\documentclass[reqno,11pt]{amsart}
\usepackage{amssymb, amsmath,latexsym,amsfonts,amsbsy, amsthm}
\usepackage{array,booktabs,mathrsfs}
\usepackage{xcolor}

\allowdisplaybreaks[4]

\newcommand{\ud}{\mathrm{d}}

\theoremstyle{plain}
\newtheorem{theorem}{Theorem}[section]
\newtheorem{lemma}[theorem]{Lemma}
\newtheorem{proposition}[theorem]{Proposition}

\theoremstyle{definition}

\theoremstyle{remark}
\newtheorem{remark}[theorem]{Remark}

\makeatletter
\renewcommand\subsubsection{\@startsection{subsubsection}{3}{\z@}%
	{0.7\linespacing\@plus0.3\linespacing}%
	{0.3\linespacing}%
	{\normalfont\itshape}}
\makeatother

\begin{document}
\title[Prandtl--Batchelor and flux-expulsion selection]
{Prandtl--Batchelor and flux-expulsion selection for steady MHD flows in a disk}

\author{Chen Gao}
\address{School of Mathematical Sciences, University of Science and Technology of China, Hefei, 230026, China}
\email{gaochen@amss.ac.cn}

\author{Zhiwu Lin}
\address{School of Mathematical Sciences, Fudan University,
Shanghai 200433, China}
\email{zwlin@fudan.edu.cn}

\author{Jianfeng Zhao}
\address{School of Mathematics and Information Science \& Center for Applied Mathematics of Guangxi (Guangxi University) \& Post-doctoral Research Station of the First-level Discipline in Mathematics, Guangxi University, Nanning, 530004, China}
\email{zhaojianfeng@amss.ac.cn}

\date{}
%%%%%%%%%%%%%%%%%%%%%%%%%%%%%%%%%%%%%%%%%%%%%%
%%%%%%%%%%%%%%%%%%%%%%%%%%%%%%%%%%%%%%%%%%
\begin{abstract}
We study the simultaneous vanishing-viscosity and vanishing-resistivity limit
of steady incompressible MHD flows in a disk.  The boundary velocity is a
small nonaxisymmetric perturbation of a rigid rotation with mean angular
speed \(\alpha\), while the prescribed tangential magnetic trace has mean
\(\beta\).  Assuming \(\alpha\neq0\) and the non-Alfv\'enic condition
\(|\alpha|\neq|\beta|\), we construct solutions converging on compact
interior subdisks to a rigidly rotating ideal MHD core with constant
vorticity and out-of-plane current density.  A new MHD--Wood law determines
the two core rotations from the boundary data: the velocity core is selected
by a coupled kinetic--magnetic balance, whereas the magnetic core is fixed by
the imposed mean circulation.  Consequently, zero circulation gives complete
interior magnetic expulsion, while nonzero circulation leaves a uniform
magnetic rotation after the nonaxisymmetric modes are confined to a thin
boundary layer.  This provides a fully coupled realization of
Prandtl--Batchelor selection and flux expulsion; unlike classical kinematic
models, the magnetic field actively changes the flow and need not be weak.
The proof combines a non-Alfv\'enic coercive theory for a periodic MHD
boundary layer, global matching of the two fields, and a coupled stability
estimate adapted to the magnetic boundary condition.  A separate conditional
rigidity argument, using exact viscous identities and local convergence but no
interior asymptotic expansion, explains the same core structure for a broader
single-eddy family.
\end{abstract}
\maketitle

\numberwithin{equation}{section}

\section{Introduction}

Two classical selection mechanisms arise in weakly dissipative flows with
closed streamlines.  The Prandtl--Batchelor principle predicts that weak
viscosity homogenizes vorticity in a closed eddy and selects a
constant-vorticity Euler core \cite{prandtl,B,W}.  Magnetic flux expulsion
predicts that rapid circulation and weak resistive diffusion remove the
nonconstant magnetic flux from such an eddy and concentrate it in thin
boundary or separatrix layers \cite{weiss66,Parker66,moffatt78,MoffattKamkar83}.
Both mechanisms select an ideal interior state through a singular
transport--diffusion balance, but they have usually been studied separately.

The separation between the two classical mechanisms is natural in kinematic
MHD, where the velocity is prescribed.  It fails in the fully coupled
equations: the Lorentz force changes the advecting eddy, while the eddy
simultaneously transports and stretches the magnetic field.  Vorticity
homogenization and flux expulsion must therefore be resolved together.  This
leads to a basic question: does a closed MHD eddy possess a joint inviscid
selection mechanism, and which parts of the two limiting fields are selected
by the boundary layer and which are fixed by the boundary data?

We answer this question for steady viscous--resistive incompressible MHD in a
disk.  Let \(\mathbf t\) be the positively oriented unit tangent.  We prescribe
\[
\mathbf u^\varepsilon\big|_{\partial B_1}
=(\alpha+\eta f(\theta))\mathbf t,
\qquad
\mathbf B^\varepsilon\cdot\mathbf t\big|_{\partial B_1}
=\beta+\eta\widehat f(\theta),
\]
where \(f,\widehat f\) are smooth zero-mean \(2\pi\)-periodic functions and
\(0<\eta\ll1\).  As viscosity and magnetic diffusivity vanish with fixed ratio
\(\kappa>0\), we
{\color{black}construct a coupled MHD--Prandtl boundary layer
expansion, prove its nonlinear stability, and obtain solutions converging on
every compact interior subdisk to}
\begin{equation}\label{intro:selected-core}
        \mathbf u_e=ar\,\mathbf e_\theta,\qquad
        \mathbf B_e=br\,\mathbf e_\theta .
\end{equation}
\begin{samepage}
Thus the limiting vorticity and the out-of-plane current
density are both constant.  We derive the following selection law, which generalizes the classical
Wood formula in Prandtl--Batchelor theory to the MHD setting:
\begin{equation}\label{intro:wood-formula}
b=\beta,\qquad
a^2=\alpha^2+\frac{\eta^2}{2\pi}\int_0^{2\pi}
\left(f^2-\kappa^{-1}\widehat f^{\,2}\right)\,\ud\theta.
\end{equation}
\end{samepage}

The identity \(b=\beta\) gives a precise form of flux expulsion.  When the
imposed circulation vanishes, the magnetic field converges to zero throughout
the interior core.  When it does not vanish, the nonaxisymmetric magnetic
modes are still confined to the boundary layer, but the globally constrained
circulation mode survives as the uniform magnetic rotation in
\eqref{intro:selected-core}.  Thus expulsion is selective rather than
absolute.  The disk provides a canonical single-eddy setting in which the local
closed-streamline mixing mechanism and the global boundary-circulation
constraint can be separated cleanly.

The classical flux-expulsion configurations also have a common global
feature.  Weiss \cite{weiss66} and Parker \cite{Parker66} start from a
uniform magnetic field; the circular-eddy calculation of Moffatt and Kamkar
uses the potential \(A=B_0r\sin\theta\), again representing a uniform field,
and their model shear problem uses a unidirectional periodic field
\cite{MoffattKamkar83}.  {The point-vortex analysis \cite{Bajer98} and recent
dynamical vortex studies \cite{GMT,TessierPH} likewise use an initially uniform
background field.}  Such a current-free field has zero circulation around the boundary
$\Gamma$ of a simply connected eddy. Indeed, letting $D$ denote the
simply connected region enclosed by $\Gamma$, Stokes' theorem yields
\[
\oint_\Gamma \mathbf{B}_0\cdot\mathbf{t}\,\ud s
=\int_D \operatorname{curl}\mathbf{B}_0\,\ud x
=0.
\]
For a circular boundary this is exactly the zero mean of the tangential
trace.  Consequently, the steady zero-circulation case of the theorem gives
a rigorous coupled counterpart of the classical prediction: the magnetic
field is expelled from every compact subset of the eddy.  We are not aware
of an earlier study of flux expulsion in the same steady, fully coupled,
closed-eddy boundary-value setting with a prescribed nonzero magnetic
circulation.  Nonzero circulation is qualitatively different: it is sustained
by the boundary data (equivalently, by a net current through the disk) and
cannot disappear from the selected steady core.

To our knowledge, this gives the first rigorous construction in the precise
steady, fully coupled, closed-streamline regime considered here.  It is neither
a passive-field limit nor a small-magnetic-field result: the mean magnetic
field may be comparable with the mean velocity and may also vanish. 

\medskip
\noindent\emph{Relation with Prandtl--Batchelor theory.}
The classical principle \cite{prandtl,B,W} was developed through
high-Reynolds-number asymptotics for finite domains and closed-streamline
flows \cite{feymann-lagerstrom,lagetstrom75,riley81}, followed by asymptotic
and existence analyses of cylindrical eddies
\cite{K1998,K2000,van-wijngaarden}.
Fei, Gao, Lin, and Tao rigorously constructed the disk flow
\cite{FGLTdisk}, where a scalar Batchelor--Wood formula fixes the rigidly
rotating Euler core; {the annular problem and its generalized
two-boundary selection rule were treated in \cite{FGLTannulus}.}

The hydrodynamic Wood formula in the disk is
\[
 a_{\rm hyd}^2=\frac1{2\pi}\int_0^{2\pi}
 (\alpha+\eta f)^2\,\ud\theta
 =\alpha^2+\frac{\eta^2}{2\pi}\int_0^{2\pi}f^2\,\ud\theta.
\]
It comes from one scalar averaged identity.  The MHD layer instead preserves
two independent averaged quantities across the layer: one fixes the mean
magnetic field, while the other balances kinetic and magnetic contributions.
Solving the two relations jointly fixes \(b\) and \(a^2\), introduces the
diffusivity ratio \(\kappa\), and shows
that the oscillatory magnetic trace lowers the selected \(a^2\) by
\((2\pi\kappa)^{-1}\eta^2\int\widehat f^{\,2}\), while the velocity
oscillation raises it.  The mean \(\beta\) survives as \(b=\beta\) and affects
the non-Alfv\'enic solvability condition, although it cancels from the final
formula for \(a^2\).  This two-field balance and the simultaneous selection
of a dynamically adjusted velocity core and a circulation-constrained
magnetic core are the main novelties of the MHD--Wood formula.

Rigorous steady Prandtl expansions for prescribed outer flows were
established for moving-plate and general steady boundary-layer problems
\cite{GN,GI1,GZ}; later works treat non-shear Euler flows, convergent channels,
and global stationary inviscid limits \cite{GZ2,GX,IMPi}.  Here the closed
geometry has no inflow boundary and, more importantly, the outer ideal state
is itself an unknown selected by the layer.

\medskip
\noindent\emph{Relation with flux expulsion and magnetic transport.}
Weiss's prescribed-eddy calculation \cite{weiss66} and the time-scale analysis
of Moffatt and Kamkar \cite{MoffattKamkar83} form the classical kinematic
picture; see also \cite{moffatt78,Bajer98,dormy-moffatt-19}.  Once the Lorentz
force is retained, the concentrated field can reorganize or suppress the eddy.
{Such nonlinear back reaction appears in the self-consistent
magnetoconvection calculation of Galloway, Proctor, and Weiss
\cite{GallowayProctorWeiss78}, the flux-rope study of Proctor and Galloway
\cite{ProctorGalloway79}, and the reduced or computational vortex models
\cite{GMT,TessierPH}.  A genuine runaway feedback occurs in the channel model
of Kamkar and Moffatt \cite{KamkarMoffatt82}.}  Nonlinear magnetoconvection likewise segregates
magnetic field into strong intercell structures and weak-field interiors,
with feedback on the circulation and, in stellar computations, flux
separation \cite{ProctorWeiss82,Weiss81a,Weiss81b,
TaoWeissBrownjohnProctor98,WeissProctorBrownjohn02}.  {These studies are
evolutionary, asymptotic, reduced, or numerical.  In the uniform or current-free
closed-eddy configurations above, the imposed field has zero circulation around
each simply connected cell.  The channel and magnetoconvection models have
different global field constraints and do not address the prescribed nonzero
boundary-circulation mode considered here.}  Our disk problem removes
buoyancy, stratification, and three-dimensional geometry to isolate a steady
coupled selection mechanism and to identify the nonzero circulation mode that
diffusion does not remove.

{The related transport analogy captures potential-vorticity
homogenization, passive-scalar mixing, and enhanced diffusion}
\cite{rhines-young82,rhines-young83,FannjiangPapanicolaou94,
ConstantinKiselevRyzhikZlatos08}.  It does not capture the present leading-order
Lorentz feedback, because both the advecting velocity and the transported
magnetic flux are unknown.

\medskip
\noindent\emph{Relation with MHD boundary layers.}
Wall-bounded MHD layers go back to Hartmann's theory
\cite{Hartmann37} and the classical accounts
\cite{Shercliff65,Moreau90,Davidson01}.  Effective conditions for thin
Hartmann layers and systematic derivations of Hartmann, Shercliff, and
magnetic-Prandtl models were given in
\cite{PotheratSommeriaMoreau02,GVPrestipino}.  The layer considered here is
not a standard Hartmann layer: the imposed field is tangential, the normal
magnetic component is not prescribed, and the resulting boundary layer is
characteristic.

For unsteady MHD boundary layers, Liu, Xie, and Yang
\cite{LXY1,LXY2} established Sobolev well-posedness and justified the
Prandtl ansatz under a nondegenerate tangential magnetic field.  Related
theories without resistivity or without viscosity appear in
\cite{LWXY,LiXu}, and strong layers for inhomogeneous incompressible MHD were
treated in \cite{LiXie}.  {\color{black}Liu, Yang, and Zhang \cite{LYZ} study
the Sobolev stability of prescribed Prandtl-type shear flows for a
periodically forced steady MHD system, under a nondegenerate tangential
magnetic field and a magnetic-dominance condition;} see also the
arbitrary-order rectangle construction \cite{BianSi24}.  These works prescribe the outer ideal flow and use a plate
or rectangle with a propagation direction.  By contrast, our angular variable
is periodic, the outer ideal state is selected globally, the mean magnetic
field may vanish, and coercivity follows from non-Alfv\'enic coupling rather
than magnetic dominance.

We consider the steady incompressible MHD system in the unit disk \(B_1(0)\):
\begin{equation}\label{ns}
\left\{
\begin{aligned}
	&\mathbf{u}^\varepsilon\cdot\nabla\mathbf{u}^\varepsilon-\mathbf{B}^\varepsilon\cdot\nabla\mathbf{B}^\varepsilon+\nabla p^\varepsilon-\kappa\varepsilon^2\Delta\mathbf{u}^\varepsilon=\mathbf{0},\\
	&\mathbf{u}^\varepsilon\cdot\nabla \mathbf{B}^\varepsilon-\mathbf{B}^\varepsilon\cdot\nabla \mathbf{u}^\varepsilon-\varepsilon^2\Delta \mathbf{B}^\varepsilon=0,\\
	&\nabla\cdot \mathbf{u}^\varepsilon=0,\qquad \nabla\cdot \mathbf{B}^\varepsilon=0.
\end{aligned}
\right.
\end{equation}
We impose the perturbed rotating velocity condition
\begin{equation}\label{velocity-boundary-condition}
\mathbf{u}^\varepsilon\big|_{\partial B_1}
=(\alpha+\eta f(\theta))\mathbf{t},
\end{equation}
and the prescribed tangential magnetic trace
\begin{align}\label{magnetic-boundary-condition}
\mathbf{B}^\varepsilon \cdot \mathbf{t}\big|_{\partial B_1}
=\beta+\eta \widehat f(\theta).
\end{align}
Here \(\kappa>0\) is fixed.  The inverse kinetic and magnetic Reynolds numbers
are \(\kappa\varepsilon^2\) and \(\varepsilon^2\), respectively.  The vector
\(\mathbf t\) is the positively oriented unit tangent, \(\alpha\neq0\) and
\(\beta\) are the mean boundary fields, and \(\eta\) measures the size of the
smooth \(2\pi\)-periodic perturbations \(f,\widehat f\).  With the convention
\(\mathbf B^\varepsilon=\nabla^\perp\Pi^\varepsilon\), prescribing the
tangential magnetic trace is exactly the scalar Neumann condition
\[
 -\Pi^\varepsilon_r(1,\theta)=\beta+\eta\widehat f(\theta).
\]
The additive constant of \(\Pi^\varepsilon\) is fixed separately.  Thus no
Dirichlet condition on \(\Pi^\varepsilon\), and no independent boundary value
of the normal component \(h^\varepsilon\), is needed for the scalar induction
problem.  This externally maintained tangential trace is different from the
usual perfectly conducting-wall condition and from a complete insulating-wall
matching problem; it may be realized by coupling to a compatible exterior
vacuum field and external currents.  Its mean part is the harmonic
circulation component \(\beta r^{-1}\mathbf e_\theta\), whereas a compatible zero-mean part is
an exterior harmonic multipole decaying at infinity.  For the zero-mean
perturbations in the theorem,
\[
 \frac{1}{2\pi}\int_{\partial B_1}\mathbf B^\varepsilon\cdot\mathbf t\,\ud s
 =\beta .
\]
The case \(\beta\neq0\) is understood as an externally maintained field, not
as an isolated finite-energy magnetic field in the whole plane.  In magnetic
potential variables, integration of the induction equation identifies a
constant \(2\beta\varepsilon^2\); Section~2 gives the derivation and the full
polar formulation.  This global identity independently confirms the exact
selection \(b=\beta\) obtained from the MHD--Wood formula.

Write
\[
\mathbf u^\varepsilon=u^\varepsilon\mathbf e_\theta
 +v^\varepsilon\mathbf e_r,\qquad
\mathbf B^\varepsilon=g^\varepsilon\mathbf e_\theta
 +h^\varepsilon\mathbf e_r,\qquad
\Omega=\mathbb T\times(0,1).
\]
Motivated by the Prandtl--Batchelor selection principle, we seek inviscid MHD
cores with constant vorticity and constant out-of-plane current
density.  In the boundary-tangent single-eddy class relevant to the disk,
these are precisely the Couette-type ideal MHD fields
\begin{align}\label{taylor-couette-flow}
(u_e(\theta,r),v_e(\theta,r),g_e(\theta,r),h_e(\theta,r))=(ar,0,br,0),
\end{align}
where \(a\) and \(b\) are constants.
Appendix~\ref{app:mhd-pb-selection} gives a conditional MHD
Prandtl--Batchelor argument based directly on the magnetic-potential equation.
For a single ideal eddy, the ideal induction equation makes the magnetic field
parallel to the velocity.  A non-Alfv\'enic change of stream function reduces
the ideal momentum equation to a semilinear elliptic equation.  Radial
symmetry in the disk, together with identities obtained by testing the
finite-\(\varepsilon\) magnetic-potential and vorticity equations by functions
of the velocity stream function, then gives constant current and constant
vorticity.
The theorem itself constructs a family with the selected core without
assuming this streamline structure.

The constants $a$ and $b$ are not imposed as outer data.  Nevertheless, the
MHD--Wood identities force \(b\) and \(a^2\) to have the explicit values in
\eqref{intro:wood-formula}; \(a\) is the root that converges to \(\alpha\) as
\(\eta\to0\).  Thus the boundary layer selects the core through two averaged
conservation laws coupling the velocity and magnetic traces.

Informally, the theorem says that every sufficiently small mean-zero
nonaxisymmetric perturbation of the two boundary traces generates only an
\(O(\varepsilon)\)-thin MHD boundary layer.  Away from that layer, the
vorticity and out-of-plane current density converge uniformly to
constants.  The velocity constant is the explicit quadratic balance in
\eqref{intro:wood-formula}, while the magnetic constant is the imposed
circulation.  The conditional
selection argument in Appendix~\ref{app:mhd-pb-selection} explains the same
constant-vorticity/constant-current structure under a single-eddy hypothesis,
non-Alfv\'enicity on the closed eddy, and a uniformly bounded family converging
locally in \(C^1\).

We next introduce the steady MHD--Prandtl equations near $r=1$:
\begin{equation}\label{prandtl-problem1}
\left\{
\begin{aligned}
&u_p\partial_\theta u_p+v_p\partial_Yu_p-g_p\partial_\theta g_p-h_p\partial_Yg_p-\kappa\partial_{YY}u_p=0,\\
&\partial_\theta g_p+\partial_Y\left(\frac{v_p g_p}{u_p}\right)-\partial_Y\left(\frac{\partial_Yg_p}{u_p}\right)=0,\\
&\partial_\theta u_p+\partial_Yv_p=0,\qquad \partial_\theta g_p+\partial_Yh_p=0,\\
&(u_p,v_p,g_p,h_p)(\theta,Y)=(u_p,v_p,g_p,h_p)(\theta+2\pi,Y),\\
&u_p|_{Y=0}=\alpha+\eta f(\theta),\qquad v_p|_{Y=0}=0,\qquad g_p|_{Y=0}=\beta+\eta \widehat f(\theta),\\
&\lim_{Y\to -\infty}(\partial_Yu_p,\partial_Yg_p)=(0,0).
\end{aligned}
\right.
\end{equation}
The constants \((a,b)\) are the far-field limits of \((u_p,g_p)\).  Every
sufficiently regular periodic solution of \eqref{prandtl-problem1} in the
weighted class constructed below, with \(u_p\geq c>0\) and constant far-field
limits, satisfies the MHD--Wood identities and hence
\eqref{intro:wood-formula}.  The sign is fixed by \(a\alpha>0\).  The
fixed-point argument constructs all Fourier modes of the solution; the
identities are applied only afterward to identify the two limits, so there is
no zero-mode solvability assumption hidden in the formula.

Our main theorem is stated as follows.
\begin{theorem}\label{main-theorem}
{
	Assume that \(\alpha\neq0\), \(|\alpha|\neq|\beta|\), and that
	\(f\) and \(\widehat f\) are smooth \(2\pi\)-periodic functions satisfying
	\(\int_0^{2\pi}f(\theta)\,\ud\theta
	=\int_0^{2\pi}\widehat f(\theta)\,\ud\theta=0\).
	Then there exist \(\varepsilon_0,\eta_0>0\) such that for
	\(0<\varepsilon<\varepsilon_0\) and \(0<\eta<\eta_0\), system
	\eqref{ns} with boundary conditions \eqref{velocity-boundary-condition}
	and \eqref{magnetic-boundary-condition} has a solution whose
	polar components \((u^\varepsilon,v^\varepsilon,g^\varepsilon,h^\varepsilon)\)
	satisfy
	\begin{align*}
		\left\|u^\varepsilon-u_e(r)-u_p\left(\theta,\frac{r-1}{\varepsilon}\right)+u_e(1)\right\|_{L^\infty(\Omega)}&\leq C\varepsilon,\\
		\|v^\varepsilon\|_{L^\infty(\Omega)}&\leq C\varepsilon,\\
		\left\|g^\varepsilon-g_e(r)-g_p\left(\theta,\frac{r-1}{\varepsilon}\right)+g_e(1)\right\|_{L^\infty(\Omega)}&\leq C\varepsilon,\\
		\|h^\varepsilon\|_{L^\infty(\Omega)}&\leq C\varepsilon.
	\end{align*}
	Here \(C\) is independent of \(\varepsilon\) and \(\eta\).
	The Couette-type ideal MHD field in \eqref{taylor-couette-flow} is given by
	$u_e(r)=ar$ and $g_e(r)=br$, with zero normal components.  The profiles
	$(u_p,v_p,g_p,h_p)$ solve \eqref{prandtl-problem1}.  The constants $a$ and
	$b$ are given by the following MHD--Wood formula:
	\[
	b=\beta,\qquad
	a^2=\alpha^2+\frac{\eta^2}{2\pi}
	\int_0^{2\pi}\left(f^2-\frac1\kappa\widehat f^{\,2}\right)\,\ud\theta.
	\]
	The value of \(a\) is determined by requiring \(a\) and \(\alpha\) to
	have the same sign.

	Moreover, for every \(0<r_0<1\),
	\[
		\lim_{\varepsilon\to0}\left(
		\|\omega^\varepsilon-2a\|_{L^\infty(B_{r_0})}
		+\|j^\varepsilon-2b\|_{L^\infty(B_{r_0})}\right)=0,
	\]
	where $\omega^\varepsilon$ and $j^\varepsilon$ denote the vorticity and
	current density, respectively.
}
\end{theorem}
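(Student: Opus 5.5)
We construct the solution as a composite approximate solution plus a remainder. First, in polar variables with \(Y=(r-1)/\varepsilon\), we solve the MHD--Prandtl problem \eqref{prandtl-problem1} for small \(\eta\) by perturbation around the constant state \((\alpha,0,\beta,0)\). We write \(u_p=\alpha+\eta\tilde u\), \(g_p=\beta+\eta\tilde g\) and linearize. The leading linear operator acting on the nonzero Fourier modes \(e^{ik\theta}\) is then the constant-coefficient system
\[
ik\alpha\tilde u-ik\beta\tilde g-\kappa\partial_{YY}\tilde u=F,\qquad ik\alpha\tilde g-ik\beta\tilde u-\partial_{YY}\tilde g=G .
\]
Its characteristic roots have strictly positive real part exactly when \(\alpha^2\neq\beta^2\); this is the non-Alfv\'enic condition. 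Under it, each mode decays exponentially as \(Y\to-\infty\), with rate \(\gtrsim|k|^{1/2}\), and we obtain coercive weighted estimates uniform in \(k\). The zero mode is handled separately, by integrating in \(Y\) and keeping the constant far-field limits free. A contraction in an exponentially weighted Sobolev space in \((\theta,Y)\) then produces \((u_p,v_p,g_p,h_p)\) together with limits \((a,b)\). Only after the solution has been constructed do we identify \((a,b)\). Averaging the induction equation in \(\theta\) and integrating in \(Y\) gives the conserved quantity \(\langle g_p\rangle\), hence \(b=\beta\). For \(a\), we multiply the momentum equation by \(u_p\) and the \(\Pi\)-transport form of the induction equation by a suitable multiple of \(g_p\), average over \(\theta\), and integrate over \(Y\in(-\infty,0)\). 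The resulting exact identity gives \(a^2=\frac1{2\pi}\int(u_p|_{Y=0})^2-\kappa^{-1}\frac1{2\pi}\int(\widehat f\eta)^2\), which is the MHD--Wood formula. The sign of \(a\) is fixed by continuity in \(\eta\).

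Second, we build a higher-order approximate solution. The Euler core is \((ar,0,br,0)\), which solves ideal MHD exactly. To it we add the profiles from the first step, cut off away from \(r=0\), and then the next-order boundary-layer corrector \((u^{(1)}_p,\ldots)\) solving the linearized MHD--Prandtl system with curvature terms as forcing. We also add a matching interior correction so that \(v,h\) satisfy the divergence constraints and the boundary conditions \eqref{velocity-boundary-condition}–\eqref{magnetic-boundary-condition} hold exactly. A stream-function and \(\Pi\)-potential formulation keeps the result divergence free. The residual of \eqref{ns} for this approximation should be \(O(\varepsilon^{N})\) in \(L^2\) for some \(N\geq 2\).

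Third, and hardest, is the stability estimate for the remainder \((\mathbf u,\mathbf B)\) about the approximate solution. It carries the Dirichlet condition for \(\mathbf u\) and the Neumann-type condition \(\mathbf B\cdot\mathbf t=0\), \(\nabla\cdot\mathbf B=0\) for \(\mathbf B\). The plain energy estimate fails, because the terms \(\mathbf u\cdot\nabla\mathbf u_a\) and \(\mathbf B\cdot\nabla\mathbf B_a\) involve \(\partial_r\) of the layer profiles, which are of size \(\eta\varepsilon^{-1}\). We would first try the symmetrized combinations \(\mathbf u\pm\mathbf B\) (Els\"asser-type) weighted by constants depending on \(\alpha,\beta,\kappa\). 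Together with the Hardy-type inequality \(\|\mathbf u/(1-r)\|\lesssim\|\nabla\mathbf u\|\) for the velocity, and smallness \(\eta\), the dangerous terms are absorbed into \(\kappa\varepsilon^2\|\nabla\mathbf u\|^2+\varepsilon^2\|\nabla\mathbf B\|^2\). The Hardy inequality is unavailable for \(\mathbf B\), which need not vanish on the boundary. For \(\mathbf B\) we therefore exploit the structure of the layer: \(\partial_Y g_p\) pairs with \(h\), and \(h\) does vanish on the boundary since \(\mathbf B\cdot\mathbf n\) can be recovered from \(\mathbf B\cdot\mathbf t\) and divergence freeness via \(\Pi\). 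If this is insufficient, a positive-definite quadratic form built from the non-Alfv\'enic linearization at \((\alpha,\beta)\) should give coercivity. This yields \(\|(\mathbf u,\mathbf B)\|_{L^2}\lesssim\varepsilon^{N-1}\). A contraction or Leray–Schauder argument then gives existence, and a Gagliardo–Nirenberg or \(H^2\) estimate gives the \(L^\infty\) bounds \(O(\varepsilon)\) in the theorem, because the first-order correctors are \(O(\varepsilon)\) in \(L^\infty\).

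Finally, on \(B_{r_0}\) the boundary-layer profiles and their derivatives are \(O(e^{-c/\varepsilon})\), and the approximate solution equals \((ar,0,br,0)\) plus \(O(\varepsilon)\) smooth terms. Interior elliptic regularity for the remainder then shows \(\omega^\varepsilon\to 2a\) and \(j^\varepsilon\to2b\) uniformly.
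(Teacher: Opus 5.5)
Your $(\theta,Y)$ construction of the layer and the interior step are reasonable. The remainder stability, which is the heart of the theorem, does not work as you describe. Absorbing the layer terms into $\kappa\varepsilon^2\|\nabla\mathbf u\|^2+\varepsilon^2\|\nabla\mathbf B\|^2$ by Hardy loses a power of $\varepsilon$. With Hardy in both factors, $\int u\,v\,\partial_r u_p^a$ is bounded by $\|(1-r)^2\partial_r u_p^a\|_\infty\|u/(1-r)\|\,\|v/(1-r)\|\sim\eta\varepsilon\|\nabla\mathbf u\|^2$. The dissipation cannot absorb this, because $\eta$ is independent of $\varepsilon$. Any improvement needs tangential-derivative control at a scale the energy does not give. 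The paper gets it from an $\varepsilon$-independent positivity estimate supplied by the rotating core: testing the vorticity equation with $\Phi_\theta$ gives $a\int(r\Phi_{r\theta}^2+\Phi_{\theta\theta}^2/r)$.

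For the magnetic part, your claim that the normal component $h$ of the magnetic error vanishes on $r=1$ is false. The only condition is $\Pi_r=0$. Divergence-freeness gives $\partial_r(rh)=-g_\theta=0$ there, not $h=0$, and $h=\Pi_\theta/r$ has a free trace; it produces the boundary term $\int\Pi_\theta(\theta,1)^2\,\ud\theta$ in the polar Hessian identity. So terms such as $\int h\,u\,\partial_r g_p^a$, with coefficient of size $\eta\varepsilon^{-1}$, get no Hardy gain in $h$, and $O(1)$ tangential control of $\Pi$ still misses a power of $\varepsilon$.

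The missing idea is the characteristic combination $G=a\Pi_\theta-b\Phi_\theta-\frac{\Pi^a_\theta}{r}\Phi_r^0+\frac{\Phi^a_\theta}{r}\Pi_r^0$. The leading part of the induction error equation is $rG-\varepsilon^2r\Delta\Pi$, so testing with $\varepsilon^{-2}G$ controls $\mathbb L=\varepsilon^{-2}\int rG^2$. Substituting $\Pi_\theta=(G+b\Phi_\theta+\cdots)/a$, where $\Phi_\theta$ does vanish at the wall, handles these terms and turns the positivity coefficient into $(a^2-b^2)/a$. This is where $|\alpha|\neq|\beta|$ enters the stability. Closing it requires the three-scale norm $\mathbb E+\mathbb P+\mathbb L$, a magnetic positivity estimate, and a refined estimate for $\nabla^2\Phi_\theta,\nabla^2\Pi_\theta$. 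Your fallback ``positive-definite quadratic form'' supplies none of this.

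Relatedly, a residual $O(\varepsilon^N)$ with $N\ge2$ and an $L^2$ bound on the remainder cannot control the quadratic terms or give $L^\infty$ errors of size $\varepsilon$. The linear estimate loses $\varepsilon^{-1}$ and each physical $H^2$ norm costs $\varepsilon^{-3}$. This is why the paper builds coupled Euler--Prandtl correctors up to order eleven, to reach a residual $\varepsilon^{10}$ in $H^1$.

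Your derivation of the MHD--Wood formula is also incorrect. Multiplying the momentum equation by $u_p$ and averaging at fixed $Y$ gives an energy--dissipation balance: a wall flux equals $\kappa\int_{-\infty}^0\langle(\partial_Yu_p)^2\rangle\,\ud Y$ plus magnetic terms. Every far-field flux is a zero-mean trace $v_\infty$ or $h_\infty$ times a constant, or a decaying derivative, so $a$ drops out and is never determined.

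The Wood relations are averages along streamlines. In the von Mises variable $\psi=\int_0^Y u_p\,\ud z$ one has $\partial_\theta(g_p/u_p)=\partial_{\psi\psi}g_p$ and $\tfrac12\partial_{\psi\psi}(\kappa u_p^2-g_p^2)=\partial_\theta(u_p-g_p^2/u_p)$. Hence $\int_0^{2\pi}g_p\,\ud\theta$ and $\int_0^{2\pi}(\kappa u_p^2-g_p^2)\,\ud\theta$ at fixed $\psi$ are bounded affine functions of $\psi$, and so constant.

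Likewise $\langle g_p\rangle$ at fixed $Y$ is not conserved. The fixed-$Y$ invariant is $\langle g_p-v_pA\rangle$, with $\partial_YA=g_p$ and $\partial_\theta A=-h_p$. This still gives $b=\beta$ once you check that $\langle v_pA\rangle\to0$. Your perturbative construction of the layer in $(\theta,Y)$ is a plausible alternative to the paper's von Mises fixed point, but identifying $a$ requires passing to $\psi$.
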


\begin{remark}
	The zero-mean assumption is only a normalization.  It separates the mean
	boundary traces from the oscillatory data, but this separation does not by
	itself produce a selection law.  Wood-type identities are generally
	nontrivial, and coupled multi-field models usually do not have an explicit
	Wood formula.
\end{remark}

Diffusion removes the oscillatory magnetic component but
leaves the uniform rotational field \(b r\,\mathbf e_\theta\).  Thus the theorem
combines Prandtl--Batchelor selection with flux expulsion.

The condition \(|\alpha|\neq|\beta|\) is neither a small-field nor
a magnetic-dominance assumption.  In the Els\"asser variables
\(\mathbf z^\pm=\mathbf u\pm\mathbf B\), introduced by Els\"asser
\cite{Elsasser50}, the selected core satisfies
\[
        \mathbf z_e^\pm=(a\pm b)r\,\mathbf e_\theta.
\]
If \(a=\pm b\), one of these fields vanishes.  These are the nonlinear
Alfv\'enic states associated with the Wal\'en relation
\(\mathbf u=\pm\mathbf B\) (in Alfv\'en units)
\cite{Alfven42,Elsasser50,Chandrasekhar61,moffatt78}.  Physically, ideal MHD
nonlinearity is an interaction between counter-propagating Els\"asser fields;
when one field vanishes, the characteristic structure changes and the usual
two-field interaction degenerates.  The same degeneration is visible here at
the steady boundary-layer level: the choice \(a=\pm b\) makes the factor
\(a^2-b^2\) vanish in the periodic MHD--Prandtl estimate, the Euler matching,
and the final stability energy.  Since \(a=\alpha+O(\eta^2)\) and \(b=\beta\),
the assumption \(|\alpha|\neq|\beta|\) gives
\(|a^2-b^2|\geq c_0>0\) when \(\eta\) is sufficiently small.  This is the
non-Alfv\'enic condition used below.  The Alfv\'enic case is therefore a
distinct characteristic regime, not an endpoint reached by the present
estimates.

\medskip
\noindent\emph{Three main points of the proof.}
The proof is not obtained by appending a passive induction equation to the
hydrodynamic disk construction.  {In the disk and annulus constructions
\cite{FGLTdisk,FGLTannulus}, one velocity stream function leads, after a von
Mises change of variables, to scalar leading Prandtl equations and scalar Wood
relations.}  In the moving-boundary theories of Guo--Nguyen and
Guo--Iyer \cite{GN,GI1}, the outer Euler flow is prescribed and the
tangential variable supplies a propagation direction.  None of these
features survives unchanged here.  The velocity and magnetic layers act at
the same order, their two normal components are nonlocal, the Lorentz force
enters the leading momentum balance, and the induction equation supplies
only a Neumann condition for the magnetic potential.  Moreover, the angular
variable is periodic and the two outer constants are outputs of the global
matching.  Thus the coupled layer must be solved as a two-field elliptic-type
problem; its magnetic far-field mode must satisfy an additional compatibility
condition; and the final error estimate must recover magnetic tangential
control that the boundary conditions do not provide.  The following three
steps address these new difficulties.

\smallskip
\noindent\emph{1. Coercivity for a periodic steady MHD--Prandtl problem.}
In an ordinary steady Prandtl problem the tangential variable plays the role of
time, and data are propagated from an inflow boundary.  On the circle there is
no inflow direction; the forward-parabolic interpretation is unavailable, and
steady periodic Prandtl equations can even exhibit nonexistence phenomena
\cite{renardy}.  In addition, both normal components in
\eqref{prandtl-problem1} are nonlocal functions of the tangential variables.
A direct elimination either obscures the boundary conditions or loses a
tangential derivative.

We combine the magnetic-flux identity with a von Mises transformation and use
\(w_p=u_p^2\) together with \(g_p\) as the tangential unknowns.  At the
constant state \((\alpha^2,\beta)\), the principal tangential combinations are
\[
 \partial_\theta w-2\beta\,\partial_\theta g,
 \qquad
 \partial_\theta g-\frac{\beta}{2\alpha^2}\partial_\theta w .
\]
The corresponding matrix
\[
 \mathcal M_{\alpha,\beta}
 =\begin{pmatrix}
 1&-2\beta\\[2pt]
 -\dfrac{\beta}{2\alpha^2}&1
 \end{pmatrix},
 \qquad
 \det\mathcal M_{\alpha,\beta}
 =1-\frac{\beta^2}{\alpha^2},
\]
is invertible precisely away from the Alfv\'enic regime.  Multipliers aligned
with these two combinations cancel the mixed diffusion terms after integration
by parts and give coercive control of the periodic derivatives.  Weighted
estimates on the half-strip then control the decaying parts of the profiles
without forcing their constant limits at \(Y=-\infty\) to vanish.  This is the
main point: MHD coupling creates an elliptic-type coercive structure
for a periodic steady layer, rather than acting as a lower-order perturbation
of a hydrodynamic evolution problem.

\smallskip
\noindent\emph{2. MHD--Wood selection and global two-field matching.}
In the moving-plate Euler--Prandtl schemes \cite{GN,GI1}, the outer Euler
flow is prescribed and the boundary layer repairs its tangential trace.  In
the hydrodynamic disk \cite{FGLTdisk}, the outer rotation is unknown and is
fixed by one scalar Batchelor--Wood condition.  In the annulus
\cite{FGLTannulus}, the selected rotating shear solves an Euler ordinary
differential equation with scalar Wood conditions at the two boundary
components.  In both cases only the velocity field is involved.  Here the leading von Mises system yields two averaged identities whose
far-field consequences give the selection law
\eqref{intro:wood-formula}, thereby determining both outer constants.
The nonlinear fixed point first constructs the periodic boundary layer and its
constant far-field limits; only afterward are the identities evaluated to
identify \(a\) and \(b\).  This avoids a circular solvability argument.

The higher-order matching is also coupled.  The
two divergence equations determine the normal profiles, and already at the
first correction order the far-field limit of the induction relation gives
\[
        -a\,h_\infty+b\,v_\infty=0.
\]
Thus the Prandtl and Euler corrections must be constructed alternately rather
than by solving a velocity problem first and then adding a passive magnetic
field.  At higher orders one obtains analogous coupled solvability conditions
with lower-order source terms.  After eliminating the ideal magnetic
variables, the Euler correction is governed by the same non-Alfv\'enic factor
\(a^2-b^2\) that appears in the leading Prandtl analysis.  Section~2 verifies
the first-order compatibility, proves one reusable higher-order solvability
statement, and uses the exact
scalar identity
\[
 \mathbf u^\varepsilon\!\cdot\nabla\Pi^\varepsilon
 -\varepsilon^2\Delta\Pi^\varepsilon=2\beta\varepsilon^2
\]
to recover \(b=\beta\) and the magnetic compatibility needed by the recursion.

\smallskip
\noindent\emph{3. A coupled stability norm that replaces missing magnetic
boundary control.}
Let \(\Phi\) and \(\Pi\) be the velocity and magnetic error potentials.  Their
boundary conditions are
\[
        \Phi=\Phi_r=0,\qquad \Pi_r=0\qquad\text{at }r=1.
\]
The magnetic potential has no Dirichlet condition: only its additive constant
is normalized.  Consequently the clamped velocity estimate controls neither
the tangential magnetic trace \(\Pi_\theta/r\) nor its interaction with the
radial zero mode.  The magnetic diffusion identity alone gives
\(\varepsilon^2\nabla^2\Pi\), which is two powers of \(\varepsilon\) too weak
for the leading Lorentz terms in the momentum equation.  It also produces a
boundary term that cannot be removed by imposing \(\Pi=0\), since that would
change the physical Neumann problem.  Estimating the velocity and magnetic
equations separately therefore either loses a power of \(\varepsilon\), loses
a tangential derivative, or silently adds a boundary condition not present in
the model.

We therefore first estimate the coupled quantity
\[
 G=a\Pi_\theta-b\Phi_\theta
   -\frac{\Pi^a_\theta}{r}\Phi_r^0
   +\frac{\Phi^a_\theta}{r}\Pi_r^0 ,
\]
where the last two terms remove the interaction with the radial zero modes.
The leading part of the induction equation is precisely
\(rG-\varepsilon^2r\Delta\Pi\).  Testing this identity at its natural strong
scale controls
\[
        \mathbb L=\varepsilon^{-2}\int_\Omega rG^2,
\]
which is two powers of \(\varepsilon\) stronger than the basic diffusive
estimate.  Thus \(G\), rather than \(\Pi_\theta\) alone, is the characteristic
magnetic quantity supplied by induction.  Substituting this information into
the momentum identity gives the
coercive factor
\[
        \left|a-\frac{b^2}{a}\right|
        =\frac{|a^2-b^2|}{|a|},
\]
which recovers the missing tangential control of \(\Pi\) and explains exactly
why non-Alfv\'enicity is the correct hypothesis.  The stability norm is built
from three distinct scales:
\[
\begin{aligned}
\mathbb E&\sim\varepsilon^2
 \int_\Omega r\big(|\nabla^2\Phi|^2+|\nabla^2\Pi|^2
 +|\nabla^2\Phi_\theta|^2+|\nabla^2\Pi_\theta|^2\big),\\
\mathbb P&\sim\int_\Omega
 \left(r\Phi_{r\theta}^2+\frac{\Phi_{\theta\theta}^2}{r}
 +r\Pi_{r\theta}^2+\frac{\Pi_{\theta\theta}^2}{r}\right),\qquad
\mathbb L=\varepsilon^{-2}\int_\Omega rG^2.
\end{aligned}
\]
Here \(\mathbb E\) controls the viscous and resistive Hessians, including the
derivatives needed to interpret the polar center; \(\mathbb P\) supplies
unweighted tangential positivity for both fields; and \(\mathbb L\) gives the
strong characteristic control needed to transfer information from the
Neumann magnetic equation to the velocity equation.  None of the three can be
dropped: \(\mathbb E\) alone is too weak in \(\varepsilon\), \(\mathbb P\)
does not see the magnetic zero mode, and \(\mathbb L\) controls only one
coupled combination.  The algebraic inversion with determinant
\(a^2-b^2\) closes the triangle among them.

{
The refined estimate has a different difficulty.  The largest terms contain
the radial derivatives of the Prandtl vorticity and current, for example
\[
 \int \Phi_\theta(\Delta\Phi_p^a)_r
 \frac{r\Phi_{\theta\theta}}{u_e+u_p^a},\qquad
 \int \Pi_\theta(\Delta\Pi_p^a)_r
 \frac{r\Phi_{\theta\theta}}{u_e+u_p^a}.
\]
The first term can be estimated by Hardy's inequality because both
\(\Phi_\theta\) and \(\Phi_{\theta\theta}\) vanish on the boundary.  This
argument does not apply to the second
term, since \(\Pi_\theta\) has no boundary condition.  Using only one factor
of \(1-r\) loses a power of \(\varepsilon\), while assigning both factors to
\(\Phi_{\theta\theta}\) requires one more radial derivative of \(\Phi\).

For this term, we use the second magnetic-potential error equation directly.
We keep \(u_e+u_p^a\) as the leading coefficient of \(\Pi_\theta\) and move
all higher-order coefficients to the right-hand side.  The resulting
\(\varepsilon^2r\Delta\Pi\) term supplies the missing power of
\(\varepsilon\), and integration by parts in \(\theta\) avoids the extra
radial derivative.  The remaining terms are bounded by
\(C\eta(\mathbb E+\mathbb P+\mathbb L)\).  This is the step that prevents
derivative loss in the refined estimate.
}

The remainder of the paper is organized as follows.  Section~2 derives the
magnetic-potential and polar formulations, constructs the leading and
higher-order MHD--Prandtl profiles, matches them with global Euler corrections,
and estimates the residual.  Section~3 proves the combination, energy,
positivity, and higher-order linear stability estimates.  Section~4 solves
the nonlinear error equations.  Section~5 completes the proof of
Theorem~\ref{main-theorem}, including the interior convergence of the
vorticity and current density.  {Appendix~A constructs the velocity
divergence corrector, and Appendix~B gives the conditional MHD
Prandtl--Batchelor selection argument.}

\section{Construction of approximate solutions}

\subsection{Magnetic-potential and polar formulations}

We first record the scalar magnetic-flux equation and the polar system used in
the construction.  We fix the convention
\[
\nabla^\perp \Pi=\frac{1}{r}\Pi_\theta\,\mathbf e_r-\Pi_r\,\mathbf e_\theta,
\qquad
\mathbf B=h\,\mathbf e_r+g\,\mathbf e_\theta=\nabla^\perp\Pi,
\]
so that \(g=-\Pi_r\) and \(h=\Pi_\theta/r\).  For scalar functions, we use
the polar Laplacian
\[
\Delta=\partial_{rr}+\frac1r\partial_r+\frac1{r^2}\partial_{\theta\theta}.
\]
For \(\mathbf B^\varepsilon\), define
\[
\Pi^\varepsilon(\theta,r)=\int_r^1 g^\varepsilon(\theta,\rho)\,\ud\rho
+\mathcal I\big[h^\varepsilon(\cdot,1)\big](\theta),
\]
where \(\mathcal I\) is the zero-mean periodic primitive in \(\theta\).
This is well-defined because the divergence constraint gives
\(\int_0^{2\pi}h^\varepsilon(\theta,1)\,\ud\theta=0\).  With this convention,
\(\Pi_r^\varepsilon=-g^\varepsilon\) and
\(\Pi_\theta^\varepsilon=rh^\varepsilon\), and the induction equation becomes
\[
\mathbf{u}^\varepsilon\cdot\nabla \mathbf{B}^\varepsilon
-\mathbf{B}^\varepsilon\cdot\nabla \mathbf{u}^\varepsilon
-\varepsilon^2\Delta \mathbf{B}^\varepsilon
=\nabla^\perp\!\left(\mathbf{u}^\varepsilon\cdot\nabla\Pi^\varepsilon
-\varepsilon^2\Delta\Pi^\varepsilon\right)=0.
\]
Thus,
\begin{equation}\label{eq:magnetic-flux-scalar}
\mathbf{u}^\varepsilon\cdot\nabla\Pi^\varepsilon
-\varepsilon^2\Delta\Pi^\varepsilon=C_0
\end{equation}
for a constant \(C_0\).  Since
\(\mathbf u^\varepsilon\cdot\mathbf n=0\) on \(\partial B_1\), integration
over the disk gives
\begin{equation*}
-\varepsilon^2\int_0^{2\pi}\Pi_r^\varepsilon(\theta,1)\,\ud\theta
=\int_{B_1}C_0\,\ud S.
\end{equation*}
Using
\(-\Pi_r^\varepsilon(\theta,1)=g^\varepsilon(\theta,1)
=\beta+\eta\widehat f(\theta)\) and the zero mean of \(\widehat f\), we find
\begin{equation}\label{eq:magnetic-flux-constant}
C_0=2\beta\varepsilon^2.
\end{equation}
This calculation gives at the PDE level the circulation constraint and
independently agrees with the MHD--Wood identity \(b=\beta\).

Writing \((u^\varepsilon,v^\varepsilon)\) and
\((g^\varepsilon,h^\varepsilon)\) for the tangential and radial components of
the velocity and magnetic fields, respectively, the momentum and divergence
equations, together with \(-r\) times
\eqref{eq:magnetic-flux-scalar}, give
\begin{equation}\label{mhd-polar}
\left \{
\begin {array}{lll}
u^\varepsilon u^\varepsilon_\theta+rv^\varepsilon u^\varepsilon_r+u^\varepsilon v^\varepsilon-g^\varepsilon g^\varepsilon_\theta-rh^\varepsilon g^\varepsilon_r-g^\varepsilon h^\varepsilon+p^\varepsilon_\theta-\kappa\varepsilon^2\big(ru^\varepsilon_{rr}
+ u^\varepsilon_{r}+\frac{u^\varepsilon_{\theta\theta}}{r}+\frac{2}{r} v^\varepsilon_\theta-\frac{ u^\varepsilon}{r}\big)=0,\\[5pt]
u^\varepsilon v^\varepsilon_\theta+rv^\varepsilon v^\varepsilon_r-(u^\varepsilon)^2-g^\varepsilon h^\varepsilon_\theta-rh^\varepsilon h^\varepsilon_r+(g^\varepsilon)^2 +rp^\varepsilon_r-\kappa\varepsilon^2\big(rv^\varepsilon_{rr}
+v^\varepsilon_{r}+\frac{ v^\varepsilon_{\theta\theta}}{r}-\frac{2}{r} u^\varepsilon_\theta-\frac{ v^\varepsilon}{r}\big)=0,\\[5pt]
-u^\varepsilon rh^\varepsilon+rv^\varepsilon g^\varepsilon-\varepsilon^2 ( rg^\varepsilon_{r}+g^\varepsilon-h^\varepsilon_\theta)=-2\beta\varepsilon^2r,\\[5pt]
u^\varepsilon_\theta+rv^\varepsilon_r+ v^\varepsilon=0,\\[5pt]
g^\varepsilon_\theta+rh^\varepsilon_r+ h^\varepsilon=0,\\[5pt]
u^\varepsilon(\theta,1)=\alpha+\eta f(\theta),\qquad v^\varepsilon(\theta,1)=0,\\[5pt]
g^\varepsilon(\theta,1)=\beta+\eta \widehat f(\theta),
\end{array}
\right.
\end{equation}
for \((\theta,r)\in\mathbb T\times(0,1)\).

We now construct an approximate solution of \eqref{mhd-polar} by matched
asymptotic expansion.  In addition to \(\Omega\), defined in the introduction,
we use the open half-strip
\[
	\Omega_p=\mathbb{T}\times(-\infty,0).
\]
The closures of these domains are used when boundary traces are discussed.
Unless otherwise specified, integrals over $\Omega$ and $\Omega_p$ are taken with respect to $\ud\theta\,\ud r$ and $\ud\theta\,\ud Y$, respectively. If a weighted measure such as $r\,\ud\theta\,\ud r$ or $r^{-1}\,\ud\theta\,\ud r$ is used, the weight will be displayed explicitly. Integrals over the disk $B_1$ are taken with respect to the standard area measure $\ud S$.
Throughout the paper, $C$ denotes a positive constant independent of $\varepsilon$ and $\eta$, and it may change from line to line. Constants depending on fixed indices or on $\delta$ are denoted by $C(m,l,k)$, $C_\delta$, and similar notation.
We write $\langle z\rangle=(1+z^2)^{1/2}$ for the standard polynomial weight.
Without loss of generality, we assume $\alpha>0$ in the proof below; the case $\alpha<0$ is reduced to this one by reversing the tangential orientation.

\subsection{Solvability of the Euler and Prandtl equations}

Away from the boundary, we make the following formal Euler expansions
\begin{align}\label{euler-expansions}
	\begin{aligned}
	u^{\varepsilon}(\theta,r)&=u_e^{(0)}(\theta,r)+\varepsilon u_e^{(1)}(\theta,r)+\cdots,\\
	v^{\varepsilon}(\theta,r)&=v_e^{(0)}(\theta,r)+\varepsilon v_e^{(1)}(\theta,r)+\cdots,\\
	g^{\varepsilon}(\theta,r)&=g_e^{(0)}(\theta,r)+\varepsilon g_e^{(1)}(\theta,r)+\cdots,\\
	h^{\varepsilon}(\theta,r)&=h_e^{(0)}(\theta,r)+\varepsilon h_e^{(1)}(\theta,r)+\cdots,\\
	p^{\varepsilon}(\theta,r)&=p_e^{(0)}(\theta,r)+\varepsilon p_e^{(1)}(\theta,r)+\cdots.
	\end{aligned}
\end{align}
We introduce the scaled variable $Y=\frac{r-1}{\varepsilon}\in (-\infty,0]$ and make the following MHD--Prandtl expansions near $r=1$:
\begin{align}\label{first-order-expansion}
	\begin{aligned}
		 & u^\varepsilon=u_e(1+\varepsilon Y)+u_p^{(0)}(\theta,Y)+\varepsilon\big[u_e^{(1)}(\theta,1+\varepsilon Y)+u_p^{(1)}(\theta,Y)\big]
		+\cdots,                                                                                                             \\[5pt]
		 & v^\varepsilon=v_p^{(0)}(\theta,Y)+\varepsilon\big[v_e^{(1)}(\theta,1+\varepsilon Y)+v_p^{(1)}(\theta,Y)\big]
		+\varepsilon^2\big[v_e^{(2)}(\theta,1+\varepsilon Y)+v_p^{(2)}(\theta,Y)\big]+\cdots,                                        \\[5pt]
		 & g^\varepsilon=g_e(1+\varepsilon Y)+g_p^{(0)}(\theta,Y)+\varepsilon\big[g_e^{(1)}(\theta,1+\varepsilon Y)+g_p^{(1)}(\theta,Y)\big]
		+\cdots,                                                                                                             \\[5pt]
		 & h^\varepsilon=h_p^{(0)}(\theta,Y)+\varepsilon\big[h_e^{(1)}(\theta,1+\varepsilon Y)+h_p^{(1)}(\theta,Y)\big]
		+\varepsilon^2\big[h_e^{(2)}(\theta,1+\varepsilon Y)+h_p^{(2)}(\theta,Y)\big]+\cdots,                                        \\[5pt]
		 & p^\varepsilon=p_e(r)+p_p^{(0)}(\theta,Y)+\varepsilon\big[p_e^{(1)}(\theta,r)+p_p^{(1)}(\theta,Y)\big]
		+\varepsilon^2\big[p_e^{(2)}(\theta,r)+p_p^{(2)}(\theta,Y)\big]+\cdots,
	\end{aligned}
\end{align}
where, as $Y\rightarrow -\infty$ and except for {the top-order
closure traces and affine pressure specified below},
\begin{align}
	\partial_\theta^l\partial_Y^mv_p^{(i)}(\theta,Y)\rightarrow 0,\quad
	\partial_\theta^l\partial_Y^mh_p^{(i)}(\theta,Y)\rightarrow 0,\quad
	\partial_\theta^l\partial_Y^mp_p^{(i)}(\theta,Y)\rightarrow 0,\label{matching-condition}
\end{align}
for $l,m\geq 0$ and $i=0,1,\cdots$. These expansions satisfy the following boundary conditions:
\begin{align*}
	 & u_e^{(0)}(\theta,1)+u_p^{(0)}(\theta,0)=\alpha+\eta f(\theta), \quad u_e^{(i)}(\theta,1)+u_p^{(i)}(\theta,0)=0,\ i\geq 1,     \\
	 & v_e^{(i)}(\theta,1)+v_p^{(i)}(\theta,0)=0, i\geq 0,                                                                           \\
	 & g_e^{(0)}(\theta,1)+g_p^{(0)}(\theta,0)=\beta+\eta \widehat f(\theta),\quad g_e^{(i)}(\theta,1)+g_p^{(i)}(\theta,0)=0,\ i\geq 1.
\end{align*}
The far-field conditions for $u_p^{(i)}(\theta,Y)$ and $g_p^{(i)}(\theta,Y)$ as $Y\rightarrow-\infty$ will be specified below. The profiles are solved in the following order:
\begin{align*}
	(u_e(r),0,g_e(r),0)\rightarrow (u_p^{(0)},v_p^{(1)},g_p^{(0)},h_p^{(1)})\rightarrow (u_e^{(1)},v_e^{(1)},g_e^{(1)},h_e^{(1)})\rightarrow(u_p^{(1)},v_p^{(2)},g_p^{(1)},h_p^{(2)})\rightarrow\cdot\cdot\cdot.
\end{align*}

\subsubsection{Zeroth-order Euler equations}

Substituting the above expansions into \eqref{mhd-polar} and collecting the terms of order one, we find that $(u_e^{(0)},v_e^{(0)},g_e^{(0)},h_e^{(0)},p_e^{(0)})$ satisfies the steady nonlinear Euler system
\begin{equation}
\left \{
\begin {array}{ll}
u_e^{(0)} \partial_\theta u_e^{(0)}+rv_e^{(0)} \partial_ru_e^{(0)}+u_e^{(0)} v_e^{(0)}-g_e^{(0)} \partial_\theta g_e^{(0)}-rh_e^{(0)} \partial_rg_e^{(0)}-g_e^{(0)} h_e^{(0)}+\partial_\theta p_e^{(0)}=0,\\[7pt]
u_e^{(0)} \partial_\theta v_e^{(0)}+rv_e^{(0)} \partial_rv_e^{(0)}-(u_e^{(0)})^2- g_e^{(0)} \partial_\theta h_e^{(0)}-rh_e^{(0)} \partial_rh_e^{(0)}+(g_e^{(0)})^2+r\partial_rp_e^{(0)}=0,\\[7pt]
-u_e^{(0)}  h_e^{(0)}+ g_e^{(0)} v_e^{(0)}=0,\\[7pt]
\partial_\theta u_e^{(0)}+r\partial_rv_e^{(0)}+v_e^{(0)}=0,\\[7pt]
\partial_\theta g_e^{(0)}+r\partial_rh_e^{(0)}+h_e^{(0)}=0.\label{outer-leading-order-equation}
\end{array}
\right.
\end{equation}

Motivated by the conditional selection argument in
Appendix~\ref{app:mhd-pb-selection}, we take the leading-order Euler core to be
the constant-vorticity/constant-current Couette field
\begin{align*}
 u_e^{(0)}(\theta,r)=u_e(r)=:ar,\qquad v_e^{(0)}(\theta,r)=0,\qquad
 g_e^{(0)}(\theta,r)=g_e(r):=br,\qquad h_e^{(0)}(\theta,r)=0.
\end{align*}
Here \(a\) and \(b\) are given explicitly by the MHD--Wood formula
\eqref{mhd-wood-formula}; in particular, \(b=\beta\).  The construction below
does not use the conditional hypotheses of Appendix~\ref{app:mhd-pb-selection}.

Then \eqref{outer-leading-order-equation} reduces to
\begin{align*}
	\partial_\theta p_e^{(0)}(\theta,r)=0,\quad  \partial_rp_e^{(0)}(\theta,r)=\frac{1}{r}u_e^2-\frac{1}{r}g_e^2.
\end{align*}

\subsubsection{Prandtl equations and solvability}

Substituting the expansions \eqref{first-order-expansion} into \eqref{mhd-polar} and collecting the order $\varepsilon^{-1}$ terms gives
\begin{align*}
	\partial_Yv_p^{(0)}(\theta,Y)=0,\ \ \partial_Yh_p^{(0)}(\theta,Y)=0,
	\ \ \partial_Yp_p^{(0)}(\theta,Y)=0,
\end{align*}
which together with \eqref{matching-condition} implies
\begin{align*}
	v_p^{(0)}=0,\ \ h_p^{(0)}=0,\ \ p_p^{(0)}=0.
\end{align*}
Collecting the order-one boundary layer terms gives the following steady Prandtl equations for $(u_p^{(0)},v_p^{(1)},g_p^{(0)},h_p^{(1)})$:
\begin{align}
\left \{
\begin {array}{ll}
\big(u_e(1)+u_p^{(0)}\big)\partial_\theta u_p^{(0)}+\big(v_p^{(1)}-v_p^{(1)}(\theta,0)\big)\partial_Yu_p^{(0)}-\big(g_e(1)+g_p^{(0)}\big)\partial_\theta g_p^{(0)}\\[5pt]
-\big(h_p^{(1)}+h_e^{(1)}(\theta,1)\big)\partial_Yg_p^{(0)}-\kappa\partial_{YY}u_p^{(0)}=0,\\[5pt]
\partial_\theta g_p^{(0)}+\partial_Y(\frac{\big(g_e(1)+g_p^{(0)}\big)\big(v_p^{(1)}-v_p^{(1)}(\theta,0)\big)}{u_e(1)+u_p^{(0)}})-\partial_Y(\frac{ \partial_Yg_p^{(0)}}{u_e(1)+u_p^{(0)}})=0,\\[5pt]
\partial_\theta u_p^{(0)}+\partial_Yv_p^{(1)}=0,\\[5pt]
\partial_\theta g_p^{(0)}+\partial_Yh_p^{(1)}=0,\\[5pt]
u_p^{(0)}(\theta,Y)=u_p^{(0)}(\theta+2\pi,Y),\quad v_p^{(1)}(\theta,Y)=v_p^{(1)}(\theta+2\pi,Y),\\[5pt]
u_p^{(0)}\big|_{Y=0}=\alpha+\eta f(\theta)-u_e(1),\\[5pt]
g_p^{(0)}(\theta,Y)=g_p^{(0)}(\theta+2\pi,Y),\quad h_p^{(1)}(\theta,Y)=h_p^{(1)}(\theta+2\pi,Y),\\[5pt]
g_p^{(0)}\big|_{Y=0}=\beta+\eta \widehat f(\theta)-g_e(1),\\[5pt]
\lim\limits_{Y\rightarrow -\infty}(\partial_Yu_p^{(0)},v_p^{(1)},\partial_Yg_p^{(0)},h_p^{(1)})=(0,0,0,0).
\label{prandtl-problem-near-1}
\end{array}
\right.
\end{align}
The pressure $p_p^{(1)}$ satisfies
\begin{align}\label{equation-of-first-pressure}
	 & \partial_Yp_p^{(1)}(\theta, Y)=(u_p^{(0)})^2(\theta,Y)+2u_e(1)u_p^{(0)}(\theta,Y)-(g_p^{(0)})^2(\theta,Y)-2g_e(1)g_p^{(0)}(\theta,Y).
\end{align}

\begin{lemma}[MHD--Wood identities and formula]
\label{lem:mhd-wood-identities}
Let \((u_p,v_p,g_p,h_p)\) be a sufficiently regular
\(2\pi\)-periodic solution of \eqref{prandtl-problem1}.  Assume, after a
possible reversal of orientation, that
\[
 u_p\geq c_*>0,
 \qquad
 (u_p,g_p)\longrightarrow(a,b)
 \quad\text{as }Y\to-\infty,
\]
where \(a,b\) are constants, and assume the polynomially weighted bounds used
in \eqref{prandtl-estimate} for two derivatives and some weight \(l>1\).
Then, in the von Mises coordinate
\(\psi=\int_0^Y u_p(\theta,z)\,\ud z\),
\begin{align}\label{eq:wood-identities}
 \frac{\ud^2}{\ud\psi^2}\int_0^{2\pi}g_p\,\ud\theta&=0,\nonumber\\
 \frac{\ud^2}{\ud\psi^2}\int_0^{2\pi}
 (\kappa u_p^2-g_p^2)\,\ud\theta&=0.
\end{align}
Both integrals are constant in \(\psi\), and hence
\begin{align}\label{eq:wood-formula-lemma}
 b&=\beta,\nonumber\\
 \kappa a^2-b^2
 &=\frac1{2\pi}\int_0^{2\pi}
 \big[\kappa(\alpha+\eta f)^2
 -(\beta+\eta\widehat f)^2\big]\,\ud\theta.
\end{align}
Equivalently,
{
\[
 a^2=\alpha^2+\frac{\eta^2}{2\pi}\int_0^{2\pi}
 \left(f^2-\kappa^{-1}\widehat f^{\,2}\right)\,\ud\theta,
\]
and \(a\) is determined by requiring it to have the same sign as \(\alpha\).}
\end{lemma}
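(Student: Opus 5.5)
The plan is to pass to von Mises variables $(\theta,\psi)$, $\psi=\int_0^Y u_p\,\ud z$, and rewrite each equation of \eqref{prandtl-problem1} as a $\theta$-derivative plus a pure $\partial_\psi^2$ term. Integrating over the period then gives \eqref{eq:wood-identities}. Since $u_p\ge c_*>0$, the map $Y\mapsto\psi$ is a diffeomorphism of $(-\infty,0]$ onto $(-\infty,0]$. From $\partial_\theta u_p+\partial_Y v_p=0$ and $v_p(\theta,0)=0$ we get $\psi_\theta=-v_p$ and $\psi_Y=u_p$. Hence
\[
\partial_Y=u\,\partial_\psi,\qquad \partial_\theta|_Y=\partial_\theta|_\psi-v\,\partial_\psi .
\]
Dropping the subscript $p$, the continuity equation becomes $u_\theta|_\psi+u^2\partial_\psi(v/u)=0$, i.e. $\partial_\psi(v/u)=\partial_\theta(1/u)$. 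This commutation identity is used repeatedly.

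\emph{Magnetic identity.} Since $g_Y/u=g_\psi$, substituting into the induction equation and dividing by $u$ gives
\[
\frac{g_\theta}{u}+g\,\partial_\psi\Big(\frac vu\Big)-g_{\psi\psi}=0,
\qquad\text{that is,}\qquad
\partial_\theta\Big(\frac gu\Big)=g_{\psi\psi}.
\]
Integrating over $\theta\in\mathbb T$ kills the left side, which is the first identity in \eqref{eq:wood-identities}. Hence $\int g\,\ud\theta$ is affine in $\psi$. Its slope is $\int g_Y/u\,\ud\theta$, and this tends to $0$ as $Y\to-\infty$ by the weighted decay of $\partial_Y g_p$ (this is where $l>1$ and $u\ge c_*$ enter). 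So the integral is constant. Evaluating at $\psi=0$, where it equals $2\pi\beta$ because $\widehat f$ has zero mean, and at $\psi=-\infty$, where it equals $2\pi b$ by dominated convergence, gives $b=\beta$.

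\emph{Kinetic--magnetic identity.} Write $w=u^2$. I first record the flux form of the induction equation. Combining $\partial_\theta g+\partial_Y h=0$ with the second line of \eqref{prandtl-problem1} gives $\partial_Y\big((vg-g_Y)/u-h\big)=0$. I will use the normalization of $h_p$ built into the weighted class, namely that it is the flux-consistent normal field with $(vg-g_Y)/u-h\equiv0$; checking this normalization against the far-field behaviour is the first thing to verify. It yields $gv-hu=u\,g_\psi$.

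With this relation, the momentum equation in von Mises variables becomes $\tfrac12 w_\theta-g g_\theta|_\psi+u g_\psi^2-\tfrac\kappa2 u w_{\psi\psi}=0$. Dividing by $u$,
\[
u_\theta-\frac{g g_\theta}{u}+g_\psi^2-\frac\kappa2 w_{\psi\psi}=0 .
\]
Next, by the magnetic identity, $g_\psi^2=\tfrac12(g^2)_{\psi\psi}-g\,\partial_\theta(g/u)$. Substituting and using $-g g_\theta/u-g\,\partial_\theta(g/u)=-\partial_\theta(g^2/u)$, the equation becomes
\[
\partial_\theta\Big(u-\frac{g^2}{u}\Big)-\frac12\partial_\psi^2\big(\kappa u^2-g^2\big)=0 .
\]
Integrating in $\theta$ gives the second identity in \eqref{eq:wood-identities}. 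The same slope argument shows that $\int(\kappa u^2-g^2)\,\ud\theta$ is constant, since its $\psi$-derivative involves $u_Y$ and $g_Y$, which decay. Comparing its values at $\psi=0$ and $\psi=-\infty$ gives \eqref{eq:wood-formula-lemma}.

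Finally, expand the squares and use $\int f=\int\widehat f=0$ together with $b=\beta$. This gives $\kappa a^2=\kappa\alpha^2+\tfrac{\eta^2}{2\pi}\int(\kappa f^2-\widehat f^{\,2})\,\ud\theta$. The sign of $a$ is fixed by $u_p\ge c_*>0$ after orientation, so $a$ has the sign of $\alpha$.

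The main obstacle I expect is justifying the flux-form normalization $(vg-g_Y)/u=h$, i.e. showing that the $\theta$-dependent integration constant $c(\theta)$ vanishes. If $c\ne0$, the term $c(\theta)g_\psi$ does not integrate away and would spoil the second identity. My first attempt is to identify $c$ by letting $Y\to-\infty$: there $g_Y\to0$, and under the matching conditions $v_p,h_p$ approach values that make $c$ vanish; the first-order compatibility $-a h_\infty+b v_\infty=0$ of the introduction points to exactly this. I would also check that the weighted bounds make all $\psi\to-\infty$ limits and boundary terms legitimate.
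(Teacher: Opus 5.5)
Your proposal is correct and is essentially the paper's proof. It uses the same von Mises reduction, the identities $\partial_\theta(g_p/u_p)=\partial_{\psi\psi}g_p$ and $\tfrac12\partial_{\psi\psi}(\kappa u_p^2-g_p^2)=\partial_\theta(u_p-g_p^2/u_p)$, periodic integration, zero slope of the resulting affine functions, and evaluation at $\psi=0$ and $\psi=-\infty$; your sign argument ($a,\alpha\geq c_*$ in the orientation where $u_p\geq c_*$) is a slightly more direct version of the paper's continuity-in-$\eta$ remark.

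On the normalization you flag, the paper does not derive it. It builds $-u_ph_p+v_pg_p-\partial_Yg_p=0$ into the solution class by defining $h_p$ this way (cf.\ \eqref{original-prandtl}), and you should simply do the same. Your far-field route only reduces $c(\theta)=h_\infty-bv_\infty/a$ to the compatibility $-ah_\infty+bv_\infty=0$, which is not among the lemma's hypotheses; the paper obtains it in Step~8 as a consequence of that very definition.
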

\begin{proof}
The lower bound on \(u_p\) makes \((\theta,Y)\mapsto(\theta,\psi)\) a
global diffeomorphism of the half-strip.  In these variables the two
tangential equations are
\begin{align*}
 \frac12\partial_\theta(u_p^2)-g_p\partial_\theta g_p
 +u_p(\partial_\psi g_p)^2
 -\frac{\kappa u_p}{2}\partial_{\psi\psi}(u_p^2)&=0,\\
 u_p\partial_\theta g_p-g_p\partial_\theta u_p
 -u_p^2\partial_{\psi\psi}g_p&=0.
\end{align*}
Dividing the second identity by \(u_p^2\) gives
\[
 \partial_\theta\left(\frac{g_p}{u_p}\right)
 =\partial_{\psi\psi}g_p.
\]
Dividing the first identity by \(u_p\) and using the preceding relation gives
\[
 \frac12\partial_{\psi\psi}(\kappa u_p^2-g_p^2)
 =\partial_\theta\left(u_p-\frac{g_p^2}{u_p}\right).
\]
Periodic integration proves \eqref{eq:wood-identities}.  The weighted bounds
imply uniform convergence to the constant far-field limits after the inverse
von Mises change of variables; in particular, the two affine functions of
\(\psi\) in \eqref{eq:wood-identities} are bounded on
\((-\infty,0]\) and hence have zero slope.  Evaluating at the wall and at
\(\psi=-\infty\), and using the zero means of \(f,\widehat f\), proves
\eqref{eq:wood-formula-lemma}.  {The right-hand side of the
formula for \(a^2\) is positive for sufficiently small \(\eta\).  After
reversing the orientation back if needed, the branch with the same sign as
\(\alpha\) is the one continuous at \(\eta=0\).}
\end{proof}

We solve \eqref{prandtl-problem-near-1} by an energy method based on the
von Mises transform.  The algebraic relation for the normal magnetic component
reduces the MHD--Prandtl system to a coupled system for the tangential
variables, which is solved by a weighted fixed-point argument.
\begin{proposition}
	There exists $\eta_0>0$ such that, for any $\eta\in (0,\eta_0)$,
	\eqref{prandtl-problem-near-1} has a solution
	\[
	(u_p^{(0)},v_p^{(1)},g_p^{(0)},h_p^{(1)}).
	\]
	It is unique among profiles in the small weighted class constructed below
	and satisfies
	\begin{align}\label{prandtl-estimate}
		\sum_{j+k\leq m}\int_{-\infty}^0\int_0^{2\pi}\Big|\partial_\theta^j\partial_Y^k (u_p^{(0)},v_p^{(1)},g_p^{(0)},h_p^{(1)})\Big|^2\langle Y\rangle^{2l}\,\ud\theta\,\ud Y\leq C(m,l)\eta^2,\qquad m,l \geq 0,
	\end{align}
	and
	\begin{align*}
		\int_{0}^{2\pi}v^{(1)}_p(\theta, Y)\,\ud\theta=\int_{0}^{2\pi}h^{(1)}_p(\theta, Y)\,\ud\theta=0,\qquad \forall\ Y\leq 0.
	\end{align*}
	Furthermore, $(u_p^{(0)},v_p^{(1)},g_p^{(0)},h_p^{(1)},v_e^{(1)}(\theta,1),h_e^{(1)}(\theta,1))$ satisfies
	\begin{align}\label{original-prandtl}
		-\big(u_e(1)+u_p^{(0)}\big)\big(h_e^{(1)}(\theta,1)+h_p^{(1)}\big)+\big(g_e(1)+g_p^{(0)}\big)\big(v_e^{(1)}(\theta,1)+v_p^{(1)}\big)-\partial_Yg_p^{(0)}=0.
	\end{align}
	The far-field constants satisfy
	\begin{equation}\label{mhd-wood-formula}
		b=\beta,
		\qquad
		{a^2=\alpha^2+\frac{\eta^2}{2\pi}
 \int_0^{2\pi}\left(f^2-\frac1\kappa\widehat f^{\,2}\right)\ud\theta.}
	\end{equation}
	{The requirement that \(a\) and \(\alpha\) have the same sign
	determines \(a\).}
\end{proposition}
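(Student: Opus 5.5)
We plan to solve \eqref{prandtl-problem-near-1} in von Mises variables. Write $U=u_e(1)+u_p^{(0)}$, $G=g_e(1)+g_p^{(0)}$, so that on the boundary $U|_{Y=0}=\alpha+\eta f$ and $G|_{Y=0}=\beta+\eta\widehat f$, and $U\ge \alpha/2>0$ for small $\eta$. Set $\psi=\int_0^Y U\,\ud z$, $w=U^2$, and keep $G$ as the second unknown.

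\textbf{Reduction to a tangential system.} The induction equation, integrated once in $Y$ using $\partial_\theta g_p^{(0)}=-\partial_Y h_p^{(1)}$ and the decay at $-\infty$, gives the algebraic relation \eqref{original-prandtl} for $h_e^{(1)}(\theta,1)+h_p^{(1)}$. Here $v_e^{(1)}(\theta,1)=-v_p^{(1)}(\theta,0)$, and $h_e^{(1)}(\theta,1)$ is chosen so that the constant of integration vanishes. This eliminates both normal components. In $(\theta,\psi)$, exactly as in the proof of Lemma~\ref{lem:mhd-wood-identities}, the system becomes
\[
\partial_\theta\Big(\tfrac{G}{U}\Big)=\partial_{\psi\psi}G,\qquad
\tfrac12\partial_{\psi\psi}(\kappa w-G^2)=\partial_\theta\Big(U-\tfrac{G^2}{U}\Big).
\]
We linearize about the constant state $(\alpha^2,\beta)$, writing $w=\alpha^2+\tilde w$ and $G=\beta+\tilde g$. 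The principal part is then a $2\times2$ system of the form $\partial_\theta$-terms $=\partial_{\psi\psi}$-terms, and its $\partial_\theta$-part has the matrix $\mathcal M_{\alpha,\beta}$ from the introduction (up to scaling). Since $|\alpha|\ne|\beta|$, this matrix is invertible. We split every unknown into its $\theta$-zero mode and its oscillatory part. For the oscillatory part we test against multipliers built from $\mathcal M_{\alpha,\beta}^{-1}$ applied to the equations; these are chosen so that the cross-diffusion terms cancel after integration by parts. This yields coercive bounds of the form $\|\partial_\psi(\tilde w,\tilde g)_{\neq0}\|^2+\|\partial_\theta(\cdot)\|_{H^{-1}_\psi}^2$, and a Poincaré inequality in $\theta$ then gives control in $L^2$. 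The zero modes satisfy $\partial_{\psi\psi}(\text{mean})=$ (nonlinear mean terms). We solve these by integrating twice from $-\infty$. This leaves the far-field constants free, and the decay of the right-hand side is converted into polynomial weights through Hardy-type inequalities. Higher regularity comes from commuting with $\partial_\theta^j$ and recovering $\partial_\psi$ derivatives from the equations; the weights $\langle\psi\rangle^{l}$ are inserted by the same multipliers, with commutator terms absorbed.

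\textbf{Fixed point.} The nonlinear terms are quadratic in $(\tilde w,\tilde g)$ and $\eta$-small, so a contraction in the weighted space of size $C\eta$ gives existence and uniqueness there. We then transform back to $(\theta,Y)$ and obtain \eqref{prandtl-estimate}. The normal components are $v_p^{(1)}=-\int\partial_\theta u_p^{(0)}$ and $h_p^{(1)}=-\int\partial_\theta g_p^{(0)}$, integrated from $-\infty$. Their $\theta$-means vanish because they are integrals of $\theta$-derivatives. The far-field limits $(a,b)$ are read off at $\psi=-\infty$. Since the constructed solution satisfies the hypotheses of Lemma~\ref{lem:mhd-wood-identities}, that lemma yields \eqref{mhd-wood-formula}, and the sign choice is the one continuous at $\eta=0$.

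\textbf{Main obstacle.} The boundary data are given in terms of $u_e(1)=a$, which is itself an output of the construction. We handle this by formulating the fixed point directly for $(U,G)$ with the wall data $(\alpha+\eta f,\beta+\eta\widehat f)$, and only afterwards setting $a,b$ to be the limits. The hardest analytic step is the coercive multiplier estimate for the periodic steady layer, which has no evolution direction. The first thing to try is to take the multipliers to be $\mathcal M^{-T}$-combinations of $\partial_\theta^{-1}$-primitives of the unknowns. One must then check that the mixed terms $\partial_\psi\tilde w\,\partial_\psi\tilde g$ cancel up to an $O(\eta)$ remainder that can be absorbed.
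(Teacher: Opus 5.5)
Your outer architecture matches the paper's. You use von Mises variables with $w=u_p^2$ and $g_p$, and eliminate the normal components by integrating the induction equation once, which forces $h_e^{(1)}(\theta,1)=\frac ba v_e^{(1)}(\theta,1)$. You pose the fixed point with the wall data $(\alpha+\eta f,\beta+\eta\widehat f)$, read off $a,b$ afterwards, and identify them with Lemma~\ref{lem:mhd-wood-identities}. The gap is the step you flag yourself as the main obstacle: the coercive linear estimate for the nonzero modes is not supplied, and the multiplier you suggest does not work.

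Write the constant-coefficient part as $\mathcal M_{\alpha,\beta}U_\xi-DU_{\psi\psi}=F$, with $U=(w,g)$ and $D=\operatorname{diag}(\kappa\alpha,\alpha)$. Your conservation form is this system premultiplied by the invertible matrix with rows $(\frac1{2\alpha},-\frac\beta\alpha)$ and $(0,\frac1\alpha)$. So its $\xi$-matrix is a row combination of $\mathcal M_{\alpha,\beta}$, not a rescaling, and its diffusion matrix is not diagonal. Test with $RV$, where $V=\partial_\xi^{-1}U$ on the nonzero modes. The diffusion term becomes $\int V_{\xi\psi}^{T}DR\,V_\psi$, whose symmetric part is an exact $\xi$-derivative. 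Hence such a multiplier produces no $\|\partial_\psi U\|^2$ at all, and only the antisymmetric part of $DR$ survives. For $R=\mathcal M_{\alpha,\beta}^{-T}$, in either normalization the relevant matrix is $D\mathcal M_{\alpha,\beta}^{-T}$. Its antisymmetric part has off-diagonal entry $\pm\beta(\kappa-4\alpha^2)/\big(4\alpha(1-\beta^2/\alpha^2)\big)$, an $O(1)$ constant, so the mixed terms do not cancel up to $O(\eta)$. If instead you invert $\mathcal M_{\alpha,\beta}$ and test against $QU$ with $Q$ symmetric positive, coercivity fails when $|\beta|>|\alpha|$, since then $\det(Q\mathcal M_{\alpha,\beta}^{-1}D)<0$. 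The statement covers that regime.

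The missing idea is the pair of symmetrizers in the paper. First, keep the non-conservative von Mises equations, multiplied by $2\alpha/u_p$ and $\alpha/u_p^2$. Then the diffusion is exactly $\operatorname{diag}(\kappa\alpha,\alpha)$ and all coefficient variations sit in $F_p^1,F_p^2$.

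\emph{Energy multiplier.} Test with $(w,4\alpha^2g)$. The transport terms become $\partial_\xi\big(\frac12w^2-2\beta wg+2\alpha^2g^2\big)$, an exact derivative of a possibly indefinite form. This gives $\int\kappa\alpha w_\psi^2+4\alpha^3g_\psi^2$ for every $\beta$.

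\emph{Positivity multiplier.} Test with $(w_\xi-2\beta g_\xi,\,4\kappa\alpha^2(g_\xi-\frac\beta{2\alpha^2}w_\xi))$, that is $P\mathcal M_{\alpha,\beta}U_\xi$ with $P=\operatorname{diag}(1,4\kappa\alpha^2)$. This $P$ makes $DP\mathcal M_{\alpha,\beta}$ symmetric, so the diffusion cross terms become $-2\kappa\alpha\beta\,\partial_\xi(w_\psi g_\psi)$. The result is $\|w_\xi-2\beta g_\xi\|^2+4\kappa\alpha^2\|g_\xi-\frac\beta{2\alpha^2}w_\xi\|^2$, which controls $\|U_\xi\|^2$ precisely because $\det\mathcal M_{\alpha,\beta}\neq0$. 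Your primitive idea does work, but with $R=P\mathcal M_{\alpha,\beta}$, not $\mathcal M_{\alpha,\beta}^{-T}$.

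The positivity estimate gains a full $\xi$-derivative in $L^2$, $\|\partial_\xi^iU\|\lesssim\|\partial_\xi^{i-1}F\|$. The contraction in $X_{m,l}$ needs this, because the sources contain $\partial_\xi$ of the input. The bound you announce, with $\partial_\theta U$ only in $H^{-1}_\psi$, already follows from the $\psi$-energy estimate and the equation. Using it would require a parabolically scaled setting that you have not set up.

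Finally, in your conservation form the quadratic $\frac12\partial_{\psi\psi}(\tilde g^2)$ lands on the right-hand side and costs two derivatives of the input. That is a derivative loss in the fixed point, unless you substitute $\tilde g_{\psi\psi}$ from the induction equation, which returns you to the paper's form. The conservation form does make the zero modes explicit, which is a real convenience.
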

\begin{proof}

We prove the proposition by a contraction argument. The proof is divided into eight steps.

\smallskip
\noindent\textbf{Step 1.} \emph{Von Mises reduction and the MHD--Wood formula.}

We first rewrite \eqref{prandtl-problem-near-1} in terms of the total boundary layer variables:
\begin{align}
\left \{
\begin {array}{ll}\label{prandtl-1-1}
u_p\partial_\theta u_p+v_p\partial_Yu_p-g_p\partial_\theta g_p-h_p\partial_Yg_p -\kappa\partial_{YY}u_p=0,\\[5pt]
\partial_\theta g_p+ \partial_Y(\frac{v_p g_p}{u_p})-\partial_Y(\frac{\partial_{Y}g_p}{u_p})=0,\\[5pt]
\partial_\theta u_p+\partial_Yv_p=0,\\[5pt]
\partial_\theta g_p+\partial_Yh_p=0,\\[5pt]
u_p(\theta,Y)=u_p(\theta+2\pi,Y),\quad v_p(\theta,Y)=v_p(\theta+2\pi,Y),\\[5pt]
g_p(\theta,Y)=g_p(\theta+2\pi,Y),\quad h_p(\theta,Y)=h_p(\theta+2\pi,Y),\\[5pt]
u_p\big|_{Y=0}=\alpha+\eta f(\theta),\\[5pt]
v_p\big|_{Y=0}=0,\\[5pt]
g_p\big|_{Y=0}=\beta+\eta \widehat f(\theta),\\[5pt]
\lim\limits_{Y\to -\infty}(\partial_Yu_p,\partial_Yg_p)=(0,0).
\end{array}
\right.
\end{align}
Here $u_p=u_e(1)+u_p^{(0)}$ and $g_p=g_e(1)+g_p^{(0)}$. We set
\[
	v_p(\theta,Y)=-\int_0^Y \partial_\theta u_p(\theta,Y')\,\ud Y',
\]
and define $h_p$ through the algebraic identity
\[
	-u_p h_p+v_p g_p-\partial_Y g_p=0.
\]
{Assume temporarily that \(u_p>0\), and set
\(\xi=\theta\) and
\(\psi=\int_0^Y u_p(\theta,z)\,\ud z\).  The calculation in
Lemma~\ref{lem:mhd-wood-identities} shows that
\eqref{prandtl-1-1} is equivalent to the system below.  The
fixed-point construction below gives \(u_p\geq\alpha/2\), so this change of
variables is valid on the whole half-strip.}
\begin{align}\label{auxiliary-prandtl}
\left \{
\begin {array}{ll}
\frac{1}{2}\partial_\xi(u_p^2)-g_p\partial_\xi g_p+u_p(\partial_\psi g_p)^2-\kappa\frac{u_p}{2}\partial_{\psi\psi}(u_p^2)=0,\\[5pt]
u_p \partial_\xi g_p-\partial_\xi u_p g_p-u_p^2\partial_{\psi\psi}g_p=0,\\[5pt]
u_p(\theta,0)=\alpha+\eta f(\theta),\\[5pt]
g_p(\theta,0)=\beta+\eta\widehat f(\theta),
\end{array}
\right.
\end{align}
{After the fixed point constructs the constant far-field limits,
Lemma~\ref{lem:mhd-wood-identities} gives \eqref{mhd-wood-formula}.}

We now return to the original fixed-point argument and write directly
\[
u_p^2=\alpha^2+w^b+w,\qquad g_p=\beta+g^b+g.
\]
The pair $(w^b,g^b)$ solves the coupled constant-coefficient problem
\begin{equation}\label{leading-boundary-lift}
\left\{
\begin{aligned}
&w^b_\xi-2\beta g^b_\xi-\kappa\alpha w^b_{\psi\psi}=0,\\
&g^b_\xi-\frac{\beta}{2\alpha^2}w^b_\xi-\alpha g^b_{\psi\psi}=0,\\
&w^b(\xi,0)=2\alpha\eta f(\xi)+\eta^2f^2(\xi),
 \qquad g^b(\xi,0)=\eta\widehat f(\xi).
\end{aligned}
\right.
\end{equation}
Its zero mode is the constant
$\left(\dfrac{\eta^2}{2\pi}\int_0^{2\pi}f^2\,\ud\theta,0\right)$,
and its nonzero modes decay as
$\psi\to-\infty$.  {A Fourier-series expansion gives a unique
smooth lift; after subtracting the zero mode, all its polynomially weighted
Sobolev norms are bounded by \(C(m,l)\eta\).}  In the
fixed-point ball, the weighted Sobolev inequality gives
\[
	\|w^b+\bar w\|_{L^\infty}\leq \frac{3\alpha^2}{4},
	\qquad
	u_p=\sqrt{\alpha^2+w^b+\bar w}\geq\frac{\alpha}{2},
\]
which verifies the positivity used in the von Mises transformation.  For an
input $(\bar w,\bar g)$, the symbols $u_p$ and $g_p$ in the right-hand sides
below mean
\[
u_p=(\alpha^2+w^b+\bar w)^{1/2},\qquad
g_p=\beta+g^b+\bar g.
\]
We define $\mathcal T(\bar w,\bar g)=(w,g)$ by the linear system
\begin{align}\label{linear-prandtl}
\left \{
\begin {array}{ll}
w_\xi-2\beta g_\xi-\kappa\alpha w_{\psi\psi}=F_p^1,\\[5pt]
g_\xi-\frac{\beta}{2\alpha^2}w_\xi-\alpha g_{\psi\psi}=F_p^2,\\[5pt]
w(\xi,0)=g(\xi,0)=0.
\end{array}
\right.
\end{align}
Here
\begin{align}\label{leading-fixed-point-sources}
F_p^1&=\left(1-\frac{\alpha}{u_p}\right)
\partial_\xi(w^b+\bar w)
+2\left(\frac{\alpha g_p}{u_p}-\beta\right)
\partial_\xi(g^b+\bar g)
-2\alpha\big|\partial_\psi(g^b+\bar g)\big|^2,\\
F_p^2&=\left(1-\frac{\alpha}{u_p}\right)
\partial_\xi(g^b+\bar g)
+\left(\frac{\alpha g_p}{2u_p^3}
-\frac{\beta}{2\alpha^2}\right)
\partial_\xi(w^b+\bar w).
\end{align}
Indeed, substituting
$u_p^2=\alpha^2+w^b+\bar w$ and
$g_p=\beta+g^b+\bar g$ into \eqref{auxiliary-prandtl}, and multiplying the first and second
equations by $2\alpha/u_p$ and $\alpha/u_p^2$, respectively, gives
\[
\frac{\alpha}{u_p}\partial_\xi(w^b+\bar w)
-\frac{2\alpha g_p}{u_p}\partial_\xi(g^b+\bar g)
+2\alpha\big|\partial_\psi(g^b+\bar g)\big|^2
-\kappa\alpha\partial_{\psi\psi}(w^b+\bar w)=0
\]
and
\[
\frac{\alpha}{u_p}\partial_\xi(g^b+\bar g)
-\frac{\alpha g_p}{2u_p^3}\partial_\xi(w^b+\bar w)
-\alpha\partial_{\psi\psi}(g^b+\bar g)=0.
\]
Moving only the lower-order coefficient perturbations to the right gives
\eqref{linear-prandtl} and \eqref{leading-fixed-point-sources}.  In
particular, both second normal derivatives retain constant coefficients on
the left-hand side.  Since $(w^b,g^b)$ solves
\eqref{leading-boundary-lift}, a fixed point of $\mathcal T$ is exactly a
solution of \eqref{auxiliary-prandtl}.  The right-hand sides contain at most
first derivatives of the input $(\bar w,\bar g)$.
\par\smallskip
\noindent\textbf{Step 2.} \emph{Linear energy estimate.}
In Steps 2--5, $\Omega_p$ denotes the transformed strip $\mathbb{T}\times(-\infty,0]$ in the variables $(\xi,\psi)$. Multiplying \eqref{linear-prandtl} by $(w,4\alpha^2g)$, integrating over $\Omega_p$, and adding the two equations, we obtain
\begin{align*}
	&\int_{\Omega_p} (w_\xi-2\beta g_\xi-\kappa\alpha w_{\psi\psi})w
	+(g_\xi-\frac{\beta}{2\alpha^2}w_\xi-\alpha g_{\psi\psi})4\alpha^2 g \,\ud\xi\,\ud\psi\\
	&\qquad=\int_{\Omega_p} F_p^1 w\,\ud\xi\,\ud\psi
	+4\alpha^2\int_{\Omega_p} F_p^2 g\,\ud\xi\,\ud\psi .
\end{align*}
A direct integration by parts gives
\begin{equation*}
	\begin{split}
		&\int_{\Omega_p} (w_\xi-2\beta g_\xi-\kappa\alpha w_{\psi\psi})w+(g_\xi-\frac{\beta}{2\alpha^2}w_\xi-\alpha g_{\psi\psi})4\alpha^2 g \,\ud\xi\,\ud\psi\\
		=&{\int_{\Omega_p} -\kappa\alpha w_{\psi\psi}w
		-4\alpha^3g_{\psi\psi}g-2\beta g_\xi w-2\beta w_\xi g
		\,\ud\xi\,\ud\psi}\\[3pt]
		=&\int_{\Omega_p} \kappa\alpha w_{\psi}^2 +4\alpha^3 g_{\psi}^2-2\beta (g w)_\xi \,\ud\xi\,\ud\psi\\[3pt]
		=&\int_{\Omega_p} \kappa\alpha w_{\psi}^2 +4\alpha^3 g_{\psi}^2\,\ud\xi\,\ud\psi,
	\end{split}
\end{equation*}
Hence
\begin{equation}
	\label{0est}
	\begin{split}
		\int_{\Omega_p}\kappa\alpha w_\psi^2+4\alpha^3g_\psi^2
		\,\ud\xi\,\ud\psi
		={}&\int_{\Omega_p}\psi F_p^1\frac{w}{\psi}\,\ud\xi\,\ud\psi
		+4\alpha^2\int_{\Omega_p}\psi F_p^2\frac{g}{\psi}
		\,\ud\xi\,\ud\psi\\
		\leq{}&\delta\int_{\Omega_p}(w_\psi^2+g_\psi^2)
		\,\ud\xi\,\ud\psi
		+C_\delta\int_{\Omega_p}\big[(\psi F_p^1)^2+(\psi F_p^2)^2\big]
		\,\ud\xi\,\ud\psi.
	\end{split}
\end{equation}
\par\smallskip
\noindent\textbf{Step 3.} \emph{Linear positivity estimate.}
Multiplying \eqref{linear-prandtl} by
\[
	\Big(w_{\xi}-2\beta g_\xi,\ 4\kappa\alpha^2\big(g_{\xi}-\frac{\beta}{2\alpha^2}w_\xi\big)\Big),
\]
integrating over $(\xi,\psi)\in [0,2\pi)\times(-\infty,0]$, and adding the two equations, we obtain

\begin{align*}
	  & \int_{\Omega_p} (w_\xi-\kappa\alpha w_{\psi\psi}-2\beta g_\xi)(w_{\xi}-2\beta g_\xi)+(g_\xi-\alpha g_{\psi\psi}-\frac{\beta}{2\alpha^2}w_\xi)4\kappa\alpha^2 (g_{\xi}-\frac{\beta}{2\alpha^2}w_\xi)\,\ud\xi\,\ud\psi \\[3pt]
	= & \int_{\Omega_p} F_p^1 (w_{\xi}-2\beta g_\xi)\,\ud\xi\,\ud\psi+\int_{\Omega_p} F_p^2 4\kappa\alpha^2 \Big(g_{\xi}-\frac{\beta}{2\alpha^2}w_\xi\Big)\,\ud\xi\,\ud\psi,
\end{align*}
and another integration by parts yields
\begin{align*}
	  & \int_{\Omega_p} (w_\xi-\kappa\alpha w_{\psi\psi}-2\beta g_\xi)(w_{\xi}-2\beta g_\xi)+(g_\xi-\alpha g_{\psi\psi}-\frac{\beta}{2\alpha^2}w_\xi)4\kappa\alpha^2 (g_{\xi}-\frac{\beta}{2\alpha^2}w_\xi)\,\ud\xi\,\ud\psi \\
	= & \int_{\Omega_p}  (w_{\xi}-2\beta g_\xi)^2+ 4\kappa\alpha^2 (g_{\xi}-\frac{\beta}{2\alpha^2}w_\xi)^2+2\kappa\alpha \beta w_{\psi\psi} g_\xi+2\kappa\alpha \beta g_{\psi\psi} w_\xi \,\ud\xi\,\ud\psi               \\
	= & \int_{\Omega_p}  (w_{\xi}-2\beta g_\xi)^2+ 4\kappa\alpha^2 (g_{\xi}-\frac{\beta}{2\alpha^2}w_\xi)^2-2\kappa\alpha \beta w_{\psi} g_{\xi\psi}-2\kappa\alpha \beta g_{\psi} w_{\xi\psi}\,\ud\xi\,\ud\psi            \\
	= & \int_{\Omega_p}  (w_{\xi}-2\beta g_\xi)^2+ 4\kappa\alpha^2 (g_{\xi}-\frac{\beta}{2\alpha^2}w_\xi)^2 \,\ud\xi\,\ud\psi
\end{align*}
Therefore
\begin{align}\label{p-est}
	\int_{\Omega_p}  (w_{\xi}^2+g_\xi^2)\,\ud\xi\,\ud\psi\leq & C\int_{\Omega_p}  \Big(w_{\xi}-2\beta g_\xi\Big)^2+ 4\kappa\alpha^2 \Big(g_{\xi}-\frac{\beta}{2\alpha^2}w_\xi\Big)^2\,\ud\xi\,\ud\psi    \\
	\leq                                   & \delta C \int_{\Omega_p} (w_\xi^2+g_\xi^2)\,\ud\xi\,\ud\psi +C_\delta\int_{\Omega_p} \big[(F_p^1)^2+(F_p^2)^2\big]\,\ud\xi\,\ud\psi.\nonumber
\end{align}
\par\smallskip
\noindent\textbf{Step 4.} \emph{Higher-order derivative estimate.}
In Steps 4--5, \(\|\cdot\|_2\) denotes the \(L^2(\Omega_p)\)-norm unless
another domain is displayed.  For any $i\geq 0$ and $j\geq 2$,
\eqref{linear-prandtl} gives
\begin{align}\label{prandtl3}
	\|\partial_\xi^i\partial_\psi^j w\|_2+\|\partial_\xi^i\partial_\psi^j g\|_2 \leq C(\|\partial_\xi^{i+1}\partial_\psi^{j-2} w\|_2+\|\partial_\xi^{i+1}\partial_\psi^{j-2} g\|_2+\|\partial_\xi^i\partial_\psi^{j-2} F_p^1\|_2+\|\partial_\xi^i\partial_\psi^{j-2} F_p^2\|_2)
\end{align}
For any $i\geq 1$, we apply $\partial_\xi^i$ to \eqref{linear-prandtl}, multiply by $(-\partial_\xi^i w,-4\alpha^2\partial_\xi^i g)$, integrate over $(\xi,\psi)\in [0,2\pi)\times(-\infty,0]$, and add the two equations. We obtain
\begin{align}\label{prandtl4}
	\|\partial_\xi^i\partial_\psi w\|_2+\|\partial_\xi^i\partial_\psi g\|_2 \leq C(\|\partial_\xi^{i}F_p^1\|_2+\|\partial_\xi^{i}F_p^2\|_2+\delta \|\partial_\xi^iw\|_2+\delta \|\partial_\xi^ig\|_2)
\end{align}
For any $i\geq 1$, multiplying by
\[
	\Big(\partial_\xi^{2i-1} w-2\beta \partial_\xi^{2i-1} g,\ 4\alpha^2 \kappa\big(\partial_\xi^{2i-1} g-\frac{\beta}{2\alpha^2}\partial_\xi^{2i-1}w\big)\Big),
\]
integrating over $(\xi,\psi)\in [0,2\pi)\times(-\infty,0]$, and adding the two equations, we obtain
\begin{align}\label{prandtl5}
	\|\partial_\xi^i  w\|_2+\|\partial_\xi^ig\|_2 \leq C(\|\partial_\xi^{i-1}F_p^1\|_2+\|\partial_\xi^{i-1}F_p^2\|_2)
\end{align}
Combining \eqref{0est}, \eqref{p-est}, \eqref{prandtl3}, \eqref{prandtl4}, and \eqref{prandtl5}, we obtain
\begin{align*}
	     & \sum_{1\leq i+j\leq m}\Big(
	     \|\partial_\xi^i\partial_\psi^j w\|_2
	     +\|\partial_\xi^i\partial_\psi^j g\|_2\Big)\\
	\leq & C\Big(\|\psi F_p^1\|_2+\|\psi F_p^2\|_2
	+\sum_{i+j\leq m-1}\Big(
	\|\partial_\xi^i\partial_\psi^j F_p^1\|_2
	+\|\partial_\xi^i\partial_\psi^j F_p^2\|_2\Big)\Big).
\end{align*}
\par\smallskip
\noindent\textbf{Step 5.} \emph{Weighted estimate.}
Decompose $w$ and $g$ as $w=w_0+w_{\neq}$ and $g=g_0+g_{\neq}$.
Multiplying by $(-w_{\neq}\psi^{2l},-4\alpha^2 g_{\neq}\psi^{2l})$, integrating over
$(\xi,\psi)\in [0,2\pi)\times(-\infty,0]$, and adding the two equations, we obtain
\begin{align*}
	     & \|\partial_\psi w_{\neq}\psi^l\|_2+\|\partial_\psi g_{\neq}\psi^l\|_2                                                                                     \\[3pt]
	\leq & C(\|w_{\neq}\psi^{l-1}\|_2+\|g_{\neq}\psi^{l-1}\|_2+\| F_p^1\psi^l\|_2+\| F_p^2\psi^l\|_2+\|w_{\neq}\psi^l\|_2+\|g_{\neq}\psi^l\|_2)                    \\[3pt]
	\leq & C(\|w_{\xi}\psi^{l-1}\|_2+\|g_{\xi}\psi^{l-1}\|_2+\| F_p^1\psi^l\|_2+\| F_p^2\psi^l\|_2+\|\partial_\xi w\psi^l\|_2+\|\partial_\xi g\psi^l\|_2)
\end{align*}
Multiplying by $((w_{\xi}-2\beta g_\xi)\psi^{2l},4\kappa\alpha^2 (g_{\xi}-\frac{\beta}{2\alpha^2}w_\xi)\psi^{2l})$, integrating over $(\xi,\psi)\in [0,2\pi)\times(-\infty,0]$, and adding the two equations, we obtain
\begin{align*}
	  & \int_{\Omega_p} (w_\xi-\kappa\alpha w_{\psi\psi}-2\beta g_\xi)(w_{\xi}-2\beta g_\xi)\psi^{2l}+(g_\xi-\alpha g_{\psi\psi}-\frac{\beta}{2\alpha^2}w_\xi)4\kappa \alpha^2 (g_{\xi}-\frac{\beta}{2\alpha^2}w_\xi)\psi^{2l}\,\ud\xi\,\ud\psi \\
	= & \int_{\Omega_p} F_p^1 (w_{\xi}-2\beta g_\xi)\psi^{2l}\,\ud\xi\,\ud\psi+\int_{\Omega_p} F_p^2 4\kappa\alpha^2 (g_{\xi}-\frac{\beta}{2\alpha^2}w_\xi)\psi^{2l}\,\ud\xi\,\ud\psi
\end{align*}
where
\begin{align*}
	     & \int_{\Omega_p} (w_\xi-\kappa\alpha w_{\psi\psi}-2\beta g_\xi)(w_{\xi}-2\beta g_\xi)\psi^{2l}+(g_\xi-\alpha g_{\psi\psi}-\frac{\beta}{2\alpha^2}w_\xi)4\kappa \alpha^2 (g_{\xi}-\frac{\beta}{2\alpha^2}w_\xi)\psi^{2l}\,\ud\xi\,\ud\psi \\
	=    & \int_{\Omega_p} (w_{\xi}-2\beta g_\xi)^2\psi^{2l}+4\kappa\alpha^2 (g_{\xi}-\frac{\beta}{2\alpha^2}w_\xi)^2\psi^{2l}-2 \kappa\alpha w_{\psi\psi} w_\xi\psi^{2l}+2\kappa\alpha \beta w_{\psi\psi} g_\xi\psi^{2l}                \\
	     & -4\kappa\alpha^3  g_{\psi\psi} g_\xi\psi^{2l}+2\kappa\alpha \beta g_{\psi\psi} w_\xi\psi^{2l}\,\ud\xi\,\ud\psi                                                                                                                    \\
	=    & \int_{\Omega_p} (w_{\xi}-2\beta g_\xi)^2\psi^{2l}+4\kappa\alpha^2 (g_{\xi}-\frac{\beta}{2\alpha^2}w_\xi)^2\psi^{2l}+{4l\kappa\alpha w_{\neq,\psi} w_\xi\psi^{2l-1}}                                                       \\
	     & -4l\kappa\alpha \beta w_{\neq,\psi} g_\xi\psi^{2l-1}+8l\kappa\alpha^3  g_{\neq,\psi} g_\xi\psi^{2l-1}-4l\kappa\alpha \beta g_{\neq,\psi} w_\xi\psi^{2l-1}-2\kappa\alpha\beta ( g_\psi w_\psi)_\xi \psi^{2l}\,\ud\xi\,\ud\psi      \\
	\geq & C\int_{\Omega_p} w_{\xi}^2\psi^{2l} +   g_{\xi}^2\psi^{2l}\,\ud\xi\,\ud\psi-C'\int_{\Omega_p}  (w_{\neq,\psi})^2\psi^{2l-2}+ (g_{\neq,\psi})^2\psi^{2l-2}\,\ud\xi\,\ud\psi
\end{align*}
Thus,
\begin{align*}
	     & \int_{\Omega_p} w_{\xi}^2\psi^{2l} +   g_{\xi}^2\psi^{2l}\,\ud\xi\,\ud\psi                                                                                          \\
	\leq & \int_{\Omega_p} C (w_{\neq,\psi})^2\psi^{2l-2}+C (g_{\neq,\psi})^2\psi^{2l-2}\,\ud\xi\,\ud\psi+\int_{\Omega_p} (F_p^1 \psi^l)^2+(F_p^2 \psi^l)^2\,\ud\xi\,\ud\psi
\end{align*}
Applying $\partial_\xi^i$ to \eqref{linear-prandtl}, multiplying by $(-\partial_\xi^i w\psi^{2l},-4\alpha^2\partial_\xi^i g\psi^{2l})$, integrating over $(\xi,\psi)\in [0,2\pi)\times(-\infty,0]$, and adding the two equations, we obtain
\begin{align*}
	     & \|\partial_\xi^i\partial_\psi w\psi^{l}\|_2+\|\partial_\xi^i\partial_\psi g\psi^{l}\|_2                                                                                                      \\
	\leq & \|\partial_\xi^i w\psi^{l-1}\|_2+\|\partial_\xi^i g\psi^{l-1}\|_2+\delta \|\partial_\xi^i w\psi^{l}\|_2+\delta\|\partial_\xi^i g\psi^{l}\|_2+\|\partial_\xi^i F_p^1\psi^{l}\|_2+\|\partial_\xi^i F_p^2\psi^{l}\|_2
\end{align*}
Multiplying by $[(\partial_\xi^{2i-1} w-2\beta \partial_\xi^{2i-1} g)\psi^{2l},4\kappa\alpha^2 (\partial_\xi^{2i-1} g-\frac{\beta}{2\alpha^2}\partial_\xi^{2i-1}w)\psi^{2l}]$, integrating over $(\xi,\psi)\in [0,2\pi)\times(-\infty,0]$, and adding the two equations, we obtain
\begin{align*}
	     & \|\partial_\xi^i w\psi^{l}\|_2+\|\partial_\xi^i g\psi^{l}\|_2                                                                                                                                                             \\
	\leq & \|\partial_\xi^i w\psi^{l-1}\|_2+\|\partial_\xi^i g\psi^{l-1}\|_2+\delta \|\partial_\xi^{i-1}\partial_\psi  w\psi^{l}\|_2+\delta\|\partial_\xi^{i-1}\partial_\psi g\psi^{l}\|_2+\|\partial_\xi^{i-1} F_p^1\psi^{l}\|_2+\|\partial_\xi^{i-1} F_p^2\psi^{l}\|_2
\end{align*}
Combining the two weighted multiplier estimates above with the lower-order weighted bounds, we have
\begin{align*}
	\sum_{ i\geq 1,\,1\leq i+j\leq m}\Big(\|\partial_\xi^i\partial_\psi^j w\psi^{l}\|_2+\|\partial_\xi^i\partial_\psi^j g\psi^{l}\|_2\Big) \leq C\sum_{i+j\leq m-1}\Big(\|\partial_\xi^i\partial_\psi^j F_p^1\psi^{l}\|_2+\|\partial_\xi^i\partial_\psi^j F_p^2\psi^{l}\|_2\Big).
\end{align*}
{Using \eqref{linear-prandtl} once more, we also have}
\begin{align*}
	\sum_{ 2\leq k\leq m}\Big(\|\partial_\psi^k w\psi^{l}\|_2+\|\partial_\psi^k g\psi^{l}\|_2\Big) \leq C\sum_{i+j\leq m-1}\Big(\|\partial_\xi^i\partial_\psi^j F_p^1\psi^{l}\|_2+\|\partial_\xi^i\partial_\psi^j F_p^2\psi^{l}\|_2\Big).
\end{align*}

{Existence for this constant-coefficient system follows by Fourier
series: the zero mode is obtained by integrating twice in \(\psi\), and the
nonzero modes are invertible because \(\alpha^2-\beta^2\ne0\).  }  To record the zero-mode weights, write
\[
F=(F^1,F^2),\qquad F_0=\frac1{2\pi}\int_0^{2\pi}F\,\ud\xi,
\qquad \widetilde F=F-F_0,
\]
and set
\begin{align*}
N_{m,l}(F):={}&
\sum_{i+j\leq m-1}
\|\langle\psi\rangle^l
\partial_\xi^i\partial_\psi^j\widetilde F\|_2\\
&+\sum_{j\leq m-2}
\|\langle\psi\rangle^{l+2}\partial_\psi^jF_0\|_2.
\end{align*}
{The two extra powers in the zero mode account for the two integrations
needed to recover that mode.}  For fixed integers $m\geq4$ and $l\geq2$, define
\[
X_{m,l}=\{(w,g):w,g\text{ are periodic},\quad
w(\xi,0)=g(\xi,0)=0,\quad \|(w,g)\|_{X_{m,l}}<\infty\},
\]
where
\begin{align*}
\|(w,g)\|_{X_{m,l}}^2
={}&|w_\infty|^2+|g_\infty|^2\\
&+\sum_{j+k\leq m}
\left\|\langle\psi\rangle^l
\partial_\xi^j\partial_\psi^k
(w-w_\infty,g-g_\infty)\right\|_2^2.
\end{align*}
For functions with nonzero boundary data, the same notation means the
displayed norm without the homogeneous trace requirement.  Weighted Hardy
inequalities for the zero mode and the mode estimates for the nonzero part
give
\begin{equation}\label{leading-linear-solver}
\|(w,g)\|_{X_{m,l}}
\leq C(m,l)N_{m,l}\big((F_p^1,F_p^2)\big).
\end{equation}
{This proves existence and \eqref{leading-linear-solver}, records the two
far-field constants, and gives uniqueness.}

\begin{lemma}[Weighted products and zero modes]\label{lem:weighted-moser}
Let \(m\geq4\) and \(l\geq2\).  Suppose
\((q_1,q_2)\in X_{m,l}\), \(q_1\) stays in a fixed compact subset of
\((-\alpha^2,\infty)\), and \(\mathcal{Q}(q_1,q_2)\) is any finite sum of the
coefficient products occurring in \(F_p^1,F_p^2\).  Each summand is assumed
to vanish to second order at the constant far-field state, except that a
far-field constant may multiply a tangential derivative.  Write
\(q=(q_1,q_2)\) and
\(\widetilde q=(\widetilde q_1,\widetilde q_2)\).  Then
\begin{align}
N_{m,l}(\mathcal{Q}(q_1,q_2))
&\leq C_{m,l}\|(q_1,q_2)\|_{X_{m,l}}^2,\label{eq:weighted-moser}\\
N_{m,l}(\mathcal{Q}(q_1,q_2)-\mathcal{Q}(\widetilde q_1,\widetilde q_2))
&\leq C_{m,l}\big(\|q\|_{X_{m,l}}+\|\widetilde q\|_{X_{m,l}}\big)
\|q-\widetilde q\|_{X_{m,l}}.\label{eq:weighted-moser-difference}
\end{align}
The same bounds hold with one factor replaced by the boundary lift, with
its \(X_{m,l}\)-norm on the right-hand side.
\end{lemma}
\begin{proof}
For a nonzero Fourier mode, apply Leibniz' rule, put the factor carrying the
largest number of derivatives in weighted \(L^2\), and put all other factors
in \(L^\infty\) using the two-dimensional Sobolev inequality.  The rational
coefficients in \(u_p=(\alpha^2+q_1)^{1/2}\) are harmless because of the
stated lower bound.

For the zero mode, decompose each factor into its far-field constant and a
decaying part.  The average of a constant times
\(\partial_\xi q_i\) is zero.  Every remaining averaged product therefore
contains two decaying factors.  If \(p,q\) are such factors, then \(l\geq2\)
gives
\[
\langle\psi\rangle^{l+2}|pq|
\leq
\big(\langle\psi\rangle^l|p|\big)
\big(\langle\psi\rangle^l|q|\big).
\]
Leibniz' rule, weighted \(L^2\)-\(L^\infty\) estimates, and the same argument
after taking a difference prove
\eqref{eq:weighted-moser}--\eqref{eq:weighted-moser-difference}.  This is
precisely the two-weight gain required by the zero-mode part of \(N_{m,l}\).
\end{proof}

\par\smallskip
\noindent\textbf{Step 6.} \emph{$\mathcal{T}$ maps $B_{m,l}(r_0)$ into $B_{m,l}(r_0)$.}
We define the ball $B_{m,l}(r_0)$ in $X_{m,l}$ by
\begin{align*}
	B_{m,l}(r_0)=\big\{(w,g)\in X_{m,l}:
	\|(w,g)\|_{X_{m,l}}\leq r_0\big\},
\end{align*}
where $r_0$ is a small number to be chosen later.

We use the Sobolev inequality
\begin{equation*}
	\|u\|_\infty\leq C(\|u\|_2+\|u_\xi\|_2+\|u_\psi\|_2+\|u_{\xi\psi}\|_2),
\end{equation*}
which implies, for $i+j\leq m-2$,
\begin{equation*}
	\|\langle\psi\rangle^{l}\partial_\xi^{i} \partial_\psi^{j} w\|_\infty\leq C\Big(\|\langle\psi\rangle^{l}\partial_\xi^{i+1} \partial_\psi^{j+1} w\|_2+\|\langle\psi\rangle^{l-1}\partial_\xi^{i+1} \partial_\psi^{j} w\|_2+\|\langle\psi\rangle^{l}\partial_\xi^{i} \partial_\psi^{j} w\|_2+\|\langle\psi\rangle^{l-1}\partial_\xi^{i} \partial_\psi^{j+1} w\|_2\Big).
\end{equation*}
Writing \(F_p=(F_p^1,F_p^2)\), we next prove
\begin{align}\label{leading-moser-estimate}
&N_{m,l}\big((F_p^1,F_p^2)\big)
\leq C(m,l)(\eta+r_0)^2,\\
&N_{m,l}\big(F_p(\bar w_1,\bar g_1)
-F_p(\bar w_2,\bar g_2)\big)
\leq C(m,l)(\eta+r_0)
\|(\bar w_1-\bar w_2,\bar g_1-\bar g_2)\|_{X_{m,l}}.
\end{align}
Indeed, the identities
\[
1-\frac{\alpha}{u_p}
=\frac{w^b+\bar w}{u_p(u_p+\alpha)},
\qquad
\frac{\alpha g_p}{u_p}-\beta
=\frac{\alpha(g^b+\bar g)}{u_p}
+\beta\left(\frac{\alpha}{u_p}-1\right),
\]
and
\[
\frac{\alpha g_p}{2u_p^3}-\frac{\beta}{2\alpha^2}
=\frac{\alpha^3(g^b+\bar g)+\beta(\alpha^3-u_p^3)}
{2\alpha^2u_p^3}.
\]
These coefficient perturbations are smooth for \(u_p\geq\alpha/2\), vanish
at the constant state, and multiply a second decaying factor or a tangential
derivative.  Lemma~\ref{lem:weighted-moser}, together with
{the lift bounds stated in Step~1}, therefore proves both estimates, including the
two additional weights in their zero modes.
Combining \eqref{leading-linear-solver} and
\eqref{leading-moser-estimate}, let \(C_{\mathrm{lin}}\) denote the resulting
estimate constant.  Choose $r_0=2C_{\mathrm{lin}}\eta^2$ and then decrease
$\eta_0$ so that $C(\eta+r_0)\leq1/2$.  It follows that $\mathcal T$ maps
$B_{m,l}(r_0)$ into itself.

\par\smallskip
\noindent\textbf{Step 7.} \emph{$\mathcal{T}$ is a contraction on $B_{m,l}(r_0)$.}
The second estimate in \eqref{leading-moser-estimate} shows that
$\mathcal T$ has Lipschitz constant at most $1/2$ on this ball.  Hence it
has a unique fixed point.  The weighted norm shows that the total profiles
$u_p$ and $g_p$ have constant limits, denoted by $a$ and $b$, respectively.
The MHD--Wood identities derived at the beginning of the proof identify these
limits with the explicit values in \eqref{mhd-wood-formula}.
The inverse von Mises map is well defined because $u_p\geq\alpha/2$ and
preserves all polynomially weighted estimates.  Applying the same estimates
successively to the differentiated equations gives the stated bounds for
every $m,l$.
We then define
\begin{align*}
u_p^{(0)}=u_p-a,\quad
v_p^{(1)}=v_p-v_\infty,\quad
g_p^{(0)}=g_p-b,\quad
h_p^{(1)}=h_p-h_\infty,
\end{align*}
where $v_\infty=\lim_{Y\to-\infty}v_p$ and
$h_\infty=\lim_{Y\to-\infty}h_p$.
\par\smallskip
\noindent\textbf{Step 8.} \emph{The choice of $h_p$.}
By the definition
\[
	h_p=\frac{v_pg_p-\partial_Yg_p}{u_p},
\]
the second equation in \eqref{prandtl-1-1} gives directly
\[
	\partial_\theta g_p+\partial_Yh_p
	=\partial_\theta g_p+\partial_Y\Big(\frac{v_pg_p}{u_p}\Big)-\partial_Y\Big(\frac{\partial_Yg_p}{u_p}\Big)=0.
\]
Moreover, taking $Y\to-\infty$ in the identity $-u_ph_p+v_pg_p-\partial_Yg_p=0$ gives
\[
	\lim\limits_{Y\to-\infty}h_p=\frac{\lim\limits_{Y\to-\infty}g_p}{\lim\limits_{Y\to-\infty}u_p}\lim\limits_{Y\to-\infty}v_p.
\]
Set
\[
	v_e^{(1)}(\theta,1)=\lim\limits_{Y\to-\infty}v_p,\qquad
	h_e^{(1)}(\theta,1)= \frac{\lim\limits_{Y\to -\infty} g_p}{\lim\limits_{Y\to -\infty}u_p} v_e^{(1)}(\theta,1).
\]
Then $(u_p,v_p,g_p,h_p)$ solves the MHD--Prandtl equations, and \eqref{original-prandtl} holds.

The weighted estimate for $\partial_\theta u_p^{(0)}$ implies that
$v_\infty$ exists.  Moreover,
\[
	v^{(1)}_p(\theta,Y)=\int_{-\infty}^Y\partial_Yv^{(1)}_{p}(\theta,Y')\,\ud Y'=-\int_{-\infty}^Y\partial_\theta u^{(0)}_{p}(\theta,Y')\,\ud Y',
\]
so the weighted Hardy inequality gives the asserted bounds for
$v_p^{(1)}$.  The algebraic formula for $h_p$ gives the same bounds for
$h_p^{(1)}$.  Thus \eqref{prandtl-estimate} follows.  Periodicity also gives
\begin{align*}
	\int_{0}^{2\pi}v^{(1)}_p(\theta, Y)\,\ud\theta=0,\qquad \forall\ Y\leq 0.
\end{align*}
Similarly, $\partial_\theta g_p^{(0)}+\partial_Yh_p^{(1)}=0$ and $h_p^{(1)}\to0$ as $Y\to-\infty$ imply
\[
	\int_{0}^{2\pi}h^{(1)}_p(\theta,Y)\,\ud\theta=0,\qquad \forall\ Y\leq0.
\]
Finally, solving \eqref{equation-of-first-pressure}, we obtain a pressure profile $p_p^{(1)}(\theta,Y)$ which decays rapidly as $Y\rightarrow -\infty$.
\end{proof}

\subsubsection{First-order linearized Euler equations}
Substituting the Euler expansion \eqref{euler-expansions} into \eqref{mhd-polar} and collecting the order-$\varepsilon$ terms, we find that $(u_e^{(1)},v_e^{(1)},g_e^{(1)},h_e^{(1)},p_e^{(1)})$ satisfies the following linearized Euler equations in $\Omega$:
\begin{equation}\label{euler-1}
\left \{
\begin {array}{ll}
ar \partial_\theta u_e^{(1)}+2arv_e^{(1)}-br \partial_\theta g_e^{(1)}-2brh_e^{(1)} +\partial_\theta p_e^{(1)}=0,\\[5pt]
ar \partial_\theta v_e^{(1)}-2aru_e^{(1)}-br \partial_\theta h_e^{(1)}+2brg_e^{(1)}+r\partial_rp_e^{(1)}=0,\\[5pt]
-u_e^{(0)}  h_e^{(1)}+g_e^{(0)}  v_e^{(1)}=0,\\[5pt]
\partial_\theta u_e^{(1)}+r\partial_rv_e^{(1)}+ v_e^{(1)}=0,\\[5pt]
\partial_\theta g_e^{(1)}+r\partial_rh_e^{(1)}+ h_e^{(1)}=0,\\[5pt]
v_e^{(1)}|_{r=1}=-v_p^{(1)}|_{Y=0},\quad v_e^{(1)}(\theta,r)=v_e^{(1)}(\theta+2\pi,r) ,
\end{array}
\right.
\end{equation}

\begin{proposition}
	The linearized MHD--Euler equations \eqref{euler-1} have a solution
	\[
		(u_e^{(1)}, v_e^{(1)},g_e^{(1)}, h_e^{(1)}, p_e^{(1)})
	\]
	satisfying
	\begin{align}
		 & |\partial_\theta u_e^{(1)}+v_e^{(1)}|(\theta,r)+|\partial_\theta g_e^{(1)}+h_e^{(1)}|(\theta,r) \leq C\eta r, \ \forall (\theta, r)\in \Omega,\nonumber                                                                \\
		 & {\color{black}|\partial_\theta v_e^{(1)}-u_e^{(1)}|(\theta,r)+|\partial_\theta h_e^{(1)}-g_e^{(1)}|(\theta,r) \leq C\eta r,  \ \forall (\theta, r)\in \Omega,} \label{estimate-of-first-combined-linearized-euler-equation1} \\[5pt]
		 & \|\partial^k_\theta\partial^j_r(u_e^{(1)},v_e^{(1)},g_e^{(1)},h_e^{(1)})\|_2\leq C(k,j)\eta, \quad \forall j,k\geq 0,\nonumber                                                                                                  \\[5pt]
		 & r^2\Delta u_e^{(1)}-u_e^{(1)}+2\partial_\theta v_e^{(1)}=0,\quad \int_{0}^{2\pi}v_e^{(1)}(\theta,r)\,\ud\theta=0,\nonumber                                                                                            \\[5pt]
		 & r^2\Delta g_e^{(1)}-g_e^{(1)}+2\partial_\theta h_e^{(1)}=0,\quad \int_{0}^{2\pi}h_e^{(1)}(\theta,r)\,\ud\theta=0.\nonumber
	\end{align}
\end{proposition}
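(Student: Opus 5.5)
The plan is to decouple the magnetic part algebraically, which reduces \eqref{euler-1} to the hydrodynamic linearization about the rigid rotation \(ar\,\mathbf e_\theta\). That problem is then solved by a harmonic stream function.

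\emph{Step 1: eliminate the magnetic field.} Since \(u_e^{(0)}=ar\), \(g_e^{(0)}=br\) and \(a\neq0\) (indeed \(a\) is close to \(\alpha>0\) by \eqref{mhd-wood-formula}), the third equation of \eqref{euler-1} forces
\[
h_e^{(1)}=\frac ba\,v_e^{(1)} .
\]
Substituting this into the magnetic divergence equation and using the velocity divergence equation gives \(\partial_\theta g_e^{(1)}=\frac ba\,\partial_\theta u_e^{(1)}\). We make the choice \(g_e^{(1)}=\frac ba\,u_e^{(1)}\), with no extra zero mode. Inserting these relations into the two momentum equations turns them into
\[
\Big(a-\frac{b^2}{a}\Big)\big(r\partial_\theta u_e^{(1)}+2rv_e^{(1)}\big)+\partial_\theta p_e^{(1)}=0,
\qquad
\Big(a-\frac{b^2}{a}\Big)\big(r\partial_\theta v_e^{(1)}-2ru_e^{(1)}\big)+r\partial_r p_e^{(1)}=0 .
\]
So \(p_e^{(1)}=(a-b^2/a)\,q\), where \(q\) is the pressure of the linearized Euler problem around \(r\,\mathbf e_\theta\). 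Every estimate for \((g_e^{(1)},h_e^{(1)})\) then follows from the corresponding one for \((u_e^{(1)},v_e^{(1)})\), multiplied by \(|b/a|\).

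\emph{Step 2: the harmonic stream function.} Write \(u_e^{(1)}=-\phi_r\) and \(v_e^{(1)}=\phi_\theta/r\). Taking the curl of the reduced momentum system, the perturbation vorticity is transported by the rigid rotation, \(\partial_\theta\Delta\phi=0\). We select \(\Delta\phi=0\). The boundary datum \(v_e^{(1)}(\theta,1)=-v_p^{(1)}(\theta,0)\) has zero mean by the preceding proposition, so it has a zero-mean primitive. We therefore take
\[
\phi=\sum_{k\neq0}c_k r^{|k|}e^{ik\theta},
\qquad ik\,c_k=-\widehat{v_p^{(1)}(\cdot,0)}(k).
\]
By \eqref{prandtl-estimate} and the trace and Sobolev inequalities, the coefficients satisfy \(|k|^N|c_k|\le C(N)\eta\) for every \(N\). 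Hence \(\phi\) is smooth on the closed disk, including the origin, and all the \(\|\partial_\theta^k\partial_r^j\cdot\|_2\) bounds hold with constant \(C(k,j)\eta\). Moreover \(\int_0^{2\pi}v_e^{(1)}\,\ud\theta=0\) for every \(r\), and the same holds for \(h_e^{(1)}\).

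\emph{Step 3: the pointwise and identity checks, mode by mode.} For the mode \(r^{k}e^{ik\theta}\) with \(k\geq1\), a direct computation gives
\[
\partial_\theta u+v=ik(1-k)c_kr^{k-1}e^{ik\theta},
\qquad
\partial_\theta v-u=k(1-k)c_kr^{k-1}e^{ik\theta}.
\]
Both vanish for \(|k|=1\) and carry a factor \(r^{|k|-1}\le r\) for \(|k|\ge2\). Summing with the rapid decay of \(c_k\) yields the bound \(C\eta r\) in \eqref{estimate-of-first-combined-linearized-euler-equation1}. The identity \(r^2\Delta u_e^{(1)}-u_e^{(1)}+2\partial_\theta v_e^{(1)}=0\) is \(r^2\) times the angular component of the vector Laplacian of \(\nabla^\perp\phi\). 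This vanishes because \(\phi\) is harmonic, and it can also be checked on each mode directly. The magnetic analogues follow from Step 1.

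\emph{Step 4: the pressure.} Finally, define \(q\) by integrating the reduced system. The compatibility condition for \(\partial_\theta q\) and \(\partial_r q\) is exactly the vorticity equation, which holds since \(\Delta\phi=0\). Explicitly, the zero mode of \(q\) is forced only through \(-2ru_e^{(1)}\) and its zero mode, which vanishes. The nonzero modes are obtained algebraically from the \(\theta\)-equation by dividing by \(ik\). Regularity of \(q\) then follows from that of \(\phi\).

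The only delicate point I expect is this pressure step, together with the behaviour at the polar center. Both are handled by the explicit polynomial form \(r^{|k|}e^{ik\theta}\). The non-Alfvénic factor \(a-b^2/a\) appears only as a harmless multiplier of \(q\) here.
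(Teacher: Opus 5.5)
Your proposal is correct and follows the paper's route: you set $h_e^{(1)}=\frac ba v_e^{(1)}$ and $g_e^{(1)}=\frac ba u_e^{(1)}$, take the harmonic Fourier solution with the prescribed zero-mean normal trace (your harmonic $\phi$ is the paper's harmonic $rv_e^{(1)}=\phi_\theta$), and recover the pressure from the momentum equations. The only difference is one of emphasis: the paper invokes $|a|\neq|b|$ to force $\Delta(rv_e^{(1)})=0$, whereas you select the harmonic stream function and verify it directly, which shows the non-Alfv\'enic factor only rescales the pressure; your mode-by-mode checks also supply details the paper leaves to ``the Fourier representation.''
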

\begin{proof}
Eliminating the pressure $p_e^{(1)}$ in \eqref{euler-1}, we obtain the following equation for $rv_e^{(1)}$ in $\Omega$:
\begin{equation}\label{equation-for-first-euler-normal}
\left \{
\begin {array}{ll}
-ar\Delta(rv_e^{(1)})+br\Delta(rh_e^{(1)})=0,\\[5pt]
-ah_e^{(1)}+bv_e^{(1)}=0  ,\\[5pt]
rv_e^{(1)}|_{r=1}=-v_p^{(1)}|_{Y=0}.
\end{array}
\right.
\end{equation}
From the first two equations,
\begin{align*}
	-\frac{a^2-b^2}{a}r\Delta(rv_e^{(1)})=0.
\end{align*}
The identities $a=\alpha+O(\eta^2)$ and $b=\beta$, together with
$|\alpha|\neq|\beta|$, give $|a|\neq|b|$ after decreasing $\eta_0$ if
necessary.  Hence $\Delta(rv_e^{(1)})=0$.  Write the boundary value as the
Fourier series
\begin{align*}
	v_p^{(1)}(\theta,0)=\sum_{n=1}^{+\infty}\big(a_{n1} \cos(n\theta)+b_{n1}\sin(n\theta)\big).
\end{align*}
Then
\begin{align*}
	u_e^{(1)}(\theta, r) & =\sum_{n=1}^{+\infty}\big(a_{n1} r^{n-1} \sin(n\theta)-b_{n1}r^{n-1}\cos(n\theta)\big),  \\
	v_e^{(1)}(\theta,r)  & =-\sum_{n=1}^{+\infty}\big(a_{n1} r^{n-1} \cos(n\theta)+b_{n1}r^{n-1}\sin(n\theta)\big).
\end{align*}
This pair solves \eqref{equation-for-first-euler-normal}. We define
\[
	g_e^{(1)}=\frac{b}{a}u_e^{(1)},\qquad h_e^{(1)}=\frac{b}{a}v_e^{(1)}.
\]
The divergence equations and the algebraic relation in \eqref{euler-1} are then satisfied. The first two equations determine $p_e^{(1)}$ up to an additive constant, and the stated estimates follow directly from the Fourier representation.
\end{proof}

\subsubsection{First-order linearized Prandtl equations}

Substituting the Prandtl expansions \eqref{first-order-expansion} into the first and third equations in \eqref{mhd-polar} and collecting the order-$\varepsilon$ terms, we obtain the following linearized steady Prandtl equations for
\[
	(u_p^{(1)},v_p^{(2)},g_p^{(1)},h_p^{(2)}).
\]
{\small
\begin{equation}
\left \{
\begin {array}{ll}
\big(u_e(1)+u_p^{(0)}\big)\partial_\theta u_p^{(1)}+\big(v_e^{(1)}(\theta,1)+ v_p^{(1)}\big)\partial_Yu_p^{(1)}+(v_e^{(2)}(\theta,1)+v_p^{(2)})\partial_{Y}u_p^{(0)}\\[5pt]
+(u_p^{(1)}+u_e^{(1)}(\theta,1))\partial_\theta u_p^{(0)}-\big(g_e(1)+g_p^{(0)}\big)\partial_\theta g_p^{(1)}-\big(h_e^{(1)}(\theta,1)+ h_p^{(1)}\big)\partial_Yg_p^{(1)}\\[5pt]
\quad\quad-(g_e^{(1)}(\theta,1)+g_p^{(1)})\partial_{\theta}g_p^{(0)}
-(h_e^{(2)}(\theta,1)+h_p^{(2)})\partial_{Y}g_p^{(0)}
-\kappa\partial_{YY}u_p^{(1)}=f_p^{(1)}(\theta,Y)\\[5pt]
-\big(u_e(1)+u_p^{(0)}\big)\big(h_e^{(2)}(\theta,1)+ \partial_rh_e^{(1)}(\theta,1)Y+ h_p^{(2)}\big)+\big(g_e(1)+g_p^{(0)}\big)\big(v_e^{(2)}(\theta,1)+\partial_rv_e^{(1)}(\theta,1)Y+ v_p^{(2)}\big)\\[5pt]
-(u_e^{(1)}(\theta,1)+u_p^{(1)})(h_e^{(1)}(\theta,1)+h_p^{(1)})+(g_e^{(1)}(\theta,1)+g_p^{(1)})(v_e^{(1)}(\theta,1)+v_p^{(1)})-\partial_{Y}g_p^{(1)}=\partial_Y(Yg_p^{(0)}),\\[5pt]
\partial_\theta u_p^{(1)}+\partial_Yv_p^{(2)}+\partial_Y(Yv_p^{(1)})=0,\quad \partial_\theta g_p^{(1)}+\partial_Yh_p^{(2)}+\partial_Y(Yh_p^{(1)})=0,\\[5pt]
u_p^{(1)}(\theta,Y)=u_p^{(1)}(\theta+2\pi,Y),\quad v_p^{(2)}(\theta,Y)=v_p^{(2)}(\theta+2\pi,Y),\\[5pt]
g_p^{(1)}(\theta,Y)=g_p^{(1)}(\theta+2\pi,Y),\quad h_p^{(2)}(\theta,Y)=h_p^{(2)}(\theta+2\pi,Y),\\[5pt]
u_p^{(1)}\big|_{Y=0}=-u_e^{(1)}\big|_{r=1},\quad g_p^{(1)}\big|_{Y=0}=-g_e^{(1)}\big|_{r=1},\\[5pt]
\lim\limits_{Y\rightarrow -\infty}(\partial_Yu_p^{(1)},v_p^{(2)},\partial_Yg_p^{(1)},h_p^{(2)})=(0,0,0,0),
\label{first-linearized-prandtl-problem-near-1}
\end{array}
\right.
\end{equation}}
where
\begin{align*}
	f_p^{(1)}(\theta,Y)= & -\partial_\theta p_p^{(1)}+\kappa Y\partial_{YY}u_p^{(0)}+\kappa \partial_Yu_p^{(0)}-u_p^{(0)}\big(\partial_\theta u_e^{(1)}(\theta,1)+v_e^{(1)}(\theta,1)+v_p^{(1)}\big)-u'_e(1)Y\partial_\theta u_p^{(0)} \\[5pt]
	                     & -(\partial_rv_e^{(1)}(\theta,1)+v_e^{(1)}(\theta,1))Y\partial_Y u_p^{(0)}-(u_e'(1)+Y\partial_Yu_p^{(0)}+u_e(1))v_p^{(1)}                                                                                    \\[5pt]
	                     & +g_p^{(0)}\big(\partial_\theta g_e^{(1)}(\theta,1)+h_e^{(1)}(\theta,1)+h_p^{(1)}\big)+g'_e(1)Y\partial_\theta g_p^{(0)}                                                                                     \\[5pt]
	                     & +(\partial_rh_e^{(1)}(\theta,1)+h_e^{(1)}(\theta,1))Y\partial_Y g_p^{(0)}+(g_e'(1)+Y\partial_Yg_p^{(0)}+g_e(1))h_p^{(1)}.
\end{align*}
We now prove the solvability of \eqref{first-linearized-prandtl-problem-near-1}.
\begin{proposition}
	There exists $\eta_0>0$ such that, for any $\eta\in(0,\eta_0)$,
	\eqref{first-linearized-prandtl-problem-near-1} has a solution
	\[
	(u_p^{(1)},v_p^{(2)},g_p^{(1)},h_p^{(2)}).
	\]
	It is unique in the weighted class described in the proof and satisfies
	\begin{align}
		 & \sum_{j+k\leq m}\int_{-\infty}^0\int_0^{2\pi}\big|\partial_\theta^j\partial_Y^k \big(u_p^{(1)}-A_{1\infty},v_p^{(2)},g_p^{(1)},h_p^{(2)}\big)\big|^2\langle Y\rangle^{2l}\,\ud\theta\,\ud Y\leq C(m,l)\eta^2,\quad m,l\geq 0;\label{decay-behavior-prandtl-1} \\
		 & \int_0^{2\pi}v_p^{(2)}(\theta,Y)\,\ud\theta =\int_0^{2\pi}h_p^{(2)}(\theta,Y)\,\ud\theta =0,\qquad \forall \ Y\leq 0, \nonumber
	\end{align}
	where
	$A_{1\infty}:=\lim\limits_{Y\rightarrow -\infty}u_p^{(1)}(\theta,Y)$ is a constant which satisfies $|A_{1\infty}|\leq C\eta$.
\end{proposition}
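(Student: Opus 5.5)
The approach mirrors the leading-order construction: pass to von Mises variables built from the \emph{leading} profile, and treat \eqref{first-linearized-prandtl-problem-near-1} as an $O(\eta)$ perturbation of the constant-coefficient system \eqref{linear-prandtl}. The existence statement for \eqref{linear-prandtl}, the solver bound \eqref{leading-linear-solver} and Lemma~\ref{lem:weighted-moser} are taken as given.

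\emph{Reduction.} Write $u_p=u_e(1)+u_p^{(0)}$, $g_p=g_e(1)+g_p^{(0)}$, and let $\psi=\int_0^Y u_p\,\ud z$, which is legitimate since $u_p\geq\alpha/2$. The first-order unknowns are recast as total corrections
\[
U=u_p^{(1)}+u_e^{(1)}(\theta,1),\qquad G=g_p^{(1)}+g_e^{(1)}(\theta,1).
\]
The normal velocity is recovered from the divergence equation,
\[
v_p^{(2)}+v_e^{(2)}(\theta,1)=-\int_0^Y\partial_\theta u_p^{(1)}\,\ud Y'-Yv_p^{(1)},
\]
using the wall condition $v^{(2)}_{\rm tot}|_{Y=0}=0$. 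The normal magnetic component $h_p^{(2)}+h_e^{(2)}(\theta,1)$ is defined by the algebraic (induction) relation in \eqref{first-linearized-prandtl-problem-near-1}, exactly as in Step~8 of the leading-order proof. The magnetic divergence equation then follows from the linearized flux equation.

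Next I linearize \eqref{auxiliary-prandtl} at the leading solution, using the pair $W=2u_pU$ and $G$. In the von Mises frame the nonlocal normal terms $v\,\partial_Y$ and $h\,\partial_Y$ are absorbed into $\partial_\xi$ derivatives, so there is no loss of a tangential derivative. The result is a linear system
\[
\mathcal L_0(W,G)+\mathcal R(W,G)=\mathcal S,
\]
where:
\begin{itemize}
\item $\mathcal L_0$ is the left side of \eqref{linear-prandtl}, with matrix $\mathcal M_{\alpha,\beta}$;
\item $\mathcal R$ has coefficients built from $w^b+\bar w$ and $g^b+\bar g$, hence of size $O(\eta)$ in the weighted norms by \eqref{prandtl-estimate}, and involves at most first derivatives of $(W,G)$;
\item $\mathcal S$ collects $f_p^{(1)}$, the right side $\partial_Y(Yg_p^{(0)})$, and the terms $Y\partial_rv_e^{(1)}(\theta,1)$, $Y\partial_rh_e^{(1)}(\theta,1)$.
\end{itemize}
Every summand of $\mathcal S$ contains a rapidly decaying leading-order profile factor, so $\mathcal S$ has weighted norms $\leq C\eta$. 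The wall data are $W=G=0$ at $\psi=0$, after subtracting a smooth rapidly decaying lift of the traces $u_e^{(1)}(\theta,1)$ and $g_e^{(1)}(\theta,1)$.

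\emph{Linear solve.} I would solve the system by iteration $(W,G)\mapsto\mathcal L_0^{-1}(\mathcal S-\mathcal R(W,G))$ in $X_{m,l}$. By \eqref{leading-linear-solver} and the product estimates of Lemma~\ref{lem:weighted-moser}, this map has Lipschitz constant at most $C\eta<1/2$, so it has a unique fixed point. The resulting solution satisfies $\|(W,G)\|_{X_{m,l}}\leq C\eta$, which gives \eqref{decay-behavior-prandtl-1} after undoing von Mises; that step preserves polynomial weights because $u_p\geq\alpha/2$. The far-field constant of $W$ yields the constant $A_{1\infty}$ with $|A_{1\infty}|\leq C\eta$. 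The mean-zero properties of $v_p^{(2)}$ and $h_p^{(2)}$ follow from periodicity and decay, as in Step~8.

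\emph{Main obstacle.} The hard step is the zero Fourier mode. In the fixed-point norm, the far-field constant of $G$ is a priori free, but the statement requires $g_p^{(1)}$ to decay. For $\mathcal L_0^{-1}$ to produce data in $N_{m,l}$, the averaged sources need the two extra weights; I would check this directly using the vanishing averages of $v_p^{(1)}$ and $h_p^{(1)}$. To show that the magnetic far-field mode vanishes, I would integrate the linearized magnetic equation in $\xi$. This expresses $\partial_\psi^2$ of the mean of $G$ as a total $\psi$-derivative of decaying quantities, in the same way as the first Wood identity. I would then match this with the $O(\varepsilon)$ term of the exact flux relation $\mathbf u\cdot\nabla\Pi-\varepsilon^2\Delta\Pi=2\beta\varepsilon^2$. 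Since $g_e^{(1)}=(b/a)u_e^{(1)}$ is mean-zero, this should force the mean of $G$ at $-\infty$ to vanish.

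If the compatibility does not come out exactly, I would absorb the residual constant into the free trace $h_e^{(2)}(\theta,1)$, or into the zero mode of the next Euler corrector. This uses the relation $-a h_\infty+b v_\infty=0$ and the non-Alfv\'enic factor $a^2-b^2\neq0$.
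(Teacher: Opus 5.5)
\textbf{The gap: the decay of $g_p^{(1)}$ is not proved.} The step you flag yourself, $g_\infty:=\lim_{Y\to-\infty}g_p^{(1)}=0$, is required by \eqref{decay-behavior-prandtl-1}, and your sketch does not establish it. Put $m_G(\psi)=\int_0^{2\pi}G\,\ud\xi$. Suppose averaging the linearized induction equation gives $\partial_\psi^2m_G=\partial_\psi Q$ with $Q$ decaying. Then $m_G(0)=0$ and the zero mean of $g_e^{(1)}(\cdot,1)$ yield only
\[2\pi g_\infty=-\int_{-\infty}^0Q\,\ud\psi .\]
Proving that this integral vanishes is the whole content, and ``this should force'' does not do it.

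The paper proceeds as follows. It writes the $O(\varepsilon^2)$ scalar induction relation in divergence form, using the expanded stream function $t\phi_0+t^2\phi_1$. It integrates over $\mathbb T\times(-R,0)$. The flux at $Y=0$ vanishes because $\phi_0(0)=\phi_1(0)=0$. The flux at $Y=-R$ tends to zero only through specific cancellations, one coefficient at a time:
\begin{itemize}
\item the $R^2$ term vanishes by the zero means of the Euler normal traces $v_i=v_e^{(i)}(\theta,1)$;
\item the $R$ term vanishes by $\partial_rv_e^{(1)}+v_e^{(1)}=-\partial_\theta u_e^{(1)}$ at $r=1$, together with $h_1=\frac ba v_1$ and the far-field relation $ah_2=bv_2-A_{1\infty}h_1+g_\infty v_1$, where $h_i=h_e^{(i)}(\theta,1)$;
\item the constant term vanishes by $\int c_0v_2=-\int c_1v_1$ and $\int c_0v_1=0$, where $(c_0)_\theta=-v_1$ and $(c_1)_\theta=-v_2$.
\end{itemize}
Only then does $\int_0^{2\pi}(g_p^{(1)}(\theta,0)-g_\infty)\,\ud\theta=0$ give $g_\infty=0$. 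Integrating along streamlines of the expanded flow, where the transport flux vanishes identically, may give a shorter version of your idea, but it still has to be carried out.

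\textbf{Your fallback does not work.} First, $h_e^{(2)}(\theta,1)$ is a zero-mean normal trace, already fixed by the far-field algebraic relation, so it cannot absorb a constant tangential mode. Second, a zero mode $g_\infty r$ in $g_e^{(1)}$ would add the current $2g_\infty$ to $H_e^{(3)}$, giving $\int_0^{2\pi}H_e^{(3)}\,\ud\theta=4\pi g_\infty\neq0$. That contradicts the periodic compatibility of $-arh_e^{(3)}+brv_e^{(3)}=H_e^{(3)}$. This is the same constraint, coming from $C_0=2\beta\varepsilon^2$, that forces $b=\beta$. So, unlike $A_{1\infty}$, the magnetic constant must be shown to vanish.

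\textbf{The linear solve: a viable different route, with bookkeeping to fix.} The paper does not use von Mises variables at this order. It stays in $(\theta,Y)$, lifts the traces with $\chi_0$ (where $\chi_0(0)=1$ and $\int_{-\infty}^0\chi_0=0$), and keeps the nonlocal $v,h$. It proves an energy estimate with multiplier $(u,g)$ and a positivity estimate with multiplier $(a\partial_\theta u-b\partial_\theta g,\ \kappa(a\partial_\theta g-b\partial_\theta u))$, both via Hardy's inequality. It then inverts as an $O(\eta)$ perturbation of the far-field operator at $(a,b)$.

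Reusing the leading von Mises solver is a reasonable alternative, and it removes the non-decaying coefficients $v_e^{(1)}(\theta,1)\partial_Y$ and $h_e^{(1)}(\theta,1)\partial_Y$. Three points need fixing:
\begin{itemize}
\item As written, $W=2u_pU$ is built from total corrections. Their limits $A_{1\infty}+u_e^{(1)}(\theta,1)$ and $g_\infty+g_e^{(1)}(\theta,1)$ are not constants, so $(W,G)\notin X_{m,l}$. You must use $u_p^{(1)},g_p^{(1)}$ minus a decaying lift instead.
\item The nonlocal terms disappear only for the fixed-$\psi$ perturbation $\tilde u$, that is, for a genuine linearization of \eqref{auxiliary-prandtl}. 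For a fixed-$Y$ unknown, $u\,\partial_\theta\bar u+v\,\partial_Y\bar u=u\,\partial_\xi\bar u-\bar u\,\partial_\psi\bar u\,\partial_\xi\phi$ with $\phi=\int_0^Yu\,\ud Y'$, which is still a $\psi$-primitive. The two unknowns are linked by the Volterra relation $u=\tilde u+(\partial_\psi\bar u)\phi$.
\item The decay of $\mathcal S$ depends on the cancellations $-a\partial_rh_e^{(1)}+b\partial_rv_e^{(1)}=0$ and $-u_e^{(1)}h_e^{(1)}+g_e^{(1)}v_e^{(1)}=0$ at $r=1$. Without them the $Y\partial_r(\cdot)$ terms grow linearly.
\end{itemize}
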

\begin{proof}
Let $\chi_0\in C_c^\infty ((-\infty,0])$ satisfy
\begin{align*}
	\chi_0(0)=1,\quad \int^0_{-\infty}\chi_0(y)\,\ud y=0.
\end{align*}
For convenience, we set
\begin{align*}
	\bar u&:=u_e(1)+u_p^{(0)}, \quad \bar v:=v_e^{(1)}(\theta,1)+v_p^{(1)},                                                                                \\
	u&:=u_p^{(1)}+u_e^{(1)}(\theta,1)\chi_0(Y), \quad v:=v_p^{(2)}-v_p^{(2)}(\theta,0)+Y v_p^{(1)}-\partial_\theta u_e^{(1)}(\theta,1)\int_0^Y\chi_0(z)dz. \\
	\bar g&:=g_e(1)+g_p^{(0)}, \quad \bar h:=h_e^{(1)}(\theta,1)+h_p^{(1)},                                                                                \\
	g&:=g_p^{(1)}+g_e^{(1)}(\theta,1)\chi_0(Y), \quad h:=h_p^{(2)}+h_e^{(2)}(\theta,1)+Y h_p^{(1)}-\partial_\theta g_e^{(1)}(\theta,1)\int_0^Y\chi_0(z)dz.
\end{align*}
We regard $v_e^{(2)}(\theta,1)$ and $h_e^{(2)}(\theta,1)$ as boundary traces to be determined after solving $(u,v,g,h)$. The term $v_e^{(2)}(\theta,1)$ is absorbed by the boundary relation
\[
	v_p^{(2)}(\theta,0)=-v_e^{(2)}(\theta,1),
\]
while $h_e^{(2)}(\theta,1)$ is included in the definition of $h$; the coercive estimates below are independent of this later choice. Then the equations \eqref{first-linearized-prandtl-problem-near-1} reduce to
\begin{equation}\label{new-linearized-prandtl-equation}
\left \{
\begin {array}{ll}
\bar{u}\partial_\theta u+\bar{v}\partial_Yu+u\partial_\theta \bar{u}+v\partial_Y\bar{u}-\bar{g}\partial_\theta g-\bar{h}\partial_Yg-g\partial_\theta \bar{g}-h\partial_Y\bar{g} -\kappa\partial_{YY}u=F^*,\\[7pt]
-\bar{u}h-u\bar{h}+\bar{g}v+g\bar{v}-\partial_Y g=G^*,\\[7pt]
\partial_\theta u+\partial_Yv=0,\\[5pt]
\partial_\theta g+\partial_Yh=0,\\[5pt]
u(\theta,Y)=u(\theta+2\pi,Y),\quad v(\theta,Y)=v(\theta+2\pi,Y),\\[5pt]
g(\theta,Y)=g(\theta+2\pi,Y),\quad h(\theta,Y)=h(\theta+2\pi,Y)\\[5pt]
u|_{Y=0}=v|_{Y=0}=g|_{Y=0}=0,\\ [5pt]
\lim\limits_{Y\rightarrow -\infty}\partial_Yu=\lim\limits_{Y\rightarrow -\infty}\partial_Yg=0,
\end{array}
\right.
\end{equation}
where $F^*(\theta,Y),G^*(\theta,Y)$ are known $2\pi$-periodic functions
determined by the lower-order profiles and the cutoff $\chi_0$.  Moreover, for
any $m,l\geq0$,
\begin{align*}
	\sum_{j+k\leq m}\Big(&\|\partial_\theta^j\partial_Y^kF^*\langle Y\rangle^l\|_2+\|\partial_\theta^j\partial_Y^kG^*\langle Y\rangle^l\|_2 \\
	&+\|\partial_\theta^j\partial_Y^k\partial_YG^*\langle Y\rangle^l\|_2\Big)\leq C(m,l)\eta.
\end{align*}
In particular, they decay rapidly as $Y\rightarrow -\infty$.
Multiplying the second equation of \eqref{new-linearized-prandtl-equation} by $\bar{u}^{-1}$ and then applying $\bar{u}\partial_Y$, we obtain the auxiliary problem
\begin{equation}\label{re-new-linearized-prandtl-equation}
\left \{
\begin {array}{ll}
\bar{u}\partial_\theta u+\bar{v}\partial_Yu+u\partial_\theta \bar{u}+v\partial_Y\bar{u}-\bar{g}\partial_\theta g-\bar{h}\partial_Yg-g\partial_\theta \bar{g}-h\partial_Y\bar{g} -\kappa\partial_{YY}u=F^*,\\[7pt]
[\partial_\theta g -\partial_Y(\frac{\bar{h}}{\bar{u} })u-\frac{\bar{h}}{\bar{u} }\partial_Y u +\partial_Y(\frac{\bar{g}}{\bar{u} })v-\frac{\bar{g}}{\bar{u}} \partial_\theta u  +\frac{\bar{v}}{\bar{u} }\partial_Yg + \partial_Y(\frac{\bar{v}}{\bar{u}})g-\partial_Y(\frac{\partial_Y g}{ \bar{u} }) ]\bar{u}=\bar{u}\partial_Y(\frac{G^*}{ \bar{u} })  ,\\[5pt]
u(\theta,Y)=u(\theta+2\pi,Y),\quad g(\theta,Y)=g(\theta+2\pi,Y)\\[5pt]
u|_{Y=0}=g|_{Y=0}=0,\ \ \lim\limits_{Y\rightarrow -\infty}\partial_Yu=\lim\limits_{Y\rightarrow -\infty}\partial_Yg=0,
\end{array}
\right.
\end{equation}
In this equation, $v=-\int_0^{Y}\partial_\theta u\,\ud Y'$ and
\[
	h=\frac{-G^*-u\bar{h}+\bar{g}v+g\bar{v}-\partial_Y g}{\bar{u}},
\]
so \eqref{re-new-linearized-prandtl-equation} is an equation for $(u,g)$. We
seek \((u,g)\) in the periodic homogeneous product space
\[
\dot H^1_{0,\mathrm{per}}(\Omega_p)^2
=\left\{(u,g):
\begin{array}{l}
u,g\ \hbox{are \(2\pi\)-periodic in \(\theta\)},\
u|_{Y=0}=g|_{Y=0}=0,\\
\partial_\theta u,\partial_Yu,\partial_\theta g,\partial_Yg
\in L^2(\Omega_p)
\end{array}\right\},
\]
where the possible far-field constants are recorded separately.  This is a
homogeneous, rather than an \(H^1_0\), formulation.  We now establish an a
priori estimate for \eqref{re-new-linearized-prandtl-equation}.
Multiplying the two equations in \eqref{re-new-linearized-prandtl-equation} by $(u,g)$, integrating over $(\theta,Y)\in(0,2\pi)\times(-\infty,0)$, and adding the results, we obtain
\begin{align*}
	  & \int_{\Omega_p}\Big[\bar{u}\partial_\theta u+\bar{v}\partial_Yu+u\partial_\theta \bar{u}+v\partial_Y\bar{u}-\bar{g}\partial_\theta g-\bar{h}\partial_Yg-g\partial_\theta \bar{g}-h\partial_Y\bar{g} -\kappa\partial_{YY}u\Big]u                                                  \\
	  & +\int_{\Omega_p}\big(\partial_\theta g -\partial_Y(\frac{\bar{h}}{\bar{u} })u-\frac{\bar{h}}{\bar{u} }\partial_Y u +\partial_Y(\frac{\bar{g}}{\bar{u} })v-\frac{\bar{g}}{\bar{u}} \partial_\theta u  +\frac{\bar{v}}{\bar{u} }\partial_Yg + \partial_Y(\frac{\bar{v}}{\bar{u}})g-\partial_Y(\frac{\partial_Y g}{ \bar{u} })  \big)\bar{u}g\,\ud\theta\,\ud Y \\
	= & \underbrace{\int_{\Omega_p} -\kappa\partial_{YY}u u-\partial_Y(\frac{\partial_Y g}{ \bar{u} }) \bar{u}g \,\ud\theta\,\ud Y}_{I_1}- \underbrace{\int_{\Omega_p} bg_\theta u+b u_\theta g\,\ud\theta\,\ud Y}_{I_2}                                                                                 \\
	+ & \underbrace{\int_{\Omega_p}\Big[\bar{u}\partial_\theta u+\bar{v}\partial_Yu +u\partial_\theta \bar{u}+v\partial_Y\bar{u}-g_p^{(0)}\partial_\theta g-g\partial_\theta \bar{g}\Big]u \,\ud\theta\,\ud Y}_{I_3}                                                                       \\
	+ & \underbrace{\int_{\Omega_p}     \Big[\partial_\theta g +\frac{\partial_\theta\bar{g}}{\bar{u} }u +\partial_Y(\frac{\bar{g}}{\bar{u} })v-\frac{g_p^{(0)}}{\bar{u}} \partial_\theta u  +\frac{\bar{v}}{\bar{u} }\partial_Yg + \partial_Y(\frac{\bar{v}}{\bar{u}})g\Big] \bar{u}g \,\ud\theta\,\ud Y}_{I_4}                                 \\
	+ & \underbrace{\int_{\Omega_p}\Big(-\bar{h}\partial_Ygu-\bar{h}\partial_Yug-\partial_Y\bar{h}\,ug+\frac{\bar{h} \partial_Y \bar{u}}{\bar{u}}ug\Big) \,\ud\theta\,\ud Y}_{I_5}     + \underbrace{\int_{\Omega_p} -uh\partial_Y\bar{g}\,\ud\theta\,\ud Y}_{I_6}                                                                            \\
	= & \int_{\Omega_p}F^*u +\partial_Y(\frac{G^*}{ \bar{u} })\bar{u} g\,\ud\theta\,\ud Y.
\end{align*}

By \eqref{prandtl-estimate} and \eqref{estimate-of-first-combined-linearized-euler-equation1}, we have
\begin{align*}
	 & \big|\partial^j_{\theta}\partial^k_{Y}(\bar{u}-a)\langle Y\rangle^l\big|+\big|\partial^j_{\theta}\partial^k_{Y}(\bar{g}-b)\langle Y\rangle^l\big| \leq C(j,k,l)\eta, \\[5pt] &\big|\partial^j_{\theta}\partial^k_{Y}v_p^{(1)}\langle Y\rangle^l\big|+\big|\partial^j_{\theta}\partial^k_{Y}h_p^{(1)}\langle Y\rangle^l\big| \leq C(j,k,l)\eta,\\[5pt]
	 & \big|\partial^j_\theta v^{(1)}_e(\theta,1)\big|+\big|\partial^j_\theta h^{(1)}_e(\theta,1)\big|\leq C(j)\eta,
\end{align*}
for all admissible $j,k$. Hence
\begin{align*}
	I_1= & \int_{\Omega_p}(-\kappa\partial_{YY}u u- \partial_Y(\frac{\partial_Y g}{ \bar{u} }) \bar{u}g) \,\ud\theta\,\ud Y               \\
	=    & \kappa\|\partial_Y u\|_2^2 +\|\partial_Yg \|_2^2+ \int \frac{Y\partial_Y\bar{u}}{ \bar{u}} \frac{g}{Y}\partial_Yg \,\ud\theta\,\ud Y \\
	\geq & \kappa\|\partial_Y u\|_2^2 +\|\partial_Yg \|_2^2-C\eta \|\partial_Y g\|_2^2
\end{align*}
and we obtain
\begin{align*}
	I_2=\int_{\Omega_p}(-b\partial_\theta g\, u-b\partial_\theta u\, g)\,\ud\theta\,\ud Y=0
\end{align*}
and
\begin{align*}
	|I_3|= & \big|\int_{\Omega_p}\Big[\bar{u}\partial_\theta u+\bar{v}\partial_Yu +v\partial_Y\bar{u}+u\partial_\theta \bar{u}-g_p^{(0)}\partial_\theta g-g\partial_\theta \bar{g}\Big]u \,\ud\theta\,\ud Y\big|           \\
	\leq   & \big|\int_{\Omega_p}\frac{1}{2}\big[\partial_\theta\bar{u}+\partial_Y\bar{v}\big]u^2 \,\ud\theta\,\ud Y\big|+\|Y^2\bar{u}_Y\|_{\infty}\Big\|\frac{\int_Y^0\partial_{\theta}u(\theta,z)\,\ud z}{Y}\Big\|_2\Big\|\frac{u}{Y}\Big\|_2  \\
	       & +\|Y^2\bar{u}_\theta\|_{\infty}\Big\|\frac{u}{Y}\Big\|_2^2+\|Yg_p^{(0)}\|_\infty \|\partial_\theta g\|_2\|\frac{u}{Y}\|_2+\| Y^2 \partial_\theta \bar{g}\|_\infty \Big\|\frac{u}{Y}\Big\|_2\Big\|\frac{g}{Y}\Big\|_2 \\
	\leq   & C\eta \big(\|\partial_{\theta}u\|_2^2+\|\partial_Yu \|_2^2+\|\partial_{\theta}g\|_2^2+\|\partial_Yg \|_2^2 \big),
\end{align*}
where we used $\bar{u}_\theta+\bar{v}_Y=0$, $\bar{g}-b=g_p^{(0)}$, and Hardy's inequality. Similarly,
\begin{align*}
	\big|I_4\big|= & \big|\int_{\Omega_p}     \Big[\partial_\theta g +\frac{\partial_\theta\bar{g}}{\bar{u} }u +\partial_Y(\frac{\bar{g}}{\bar{u} })v-\frac{g_p^{(0)}}{\bar{u}} \partial_\theta u  +\frac{\bar{v}}{\bar{u} }\partial_Yg + \partial_Y(\frac{\bar{v}}{\bar{u}})g \Big]\bar{u}g \,\ud\theta\,\ud Y\big| \\[5pt]
	\leq           & \big \|Yg_p^{(0)}\|_\infty \|\partial_\theta u\|_2\|\frac{g}{Y}\|_2+\| Y^2 \partial_\theta \bar{g}\|_\infty \Big\|\frac{u}{Y}\Big\|_2\Big\|\frac{g}{Y}\Big\|_2                                                                                            \\[5pt]
	               & + \|Y^2 \bar{u}\partial_Y  (\frac{\bar{v}}{\bar{u}})\|_\infty\|\frac{g}{Y}\|_2^2+\|\frac{\big|\int_Y^0 \partial_\theta u\big|}{Y}\|_2\|Y^2\bar{u} \partial_Y(\frac{\bar{g}}{\bar{u}})\|_\infty\|\frac{g}{Y}\|_2                                                             \\[5pt]
	\leq           & C\eta(\|\partial_\theta u\|_2^2+\|\partial_\theta g\|_2^2+\|\partial_Y u\|_2^2+\|\partial_Y g\|_2^2)
\end{align*}
The cancellation in $I_5$ is
\begin{align*}
	\big|I_5\big| & =\big|\int_{\Omega_p}-\partial_Y(\bar{h}ug)+\frac{\bar{h}\partial_Y\bar{u}}{\bar{u}}ug\,\ud\theta\,\ud Y \big|            \\
	              & =\big|\int_{\Omega_p}\frac{Y^2\bar{h}\partial_Y\bar{u}}{\bar{u}}\frac{u}{Y}\frac{g}{Y}\,\ud\theta\,\ud Y \big| \\
	              & \leq C\eta(\|\partial_\theta u\|_2^2+\|\partial_\theta g\|_2^2+\|\partial_Y u\|_2^2+\|\partial_Y g\|_2^2)
\end{align*}
where the boundary term of $\partial_Y(\bar{h}ug)$ vanishes by the boundary condition at $Y=0$ and by the zero mean of the far-field trace of $\bar{h}$.

We handle $I_6$ by using
\[
	h=\frac{-G^*-u\bar{h}+\bar{g}v+g\bar{v}-\partial_Y g}{\bar{u}},\qquad \bar{u}\geq \frac{a}{2}.
\]
\begin{align*}
	\big|I_6\big|= & \big|\int_{\Omega_p} -uh\partial_Y\bar{g}\,\ud\theta\,\ud Y \big|                                                                                                \\
	=              & \big|\int_{\Omega_p} -u \frac{-G^*-u\bar{h}+\bar{g}v+g\bar{v}-\partial_Y g}{\bar{u}} \partial_Y\bar{g}\,\ud\theta\,\ud Y \big|                                                         \\
	               & \leq C \int \big|\frac{u}{Y} YG^*\partial_Y\bar{g}\big|+\big|\frac{u}{Y}\frac{u}{Y}Y^2\bar{h}\partial_Y\bar{g}\big|+\big|\frac{u}{Y}\frac{v}{Y}Y^2\bar{g}\partial_Y\bar{g}\big|\,\ud\theta\,\ud Y \\
		       & \quad +C\int \big|\frac{u}{Y}\frac{g}{Y}Y^2\bar{v}\partial_Y\bar{g}\big|+\big|\frac{u}{Y}\partial_Y g\, Y\partial_Y\bar{g}\big|\,\ud\theta\,\ud Y \\
	               & \leq C\eta(\|\partial_\theta u\|_2^2+\|\partial_\theta g\|_2^2+\|\partial_Y u\|_2^2+\|\partial_Y g\|_2^2)+C\|YG^*\|_2^2
\end{align*}

Finally,
\begin{align*}
	     & \big|\int_{\Omega_p}F^*u +\partial_Y(\frac{G^*}{ \bar{u} })\bar{u} g\,\ud\theta\,\ud Y\big|                          \\
	\leq & \|YF^*\|_2^2+ \|Y\partial_Y(\frac{G^*}{ \bar{u} })\bar{u}\|_2^2+ \frac{\kappa}{2}\|\partial_Yu\|_2^2+   \frac{1}{2}\|\partial_Yg\|_2^2 \\[5pt]
\end{align*}
Collecting the estimates above and taking $\eta$ sufficiently small, we obtain
\begin{align}\label{energy-estimate-prandtl}
	 & \|\partial_Yu\|_2^2 +\|\partial_Yg\|_2^2                                                          \\
	 & \nonumber\leq C\eta [\|\partial_{\theta}u\|_2^2+\|\partial_{\theta}g\|_2^2] +C\|YF^*\|_2^2+C\|Y\partial_YG^*\|_2^2+C\|YG^*\|_2^2,
\end{align}
where $C$ is independent of $\eta$.
Next, we multiply the two equations in \eqref{re-new-linearized-prandtl-equation} by
\[
	\big(a\partial_{\theta}u-b\partial_{\theta}g,\ \kappa(a\partial_{\theta}g-b\partial_{\theta}u)\big),
\]
add them together, and integrate over ${\Omega_p}$. We obtain
\begin{align*}
	  & \int_{\Omega_p}\Big(\bar{u}\partial_\theta u+\bar{v}\partial_Yu+v\partial_Y\bar{u}+u\partial_\theta \bar{u}-\kappa\partial_{YY}u-\bar{g}\partial_\theta g-\bar{h}\partial_Yg-g\partial_\theta \bar{g}-h\partial_Y\bar{g}\Big)(a\partial_{\theta}u-b\partial_{\theta}g) \,\ud\theta\,\ud Y \\
	  & +\int_{\Omega_p}\big(\partial_\theta g -\partial_Y(\frac{\bar{h}}{\bar{u} })u-\frac{\bar{h}}{\bar{u} }\partial_Y u +\partial_Y(\frac{\bar{g}}{\bar{u} })v-\frac{\bar{g}}{\bar{u}} \partial_\theta u  +\frac{\bar{v}}{\bar{u} }\partial_Yg                                                                                   \\
	  & \quad\quad + \partial_Y(\frac{\bar{v}}{\bar{u}})g-\partial_Y(\frac{\partial_Y g}{ \bar{u} })  \big)\bar{u}\kappa(a\partial_{\theta}g-b\partial_{\theta}u)\,\ud\theta\,\ud Y                                                                                                                                                \\
	  & =\underbrace{\int_{\Omega_p}(\bar{u}\partial_\theta u-\bar{g}\partial_\theta g)(a\partial_{\theta}u-b\partial_{\theta}g)+\kappa(\bar{u}\partial_\theta g-\bar{g}\partial_\theta u)(a\partial_{\theta}g-b\partial_{\theta}u) \,\ud\theta\,\ud Y}_{I_1}                                                          \\
	  & +\underbrace{\kappa\int_{\Omega_p} -\partial_{YY}u(a\partial_{\theta}u-b\partial_{\theta}g)-\partial_Y(\frac{\partial_Y g}{ \bar{u} })\bar{u}(a\partial_{\theta}g-b\partial_{\theta}u) \,\ud\theta\,\ud Y}_{I_2}                                                                                                              \\
	  & +\underbrace{\int_{\Omega_p}\Big(\bar{v}\partial_Yu+v\partial_Y\bar{u}+u\partial_\theta \bar{u}-\bar{h}\partial_Yg-g\partial_\theta \bar{g}\Big)(a\partial_{\theta}u-b\partial_{\theta}g)\,\ud\theta\,\ud Y}_{I_3}                                                                        \\
	  & +\underbrace{\kappa\int_{\Omega_p}\big( -\partial_Y(\frac{\bar{h}}{\bar{u} })u-\frac{\bar{h}}{\bar{u} }\partial_Y u +\partial_Y(\frac{\bar{g}}{\bar{u} })v  +\frac{\bar{v}}{\bar{u} }\partial_Yg + \partial_Y(\frac{\bar{v}}{\bar{u}})g  \big)   (a\partial_{\theta}g-b\partial_{\theta}u)\,\ud\theta\,\ud Y}_{I_4}                       \\
	  & +\underbrace{\int_{\Omega_p}-h\partial_Y\bar{g} (a\partial_{\theta}u-b\partial_{\theta}g)\,\ud\theta\,\ud Y}_{I_5}                                                                                                                                                                                           \\
	= & \int_{\Omega_p}F^*(a\partial_{\theta}u-b\partial_{\theta}g)+\kappa\partial_Y G^* (a\partial_{\theta}g-b\partial_{\theta}u) \,\ud\theta\,\ud Y.
\end{align*}
Using $|a|\neq|b|$, we obtain
\begin{align*}
	I_1= & \int_{\Omega_p}(\bar{u}\partial_\theta u-\bar{g}\partial_\theta g)(a\partial_{\theta}u-b\partial_{\theta}g)+\kappa(\bar{u}\partial_\theta g-\bar{g}\partial_\theta u)(a\partial_{\theta}g-b\partial_{\theta}u) \,\ud\theta\,\ud Y \\
	=    & \int_{\Omega_p}(a\partial_{\theta}u-b\partial_{\theta}g)^2+\kappa(a\partial_{\theta}g-b\partial_{\theta}u)^2+(u_p^{(0)}\partial_\theta u-g_p^{(0)}\partial_\theta g)(a\partial_{\theta}u-b\partial_{\theta}g)                                     \\
	     & +\kappa(u_p^{(0)}\partial_\theta g-g_p^{(0)}\partial_\theta u)(a\partial_{\theta}g-b\partial_{\theta}u)                                                                                                                   \\
	\geq & (1-C\eta)\int_{\Omega_p}(a\partial_{\theta}u-b\partial_{\theta}g)^2+(a\partial_{\theta}g-b\partial_{\theta}u)^2                                                                                                           \\
	\geq & C'\int_{\Omega_p}(\partial_{\theta}u)^2+(\partial_{\theta}g)^2
\end{align*}
The diffusion term can be computed as follows
\begin{align*}
	  & I_2=\kappa\int_{\Omega_p} -\partial_{YY}u(a\partial_{\theta}u-b\partial_{\theta}g)-\partial_Y\Big(\frac{\partial_Y g}{\bar{u}}\Big)\bar{u}(a\partial_{\theta}g-b\partial_{\theta}u) \,\ud\theta\,\ud Y                             \\
	= & \kappa\int_{\Omega_p} -\partial_{YY}u(a\partial_{\theta}u-b\partial_{\theta}g)-\partial_{YY}g(a\partial_{\theta}g-b\partial_{\theta}u)+\frac{\partial_Y\bar{u}}{\bar{u}}\partial_Yg(a\partial_{\theta}g-b\partial_{\theta}u)\,\ud\theta\,\ud Y \\
	= & \kappa\int_{\Omega_p}\frac{\partial_Y\bar{u}}{\bar{u}}\partial_Yg(a\partial_{\theta}g-b\partial_{\theta}u)\,\ud\theta\,\ud Y,
\end{align*}
and hence
\[
	|I_2|\leq C\eta\big[\|\partial_Yu\|_2^2+\|\partial_\theta u\|_2^2+\|\partial_Yg\|_2^2+\|\partial_\theta g\|_2^2\big].
\]
Moreover,
\begin{align*}
	\big|I_3\big|= & \big|\int_{\Omega_p}\Big[\bar{v}\partial_Yu+v\partial_Y\bar{u}+u\partial_\theta \bar{u}-\bar{h}\partial_Yg-g\partial_\theta \bar{g}\Big](a\partial_{\theta}u-b\partial_{\theta}g)\big| \\
	\leq           & C\eta\big[\|\partial_Yu\|^2_2+\|\partial_\theta u\|^2_2+\|\partial_Yg\|^2_2+\|\partial_\theta g\|^2_2\big ].
\end{align*}
Similarly, we have
\begin{align*}
	\big|I_4\big|= & \big|\kappa\int_{\Omega_p}\big( -\partial_Y(\frac{\bar{h}}{\bar{u} })u-\frac{\bar{h}}{\bar{u} }\partial_Y u +\partial_Y(\frac{\bar{g}}{\bar{u} })v  +\frac{\bar{v}}{\bar{u} }\partial_Yg + \partial_Y(\frac{\bar{v}}{\bar{u}})g  \big)   (a\partial_{\theta}g-b\partial_{\theta}u)\,\ud\theta\,\ud Y\big| \\
	\leq           & C\eta\big[\|\partial_Yu\|^2_2+\|\partial_\theta u\|^2_2+\|\partial_Yg\|^2_2+\|\partial_\theta g\|^2_2\big ].
\end{align*}
We handle $I_5$ as in the previous energy estimate, using the expression for $h$:
\begin{align*}
	\big|I_5\big|= & \big|\int_{\Omega_p}-h\partial_Y\bar{g} (a\partial_{\theta}u-b\partial_{\theta}g)\big|                                                                                      \\
	\leq           & C\int_{\Omega_p}|\partial_Y\bar{g}|\, |a\partial_{\theta}u-b\partial_{\theta}g|\,
	\Big|\frac{-G^*-u\bar{h}+\bar{g}v+g\bar{v}-\partial_Yg}{\bar{u}}\Big|\,\ud\theta\,\ud Y \\
	\leq           & C\eta\big(\|\partial_Yu\|^2_2+\|\partial_\theta u\|^2_2+\|\partial_Yg\|^2_2+\|\partial_\theta g\|^2_2\big)+C\|G^*\|_2^2,
\end{align*}
and
\begin{align*}
	     & \big|\int_{\Omega_p}F^*(a\partial_{\theta}u-b\partial_{\theta}g)+\kappa\partial_YG^*(a\partial_{\theta}g-b\partial_{\theta}u)  \,\ud\theta\,\ud Y\big| \\
	\leq & \|F^*\|_2\| (a\partial_{\theta}u-b\partial_{\theta}g)\|_2+\kappa\|\partial_YG^*\|_2\| (a\partial_{\theta}g-b\partial_{\theta}u)\|_2,
\end{align*}
Thus
\begin{align*}
	\|\partial_{\theta}u\|_2^2+\|\partial_{\theta}g\|_2^2\leq C \eta\big[\|\partial_Y u\|_2^2+\|\partial_Y g\|_2^2\big]+C(\|F^*\|_2^2+\|\partial_YG^*\|_2^2+\|G^*\|_2^2).
\end{align*}
Combining this with \eqref{energy-estimate-prandtl}, we have
\begin{align}\label{lax-mil-prandtl}
	&\|\partial_Y u\|_2^2+\|\partial_Y g\|_2^2+\|\partial_{\theta}u\|_2^2+\|\partial_{\theta}g\|_2^2 \nonumber\\
	&\quad \leq C\|YF^*\|_2^2+C\|Y\partial_YG^*\|_2^2+C\|YG^*\|_2^2+C\|F^*\|_2^2+C\|\partial_YG^*\|_2^2+C\|G^*\|_2^2
\end{align}

We now justify solvability.  Keep the exact
nonlocal maps
\[
v[u](\theta,Y)=-\int_0^Y u_\theta(\theta,z)\,\ud z,
\qquad
h_0[u,g]=\frac{-u\bar h+\bar g\,v[u]+g\bar v-g_Y}{\bar u},
\qquad h=h_0[u,g]-\frac{G^*}{\bar u}.
\]
After the terms containing $G^*$ are moved to the right-hand side, the two
equations for $(u,g)$ are the first equation of
\eqref{re-new-linearized-prandtl-equation} with $h_0[u,g]$ in its linear part
and $g_\theta+(h_0[u,g])_Y=(G^*/\bar u)_Y$.  Denote this full linear
operator, including its nonlocal part, by \(\mathcal L\).  Let $\mathcal L_0$ be its
constant far-field part,
\begin{equation*}
\mathcal L_0\binom{u}{g}
=\binom{a u_\theta-b g_\theta-\kappa u_{YY}}
{a g_\theta-b u_\theta-g_{YY}}.
\end{equation*}
{The zero tangential mode is obtained by integrating twice in
\(Y\).  For the nonzero modes, Fourier series and \(a^2-b^2\ne0\) give a
bounded inverse of \(\mathcal L_0\); equivalently, this follows from
\eqref{lax-mil-prandtl} by a standard Lax--Milgram approximation.  The full
operator is a small perturbation of \(\mathcal L_0\), since the profile
bounds and Hardy's inequality give
\(\|\mathcal L_0^{-1}(\mathcal L-\mathcal L_0)\|\leq C\eta\).  Thus it is
invertible for small \(\eta\).}  We obtain the weighted bounds afterward by
applying the same energy estimates to the differentiated equations with the
weight $\langle Y\rangle^{2l}$ and arguing by induction on $l$.  Derivatives
of the weight give lower weights.  The limits $v_\infty(\theta)$ and
$h_\infty(\theta)$ only multiply $u_Y$ and $g_Y$, so they are controlled at
the current weight; all other coefficient differences decay rapidly.  This
gives the stated estimates for every $m,l$.  The boundary term at
$Y=-\infty$ vanishes because the limiting tangential constants multiply
zero-mean normal traces, while the remaining parts decay.

As in the nonlinear Prandtl construction, the zero Fourier modes at $Y=-\infty$
are not required to vanish. {We now identify the far-field traces of the
auxiliary solution.} Recall that
$A_{1\infty}=\lim_{Y\to-\infty}u$, and denote the other possible far-field
traces by
\[
	v_\infty(\theta):=\lim\limits_{Y\to-\infty}v,\qquad
	g_\infty:=\lim\limits_{Y\to-\infty}g,\qquad
	h_\infty(\theta):=\lim\limits_{Y\to-\infty}h.
\]
The weighted estimates above are imposed on $u-A_{1\infty}$, $v-v_\infty$, $g-g_\infty$ and $h-h_\infty$. These far-field traces are absorbed into the next Euler boundary data by setting
\[
	v_e^{(2)}(\theta,1)=v_\infty(\theta),\qquad
	h_e^{(2)}(\theta,1)=h_\infty(\theta).
\]
Then the Prandtl profiles are defined by
\begin{align*}
	u_p^{(1)}&=u-u_e^{(1)}(\theta,1)\chi_0(Y),\\
	v_p^{(2)}&=v-v_e^{(2)}(\theta,1)-Yv_p^{(1)}+\partial_\theta u_e^{(1)}(\theta,1)\int_0^Y\chi_0(z)dz,\\
	g_p^{(1)}&=g-g_e^{(1)}(\theta,1)\chi_0(Y),\\
	h_p^{(2)}&=h-h_e^{(2)}(\theta,1)-Yh_p^{(1)}+\partial_\theta g_e^{(1)}(\theta,1)\int_0^Y\chi_0(z)dz.
\end{align*}
Since $\int_{-\infty}^0\chi_0(z)\,\ud z=0$ and the lower-order profiles decay
rapidly, the desired boundary conditions and estimates follow from these
definitions once $g_\infty=0$ is verified below.  The zero means of
$v_p^{(2)}$ and $h_p^{(2)}$ follow from the periodic divergence equations and
the chosen far-field traces.

\medskip
\noindent\textbf{First magnetic far-field compatibility.}
Although $\lim_{Y\to-\infty}u_p^{(1)}$ may be nonzero, the magnetic trace
satisfies $\lim_{Y\to-\infty}g_p^{(1)}=0$.  Set $r=1+tY$ and write
\[
	\Phi_1(t)=t\phi_0+t^2\phi_1+O(t^3),
\]
where
\[
	\phi_0=\int_0^Y\bar u(\theta,Y')\,\ud Y',\qquad
	\phi_1=\int_0^Y
	\big(aY'+u_e^{(1)}(\theta,1)+u_p^{(1)}(\theta,Y')\big)\,\ud Y'.
\]
For a formal power series $Q(t)$, let $[t^m]Q$ denote the coefficient of
$t^m$.
Let
\[
	g^{a,1}(t)=\bar{g}+t\big(bY+g_e^{(1)}(\theta,1)+g_p^{(1)}\big),\qquad
	h^{a,1}(t)=t\bar{h}+t^2H_2,
\]
where
\[
	H_2=h_e^{(2)}(\theta,1)
	+Y\partial_rh_e^{(1)}(\theta,1)+h_p^{(2)}.
\]
The magnetic equation at this order can be written as
\[
	[t^2]\big(-\partial_r\Phi_1(t)\, r h^{a,1}(t)
	-\partial_\theta\Phi_1(t)\, g^{a,1}(t)\big)
	-\partial_Y g_p^{(1)}=\partial_Y(Yg_p^{(0)}).
\]
The magnetic divergence equation gives, to the required order,
\[
	-\Phi_{1,r}rh^{a,1}-\Phi_{1,\theta}g^{a,1}
	=-\partial_r(\Phi_1rh^{a,1})
	-\partial_\theta(\Phi_1g^{a,1}).
\]
Since $\partial_r=t^{-1}\partial_Y$, integration over
$\mathbb T\times(-R,0)$ leaves only the boundary values of
\[
	B(Y):=[t^3](\Phi_1rh^{a,1})
	=\phi_0(H_2+Y\bar h)+\phi_1\bar h.
\]
The value at $Y=0$ is zero because $\phi_0(0)=\phi_1(0)=0$.  To treat
$Y=-R$, set
\[
	v_i=v_e^{(i)}(\theta,1),\qquad h_i=h_e^{(i)}(\theta,1),
	\qquad i=1,2.
\]
As $Y\to-\infty$, the profile estimates and the divergence equations give
\[
	\phi_0=aY+c_0+o(1),\qquad
	\phi_1=\frac a2Y^2+
	\big(u_e^{(1)}(\theta,1)+A_{1\infty}\big)Y+c_1+o(1),
\]
with $(c_0)_\theta=-v_1$ and $(c_1)_\theta=-v_2$.  Moreover,
\[
	h_1=\frac ba v_1,\qquad
	ah_2=bv_2-A_{1\infty}h_1+g_\infty v_1.
\]
The second identity is the far-field limit of the magnetic equation at this
order.  In the integral of $B(Y)$ over $\mathbb T$, the coefficient of $Y^2$
vanishes by the zero means of the Euler normal traces.  The coefficient of
$Y$ vanishes by
\[
	\partial_rv_e^{(1)}(\theta,1)+v_1
	=-\partial_\theta u_e^{(1)}(\theta,1),\qquad
	(c_0)_\theta=-v_1,
\]
and the two relations for $h_1,h_2$.  The constant coefficient is
\[
	\frac ba\int_{\mathbb T}(c_0v_2+c_1v_1)\,\ud\theta
	+
	\left(\frac{g_\infty}{a}
	-\frac{bA_{1\infty}}{a^2}\right)
	\int_{\mathbb T}c_0v_1\,\ud\theta=0,
\]
because
\[
	\int_{\mathbb T}c_0v_2\,\ud\theta
	=-\int_{\mathbb T}c_1v_1\,\ud\theta,\qquad
	\int_{\mathbb T}c_0v_1\,\ud\theta
	=-\int_{\mathbb T}c_0(c_0)_\theta\,\ud\theta=0.
\]
Thus the lower boundary contribution tends to zero as $R\to\infty$.
Also,
$\int_{\Omega_p}\partial_Y(Yg_p^{(0)})\,\ud\theta\,\ud Y=0$, and hence
\[
	\int_0^{2\pi}\big(g_p^{(1)}(\theta,0)-g_\infty\big)\,\ud\theta=0.
\]
Using $g_p^{(1)}(\theta,0)=-g_e^{(1)}(\theta,1)$ and the zero mean of
$g_e^{(1)}$, we conclude that $g_\infty=0$.  This completes the proof.
\end{proof}
\begin{remark}
	We cannot expect $\lim_{Y\rightarrow-\infty}u^{(1)}_p=0$ because of
	periodicity in the tangential direction.  This nonzero constant trace does
	not affect the outer velocity correction.  Indeed, if
	$(u^{(1)}_e,v^{(1)}_e)$ solves the linearized Euler system, then
	$(u^{(1)}_e+A_{1\infty}r,v^{(1)}_e)$ does as well.  We therefore absorb
	$A_{1\infty}$ into the Euler profile.
\end{remark}
Next, we construct the pressure $p_p^{(2)}(\theta,Y)$. Consider the equation
\begin{align}\label{equation-for-second-pressure}
	\partial_Yp_p^{(2)}(\theta, Y)=g_1(\theta,Y), \quad \lim_{Y\rightarrow -\infty}p_p^{(2)}(\theta,Y)=0,
\end{align}
where
\begin{align*}
	g_1(\theta,Y)= & -Y\partial_Yp_p^{(1)}+\kappa \partial_{YY}v_p^{(1)}-u_e(1)\partial_\theta v_p^{(1)}-u_p^{(0)} (\partial_\theta v_e^{(1)}(\theta,1)+\partial_\theta v_p^{(1)}) \\[5pt]
	               & -\partial_Yv_p^{(1)}(v_e^{(1)}(\theta,1)+v_p^{(1)})-2(Yu'_e(1)u_p^{(0)}+u_e(1)\bar{u}_p^{(1)}+[u_e^{(1)}(\theta,1)+A_{1\infty}]u_p^{(0)}+u_p^{(0)}\bar{u}_p^{(1)})           \\
	               & +g_e(1)\partial_\theta h_p^{(1)}+g_p^{(0)} (\partial_\theta h_e^{(1)}(\theta,1)+\partial_\theta h_p^{(1)})                                                           \\[5pt]
	               & +\partial_Yh_p^{(1)}(h_e^{(1)}(\theta,1)+h_p^{(1)})+2(Yg'_e(1)g_p^{(0)}+g_e(1)g_p^{(1)}+g_e^{(1)}(\theta,1)g_p^{(0)}+g_p^{(0)}g_p^{(1)}).
\end{align*}
Here and below,
\[
\bar{u}_p^{(1)}=u_p^{(1)}-A_{1\infty}.
\]
$g_1(\theta,Y)$ is obtained by replacing $u_p^{(1)}$ with
$\bar{u}_p^{(1)}$ in \eqref{first-order-expansion}, substituting the resulting
expansion into the second equation of \eqref{mhd-polar}, and collecting the
terms of order $\varepsilon$.
Since $g_1(\theta,Y)$ decays rapidly as $Y\rightarrow -\infty$, \eqref{equation-for-second-pressure} determines $p_p^{(2)}(\theta,Y)$, and $p_p^{(2)}$ also decays rapidly as $Y\rightarrow -\infty$.

\subsubsection{Higher-order linearized Euler equations}

Assume that the Euler and Prandtl profiles up to order $k-1$ have already been
constructed.  As in the modification
$\bar{u}_e^{(1)}=u_e^{(1)}+A_{1\infty}r$, set
\[
\bar{u}_e^{(i)}(\theta,r):=u_e^{(i)}(\theta,r)+A_{i\infty}r,\qquad
\widetilde u_e^{(i)}:=\bar u_e^{(i)}-A_{i\infty}r=u_e^{(i)}.
\]
Substitute
\begin{align*}
	 & u^{\varepsilon}(\theta,r)=u_e(r)+\sum_{i=1}^{k-1}\varepsilon^{i} \bar{u}_e^{(i)}(\theta,r)+\varepsilon^{k} u_e^{(k)}(\theta,r)+\cdots, \\[-3pt] &v^{\varepsilon}(\theta,r)=\sum_{i=1}^{k-1}\varepsilon^{i} v_e^{(i)}(\theta,r)+\varepsilon^{k} v_e^{(k)}(\theta,r)+\cdots, \\[-3pt]
	 & g^{\varepsilon}(\theta,r)=g_e(r)+\sum_{i=1}^{k-1}\varepsilon^{i} g_e^{(i)}(\theta,r)+\varepsilon^{k} g_e^{(k)}(\theta,r)+\cdots,       \\[-3pt]
	 & h^{\varepsilon}(\theta,r)=\sum_{i=1}^{k-1}\varepsilon^{i} h_e^{(i)}(\theta,r)+\varepsilon^{k} h_e^{(k)}(\theta,r)+\cdots,               \\[-3pt]
	 & p^{\varepsilon}(\theta,r)=p_e(r)+\sum_{i=1}^{k}\varepsilon^{i} p_e^{(i)}(\theta,r)+\cdots
\end{align*}
into the MHD equations \eqref{mhd-polar}.  The order-$\varepsilon^k$ terms
give the following linearized MHD--Euler equations for
$(u_e^{(k)},v_e^{(k)},g_e^{(k)},h_e^{(k)},p_e^{(k)})$ in $\Omega$:
\begin{equation}\label{outer-4-order-equation}
\left \{
\begin {array}{ll}
ar \partial_\theta u_e^{(k)}+2arv_e^{(k)}-br \partial_\theta g_e^{(k)}-2brh_e^{(k)} +\partial_\theta p_e^{(k)}=F_{e}^{(k)}(\theta,r),\\[5pt]
ar \partial_\theta v_e^{(k)}-2aru_e^{(k)}-br \partial_\theta h_e^{(k)}+2brg_e^{(k)}+r\partial_rp_e^{(k)}=G_{e}^{(k)}(\theta,r),\\[5pt]
-arh_e^{(k)}+brv_e^{(k)}=H_e^{(k)},\\[5pt]
\partial_\theta u_e^{(k)}+r\partial_rv_e^{(k)}+ v_e^{(k)}=0,\\[5pt]
\partial_\theta g_e^{(k)}+r\partial_rh_e^{(k)}+ h_e^{(k)}=0,\\[5pt]
v_e^{(k)}|_{r=1}=-v_p^{(k)}|_{Y=0},\quad v_e^{(k)}(\theta,r)=v_e^{(k)}(\theta+2\pi,r) ,
\end{array}
\right.
\end{equation}
where
\begin{align*}
	F_{e}^{(k)}(\theta,r)= & -\sum_{i=1}^{k-1}\Big(\bar{u}_e^{(i)}\partial_\theta \bar{u}_e^{(k-i)}+v_e^{(i)}r\partial_r\bar{u}_e^{(k-i)}+\bar{u}_e^{(i)}v_e^{(k-i)}\Big)                                                             \\
	                       & +\sum_{i=1}^{k-1}\Big(g_e^{(i)}\partial_\theta g_e^{(k-i)}+h_e^{(i)}r\partial_rg_e^{(k-i)}+g_e^{(i)}h_e^{(k-i)}\Big)                                                                                     \\
	                       & +\underbrace{\kappa\Big(\frac{\partial_{\theta\theta}\bar{u}_e^{(k-2)}}{r}+r\partial_{rr}\bar{u}_e^{(k-2)}+\partial_r\bar{u}_e^{(k-2)}
	+\frac{2}{r}\partial_\theta v_e^{(k-2)}-\frac{\bar{u}_e^{(k-2)}}{r}\Big)}_{=0},                                                                                                                                         \\
	=                      & -\sum_{i=1}^{k-1}\Big(\bar{u}_e^{(i)}\partial_\theta \bar{u}_e^{(k-i)}+v_e^{(i)}r\partial_r\bar{u}_e^{(k-i)}+\bar{u}_e^{(i)}v_e^{(k-i)}\Big)                                                             \\
	                       & +\sum_{i=1}^{k-1}\Big(g_e^{(i)}\partial_\theta g_e^{(k-i)}+h_e^{(i)}r\partial_rg_e^{(k-i)}+g_e^{(i)}h_e^{(k-i)}\Big),                                                                                     \\[3pt]
	G_{e}^{(k)}(\theta,r)= & -\sum_{i=1}^{k-1}\Big(\bar{u}_e^{(i)}\partial_\theta v_e^{(k-i)}+v_e^{(i)}r\partial_r v_e^{(k-i)}-\bar{u}_e^{(i)}\bar{u}_e^{(k-i)}\Big)                                                                 \\
	                       & +\sum_{i=1}^{k-1}\Big(g_e^{(i)}\partial_\theta h_e^{(k-i)}+h_e^{(i)}r\partial_r h_e^{(k-i)}-g_e^{(i)}g_e^{(k-i)}\Big)                                                                                   \\
	                       & +\underbrace{\kappa\Big(\frac{\partial_{\theta\theta}v_e^{(k-2)}}{r}+r\partial_{rr}v_e^{(k-2)}+\partial_rv_e^{(k-2)}
	-\frac{2}{r}\partial_\theta \bar{u}_e^{(k-2)}-\frac{v_e^{(k-2)}}{r}\Big)}_{=0}                                                                                                                                          \\
	=                      & -\sum_{i=1}^{k-1}\Big(\bar{u}_e^{(i)}\partial_\theta v_e^{(k-i)}+v_e^{(i)}r\partial_r v_e^{(k-i)}-\bar{u}_e^{(i)}\bar{u}_e^{(k-i)}\Big)                                                                 \\
	                       & +\sum_{i=1}^{k-1}\Big(g_e^{(i)}\partial_\theta h_e^{(k-i)}+h_e^{(i)}r\partial_r h_e^{(k-i)}-g_e^{(i)}g_e^{(k-i)}\Big),                                                                                   \\[3pt]
	H_{e}^{(k)}(\theta,r)= & \sum_{i=1}^{k-1} \bar{u}_e^{(i)}h_e^{(k-i)}-g_e^{(i)}v_e^{(k-i)}+\underbrace{\frac{1}{r}\Big(r\partial_{r}g_e^{(k-2)}+g_e^{(k-2)}-\partial_\theta h_e^{(k-2)} \Big)-2\beta 1_{\{k=2\}}}_{=0} \\
	=                      & \sum_{i=1}^{k-1} \bar{u}_e^{(i)}h_e^{(k-i)}-g_e^{(i)}v_e^{(k-i)},
\end{align*}
where $1_{\{k=2\}}$ denotes the Kronecker delta: $1_{\{k=2\}}=1$ if and only
if $k=2$.  This calculation also shows that the magnetic part of the
MHD--Euler flow must be $(\beta r,0)$.  Otherwise
$\int_0^{2\pi}H_{e}^{(2)}\neq 0$, which violates periodic compatibility.

\begin{proposition}
	The linearized MHD--Euler equations \eqref{outer-4-order-equation} have a solution
	for each $2\leq k\leq11$,
	\[
		(\bar{u}_e^{(k)}, v_e^{(k)},g_e^{(k)},h_e^{(k)}, p_e^{(k)})
	\]
	satisfying
	\begin{align}\label{estimate-of-fourth-linearized-euler-equation}
		 & |\partial_\theta \bar{u}_e^{(k)}+v_e^{(k)}|(\theta,r)+|\partial_\theta g_e^{(k)}+h_e^{(k)}|(\theta,r)\leq C\eta r, \ \forall (\theta, r)\in \Omega,\nonumber       \\
		 & \ |\partial_\theta v_e^{(k)}-\bar{u}_e^{(k)}|(\theta,r)+ \ |\partial_\theta h_e^{(k)}-g_e^{(k)}|(\theta,r)\leq C\eta r,  \ \forall (\theta, r)\in \Omega,\nonumber \\[5pt]
		 & \|\partial^i_\theta\partial^j_r(\bar{u}_e^{(k)},v_e^{(k)},g_e^{(k)},h_e^{(k)})\|_2\leq C(i,j,k)\eta, \quad \forall i,j\geq 0;                                      \\[5pt]
		 & r^2\Delta \bar{u}_e^{(k)} -\bar{u}_e^{(k)}+2\partial_\theta v_e^{(k)}= r^2\Delta g_e^{(k)} -g_e^{(k)}+2\partial_\theta h_e^{(k)}= 0,\nonumber                \\[5pt]
		 & \int_{0}^{2\pi} v_e^{(k)}\,\ud\theta=\int_{0}^{2\pi} h_e^{(k)}\,\ud\theta=\int_{0}^{2\pi} g_e^{(k)}\,\ud\theta=0.\nonumber
	\end{align}
	In addition,
	\begin{align}
		H_{e}^{(k)}=\sum_{i=1}^{k-1} A_{i\infty}rh_e^{(k-i)}\label{eq-of-h_e-k}.
	\end{align}
\end{proposition}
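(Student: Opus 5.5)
We prove the proposition by induction on \(k\), with \(2\leq k\leq 11\).  The approach copies the first-order construction.  The induction hypothesis is that the Euler profiles \((\bar u_e^{(i)},v_e^{(i)},g_e^{(i)},h_e^{(i)})\) for \(i<k\) satisfy the estimates of the proposition.  In particular, each has the form of an \(r\)-power Fourier series.  Their velocity parts are generated by harmonic functions \(rv_e^{(i)}\) in the sense of the first-order Euler proposition.  Their magnetic parts are given by \(g_e^{(i)}=\frac ba\widetilde u_e^{(i)}\) and \(h_e^{(i)}=\frac ba v_e^{(i)}\), up to a correction that we track through \eqref{eq-of-h_e-k}.

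\textbf{Step 1: the magnetic source.}  First we prove \eqref{eq-of-h_e-k}.  Substitute \(\bar u_e^{(i)}=\widetilde u_e^{(i)}+A_{i\infty}r\) into
\[
H_e^{(k)}=\sum_{i=1}^{k-1}\bar u_e^{(i)}h_e^{(k-i)}-g_e^{(i)}v_e^{(k-i)}.
\]
The remaining sum \(\sum_i(\widetilde u_e^{(i)}h_e^{(k-i)}-g_e^{(i)}v_e^{(k-i)})\) should cancel in pairs.  The reason is that \(h_e^{(j)}\) and \(g_e^{(j)}\) are proportional to \(v_e^{(j)}\) and \(\widetilde u_e^{(j)}\) with the common factor \(b/a\), modulo terms that are themselves multiples of \(A_{i\infty}\).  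Pairing the index \(i\) with \(k-i\) then gives
\[
\frac ba\big(\widetilde u_e^{(i)}v_e^{(k-i)}-\widetilde u_e^{(i)}v_e^{(k-i)}\big)=0.
\]
If the proportionality fails at order \(A\), we will include it in the induction hypothesis as
\[
ah_e^{(j)}-bv_e^{(j)}=\sum_{i<j}A_{i\infty}h_e^{(j-i)},
\]
which is just the third equation at lower order.  We then check the cancellation with this ansatz.  Since \(\int_{\mathbb T}h_e^{(j)}\,\ud\theta=0\), it follows that \(\int_{\mathbb T}H_e^{(k)}\,\ud\theta=0\), which is the periodic compatibility.

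\textbf{Step 2: reduction to one elliptic problem.}  Next we eliminate the pressure as in \eqref{equation-for-first-euler-normal}.  Take \(\partial_r\) of the first momentum equation minus \(\partial_\theta\) of the second divided by \(r\), and use the divergence relations.  This gives
\[
-ar\Delta(rv_e^{(k)})+br\Delta(rh_e^{(k)})=\mathrm{curl}\text{-source}(F_e^{(k)},G_e^{(k)}).
\]
Here we claim that the curl of the nonlinear Euler--Euler interactions vanishes.  The lower profiles are built from \(r^{n-1}e^{in\theta}\) modes, which are of potential/Couette type with zero vorticity perturbation (this is what \(r^2\Delta u-u+2\partial_\theta v=0\) encodes).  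The quadratic Euler terms then reduce to gradients, plus \(A\)-terms that are absorbed consistently.  Substituting \(h_e^{(k)}=(brv_e^{(k)}-H_e^{(k)})/(ar)\) yields
\[
-\frac{a^2-b^2}{a}\,r\Delta(rv_e^{(k)})=\text{known},
\]
with \(|a^2-b^2|\geq c_0\).  We solve the resulting Dirichlet problem with boundary datum \(rv_e^{(k)}|_{r=1}=-v_p^{(k)}|_{Y=0}\), which has zero mean by the Prandtl propositions.  Then \(u_e^{(k)}\) is defined from the divergence equation mode by mode, and \((g_e^{(k)},h_e^{(k)})\) from the algebraic relation and the magnetic divergence.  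The pressure is recovered from the first two equations.  Smoothness of the data, and regularity at \(r=0\) (no \(r^{-n}\) modes), give the \(L^2\) derivative bounds \(C(i,j,k)\eta\) and the pointwise bounds \(O(\eta r)\).  The Couette-type identities \(r^2\Delta\bar u_e^{(k)}-\bar u_e^{(k)}+2\partial_\theta v_e^{(k)}=0\) follow from the harmonic structure.  The \(A_{k\infty}r\) shift preserves them, since \(r^2\Delta(r)-r=0\).

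\textbf{Main obstacle.}  The hard step is verifying that the curl of \(F_e^{(k)},G_e^{(k)}\) and the \(A\)-dependent part of \(H_e^{(k)}\) do not destroy the potential-mode structure, so that the induction closes.  We would first check this explicitly for \(k=2\) mode by mode.  If an inhomogeneous term survives, we would instead solve the Poisson problem with source and replace the pointwise identities by the corresponding \(O(\eta r)\) bounds.  The zero means of \(g_e^{(k)}\) and \(h_e^{(k)}\) must also be checked: \(\int_{\mathbb T} g_e^{(k)}\,\ud\theta=0\) should follow from the magnetic far-field compatibility argument analogous to \(g_\infty=0\), i.e.\ from the identity \(C_0=2\beta\varepsilon^2\).
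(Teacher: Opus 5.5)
Your outline follows the paper's proof: induction on the $A$-shifted proportionality of the magnetic and velocity corrections, elimination of the pressure, vanishing of the curl of the quadratic source, reduction to $\Delta(rv_e^{(k)})=0$ through $a^2\neq b^2$, and the Fourier representation. However, the two steps that carry the proposition are deferred; one has a sign error and the other comes with a fallback that would not work.

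\emph{Step~1.} The third equation $-arh_e^{(j)}+brv_e^{(j)}=H_e^{(j)}$, together with \eqref{eq-of-h_e-k} at order $j$, gives $ah_e^{(j)}-bv_e^{(j)}=-\sum_{i<j}A_{i\infty}h_e^{(j-i)}$, not $+$. Your own substitution in Step~2 uses the correct sign.

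You also need the companion relation $g_e^{(j)}=\frac ba\widetilde u_e^{(j)}-\sum_{i<j}\frac{A_{i\infty}}{a}g_e^{(j-i)}$. It follows by inserting the $h$-relation into $\partial_\theta g_e^{(j)}=-\partial_r(rh_e^{(j)})$, using $\partial_\theta\widetilde u_e^{(j)}=-\partial_r(rv_e^{(j)})$, and integrating in $\theta$. The integration constant vanishes only because $\int_{\mathbb T}g_e^{(j)}\,\ud\theta=0$ is imposed. The order-$j$ system does not determine this radial zero mode (the radial momentum equation absorbs it into $p_e^{(j)}$), so it is a normalization, and it is the one compatible with zero current at higher order. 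In the paper it is the \emph{input} that yields $g_\infty=0$ in the next Prandtl layer, so deriving it from that far-field argument, as you suggest, is circular.

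With both relations, the $b/a$ parts cancel term by term, but the $A$-parts do not. After shifting the index in the $g$-sum they regroup as $\sum_{j}\frac{A_{j\infty}}{a}\sum_{p+q=k-j}\big(-\widetilde u_e^{(p)}h_e^{(q)}+g_e^{(p)}v_e^{(q)}\big)$. Each inner sum vanishes exactly by the induction hypothesis at order $k-j<k$. This regrouping \emph{is} the proof of \eqref{eq-of-h_e-k}, and ``we then check'' leaves it out.

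\emph{Step~2.} The right-hand side you call ``known'' is exactly zero, and the proof needs this. Your contingency of solving a Poisson problem and replacing the identities by $O(\eta r)$ bounds would not prove the statement. The exact identities $r^2\Delta\bar u_e^{(k)}-\bar u_e^{(k)}+2\partial_\theta v_e^{(k)}=0$ and its magnetic analogue are asserted in the statement. They, together with zero current, are what make the diffusion terms in $F_e^{(k+2)},G_e^{(k+2)},H_e^{(k+2)}$ vanish, so without them the induction does not close.

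The exact vanishing follows from the planar identity $\operatorname{curl}[(\mathbf u\cdot\nabla)\mathbf w+(\mathbf w\cdot\nabla)\mathbf u]=\mathbf u\cdot\nabla\operatorname{curl}\mathbf w+\mathbf w\cdot\nabla\operatorname{curl}\mathbf u$ for divergence-free fields. The sums defining $F_e^{(k)},G_e^{(k)}$ are symmetric in $(i,k-i)$. Each $\bar u_e^{(i)}$ has constant vorticity $2A_{i\infty}$, and each magnetic correction has zero current by the $g$-relation above. Hence the curl source vanishes, $A$-terms included.

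Finally, $rh_e^{(k)}=\frac ba rv_e^{(k)}-\frac1aH_e^{(k)}$, and \eqref{eq-of-h_e-k} gives $\Delta H_e^{(k)}=\sum_iA_{i\infty}\Delta(rh_e^{(k-i)})=0$ inductively. Therefore $\Delta(rv_e^{(k)})=0$ exactly, and only then does the harmonic representation deliver the listed properties.
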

\begin{proof}
	We first prove \eqref{eq-of-h_e-k}. Assume inductively that
	\begin{align}\label{eq-of-h_e-j}
		H_{e}^{(j)}=\sum_{i=1}^{j-1} A_{i\infty}rh_e^{(j-i)},\quad \forall j\leq k-1.
	\end{align}
	Since $H_{e}^{(j)}=\sum_{i=1}^{j-1} \bar{u}_e^{(i)}h_e^{(j-i)}-g_e^{(i)}v_e^{(j-i)}$ for $j\leq k-1$, the inductive assumption \eqref{eq-of-h_e-j} is equivalent to
	\begin{align}\label{property-of-hek}
		\sum_{i=1}^{j-1} \widetilde u_e^{(i)}h_e^{(j-i)}-g_e^{(i)}v_e^{(j-i)}=0.
	\end{align}
	Combining $-arh_e^{(j)}+brv_e^{(j)}=H_e^{(j)}$ with the divergence-free conditions and the zero-mean condition for $g_e^{(j)}$, we have
	\begin{align*}
		h_e^{(j)}=\frac{b}{a}v_e^{(j)}-\sum_{i=1}^{j-1} \frac{A_{i\infty}}{a}h_e^{(j-i)}, \\
		g_e^{(j)}=\frac{b}{a}\widetilde u_e^{(j)}-\sum_{i=1}^{j-1} \frac{A_{i\infty}}{a}g_{e}^{(j-i)}.
	\end{align*}
	Then
	\begin{align*}
		 & \quad\sum_{i=1}^{k-1}\widetilde u_e^{(i)}h_e^{(k-i)}-g_{e}^{(i)}v_e^{(k-i)}                                                                                                                                                                \\
		 & = \sum_{i=1}^{k-1} \widetilde u_e^{(i)}h_e^{(k-i)}- \sum_{i=1}^{k-1} g_{e}^{(k-i)}v_e^{(i)}                                                                                                                                                \\
		 & =\sum_{i=1}^{k-1} \widetilde u_e^{(i)}   ( \frac{b}{a}v_e^{(k-i)}-\sum_{j=1}^{k-i-1} \frac{A_{j\infty}}{a}h_e^{(k-i-j)} )- \sum_{i=1}^{k-1}v_e^{(i)} ( \frac{b}{a}\widetilde u_e^{(k-i)}-\sum_{j=1}^{k-i-1} \frac{A_{j\infty}}{a}g_{e}^{(k-i-j)} ) \\
		 & =\sum_{i=1}^{k-1} \sum_{j=1}^{k-i-1}\frac{A_{j\infty}}{a}(-\widetilde u_e^{(i)} h_e^{(k-i-j)} + g_{e}^{(k-i-j)} v_e^{(i)})                                                                                                                   \\
		 & =\sum_{j=1}^{k-2}\sum_{i=1}^{k-j-1}\frac{A_{j\infty}}{a}(-\widetilde u_e^{(i)} h_e^{(k-i-j)} + g_{e}^{(k-i-j)} v_e^{(i)})                                                                                                                    \\
		 & =\sum_{j=1}^{k-2}\frac{A_{j\infty}}{a} \sum_{i=1}^{k-j-1}(-\widetilde u_e^{(i)} h_e^{(k-i-j)} + g_{e}^{(k-i-j)} v_e^{(i)})                                                                                                                   \\
		 & =\sum_{j=1}^{k-2}\frac{A_{j\infty}}{a} \sum_{p+q=k-j}(-\widetilde u_e^{(p)} h_e^{(q)} + g_{e}^{(p)} v_e^{(q)})                                                                                                                               \\
		 & =0.
	\end{align*}
	The last equality follows from \eqref{property-of-hek}.  Hence
	\begin{align*}
		-arh_e^{(k)}+brv_e^{(k)}=\sum_{i=1}^{k-1} A_{i\infty}rh_e^{(k-i)}.
	\end{align*}
	After eliminating the pressure, the curl of the source vanishes.  Indeed,
	grouping the ordered lower-order pairs and using the divergence-free
	conditions reduces the curl to derivatives of the lower-order vorticities
	and currents.  These vanish because the velocity corrections have constant
	vorticity and the magnetic corrections have zero current.
	This is the same as $\Delta(rv_e^{(i)})=\Delta(rh_e^{(i)})=0$ for all
	$i\leq k-1$.  We therefore obtain
	\begin{align*}
		-a\Delta(rv_e^{(k)})+b\Delta(rh_e^{(k)})= -\frac{a^2-b^2}{a}\Delta(rv_e^{(k)}) =0.
	\end{align*}
	Since $|a|\neq|b|$,
	\begin{align*}
		\Delta(rv_e^{(k)}) =0.
	\end{align*}
	The remaining properties in \eqref{estimate-of-fourth-linearized-euler-equation} then follow from the harmonic representation of $rv_e^{(k)}$, the algebraic relation above, and the divergence-free equations.
\end{proof}
\subsubsection{Higher-order linearized Prandtl equations}

For $i\geq1$, let
$\bar{u}_p^{(i)}(\theta,Y)=u_p^{(i)}(\theta,Y)-A_{i\infty}$.  We also set
\[
	A_{0\infty}=0,\qquad \bar u_p^{(0)}=u_p^{(0)},\qquad
	\bar u_e^{(0)}=u_e,
\]
and every profile with a negative superscript is understood to be zero.  We
make the following formal expansion
\begin{align*}
	u^\varepsilon(\theta,1+\varepsilon Y)= & u_e(1+\varepsilon Y)+u_p^{(0)}(\theta,Y)+\sum_{i=1}^{k-1}\varepsilon^i\big[ \bar{u}_e^{(i)}(\theta,1+\varepsilon Y)+\bar{u}_p^{(i)}(\theta,Y)\big] \\[-3pt]
	                                    & +\varepsilon^k\big[u_e^{(k)}(\theta,1+\varepsilon Y)+ u_p^{(k)}(\theta,Y)\big]+\cdots,                                                              \\[-3pt]
	v^\varepsilon(\theta,1+\varepsilon Y)= & \sum_{i=1}^{k}\varepsilon^i\big[v_e^{(i)}(\theta,1+\varepsilon Y)+v_p^{(i)}(\theta,Y)\big]
	+\varepsilon^{k+1}[v_e^{(k+1)}(\theta,1+\varepsilon Y) +v_p^{(k+1)}(\theta,Y)]+\cdots,                                                                                                  \\[-3pt]
	g^\varepsilon(\theta,1+\varepsilon Y)= & g_e(1+\varepsilon Y)+g_p^{(0)}(\theta,Y)+\sum_{i=1}^{k-1}\varepsilon^i\big[g_e^{(i)}(\theta,1+\varepsilon Y)+g_p^{(i)}(\theta,Y)\big]
	\\[-3pt]
	                                    & +\varepsilon^k\big[g_e^{(k)}(\theta,1+\varepsilon Y)+ g_p^{(k)}(\theta,Y)\big]+\cdots,                                                              \\[-3pt]
	h^\varepsilon(\theta,1+\varepsilon Y)= & \sum_{i=1}^{k}\varepsilon^i\big[h_e^{(i)}(\theta,1+\varepsilon Y)+h_p^{(i)}(\theta,Y)\big]
	+\varepsilon^{k+1}[h_e^{(k+1)}(\theta,1+\varepsilon Y) +h_p^{(k+1)}(\theta,Y)]+\cdots,                                                                                               \\[-3pt]
	p^\varepsilon(\theta,1+\varepsilon Y)= & p_e(1+\varepsilon Y)+\sum_{i=1}^{k}\varepsilon^i\big[p_e^{(i)}(\theta,1+\varepsilon Y)+p_p^{(i)}(\theta,Y)\big]+\cdots
\end{align*}
Substituting these expansions into \eqref{mhd-polar}, with the following boundary conditions for $2\leq k\leq 10$,
\begin{align*}
	&u_e^{(k)}(\theta,1)+u_p^{(k)}(\theta,0)=0,\quad g_e^{(k)}(\theta,1)+g_p^{(k)}(\theta,0)=0,\\
	&v_e^{(k+1)}(\theta,1)+v_p^{(k+1)}(\theta,0)=0,\quad
	\lim_{Y\rightarrow -\infty}(\partial_Yu_p^{(k)},v_p^{(k+1)},\partial_Yg_p^{(k)},h_p^{(k+1)})=(0,0,0,0).
\end{align*}
At the top order $k=11$, no twelfth Euler profile is
introduced.  We take $v_e^{(12)}(\theta,1)=h_e^{(12)}(\theta,1)=0$ and
impose $v_p^{(12)}(\theta,0)=0$.  We retain the zero-mean limits
\[
V_{12,\infty}(\theta)=\lim_{Y\to-\infty}v_p^{(12)}(\theta,Y),
\qquad
H_{12,\infty}(\theta)=\lim_{Y\to-\infty}h_p^{(12)}(\theta,Y)
\]
and also allow $g_p^{(11)}$ to have a constant limit.  After subtracting
these limits, the profiles decay rapidly.  Their cutoff terms are of order
$\varepsilon^{11}$ or higher and are covered by the residual estimate below.
For $2\leq k\leq11$, define the pressure profile before writing the
$k$th Prandtl system by
\[
	\partial_Yp_p^{(k)}=g_{k-1},\qquad
	p_p^{(k)}(\theta,-\infty)=0,
\]
where $g_{k-1}$ depends only on profiles constructed at lower orders.  Its
explicit expression is recorded below.
We obtain the following linearized Prandtl problem for $(u_p^{(k)},v_p^{(k+1)},g_p^{(k)},h_p^{(k+1)})$
\begin{equation}
\left \{
\begin {array}{ll}
\big(u_e(1)+u_p^{(0)}\big)\partial_\theta u_p^{(k)}+\big(v_e^{(1)}(\theta,1)+ v_p^{(1)}\big)\partial_Yu_p^{(k)}+u_p^{(k)}\partial_\theta u_p^{(0)} \\[5pt]
\quad \quad \quad + (v_p^{(k+1)}+v_e^{(k+1)}(\theta,1))\partial_{Y}u_p^{(0)}
-\big(g_e(1)+g_p^{(0)}\big)\partial_\theta g_p^{(k)}
-\big(h_e^{(1)}(\theta,1)+h_p^{(1)}\big)\partial_Yg_p^{(k)}\\[5pt]
\quad \quad \quad -g_p^{(k)}\partial_\theta g_p^{(0)}
-\big(h_e^{(k+1)}(\theta,1)+h_p^{(k+1)}\big)\partial_Yg_p^{(0)}
-\kappa\partial_{YY}u_p^{(k)}=f_p^{(k)},\\[5pt]
-(u_e(1)+u_p^{(0)})(h_e^{(k+1)}(\theta,1)+h_p^{(k+1)})-u_p^{(k)}(h_e^{(1)}(\theta,1)+h_p^{(1)})\\[5pt]
+(g_e(1)+g_p^{(0)})(v_e^{(k+1)}(\theta,1)+v_p^{(k+1)})+g_p^{(k)}(v_e^{(1)}(\theta,1)+v_p^{(1)})-\partial_Yg_p^{(k)} =\mathcal{G}_p^{(k)},\\[5pt]
\partial_\theta u_p^{(k)}+\partial_Yv_p^{(k+1)}+\partial_Y(Yv_p^{(k)})=0,\\[5pt]
\partial_\theta g_p^{(k)}+\partial_Yh_p^{(k+1)}+\partial_Y(Yh_p^{(k)})=0,\\[5pt]
u_p^{(k)}(\theta,Y)=u_p^{(k)}(\theta+2\pi,Y),\quad v_p^{(k+1)}(\theta,Y)=v_p^{(k+1)}(\theta+2\pi,Y),\\[5pt]
g_p^{(k)}(\theta,Y)=g_p^{(k)}(\theta+2\pi,Y),\quad h_p^{(k+1)}(\theta,Y)=h_p^{(k+1)}(\theta+2\pi,Y),\\[5pt]
u_p^{(k)}\big|_{Y=0}=-u_e^{(k)}\big|_{r=1},\quad g_p^{(k)}\big|_{Y=0}=-g_e^{(k)}\big|_{r=1},\\[5pt]
{
\begin{cases}
\displaystyle\lim_{Y\to-\infty}
(\partial_Yu_p^{(k)},v_p^{(k+1)},\partial_Yg_p^{(k)},h_p^{(k+1)})=0,
&2\leq k\leq10,\\[3pt]
\displaystyle\lim_{Y\to-\infty}
(\partial_Yu_p^{(11)},v_p^{(12)}-V_{12,\infty},
\partial_Yg_p^{(11)},h_p^{(12)}-H_{12,\infty})=0,&k=11,
\end{cases}}
\label{third-linearized-prandtl-problem-near-1}
\end{array}
\right.
\end{equation}
where the known source terms $f_p^{(k)}$ and $\mathcal{G}_p^{(k)}$ are given by
	{\small \begin{align*}
			f_p^{(k)}(\theta,Y)= & -\partial_\theta p_p^{(k)}+\kappa Y\partial_{YY}u_p^{(k-1)}
			+\kappa\partial_Yu_p^{(k-1)}
			+\sum_{l=0}^{k-2}(-1)^l(k-l-3)\frac{Y^l\partial_{\theta\theta}\bar{u}_p^{(k-l-2)}}{l!} \\
			               & +2\partial_\theta v_p^{(k-2)}-2Y\partial_\theta v_p^{(k-3)}
			-\sum_{l=0}^{k-2}(-1)^l\frac{Y^l\bar{u}_p^{(k-2-l)}}{l!} \\
			               & -\sum_{i+j=k, 1\leq i\leq k-1}\bar{u}_p^{(i)}\partial_\theta\bar{u}_p^{(j)}
			-\sum_{i+j=k }[v_p^{(i)}Y\partial_Y\bar{u}_p^{(j)}+\bar{u}_p^{(i)}v_p^{(j)}]                          \\
			               & -\sum_{l=0}^k\sum_{i+j=k-l, (l,j)\neq (0,k)}\Big(\frac{\partial_r^l\bar{u}_e^{(i)}(\theta,1)}{l!}Y^l\partial_\theta\bar{u}_p^{(j)}
			+\bar{u}_p^{(i)}\frac{\partial_r^l\partial_\theta\bar{u}_e^{(j)}(\theta,1)}{l!}Y^l\Big)                                                                                                                                                                   \\
			               & -\sum_{l=0}^k\sum_{i+j=k-l}\Big(\frac{\partial_r^lv_e^{(i)}(\theta,1)}{l!}Y^{l+1}\partial_Y\bar{u}_p^{(j)}
			+v_p^{(i)}\frac{\partial_r^l(r\partial_r\bar{u}_e^{(j)})(\theta,1)}{l!}Y^l\Big)                                                                                                                                                                           \\
			               & -\sum_{l=0}^k\sum_{i+j=k+1-l, (l,j)\neq (0,k),i\leq k}\frac{\partial_r^lv_e^{(i)}(\theta,1)}{l!}Y^l\partial_Y\bar{u}_p^{(j)}                                                                                                             \\
			               & -\sum_{\substack{i+j=k+1\\2\leq i\leq k,\ 1\leq j\leq k-1}} v_p^{(i)}\partial_Y\bar{u}_p^{(j)}
			               +\sum_{\substack{i+j=k+1\\2\leq i\leq k,\ 1\leq j\leq k-1}} h_p^{(i)}\partial_Yg_p^{(j)}                                                                                                                        \\
			               & -\sum_{l=0}^k\sum_{i+j=k+1-l}\Big(\frac{\partial_r^l\bar{u}_e^{(i)}(\theta,1)}{l!}Y^{l}v_p^{(j)}
			+\bar{u}_p^{(i)}\frac{\partial_r^lv_e^{(j)}(\theta,1)}{l!}Y^l\Big)                                                                                                                                                                                        \\
			               & +\sum_{i+j=k, 1\leq i\leq k-1}g_p^{(i)}\partial_\theta g_p^{(j)}
			+\sum_{i+j=k }[h_p^{(i)}Y\partial_Yg_p^{(j)}+g_p^{(i)}h_p^{(j)}]                                                                                                         \\
			               & +\sum_{l=0}^k\sum_{i+j=k-l, (l,j)\neq (0,k)}\Big(\frac{\partial_r^l g_e^{(i)}(\theta,1)}{l!}Y^l\partial_\theta g_p^{(j)}
			+g_p^{(i)}\frac{\partial_r^l\partial_\theta g_e^{(j)}(\theta,1)}{l!}Y^l\Big)                                                                                                                                                                              \\
			               & +\sum_{l=0}^k\sum_{i+j=k-l}\Big(\frac{\partial_r^lh_e^{(i)}(\theta,1)}{l!}Y^{l+1}\partial_Yg_p^{(j)}
			+h_p^{(i)}\frac{\partial_r^l(r\partial_rg_e^{(j)})(\theta,1)}{l!}Y^l\Big)                                                                                                                                                                         \\
			               & +\sum_{l=0}^k\sum_{i+j=k+1-l, (l,j)\neq (0,k),i\leq k}\frac{\partial_r^lh_e^{(i)}(\theta,1)}{l!}Y^l\partial_Yg_p^{(j)}                                                                                                                   \\
			               & +\sum_{l=0}^k\sum_{i+j=k+1-l}\Big(\frac{\partial_r^l g_e^{(i)}(\theta,1)}{l!}Y^{l}h_p^{(j)}
			+g_p^{(i)}\frac{\partial_r^lh_e^{(j)}(\theta,1)}{l!}Y^l\Big),
		\end{align*}}
and
	{\small \begin{align*}
			\mathcal{G}_p^{(k)}(\theta,Y)= & \sum_{i=1}^{k-1}[u_p^{(i)}+\sum_{l=0}^i\frac{1}{l!}\partial_r^lu_e^{(i-l)}(\theta,1)Y^l]h_p^{(k-i)}  \\
			               & +{\sum_{i=1}^{k-1}u_p^{(i)}
			               \sum_{l=0}^{k-i}\frac{1}{l!}\partial_r^lh_e^{(k-i-l)}(\theta,1)Y^l}    \\
			               & -\sum_{i=1}^{k-1}[g_p^{(i)}+\sum_{l=0}^i\frac{1}{l!}\partial_r^lg_e^{(i-l)}(\theta,1)Y^l]v_p^{(k-i)} \\
			               & -{\sum_{i=1}^{k-1}g_p^{(i)}
			               \sum_{l=0}^{k-i}\frac{1}{l!}\partial_r^lv_e^{(k-i-l)}(\theta,1)Y^l} \\
			               & +\sum_{l=0}^{k-2}Y^{k-l-2}g_p^{(l)}+\sum_{l=0}^{k-2}(k-l-3)Y^{k-l-2}h_p^{(l)}.
			\end{align*}}

\begin{proposition}[Higher-order MHD--Prandtl profiles]
	There exists $\eta_0>0$ such that, for any $\eta\in(0,\eta_0)$,
	\eqref{third-linearized-prandtl-problem-near-1} has a solution
	$(u_p^{(k)},v_p^{(k+1)},g_p^{(k)},h_p^{(k+1)})$ for $2\leq k\leq11$,
	unique in the weighted class below.
	Define
	\[
	\mathcal Q_k=(u_p^{(k)}-A_{k\infty},
	v_p^{(k+1)}-V_{k+1,\infty},
	g_p^{(k)}-\lim_{Y\to-\infty}g_p^{(k)},
	h_p^{(k+1)}-H_{k+1,\infty}).
	\]
	\begin{align}\label{decay-behavior-prandtl-3}
		 & {\sum_{i+j\leq m}\int_{-\infty}^0\int_0^{2\pi}
		 \big|\partial_\theta^i\partial_Y^j\mathcal Q_k\big|^2
		 \langle Y\rangle^{2l}\,\ud\theta\,\ud Y
		 \leq C(m,l,k)\eta^2,}\quad m,l\geq 0, \nonumber \\
		 & \int_0^{2\pi}v_p^{(k+1)}(\theta,Y)\,\ud\theta=\int_0^{2\pi}h_p^{(k+1)}(\theta,Y)\,\ud\theta=0,\qquad \forall \ Y\leq 0,
	\end{align}
	Here $A_{k\infty}=\lim_{Y\to-\infty}u_p^{(k)}$,
	$V_{k+1,\infty}=H_{k+1,\infty}=0$ for $2\leq k\leq10$, while the two
	top-order normal traces are those defined above.  For $2\leq k\leq10$,
	$\lim_{Y\to-\infty}g_p^{(k)}=0$; no such normalization is imposed at
	$k=11$.  All these limits are bounded by $C(k)\eta$.
\end{proposition}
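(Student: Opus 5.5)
The proof goes by induction on \(k\), in the same way as the first-order linearized Prandtl proposition. For each \(2\leq k\leq 11\) the linear operator acting on \((u_p^{(k)},v_p^{(k+1)},g_p^{(k)},h_p^{(k+1)})\) in \eqref{third-linearized-prandtl-problem-near-1} is exactly the operator of \eqref{first-linearized-prandtl-problem-near-1}: the coefficients are \(\bar u,\bar v,\bar g,\bar h\), built from the leading profiles, and the normal components are slaved to the tangential ones. Only the sources \(f_p^{(k)},\mathcal G_p^{(k)}\) and the boundary data change. The induction hypothesis is that all profiles of order \(<k\) satisfy \eqref{prandtl-estimate}, \eqref{decay-behavior-prandtl-1}, and \eqref{decay-behavior-prandtl-3} at lower orders. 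The Euler profiles of order \(\leq k\) satisfy \eqref{estimate-of-fourth-linearized-euler-equation}, together with \eqref{eq-of-h_e-k}.

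First I homogenize. With the cutoff \(\chi_0\) (\(\chi_0(0)=1\), \(\int\chi_0=0\)), set \(u=u_p^{(k)}+u_e^{(k)}(\theta,1)\chi_0\) and \(g=g_p^{(k)}+g_e^{(k)}(\theta,1)\chi_0\). For \(v,h\) I absorb \(Yv_p^{(k)}\), \(Yh_p^{(k)}\), the traces \(v_e^{(k+1)}(\theta,1)\), \(h_e^{(k+1)}(\theta,1)\), and the primitive of \(\chi_0\). This gives a system of the form \eqref{new-linearized-prandtl-equation} with new sources \(F^*_k,G^*_k\). The essential check is that \(F^*_k\), \(G^*_k\) and \(\partial_YG^*_k\) decay in every weighted norm \(\langle Y\rangle^l H^m\), with bound \(C(k)\eta\). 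The polynomial factors \(Y^l\) in \(f_p^{(k)}\) and \(\mathcal G_p^{(k)}\) always multiply either a decaying lower-order profile (after \(A_{i\infty}\) is subtracted) or a \(Y\)-derivative of one. The non-decaying pieces are products of far-field constants \(A_{i\infty}\) with Euler traces. I will show these cancel, because the Euler profiles solve \eqref{outer-4-order-equation}, whose sources \(F_e^{(k)},G_e^{(k)},H_e^{(k)}\) are exactly the far-field limits of the Prandtl source; identity \eqref{eq-of-h_e-k} handles the magnetic relation. This is where the choice of the modified \(\bar u_e^{(i)}=u_e^{(i)}+A_{i\infty}r\) is used. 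The pressure \(p_p^{(k)}\) obtained from \(\partial_Yp_p^{(k)}=g_{k-1}\) decays for the same reason.

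Next comes solvability. I rerun the energy estimate (multiplier \((u,g)\)) and the positivity estimate (multiplier \((a u_\theta-bu_\theta\text{-type},\ \kappa(ag_\theta-bu_\theta))\)) to get \eqref{lax-mil-prandtl}. Here \(|a|\neq|b|\) supplies the tangential coercivity. The full operator is then an \(O(\eta)\) perturbation of the invertible \(\mathcal L_0\). Weighted bounds for all \((m,l)\) follow by differentiating in \(\theta\) and \(Y\), inducting on the weight, and using Hardy's inequality. Periodic integration of the divergence equations gives the zero means of \(v_p^{(k+1)},h_p^{(k+1)}\), once the traces \(v_e^{(k+1)}(\theta,1)=V_\infty\) and \(h_e^{(k+1)}(\theta,1)=H_\infty\) are assigned for \(k\leq10\). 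At \(k=11\) I set \(v_e^{(12)}=h_e^{(12)}=0\) and retain \(V_{12,\infty},H_{12,\infty}\) as stated.

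The main obstacle is the magnetic far-field normalization \(\lim g_p^{(k)}=0\) for \(2\leq k\leq 10\), since the energy method does not control the zero mode of \(g\) at \(-\infty\). I will reproduce the "first magnetic far-field compatibility" argument at order \(k\). Write the magnetic equation at order \(t^{k+1}\) in flux-divergence form using \(\Phi_k(t)=\sum t^{j+1}\phi_j\) and the magnetic divergence equation, then integrate over \(\mathbb T\times(-R,0)\). In the limit \(R\to\infty\), the polynomially growing terms in \(Y\) of the boundary term \(B(Y)\) must vanish. This uses the zero means of the Euler normal traces, the relations \(a h_{j}=bv_j+(\text{lower order})\) coming from \eqref{eq-of-h_e-k}, and skew-symmetry identities \(\int c_i(c_j)_\theta=-\int c_j(c_i)_\theta\). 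The identity \eqref{eq:magnetic-flux-constant} with \(C_0=2\beta\varepsilon^2\) is the bookkeeping check that no constant is left over. What remains is \(\int(g_p^{(k)}(\theta,0)-g_\infty)\,\ud\theta=0\), and the zero mean of \(g_e^{(k)}\) then gives \(g_\infty=0\). At \(k=11\) this step is not needed, because the constant limit is absorbed into the residual.
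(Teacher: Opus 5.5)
Your proposal is correct and follows the paper's proof. After the $\chi_0$ lift, the order-$k$ system has the same tangential operator $\mathcal L$ as the first-order problem, so invertibility as an $O(\eta)$ perturbation of $\mathcal L_0$, together with the weighted energy and positivity estimates, gives existence, uniqueness and \eqref{decay-behavior-prandtl-3}; the normal far-field traces are absorbed into the next Euler profiles (retained as $V_{12,\infty},H_{12,\infty}$ at $k=11$), and $\lim_{Y\to-\infty}g_p^{(k)}=0$ for $k\leq10$ comes from repeating the integrated scalar-induction compatibility argument. Your account of why the $A_{i\infty}$-times-Euler pieces of $f_p^{(k)},\mathcal G_p^{(k)}$ are harmless (via \eqref{outer-4-order-equation} and \eqref{eq-of-h_e-k}) only spells out what the paper compresses into ``the induction hypothesis gives the displayed weighted bounds''; note that the positivity multiplier should read $(au_\theta-bg_\theta,\ \kappa(ag_\theta-bu_\theta))$.
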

\begin{proof}
{
The induction hypothesis gives the displayed weighted bounds for
$f_p^{(k)}$, $\mathcal G_p^{(k)}$, and their derivatives.  After lifting the
boundary values of $u_p^{(k)}$ and $g_p^{(k)}$, the divergence and scalar
induction equations recover the normal variables exactly as in the
first-order problem.  The resulting equation for the two tangential
variables has the same operator $\mathcal L$ as above.  Its inverse and the
source bounds give existence, uniqueness, and
\eqref{decay-behavior-prandtl-3}.

For $2\leq k\leq10$, the normal traces are absorbed into the next Euler
profiles, and integration of the scalar induction equation, as in the
first-order problem, gives $\lim_{Y\to-\infty}g_p^{(k)}=0$.  At $k=11$ the
remaining far-field limits are simply retained.
}
\end{proof}
For completeness, the source $g_{k-1}$ used above is
	{\small \begin{align*}
			g_{k-1}(\theta,Y)= & \kappa\partial_{YY}v_p^{(k-1)}
			+\kappa Y\partial_{YY}v_p^{(k-2)}+\kappa\partial_{Y}v_p^{(k-2)}
			+\kappa\partial_{\theta\theta}v_p^{(k-3)} \\[5pt]
			                   & -\sum_{i=0}^{k-3}(-1)^i\frac{Y^i}{i!}\partial_\theta \bar{u}_p^{(k-i-1)}
			-v_p^{(k-3)}-Y\partial_Yp_p^{(k-1)}                                                                                                                                       \\[5pt]
			                   & -\sum_{i+j=k-1}[\bar{u}_p^{(i)}\partial_\theta v_p^{(j)}
			+v_p^{(i)}Y\partial_Y v_p^{(j)}-\bar{u}_p^{(i)}\bar{u}_p^{(j)}]
			-\sum_{i+j=k} v_p^{(i)}Y\partial_Y v_p^{(j)} \\[5pt]
			                   & -\sum_{l=0}^{k-2}\sum_{i+j=k-l}\frac{\partial_r^lv_e^{(i)}(\theta,1)}{l!}Y^{l}\partial_Y v_p^{(j)} \\[5pt]
			                   & -\sum_{l=0}^{k-1}\sum_{i+j=k-l-1}\Big(\frac{\partial_r^l\bar{u}_e^{(i)}(\theta,1)}{l!}Y^{l}\partial_\theta v_p^{(j)}
			+\bar{u}_p^{(i)}\frac{\partial_r^l\partial_\theta v_e^{(j)}(\theta,1)}{l!}Y^l\Big)                                                                                                                                                                                                         \\[5pt]
			                   & +\sum_{l=0}^{k-1}\sum_{i+j=k-l-1}\Big(\frac{\partial_r^l\bar{u}_e^{(i)}(\theta,1)}{l!}Y^{l}\bar{u}_p^{(j)}
			+\bar{u}_p^{(i)}\frac{\partial_r^l\bar{u}_e^{(j)}(\theta,1)}{l!}Y^l\Big)                                                                                                                                                                                                                   \\[5pt]
			                   & -\sum_{l=0}^{k-3}\sum_{i+j=k-l-1}\Big(\frac{\partial_r^lv_e^{(i)}(\theta,1)}{l!}Y^{l+1}\partial_Y v_p^{(j)}
			+v_p^{(i)}\frac{\partial_r^l(r\partial_r v_e^{(j)})(\theta,1)}{l!}Y^l\Big)                                                                                                                                                                                                                 \\
			                   & +\sum_{i+j=k-1}[g_p^{(i)}\partial_\theta h_p^{(j)}
			+h_p^{(i)}Y\partial_Y h_p^{(j)}-g_p^{(i)}g_p^{(j)}] \\[5pt]
			                   & +\sum_{i+j=k} h_p^{(i)}Y\partial_Y h_p^{(j)}
			+\sum_{l=0}^{k-2}\sum_{i+j=k-l}\frac{\partial_r^lh_e^{(i)}(\theta,1)}{l!}Y^{l}\partial_Y h_p^{(j)}                   \\[5pt]
			                   & +\sum_{l=0}^{k-1}\sum_{i+j=k-l-1}\Big(\frac{\partial_r^lg_e^{(i)}(\theta,1)}{l!}Y^{l}\partial_\theta h_p^{(j)}
			+g_p^{(i)}\frac{\partial_r^l\partial_\theta h_e^{(j)}(\theta,1)}{l!}Y^l\Big)                                                                                                                                                                                                               \\[5pt]
			                   & -\sum_{l=0}^{k-1}\sum_{i+j=k-l-1}\Big(\frac{\partial_r^lg_e^{(i)}(\theta,1)}{l!}Y^{l}g_p^{(j)}
			+g_p^{(i)}\frac{\partial_r^lg_e^{(j)}(\theta,1)}{l!}Y^l\Big)                                                                                                                                                                                                                               \\[5pt]
			                   & +\sum_{l=0}^{k-3}\sum_{i+j=k-l-1}\Big(\frac{\partial_r^lh_e^{(i)}(\theta,1)}{l!}Y^{l+1}\partial_Y h_p^{(j)}
			+h_p^{(i)}\frac{\partial_r^l(r\partial_r h_e^{(j)})(\theta,1)}{l!}Y^l\Big).
		\end{align*}}
The same normal equation defines $p_p^{(12)}$, normalized by
$p_p^{(12)}(\theta,0)=0$.
For $k\leq11$ the source decays rapidly.  At $k=12$ it may have
a constant limit, so $p_p^{(12)}$ has at most affine growth.

\subsection{Approximate solutions}

In this subsection, we construct an approximate solution of the MHD equations \eqref{mhd-polar}.
Fix $\chi\in C^\infty([0,1])$ such that
$\chi=0$ on $[0,1/2]$ and $\chi=1$ on $[3/4,1]$.
Set
\begin{align*}
	u_p^a(\theta,r)&:=\chi(r)\Big(u_p^{(0)}(\theta,Y)+\sum_{i=1}^{11}\varepsilon^i\bar{u}_p^{(i)}(\theta,Y)\Big), \\
	v_p^a(\theta,r)&:=\chi(r)\Big(\sum_{i=1}^{12}\varepsilon^i v_p^{(i)}(\theta,Y)\Big),                          \\
	g_p^a(\theta,r)&:=\chi(r)\Big(g_p^{(0)}(\theta,Y)+\sum_{i=1}^{11}\varepsilon^ig_p^{(i)}(\theta,Y)\Big), \\
	h_p^a(\theta,r)&:=\chi(r)\Big(\sum_{i=1}^{12}\varepsilon^i h_p^{(i)}(\theta,Y)\Big),                          \\
	p_p^a(\theta,r)&:=\chi^2(r)\Big(\sum_{i=1}^{12}\varepsilon^i p_p^{(i)}(\theta,Y)\Big),
\end{align*}
and
\begin{align*}
	u_e^a(\theta,r)&:=u_e(r)+\sum_{i=1}^{11}\varepsilon^i \bar{u}_e^{(i)}(\theta,r),&
	v_e^a(\theta,r)&:=\sum_{i=1}^{11}\varepsilon^i v_e^{(i)}(\theta,r), \\
	g_e^a(\theta,r)&:=g_e(r)+\sum_{i=1}^{11}\varepsilon^i g_e^{(i)}(\theta,r),&
	h_e^a(\theta,r)&:=\sum_{i=1}^{11}\varepsilon^i h_e^{(i)}(\theta,r), \\
	p_e^a(\theta,r)&:=p_e(r)+\sum_{i=1}^{11}\varepsilon^i p_e^{(i)}(\theta,r).
\end{align*}
We construct an approximate solution
\begin{align}\label{approximate-solution}
	u^a(\theta,r)&:=u_e^a(\theta,r)+u_p^a(\theta,r)+\varepsilon^{12} u_L(\theta,r),\nonumber \\[3pt]
	v^a(\theta,r)&:=v_e^a(\theta,r)+v_p^a(\theta,r),\nonumber                                   \\[3pt]
	g^a(\theta,r)&:=g_e^a(\theta,r)+g_p^a(\theta,r),\nonumber       \\[3pt]
	p^a(\theta,r)&:=p_e^a(\theta,r)+p_p^a(\theta,r),
\end{align}
where the velocity corrector $u_L(\theta,r)$ is constructed in
Appendix~\ref{app:correctors} and satisfies
\begin{align*}
	u_L(\theta,1)=0,\qquad
	\|\partial_\theta^j\partial_r^ku_L\|_2
	\leq C(j,k)\eta\varepsilon^{-k}.
\end{align*}
It is chosen so that $(u^a,v^a)$ is divergence-free.
Using the zero-mean periodic primitive $\mathcal I$ defined above, define the
approximate stream functions and magnetic potentials by
\begin{align*}
	\Phi_e^a(\theta,r)&:=\int_r^1 u_e^a(\theta,\rho)\,\ud\rho
	+\mathcal I\big[v_e^a(\cdot,1)\big](\theta),\\
	\Phi_p^a(\theta,r)&:=\int_r^1 u_p^a(\theta,\rho)\,\ud\rho
	+\mathcal I\big[v_p^a(\cdot,1)\big](\theta),\\
	\Phi_L(\theta,r)&:=\int_r^1 u_L(\theta,\rho)\,\ud\rho,\\
	\Pi_e^a(\theta,r)&:=-\int_0^r g_e^a(\theta,\rho)\,\ud\rho,\\
	\Pi_p^a(\theta,r)&:=-\int_0^r g_p^a(\theta,\rho)\,\ud\rho.
\end{align*}
Then
\[
	\Phi^a=\Phi_e^a+\Phi_p^a+\varepsilon^{12}\Phi_L,\qquad
	\Pi^a=\Pi_e^a+\Pi_p^a,
\]
and
\[
	u^a=-\Phi^a_r,\quad v^a=\frac{\Phi^a_\theta}{r},\qquad
	g^a=-\Pi^a_r,\quad h^a=\frac{\Pi^a_\theta}{r}.
\]
Thus the approximate magnetic field is divergence-free by construction and
its prescribed tangential trace is unchanged.  To compare its normal
component with the profiles used in the expansion, set
\[
\bar h^a=h_e^a+h_p^a,
\qquad K_g=\partial_\theta g^a+\partial_r(r\bar h^a).
\]
The profile divergence equations give
\begin{equation}\label{magnetic-normal-correction}
h^a=\bar h^a+h_c^a,
\qquad
h_c^a(\theta,r)=-\frac1r\int_0^rK_g(\theta,\rho)\,\ud\rho.
\end{equation}
Write $h_p^{(12)}=H_{12,\infty}+\widetilde h_p^{(12)}$.  The explicit defect
formula in Appendix~\ref{app:correctors} gives
\[
K_g=\varepsilon^{12}\partial_r(r\chi H_{12,\infty})
+\widetilde K_g.
\]
The term $\widetilde K_g$ contains only decaying Prandtl profiles near the
boundary, and hence its radial primitive gains one power of $\varepsilon$.
Thus,
\begin{align}\label{magnetic-normal-correction-estimate}
h_c^a&=-\varepsilon^{12}\chi H_{12,\infty}+\widetilde h_c^a,
&|\partial_\theta^j\partial_r^k\widetilde h_c^a|
&\leq C(j,k)\eta\varepsilon^{13-k},\nonumber\\
|\partial_\theta^jh_c^a|+|\partial_\theta^j\partial_rh_c^a|
+\varepsilon|\partial_\theta^j\partial_r^2h_c^a|
&\leq C(j)\eta\varepsilon^{12}.&&
\end{align}
No boundary value of $h^a$ is prescribed, so this normal correction requires
no further boundary corrector.  We have, in particular,
\[
u^a_\theta+rv^a_r+v^a=0,
\qquad g^a_\theta+rh^a_r+h^a=0.
\]
Moreover, $(u^a, v^a,g^a, h^a)$ satisfies the following boundary conditions:
\begin{align*}
	u^a(\theta+2\pi,r) & =u^a(\theta,r), \ v^a(\theta+2\pi,r)=v^a(\theta,r),                                    \\[5pt]
	g^a(\theta+2\pi,r) & =g^a(\theta,r), \ h^a(\theta+2\pi,r)=h^a(\theta,r),                                    \\[5pt]
	u^a(\theta, 1)     & =\alpha+\eta f(\theta), \ v^a(\theta,1)=0,\ g^a(\theta, 1)=\beta+\eta \widehat f(\theta).
\end{align*}
In addition, $v^a$ and $h^a$ satisfy
\begin{align*}
	\int_{0}^{2\pi}v^a\,\ud\theta=\int_{0}^{2\pi}h^a\,\ud\theta=0.
\end{align*}

Since the constant \(A_{i\infty}\) is absorbed into
\(\bar u_e^{(i)}\) through the correction \(A_{i\infty}r\), estimates
\eqref{prandtl-estimate},
\eqref{estimate-of-first-combined-linearized-euler-equation1},
\eqref{decay-behavior-prandtl-1},
\eqref{estimate-of-fourth-linearized-euler-equation}, and
\eqref{decay-behavior-prandtl-3} imply
\begin{align}\label{estimate-on-euler-parts}
	\begin{aligned}
		 & \|u^a_e-ar\|_{L^\infty(\Omega)}
		 +\|(\partial_r(u^a_e-ar),\partial_\theta u^a_e)\|_{L^\infty(\Omega)}
		 \leq C\varepsilon\eta,\\
		 & \|v^a_e\|_{L^\infty(\Omega)}
		 +\|(\partial_rv^a_e,\partial_\theta v^a_e)\|_{L^\infty(\Omega)}
		 \leq C\varepsilon\eta,\\[5pt]
		 & {\|g^a_e-br\|_{L^\infty(\Omega)}
		 +\|(\partial_r(g^a_e-br),\partial_\theta g^a_e)\|_{L^\infty(\Omega)}
		 \leq C\varepsilon\eta,}\\
		 & {\|h^a_e\|_{L^\infty(\Omega)}
		 +\|(\partial_rh^a_e,\partial_\theta h^a_e)\|_{L^\infty(\Omega)}
		 \leq C\varepsilon\eta,}\\[5pt]
		 & |\partial_\theta u_e^a(\theta,r)+v_e^a(\theta,r)|\leq C\varepsilon \eta r, \ \ |\partial_\theta v_e^a(\theta,r)-u_e^a(\theta,r)+ar|\leq C\varepsilon \eta r, \ \forall (\theta, r)\in \Omega
	\end{aligned}
\end{align}
and
\begin{align*}
	\begin{aligned}
		&\|(Y^j\partial_Y^ku_p^a,Y^j\partial_\theta^ku_p^a)\|_{L^\infty(\Omega)}
		+\|(Y^j\partial_Y^kg_p^a,Y^j\partial_\theta^kg_p^a)\|_{L^\infty(\Omega)}
		\leq C\eta,\\
		&\|(Y^j\partial_Y^kv_p^a,Y^j\partial_\theta^kv_p^a)\|_{L^\infty(\Omega)}
		+\|(Y^j\partial_Y^kh_p^a,Y^j\partial_\theta^kh_p^a)\|_{L^\infty(\Omega)}
		\leq C\varepsilon\eta,
		\qquad \forall j\leq 2,\ k\leq 1.
	\end{aligned}
\end{align*}
The nondecaying top limits satisfy the same bounds since
$\varepsilon^{11}|Y|^j\leq1$ and
$\varepsilon^{12}|Y|^j\leq\varepsilon$ for $j\leq2$; the possible affine
part of $\varepsilon^{12}p_p^{(12)}$ gives only
$O(\varepsilon^{11})$ cutoff terms.

Finally, set
\begin{align*}
	R_u^a= & u^a u^a_\theta+rv^a u^a_r+u^a v^a-g^a g^a_\theta-rh^a g^a_r-g^a h^a+p^a_\theta-\kappa \varepsilon^2\big(ru^a_{rr}+ u^a_{r}+\frac{u^a_{\theta\theta}}{r}+\frac{2}{r}v^a_\theta-\frac{ u^a}{r}\big), \\[5pt]
	R_v^a= & u^a v^a_\theta+rv^av^a_r-(u^a)^2-g^a h^a_\theta-rh^a h^a_r+(g^a)^2 +rp^a_r-\kappa\varepsilon^2\big(rv^a_{rr}+v^a_{r}+\frac{ v^a_{\theta\theta}}{r}-\frac{2}{r} u^a_\theta-\frac{ v^a}{r}\big),     \\[5pt]
	R_g^a= & -u^a rh^a+rv^a g^a-\varepsilon^2 ( rg^a_{r}+g^a-h^a_\theta)+2\beta\varepsilon^2r
\end{align*}
and use these definitions throughout the residual estimate.
\begin{proposition}[Residual bounds]\label{prop:residual-estimate}
The approximate solution satisfies
\begin{align}\label{remainder-estimate}
\left(\int_0^1\int_0^{2\pi}
\frac{(r\partial_rR_u^a)^2+(R_v^a)^2+(R_g^a)^2}{r}
\,\ud\theta\,\ud r\right)^{1/2}
\leq C\varepsilon^{10},
\end{align}
and
\begin{align}\label{remainder-estimate-in-polar-variables}
\int_0^1\int_0^{2\pi}
\frac{{|\partial_rR_u^a-r^{-1}\partial_\theta R_v^a|^2}
+|R_g^a|^2+|\partial_\theta R_g^a|^2}{r}
\,\ud\theta\,\ud r
\leq C\varepsilon^{20}.
\end{align}
Moreover,
\begin{align}\label{eq:pointwise-residual}
{\left|\partial_rR_u^a(\theta,r)
-\frac1r\partial_\theta R_v^a(\theta,r)\right|}
+|R_v^a(\theta,r)|+|R_g^a(\theta,r)|
\leq C\varepsilon^{10}r.
\end{align}
Let $\mathbf R_M^a$ be the Cartesian momentum residual, so
that $R_u^a$ and $R_v^a$ are $r$ times its tangential and radial
components, and let $R_\Pi^a=R_g^a/r$ be the scalar magnetic-potential
residual.  Then
\begin{equation}\label{cartesian-residual-H1}
\|\mathbf R_M^a\|_{H^1(B_1)}+
\|R_\Pi^a\|_{H^1(B_1)}\leq C\varepsilon^{10}.
\end{equation}
{
For every fixed \(0<\rho<1\), one also has
\begin{equation}\label{cartesian-residual-interior-H2}
\|\mathbf R_M^a\|_{H^2(B_\rho)}+
\|R_\Pi^a\|_{H^2(B_\rho)}\leq C_\rho\varepsilon^{10}.
\end{equation}
}
\end{proposition}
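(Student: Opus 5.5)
I will split the residual into an outer part supported where $\chi<1$ (i.e.\ $r\le 3/4$, or where cutoff derivatives act) and a boundary-layer part supported near $r=1$, and estimate each by Taylor expansion in $\varepsilon$. Throughout I will use the construction above: the profiles up to order $11$ in $u,g$ and order $12$ in $v,h$, the corrector $u_L$, and the normal correction $h_c^a$ from \eqref{magnetic-normal-correction}--\eqref{magnetic-normal-correction-estimate}.

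\emph{Outer region.} On $\{r\le 1/2\}$ we have $\chi=0$, so $(u^a,v^a,g^a,h^a)$ is the Euler expansion up to order $\varepsilon^{11}$ plus $\varepsilon^{12}u_L$ and $h_c^a$. By construction, the orders $\varepsilon^k$ with $k\le 11$ cancel in $R_u^a,R_v^a$, because the Euler equations \eqref{euler-1} and \eqref{outer-4-order-equation} hold. They also cancel in $R_g^a$, using the algebraic relation, the identity \eqref{eq-of-h_e-k}, and the constant $2\beta\varepsilon^2r$ from \eqref{eq:magnetic-flux-constant}. The viscous terms produce no mismatch, since each $\bar u_e^{(i)},v_e^{(i)}$ satisfies $r^2\Delta\bar u_e^{(i)}-\bar u_e^{(i)}+2\partial_\theta v_e^{(i)}=0$ and similarly for $g,h$. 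What remains are products of profiles of total order at least $\varepsilon^{12}$, together with the $u_L$ and $h_c^a$ contributions. Their size is $\varepsilon^{12}\eta$ in $L^\infty$ and in $L^2$ after derivatives.

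The factor $r$ in \eqref{eq:pointwise-residual}, and the integrability of the $1/r$ weights near the center, I will obtain from the polar structure of the outer profiles. They are built from $r^{n-1}e^{in\theta}$ harmonics plus $A_{i\infty}r$, so every component is $O(r)$ after the combination in \eqref{estimate-of-fourth-linearized-euler-equation}. On the transition region $1/2\le r\le 3/4$, each Prandtl profile is evaluated at $|Y|\gtrsim \varepsilon^{-1}$. The weighted bounds \eqref{prandtl-estimate}, \eqref{decay-behavior-prandtl-1} and \eqref{decay-behavior-prandtl-3}, with $l$ large, then make it $O(\varepsilon^{N})$ for any $N$, including cutoff derivatives. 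The only exceptions are the nondecaying top limits $V_{12,\infty},H_{12,\infty}$, the $g_p^{(11)}$ limit and the affine part of $p_p^{(12)}$. These carry $\varepsilon^{11}$ or $\varepsilon^{12}$ and contribute $O(\varepsilon^{11})$.

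\emph{Boundary layer.} On $\{3/4\le r\le1\}$ I will write $r=1+\varepsilon Y$ and Taylor expand the Euler traces $u_e^{(i)}(\theta,1+\varepsilon Y)$ to order $12-i$, with remainder $\varepsilon^{12}|Y|^{12-i}$ times bounded derivatives. Substituting into $R_u^a,R_v^a,R_g^a$, the coefficients of $\varepsilon^k$, $k\le 11$, vanish. This follows from the leading system \eqref{prandtl-problem-near-1}, the relation \eqref{original-prandtl}, the first-order system \eqref{first-linearized-prandtl-problem-near-1}, the higher-order systems \eqref{third-linearized-prandtl-problem-near-1}, and the pressure equations $\partial_Yp_p^{(k)}=g_{k-1}$. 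The leftover terms are polynomials in $Y$ multiplying decaying profiles, plus the top-order nondecaying traces. I will bound them in $L^2(\ud\theta\,\ud r)$ using $\ud r=\varepsilon\,\ud Y$. This gains $\varepsilon^{1/2}$, while each radial derivative costs $\varepsilon^{-1}$. Hence $\|r\partial_rR_u^a\|$ costs one power of $\varepsilon$, which is why \eqref{remainder-estimate} is stated at $\varepsilon^{10}$. The vorticity-type combination $\partial_rR_u^a-r^{-1}\partial_\theta R_v^a$ also loses at most one power relative to $\varepsilon^{11}$, consistent with \eqref{remainder-estimate-in-polar-variables}.

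\emph{Cartesian bounds.} For \eqref{cartesian-residual-H1} I will convert the polar bounds. Near $r=1$ the polar and Cartesian $H^1$ norms are equivalent. Near the origin, the residual is purely outer and smooth, of size $\varepsilon^{12}$, so it is a smooth Cartesian field. The interior $H^2$ bound \eqref{cartesian-residual-interior-H2} is easier, because on $B_\rho$ with $\rho<1$ the layer terms are $O(\varepsilon^N)$ and only outer terms remain.

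\emph{Main obstacle.} I expect the hardest step to be the bookkeeping at the top order. Specifically, I must check three things: that the nondecaying limits $V_{12,\infty}$, $H_{12,\infty}$, $\lim g_p^{(11)}$, the affine part of $p_p^{(12)}$, and $h_c^a$ enter $R_g^a$ and $\partial_rR_u^a$ only at order at least $\varepsilon^{11}$; that after one $\partial_r$ they still give $\varepsilon^{10}$; and that they do not spoil the pointwise factor $r$. I would first verify, from the explicit formula for $K_g$ in Appendix~A, that the $\varepsilon^{12}\chi H_{12,\infty}$ piece of $h_c^a$ exactly cancels the corresponding $\varepsilon^{12}\chi H_{12,\infty}$ in $h_p^a$ at leading order.
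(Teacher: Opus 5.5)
Your bookkeeping follows the paper's proof: cancellation through $\varepsilon^{11}$ where $\chi=1$, $O(\varepsilon^N)$ decay of the layer profiles on the support of $\chi'$, $O(\varepsilon^{11})$ contributions from the top-order limits and the affine part of $\varepsilon^{12}p_p^{(12)}$, and a factor $\varepsilon^{-1}$ per radial derivative. Two steps, however, are wrong as stated.

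\textbf{The factor $r$ at the center.} The outer components are not $O(r)$. The $n=1$ harmonics, e.g.\ $a_{11}\sin\theta-b_{11}\cos\theta$ in $u_e^{(1)}$, are polar components of a uniform Cartesian field and do not vanish at $r=0$. Grouping terms with the combinations $\partial_\theta u_e^a+v_e^a$ and $\partial_\theta v_e^a-u_e^a+ar$ can make $R_u^a$, $R_v^a$, $R_g^a$ termwise $O(r)$. It cannot give the $O(r)$ bound on $\partial_rR_u^a-r^{-1}\partial_\theta R_v^a$ in \eqref{eq:pointwise-residual}, because $R_v^a=O(r)$ only yields $r^{-1}\partial_\theta R_v^a=O(1)$. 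The needed cancellation is structural. Since \eqref{mhd-polar} is $r$ times the Cartesian system in polar components, one has $R_u^a=rF_\theta^a$, $R_v^a=rF_r^a$, $R_g^a=rR_\Pi^a$, and $\partial_rR_u^a-r^{-1}\partial_\theta R_v^a=r\operatorname{curl}\mathbf R_M^a$. On $r\le 1/2$ the layer terms, $u_L$ and $h_c^a$ all vanish. There the Cartesian residuals are finite sums of products of smooth Euler fields of total order at least $\varepsilon^{12}$, so their $C^1$ norms are $O(\varepsilon^{12})$. All three factors of $r$, the $1/r$ integrability, and \eqref{cartesian-residual-H1} near the origin then follow at once.

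\textbf{The source of the exponent $10$.} Leftovers of order $\varepsilon^{12}$ give $\varepsilon^{11}$ after one $\partial_r$, not $\varepsilon^{10}$. The $\varepsilon^{10}$ comes from the cancellation you plan to verify last, and that cancellation hurts rather than helps.
\begin{itemize}
\item Because $h_c^a=-\varepsilon^{12}\chi H_{12,\infty}+\widetilde h_c^a$, the actual $h^a=\Pi^a_\theta/r$ has lost the limit $H_{12,\infty}$.
\item The $k=11$ Prandtl momentum equation, however, was solved with the full $h_p^{(12)}$ in the term $-h_p^{(12)}\partial_Yg_p^{(0)}$.
\item Hence $-rh^ag^a_r$ leaves an uncancelled term $\varepsilon^{11}rH_{12,\infty}\partial_Yg_p^{(0)}$ inside the layer, where $\chi=1$.
\item Its radial derivative contains $\varepsilon^{10}rH_{12,\infty}\partial_{YY}g_p^{(0)}$. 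This is the paper's $\partial_r(h_c^a\partial_rg_p^{(0)})=O(\varepsilon^{10})$.
\end{itemize}
So for the actual approximate solution the $\varepsilon^{11}$ coefficient of $R_u^a$ does not vanish. Your planned check that $h_c^a$ enters $\partial_rR_u^a$ only at order $\varepsilon^{11}$ would fail. The stated $\varepsilon^{10}$ bounds still hold, but only once this $O(\varepsilon^{11})$ defect in $R_u^a$ is explicitly accounted for. It is exactly what limits the pointwise estimate to $\varepsilon^{10}$.
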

\begin{proof}
{\color{black}
On the region where \(\chi=1\), substitution of the expansion shows that
the coefficients through order \(\varepsilon^{11}\) vanish.  The leading
order follows from \eqref{prandtl-problem-near-1} and \eqref{euler-1}, the
first correction from \eqref{first-linearized-prandtl-problem-near-1} and
\eqref{equation-for-second-pressure}, and the remaining orders from
\eqref{outer-4-order-equation} and
\eqref{third-linearized-prandtl-problem-near-1}.  Hence the uncancelled bulk
terms start at order \(\varepsilon^{12}\).  One physical radial derivative
costs at most \(\varepsilon^{-1}\).

On the support of \(\chi'\), every term containing a decaying Prandtl profile
is \(O(\varepsilon^N)\) for any fixed \(N\).  The affine part of
\(\varepsilon^{12}p_p^{(12)}\) contributes \(O(\varepsilon^{11})\).  The
largest differentiated corrector terms are of order \(\varepsilon^{10}\).
Indeed, \eqref{magnetic-normal-correction-estimate} and the profile bounds give
\[
 \partial_r\!\left(h_c^a\partial_rg_p^{(0)}\right)
 =(\partial_rh_c^a)(\partial_rg_p^{(0)})
 +h_c^a\partial_{rr}g_p^{(0)}=O(\varepsilon^{10}),
\]
and the terms containing \(\varepsilon^{12}u_L\) satisfy the same bound by
Appendix~\ref{app:correctors}.  Therefore
\[
 \|\mathbf R_M^a\|_{H^1(\{r\geq1/2\})}
 +\|R_\Pi^a\|_{H^1(\{r\geq1/2\})}
 \leq C\varepsilon^{10}.
\]}

It remains to justify the factor at the polar center.  Write the smooth
Cartesian momentum residual as
\(\mathbf R_M^a=F_\theta^a\mathbf e_\theta+F_r^a\mathbf e_r\).
By definition,
\[
 R_u^a=rF_\theta^a,\qquad R_v^a=rF_r^a,\qquad
 R_g^a=rR_\Pi^a,
\]
and the polar curl identity gives
\begin{equation*}
\partial_rR_u^a-\frac1r\partial_\theta R_v^a
=r\,\operatorname{curl}\mathbf R_M^a.
\end{equation*}
On \(r\leq1/2\), the Prandtl and corrector terms vanish.  The regular Fourier
conditions for the smooth Euler profiles make
\(\mathbf R_M^a,R_\Pi^a\) smooth Cartesian functions at \(r=0\), and
{\color{black}the same coefficient cancellations give a
\(C\varepsilon^{10}\) bound for their \(C^1\)-norms.}  The three identities above prove
\eqref{eq:pointwise-residual}; integration proves
\eqref{remainder-estimate}--\eqref{remainder-estimate-in-polar-variables}
and \eqref{cartesian-residual-H1}.  {On every fixed disk \(B_\rho\) with
\(\rho<1\), the Prandtl profiles and their physical derivatives are rapidly
decaying.  Differentiating the same residual identities once more therefore
gives \eqref{cartesian-residual-interior-H2}.}
\end{proof}

\section{Linear stability estimates for error equations}

In this section, we derive the error equations and establish linear stability estimates.

\subsection{Error equations}
We write the error equations in terms of the velocity stream function and the magnetic potential.
Using the operator $\mathcal I$ defined above, set
\begin{align*}
	\Phi(\theta,r) & =\int_r^1 (u^\varepsilon-u^a)(\theta,\rho)\,\ud\rho,                                                                                                    \\
	\Pi(\theta,r)  & =\int_r^1 (g^\varepsilon-g^a)(\theta,\rho)\,\ud\rho+\mathcal{I}\big[(h^\varepsilon-h^a)(\cdot,1)\big](\theta).
\end{align*}
Then
\[
	u=-\Phi_r,\quad v=\frac{\Phi_\theta}{r},\qquad
	g=-\Pi_r,\quad h=\frac{\Pi_\theta}{r}.
\]
The approximate stream function and magnetic potential are defined in the same way, so that
\[
	u^a=-\Phi^a_r,\quad v^a=\frac{\Phi^a_\theta}{r},\qquad
	g^a=-\Pi^a_r,\quad h^a=\frac{\Pi^a_\theta}{r}.
\]
After eliminating the pressure, the error equations for $\Phi$ and $\Pi$ read
\begin{align}\label{error-equation}
	\left\{
	\begin{array}{lll}
		\Phi^a_\theta(\Delta\Phi)_r+\Phi_\theta(\Delta\Phi^a)_r
		-\Phi^a_r(\Delta\Phi)_\theta-\Phi_r(\Delta\Phi^a)_\theta                                             \\[3pt]
		\qquad -\Pi^a_\theta(\Delta\Pi)_r-\Pi_\theta(\Delta\Pi^a)_r
		+\Pi^a_r(\Delta\Pi)_\theta+\Pi_r(\Delta\Pi^a)_\theta
		-\kappa\varepsilon^2 r\Delta^2\Phi=F_1,                                                             \\[7pt]
		-\Phi^a_r\Pi_\theta-\Phi_r\Pi^a_\theta
		+\Phi^a_\theta\Pi_r+\Phi_\theta\Pi^a_r-\varepsilon^2 r\Delta\Pi=F_2,                                 \\[7pt]
		\Phi (\theta+2\pi,r)=\Phi(\theta,r), \ \Pi(\theta+2\pi,r)=\Pi(\theta,r),                             \\
		\Phi(\theta,1)=\Phi_r(\theta,1)=0,                                                                   \\
		\Pi_r(\theta,1)=0.
	\end{array}
	\right.
\end{align}
where
\begin{align*}
	 F_1=&\ {\partial_rR_u^a-\frac1r\partial_\theta R_v^a}
	-\big[\Phi_\theta(\Delta\Phi)_r-\Phi_r(\Delta\Phi)_\theta
	-\Pi_\theta(\Delta\Pi)_r+\Pi_r(\Delta\Pi)_\theta\big],              \\
	 F_2=&\ R^a_g+\Phi_r\Pi_\theta-\Phi_\theta\Pi_r.
\end{align*}

\subsection{Linear stability estimate}

In this subsection, we consider the linear equations
\eqref{error-equation} in the polar variables $(r,\theta)$.  All scalar
potentials and test functions are smooth Cartesian functions on the closed
disk, so the integrations by parts below have no boundary contribution at
$r=0$.  We also recall that the reduction to $\alpha>0$ and the relation
$a=\alpha+O(\eta^2)$ imply
\[
a\geq \frac{\alpha}{2}>0
\]
when $\eta_0$ is sufficiently small.  The coercive estimates use only
$|a^2-b^2|>0$.

For later reference we record the Hardy and polar Hessian identities used repeatedly.

\begin{lemma}[Boundary Hardy inequalities]
Let $q=q(\theta,r)$ be smooth and periodic in $\theta$. If $q(\theta,1)=0$, then
\[
\int_0^1\int_0^{2\pi}r\left|\frac{q}{1-r}\right|^2\,\ud\theta\,\ud r
\leq C\int_0^1\int_0^{2\pi}r|q_r|^2\,\ud\theta\,\ud r.
\]
If in addition $q_r(\theta,1)=0$, then
\[
\int_0^1\int_0^{2\pi}r\left|\frac{q}{(1-r)^2}\right|^2\,\ud\theta\,\ud r
\leq C\int_0^1\int_0^{2\pi}r|q_{rr}|^2\,\ud\theta\,\ud r.
\]
The same estimates hold after applying tangential derivatives.
\end{lemma}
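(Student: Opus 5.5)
The plan is to reduce both inequalities to one-dimensional Hardy inequalities in the variable $s=1-r\in(0,1)$, applied for each fixed $\theta$, and then to integrate in $\theta$. The weight $r$ is the only obstruction to a direct one-dimensional argument. It degenerates at $r=0$, whereas the singular weight $(1-r)^{-2}$ or $(1-r)^{-4}$ lives at $r=1$. I will therefore split the interval with a smooth cutoff and treat the two regions separately.

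\emph{Near the boundary} ($1/2\le r\le1$), we have $r\simeq1$. The classical inequality
\[
\int_0^{1/2}\frac{|q|^2}{s^2}\,\ud s\le 4\int_0^{1/2}|q_s|^2\,\ud s ,
\]
valid for $q(0)=0$, is proved by writing $q(s)=\int_0^s q_s$ and applying Minkowski's integral inequality (equivalently, integrating $\partial_s(s^{-1})|q|^2$ by parts). The boundary term at $s=1/2$ carries the favorable sign, or it can be handled with a cutoff. This gives the first estimate on $[1/2,1]$.

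For the second estimate, apply the same inequality with weight $s^{-4}$, in the form $\int s^{-4}|q|^2\le C\int s^{-2}|q_s|^2$, to the function $q$ with $q(0)=0$. Then apply the first inequality to $q_s$, which also vanishes at $s=0$ by the assumption $q_r(\theta,1)=0$. Chaining the two gives $\int s^{-4}|q|^2\le C\int |q_{ss}|^2$.

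\emph{In the interior} ($0\le r\le1/2$), the weight $(1-r)^{-k}$ is bounded. It therefore suffices to control $\int_0^{1/2} r|q|^2$ by the right-hand side. I will write
\[
q(r)=-\int_r^1 q_r(\rho)\,\ud\rho .
\]
By Cauchy--Schwarz, for $r\le1/2$,
\[
|q(r)|^2\le \int_r^1 \rho^{-1}\,\ud\rho\int_r^1\rho|q_r|^2\,\ud\rho\le |\log r|\int_0^1\rho|q_r|^2 .
\]
Since $r|\log r|$ is integrable near $0$, this yields $\int_0^{1/2}r|q|^2\le C\int_0^1 r|q_r|^2$.

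For the second inequality, I will bound $r|q_r|^2$ on $[0,1/2]$ in the same way using $q_r(1)=0$, which gives $\int_0^1 r|q_r|^2\le C\int_0^1 r|q_{rr}|^2$, and then combine this with the first inequality. The near-boundary bound for $q_r$ is already contained in the first step.

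Because $\partial_\theta$ commutes with every operation used and preserves the boundary conditions, the tangential versions follow immediately.

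I expect the only delicate point to be the bookkeeping at the junction $r=1/2$, where the two regional estimates must be glued. I would handle it by bounding the near-boundary part with the full integral $\int_0^1$ on the right-hand side, so that no boundary terms at $r=1/2$ need to be tracked. Otherwise the argument is routine.
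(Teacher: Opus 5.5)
Your proposal is correct and follows essentially the same route as the paper: the one-dimensional Hardy inequality on $[1/2,1]$, a Poincar\'e-type bound on $[0,1/2]$ that uses $q(\theta,1)=0$, and, for the second estimate, the first inequality applied to $q_r$ chained with a Hardy step obtained from $q=-\int_r^1 q_s\,\ud s$. Your logarithmic Cauchy--Schwarz bound simply spells out the degenerate-weight Poincar\'e step near $r=0$, which the paper only names.
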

\begin{proof}
The first estimate is the one-dimensional Hardy inequality on $[1/2,1]$, followed by an ordinary Poincar\'e estimate on $[0,1/2]$. Applying the first estimate to $q_r$ and integrating $q(r)=-\int_r^1q_s(s)\,\ud s$ once more gives the second estimate.
\end{proof}

\begin{lemma}[Polar Hessian identities]\label{lem:polar-hessian}
For a smooth scalar function $q$ on the disk, set
\[
|\nabla_p^2q|^2=q_{rr}^2+2\left(\frac{q_{r\theta}}r-\frac{q_\theta}{r^2}\right)^2
+\left(\frac{q_r}{r}+\frac{q_{\theta\theta}}{r^2}\right)^2.
\]
If $q=q_r=0$ at $r=1$, then
\[
\int_\Omega r(\Delta q)^2=\int_\Omega r|\nabla_p^2q|^2.
\]
If only $q_r=0$ at $r=1$, then
\[
\int_\Omega r(\Delta q)^2
=\int_\Omega r|\nabla_p^2q|^2
+\int_0^{2\pi}|q_\theta(\theta,1)|^2\,\ud\theta.
\]
Thus the first identity applies to $q=\Phi$ and $q=\Phi_\theta$, while the second applies to $q=\Pi$ and $q=\Pi_\theta$, with boundary terms $\Pi_\theta^2$ and $\Pi_{\theta\theta}^2$, respectively.
\end{lemma}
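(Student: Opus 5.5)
The plan is to reduce both identities to the standard Cartesian integration by parts for $(\Delta q)^2-|D^2q|^2$ on $B_1$. The quantity $|\nabla_p^2q|^2$ is simply $|D^2q|^2$ expressed in the orthonormal frame $(\mathbf e_r,\mathbf e_\theta)$, and the weighted polar integrals are ordinary area integrals.

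\emph{Step 1: identify the Hessian.} Differentiating $\nabla q=q_r\mathbf e_r+\frac{q_\theta}{r}\mathbf e_\theta$, using $\partial_\theta\mathbf e_r=\mathbf e_\theta$ and $\partial_\theta\mathbf e_\theta=-\mathbf e_r$, gives the frame components of $D^2q$:
\[
\langle D^2q\,\mathbf e_r,\mathbf e_r\rangle=q_{rr},\qquad
\langle D^2q\,\mathbf e_r,\mathbf e_\theta\rangle=\frac{q_{r\theta}}r-\frac{q_\theta}{r^2},\qquad
\langle D^2q\,\mathbf e_\theta,\mathbf e_\theta\rangle=\frac{q_r}{r}+\frac{q_{\theta\theta}}{r^2}.
\]
Hence $|\nabla_p^2q|^2=|D^2q|^2$ pointwise, with the off-diagonal entry counted twice. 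Since $\ud S=r\,\ud\theta\,\ud r$, we have $\int_\Omega r(\Delta q)^2=\int_{B_1}(\Delta q)^2\,\ud S$ and $\int_\Omega r|\nabla_p^2q|^2=\int_{B_1}|D^2q|^2\,\ud S$. Because $q$ is a smooth Cartesian function on the closed disk, the origin causes no difficulty.

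\emph{Step 2: Cartesian identity.} Integrating by parts twice, with summation over repeated indices,
\[
\int_{B_1}(\Delta q)^2-|D^2q|^2\,\ud S
=\int_{\partial B_1}\big(\partial_nq\,\Delta q-\nabla q\cdot\partial_n\nabla q\big)\,\ud s .
\]
This follows by writing $q_{ii}q_{jj}-q_{ij}q_{ij}=\partial_i(q_iq_{jj})-\partial_j(q_iq_{ij})$; the interior terms $-q_iq_{jji}$ and $+q_iq_{iji}$ cancel.

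\emph{Step 3: evaluate the boundary term on $r=1$.} Here $\partial_n=\partial_r$, and $\ud s=\ud\theta$.
\begin{itemize}
\item When $q_r(\theta,1)=0$ for all $\theta$, the first term vanishes.
\item Also $q_{r\theta}(\theta,1)=0$ by tangential differentiation, and $\nabla q=q_\theta\mathbf e_\theta$ on the boundary.
\item Since $\partial_r\mathbf e_r=\partial_r\mathbf e_\theta=0$, we get $\partial_r\nabla q=q_{rr}\mathbf e_r+\big(\frac{q_{r\theta}}r-\frac{q_\theta}{r^2}\big)\mathbf e_\theta$, which equals $q_{rr}\mathbf e_r-q_\theta\mathbf e_\theta$ at $r=1$.
\end{itemize}
Therefore $-\nabla q\cdot\partial_n\nabla q=q_\theta^2$, which gives the second identity with boundary term $\int_0^{2\pi}|q_\theta(\theta,1)|^2\,\ud\theta$. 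If in addition $q(\theta,1)=0$, then $q_\theta(\theta,1)=0$, and the first identity follows.

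\emph{Step 4: the stated applications.} The boundary conditions $\Phi=\Phi_r=0$ and $\Pi_r=0$ at $r=1$ are preserved by $\partial_\theta$. Hence $\Phi$ and $\Phi_\theta$ satisfy the clamped conditions, while $\Pi$ and $\Pi_\theta$ satisfy only the Neumann condition. Their boundary terms are $\Pi_\theta^2$ and $\Pi_{\theta\theta}^2$, respectively.

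The only step requiring care is the sign and curvature contribution in Step 3, namely the term $-q_\theta/r^2$ coming from the rotating frame. The rest is routine.
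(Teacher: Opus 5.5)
Your proof is correct, but it organizes the computation differently from the paper. The paper works entirely in polar coordinates. It expands $(\Delta q)^2-|\nabla_p^2q|^2$ and integrates by parts once in $\theta$ and once in $r$, which in effect gives
\[
\int_0^{2\pi} r\big[(\Delta q)^2-|\nabla_p^2q|^2\big]\,\ud\theta
=\int_0^{2\pi}\partial_r\Big(q_r^2-\frac{2q_\theta q_{r\theta}}{r}+\frac{q_\theta^2}{r^2}\Big)\,\ud\theta ,
\]
and then evaluates the bracket at $r=1$ and at $r=0$. At the center the bracket tends to $(\nabla q(0)\cdot\mathbf e_r)^2-(\nabla q(0)\cdot\mathbf e_\theta)^2$. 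This is not zero pointwise but has zero $\theta$-average, which is what ``smoothness removes the contribution at $r=0$'' amounts to.

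You instead identify $|\nabla_p^2q|^2$ with the Cartesian $|D^2q|^2$ through the rotating frame. You then use the coordinate-free identity $q_{ii}q_{jj}-q_{ij}q_{ij}=\partial_i(q_iq_{jj})-\partial_j(q_iq_{ij})$ on $B_1$. As a result the origin never appears as a boundary, and polar coordinates enter only when evaluating the outer boundary term.

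Your route is cleaner at the center. It also shows the boundary term to be a curvature effect: on a general smooth domain with $\partial_nq=0$, the same computation gives the boundary integral of the curvature times the squared tangential derivative. The paper's route has the advantage of staying inside the polar bookkeeping used in all later estimates.

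One small slip: in Step 2 the term that cancels $-q_iq_{jji}$ is $+q_iq_{ijj}$, not $q_iq_{iji}$.
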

\begin{proof}
Expand $(\Delta q)^2-|\nabla_p^2q|^2$ and integrate once in $r$ and once in $\theta$. Periodicity removes the tangential boundary terms, smoothness removes the contribution at $r=0$, and the stated terms remain at $r=1$.
\end{proof}

{\color{black}
\begin{lemma}
Let \(d(r)=1-r\).  For every fixed \(j,k,q\geq0\) needed below, the
approximate fields satisfy
\begin{align*}
\|\partial_\theta^k\partial_r^j(u^a-ar,g^a-br)\|_\infty
&\leq C_{j,k}\eta\varepsilon^{-j},\\
\|\partial_\theta^k\partial_r^j(v^a,h^a)\|_\infty
&\leq C_{j,k}\eta\varepsilon^{1-j},\\
\|d^q\partial_\theta^k\partial_r^j(u_p^a,g_p^a)\|_\infty
&\leq C_{j,k,q}\eta\varepsilon^{q-j},\\
\|d^q\partial_\theta^k\partial_r^j(v_p^a,h_p^a)\|_\infty
&\leq C_{j,k,q}\eta\varepsilon^{q+1-j}.
\end{align*}
The Euler parts additionally obey
\begin{align*}
|\partial_\theta u_e^a+v_e^a|
+|\partial_\theta v_e^a-u_e^a+ar|
+|\partial_\theta g_e^a+h_e^a|
+|\partial_\theta h_e^a-g_e^a+br|
\leq C\varepsilon\eta r.
\end{align*}
All estimates remain valid for the velocity and magnetic correctors at the
orders at which they occur.
\end{lemma}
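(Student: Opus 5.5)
The bounds are bookkeeping estimates for the explicit approximate solution of Section~2.3, and I would prove them piece by piece from the profile estimates already established. The four families are: the Euler part \((u_e^a,v_e^a,g_e^a,h_e^a)\), the cut-off Prandtl part \((u_p^a,v_p^a,g_p^a,h_p^a)\), the velocity corrector \(\varepsilon^{12}u_L\), and the magnetic normal correction \(h_c^a\).

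\emph{Euler part.} By \eqref{estimate-of-first-combined-linearized-euler-equation1} and \eqref{estimate-of-fourth-linearized-euler-equation}, every \(\varepsilon^i\bar u_e^{(i)}\), \(\varepsilon^i v_e^{(i)}\), \(\varepsilon^i g_e^{(i)}\), \(\varepsilon^i h_e^{(i)}\) with \(i\geq1\) has all \(\partial_\theta^k\partial_r^j\) derivatives in \(L^2\) bounded by \(C\eta\varepsilon^i\). These profiles are finite combinations of harmonic-type Fourier series \(r^{n-1}e^{in\theta}\) with rapidly decaying coefficients, so \(L^\infty\) bounds follow directly. This gives \(\|\partial^k_\theta\partial^j_r(u_e^a-ar,g_e^a-br)\|_\infty\leq C\eta\varepsilon\) and the same bound for \((v_e^a,h_e^a)\), with no loss in \(j\). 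The combined pointwise bounds \(|\partial_\theta u_e^a+v_e^a|\leq C\varepsilon\eta r\) and the others are summed from the corresponding \(O(\eta r)\) statements at each order, exactly as in \eqref{estimate-on-euler-parts}. For the magnetic pair I use the analogous bounds on \(g_e^{(k)},h_e^{(k)}\), since \(g_e^{(k)}=\frac ba\widetilde u_e^{(k)}+\cdots\).

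\emph{Prandtl part.} Each profile is \(\chi(r)Q(\theta,Y)\) with \(Y=(r-1)/\varepsilon\), so \(\partial_r^j\) produces \(\varepsilon^{-j}\partial_Y^j Q\) plus \(\chi\)-derivative terms supported in \(r\in[1/2,3/4]\). On that set \(|Y|\gtrsim\varepsilon^{-1}\) and the decaying profiles are \(O(\varepsilon^N)\). Weighted Sobolev embedding of \eqref{prandtl-estimate}, \eqref{decay-behavior-prandtl-1} and \eqref{decay-behavior-prandtl-3} gives \(|\langle Y\rangle^q\partial_\theta^k\partial_Y^j Q|\leq C\eta\) for every \(q\). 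Writing \(d^q=\varepsilon^q|Y|^q\) then yields \(\|d^q\partial^k_\theta\partial^j_r(u_p^a,g_p^a)\|_\infty\leq C\eta\varepsilon^{q-j}\). The normal components start at \(\varepsilon^1\), which gives the extra power of \(\varepsilon\).

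\emph{Non-decaying pieces and correctors.} The non-decaying pieces are the constants \(A_{k\infty}\), which are already absorbed into \(\bar u_e\), the top-order limits \(\varepsilon^{12}V_{12,\infty}\) and \(\varepsilon^{12}H_{12,\infty}\), and the possible constant limit of \(\varepsilon^{11}g_p^{(11)}\). For these I estimate directly: the \(\varepsilon^{11}\) or \(\varepsilon^{12}\) prefactor dominates \(\varepsilon^{-j}\) from \(\chi\)-derivatives, and weighting by \(d^q\) is harmless since \(d\leq1\). For \(u_L\) I would invoke the appendix bound \(\|\partial^j_\theta\partial^k_r u_L\|_2\leq C\eta\varepsilon^{-k}\) and upgrade it to \(L^\infty\) by one-dimensional Sobolev in both variables, losing at most \(\varepsilon^{-1}\), which is absorbed by \(\varepsilon^{12}\). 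For \(h_c^a\) I use \eqref{magnetic-normal-correction-estimate}.

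\emph{Main obstacle.} The delicate point is the full-field estimates \(\|\partial^k_\theta\partial^j_r(u^a-ar)\|_\infty\leq C\eta\varepsilon^{-j}\) at \(j=0\), and their analogues for \(g^a\). These require the leading Prandtl profiles to be \(O(\eta)\) in sup norm, including their constant far-field offsets. I would check this from \(u_p^{(0)}=u_p-a\) with \(|a-\alpha|=O(\eta^2)\) and the \(X_{m,l}\) ball radius \(r_0\sim\eta^2\), together with \(b=\beta\) and the \(O(\eta)\) lift \((w^b,g^b)\). Everything else is summation over the finitely many orders \(i\leq12\).
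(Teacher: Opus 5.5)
Your proposal is correct and is essentially the paper's own argument: split the approximate solution into Euler, cut-off Prandtl and corrector pieces, use $\partial_r=\varepsilon^{-1}\partial_Y$ and $d=\varepsilon|Y|$ with the arbitrary polynomial weights in \eqref{prandtl-estimate}, \eqref{decay-behavior-prandtl-1}, \eqref{decay-behavior-prandtl-3}, use the regular polar Fourier representations for the Euler profiles, and use Appendix~\ref{app:correctors} together with \eqref{magnetic-normal-correction-estimate} for the correctors.

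Two small remarks. First, your ``main obstacle'' is already contained in \eqref{prandtl-estimate}: Sobolev embedding gives $\|u_p^{(0)}\|_\infty+\|g_p^{(0)}\|_\infty\le C\eta$ directly, so no separate check of $a-\alpha$ or of the lift is needed. Second, the non-decaying top-order pieces are only $O(\eta\varepsilon^{11})$, so ``$d\le1$'' yields the $d^q$-weighted bounds for them only when $q\le 11$; this covers every weight actually used, which is why the lemma says ``needed below''.
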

}
\begin{proof}
For a Prandtl profile,
\(\partial_r=\varepsilon^{-1}\partial_Y\) and
\(d=\varepsilon|Y|\), so {\color{black}the weighted bounds follow directly from}
the arbitrary polynomial weights in
\eqref{prandtl-estimate}, \eqref{decay-behavior-prandtl-1}, and
\eqref{decay-behavior-prandtl-3}.  The Euler bounds follow from
\eqref{estimate-on-euler-parts} and the regular polar Fourier
representations.  Appendix~\ref{app:correctors} and
\eqref{magnetic-normal-correction-estimate} give the corrector bounds.
Adding the pieces proves the lemma.
\end{proof}

For a function $f$, let $f^0$ be its zero Fourier mode in $\theta$ and
\(\widetilde f=f-f^0\).  All integrals below are taken over \(\Omega\) with
respect to \(\ud\theta\,\ud r\) unless a weight is displayed.  Define
\[
	G=a\Pi_\theta-b\Phi_\theta-\frac{\Pi^a_\theta}{r}\Phi_r^0+\frac{\Phi^a_\theta}{r}\Pi_r^0 .
\]
We introduce three energies with different $\varepsilon$ scales:
\begin{align*}
	\left\{
	\begin{array}{lll}
		\mathbb{E}= & \displaystyle
		\varepsilon^2\int r\Big(
		|\nabla^2_{\!p}\Phi|^2+|\nabla^2_{\!p}\Pi|^2
		+|\nabla^2_{\!p}\Phi_\theta|^2+|\nabla^2_{\!p}\Pi_\theta|^2\Big),\\[7pt]
		\mathbb{P}= & \displaystyle
		\int\left(r\Phi_{r\theta}^2+\frac{\Phi_{\theta\theta}^2}{r}
		+r\Pi_{r\theta}^2+\frac{\Pi_{\theta\theta}^2}{r}\right),\\[7pt]
		\mathbb{L}= & \displaystyle
		\frac{1}{\varepsilon^2}\int rG^2.
	\end{array}
	\right.
\end{align*}
We also write
\[
	\mathbb P_\Phi=\int\left(r\Phi_{r\theta}^2+\frac{\Phi_{\theta\theta}^2}{r}\right),
	\qquad
	\mathbb P_\Pi=\int\left(r\Pi_{r\theta}^2+\frac{\Pi_{\theta\theta}^2}{r}\right),
\]
so that $\mathbb P=\mathbb P_\Phi+\mathbb P_\Pi$.

\subsubsection{A combination estimate}

This subsection establishes the combination estimate which replaces the missing boundary control for $\Pi$. The quantity $G$ is designed so that the leading part of the second equation in \eqref{error-equation} is $rG$, while the remaining terms either contain small approximate profiles or have good boundary behavior.
\begin{lemma}
	Let $(\Phi,\Pi)$ be a smooth solution of \eqref{error-equation}. Then
	\begin{align}\label{e:combination-estimate}
		\mathbb{L}+b\int r\Delta\Pi\,\Phi_\theta
		\leq C\eta(\mathbb{E}+\mathbb{P}+\mathbb{L})
		+\frac{C}{\varepsilon^2}\int \frac{F_2^2}{r}.
	\end{align}
\end{lemma}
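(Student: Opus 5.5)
The plan is to test the second (induction) equation in \eqref{error-equation} against $\varepsilon^{-2}G$ and integrate over $\Omega$. First I would rewrite its left-hand side as
\[
rG-\varepsilon^2r\Delta\Pi+\mathcal R,
\]
where $\mathcal R$ collects the non-leading coefficient terms. Since $-\Phi^a_r=u^a$ and $\Pi^a_r=-g^a$, the leading coefficients are $ar$ and $-br$. Splitting $\Phi_r=\Phi_r^0+\widetilde\Phi_r$ and $\Pi_r=\Pi_r^0+\widetilde\Pi_r$, the zero-mode pieces are exactly those built into $G$. The remainder is
\[
\mathcal R=(u^a-ar)\Pi_\theta-(g^a-br)\Phi_\theta-\Pi^a_\theta\widetilde\Phi_r+\Phi^a_\theta\widetilde\Pi_r .
\]
After multiplying by $\varepsilon^{-2}G$ and integrating, the term $rG\cdot\varepsilon^{-2}G$ gives $\mathbb L$ exactly.

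For the diffusion term $-\int r\Delta\Pi\,G$ I would treat the four parts of $G$ separately.
\begin{itemize}
\item The $a\Pi_\theta$ part vanishes. Indeed, $\int (r\Pi_r)_r\Pi_\theta=-\int r\Pi_r\Pi_{r\theta}=0$, where the boundary term drops because $\Pi_r(\theta,1)=0$ and the polar centre contributes nothing. Also $\int r^{-1}\Pi_{\theta\theta}\Pi_\theta=0$ by periodicity.
\item The $-b\Phi_\theta$ part produces precisely $+b\int r\Delta\Pi\,\Phi_\theta$, which is kept on the left-hand side as in \eqref{e:combination-estimate}.
\item The two zero-mode correction parts carry the coefficients $\Pi^a_\theta/r=h^a$ and $\Phi^a_\theta/r=v^a$, both $O(\varepsilon\eta)$ by the preceding lemma on the approximate fields. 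Their contribution is bounded by $C\varepsilon\eta\cdot\varepsilon^{-1}\mathbb E^{1/2}\cdot(\|\Phi^0_r\|+\|\Pi^0_r\|)$.
\end{itemize}
In the last item I still need to control the radial zero modes $\Phi_r^0,\Pi_r^0$ in $L^2$. For $\Phi$ this follows from Hardy and $\mathbb E$ with a compensating $\varepsilon$. For $\Pi^0_r$, which vanishes at $r=1$, Hardy together with $\varepsilon^{-1}\mathbb E^{1/2}$ gives the needed control. This is a point to check carefully.

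The main obstacle is the term $\varepsilon^{-2}\int(u^a-ar)\Pi_\theta G$. Its Euler part is harmless, since it is $O(\varepsilon\eta)$ and gives $C\eta\,\mathbb P^{1/2}\mathbb L^{1/2}$. The Prandtl part $u_p^a$ is, however, only $O(\eta)$, and $\Pi_\theta$ has no boundary condition. A direct trace bound loses $\varepsilon^{-1/2}$. My plan is to substitute the algebraic identity $a\Pi_\theta=G+b\Phi_\theta+(\text{zero-mode terms})$, which is legitimate because $a\ge\alpha/2$. This produces three kinds of terms.
\begin{itemize}
\item The $G$ term gives $\varepsilon^{-2}\int|u_p^a|G^2\le C\eta\,\mathbb L$.
\item The $\Phi_\theta$ term is handled by the Hardy inequality, since $\Phi_\theta=\Phi_{\theta r}=0$ at $r=1$. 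The weight $d^q$ on $u_p^a$ gives $\|u^a_p\Phi_\theta\|\le C\eta\varepsilon\|\Phi_\theta/d\|\le C\eta\varepsilon\,\mathbb P^{1/2}$. This yields $C\eta\,\mathbb P^{1/2}\mathbb L^{1/2}$.
\item The zero-mode terms again carry an $O(\varepsilon\eta)$ coefficient.
\end{itemize}
The term $(g^a-br)\Phi_\theta$ is treated directly by the same Hardy argument.

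The remaining remainder terms carry the factors $\Pi^a_\theta$ and $\Phi^a_\theta$, both $O(\varepsilon\eta)$. For these, the Poincaré inequality in $\theta$ gives $\|\widetilde\Phi_r\|+\|\widetilde\Pi_r\|\le C\mathbb P^{1/2}$, so these terms are also bounded by $C\eta\,\mathbb P^{1/2}\mathbb L^{1/2}$. Finally, Cauchy--Schwarz gives
\[
\varepsilon^{-2}\Big|\int F_2G\Big|\le \tfrac12\mathbb L+C\varepsilon^{-2}\int F_2^2/r,
\]
and absorbing $\tfrac12\mathbb L$ on the left yields \eqref{e:combination-estimate}.
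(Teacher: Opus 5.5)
Your overall route matches the paper's. You test the induction error equation with $\varepsilon^{-2}G$, split $u^a-ar$ and $g^a-br$ into Euler and Prandtl parts, substitute $a\Pi_\theta=G+b\Phi_\theta+\dots$ for the Prandtl coefficient, use Hardy on $\Phi_\theta$, and keep $b\int r\Delta\Pi\,\Phi_\theta$ on the left. There is, however, a genuine gap in the zero-mode pieces, exactly at the point you flagged.

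\textbf{Why the zero-mode bounds fail.} Neither $\mathbb P$ nor $\mathbb L$ sees the $\theta$-zero mode. $\mathbb P$ contains only $\theta$-derivatives, and $G^0=0$ because $h^a=\Pi^a_\theta/r$ and $v^a=\Phi^a_\theta/r$ have zero $\theta$-mean. The only available control is therefore
\[
\|\sqrt r\,\Phi^0_{rr}\|+\|\Phi^0_r/\sqrt r\|+\|\sqrt r\,\Pi^0_{rr}\|+\|\Pi^0_r/\sqrt r\|\le C\varepsilon^{-1}\mathbb E^{1/2}.
\]
No compensating $\varepsilon$ is left over. The factor $\varepsilon$ in $h^a,v^a=O(\varepsilon\eta)$ has already been spent against $\|\sqrt r\,\Delta\Pi\|\le C\varepsilon^{-1}\mathbb E^{1/2}$. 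Your bound for the diffusion correction terms is thus $C\eta\varepsilon^{-1}\mathbb E$, which does not close. Removing the zero mode of $\Delta\Pi$ (legitimate, since $h^a,v^a$ have zero mean) does not help either. The remainder $\Delta\widetilde\Pi$ still contains $\widetilde\Pi_{rr}$, which is again only of size $\varepsilon^{-1}\mathbb E^{1/2}$.

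\textbf{The missing step.} You need an integration by parts that moves the radial derivative off $\Pi$. With $Y=h^a\Phi_r^0-v^a\Pi_r^0$,
\[
\int r\Delta\Pi\,Y=-\int r\Pi_r\,Y_r-\int\frac{\Pi_\theta}{r}\,Y_\theta ,
\]
and the boundary term vanishes because $\Pi_r(\theta,1)=0$.
\begin{itemize}
\item Since $Y_r$ has zero $\theta$-mean, $\Pi_r$ may be replaced by $\widetilde\Pi_r$. Poincar\'e in $\theta$ gives $\int r\widetilde\Pi_r^{\,2}\le\mathbb P$, and likewise $\int\Pi_\theta^2/r\le\mathbb P$.
\item The derivative $\partial_r h_p^a$ is only $O(\eta)$. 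You therefore use $(1-r)\big(|\partial_rh^a|+|\partial_rv^a|\big)\le C\varepsilon\eta$, together with Hardy for $\Phi_r^0/(1-r)$ and $\Pi_r^0/(1-r)$; both vanish at $r=1$.
\end{itemize}
This yields $C\varepsilon\eta\,\mathbb P^{1/2}\cdot\varepsilon^{-1}\mathbb E^{1/2}\le C\eta(\mathbb E+\mathbb P)$.

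\textbf{Second instance of the same defect.} Your zero-mode term in the Prandtl part of the $(u^a-ar)\Pi_\theta G$ integral has the same problem. Using only $|u_p^a|\le C\eta$ and $|h^a|\le C\varepsilon\eta$ gives $C\eta^2\varepsilon^{-1}\mathbb E^{1/2}\mathbb L^{1/2}$. To close it, use the boundary-layer localization $(1-r)|u_p^a|\le C\varepsilon\eta$ together with Hardy on $\Phi_r^0/(1-r)$ and $\Pi_r^0/(1-r)$, exactly as you did for $\Phi_\theta$. This gives $C\eta^2\mathbb E^{1/2}\mathbb L^{1/2}$.
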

\begin{proof}
	We first rewrite the second equation in \eqref{error-equation}. Since
	$u^a=-\Phi^a_r$ and $g^a=-\Pi^a_r$, it becomes
	\begin{align*}
		&u^a\Pi_\theta-g^a\Phi_\theta-\Pi^a_\theta\Phi_r+\Phi^a_\theta\Pi_r-\varepsilon^2 r\Delta\Pi=F_2 .
	\end{align*}
	Using the definition of $G$ and separating the zero Fourier modes of $\Phi_r$ and $\Pi_r$, this identity is equivalent to
	\begin{align*}
		rG+(u^a-ar)\Pi_\theta-(g^a-br)\Phi_\theta
		-\Pi^a_\theta\widetilde{\Phi}_r
		+\Phi^a_\theta\widetilde{\Pi}_r
		-\varepsilon^2 r\Delta\Pi=F_2.
	\end{align*}
	Multiplying this equation by $\varepsilon^{-2}G$ and integrating over $\Omega$, we obtain
	\begin{align*}
		\mathbb{L}-\int r\Delta\Pi\,G
		=I_1+I_2+I_3+I_4,
	\end{align*}
	where
	\begin{align*}
		I_1=&-\frac1{\varepsilon^2}\int (u^a-ar)\Pi_\theta G,\\
		I_2=&\ \frac1{\varepsilon^2}\int (g^a-br)\Phi_\theta G,\\
		I_3=&\ \frac1{\varepsilon^2}\int \Pi^a_\theta\widetilde{\Phi}_rG
		-\frac1{\varepsilon^2}\int \Phi^a_\theta\widetilde{\Pi}_rG,\\
		I_4=&\ \frac1{\varepsilon^2}\int F_2G.
	\end{align*}

	We first estimate $I_1$. {Split
	$u^a-ar=(u_e^a-ar+\varepsilon^{12}u_L)+u_p^a$.} For the Euler part,
	\begin{align*}
		{\left|\frac1{\varepsilon^2}\int
		(u_e^a-ar+\varepsilon^{12}u_L)\Pi_\theta G\right|}
		&\leq {\frac1{\varepsilon^2}\int
		|u_e^a-ar+\varepsilon^{12}u_L|\,|\Pi_\theta||G|}\\
		&\leq \frac{C\eta}{\varepsilon}\int |\Pi_\theta||G|\\
		&\leq \eta\mathbb{L}+C\eta\int \frac{\Pi_\theta^2}{r}
		\leq \eta\mathbb{L}+C\eta\mathbb{P}.
	\end{align*}
	For the Prandtl part, use the definition of $G$ to eliminate $\Pi_\theta$:
	\[
		\Pi_\theta=\frac1a\left(G+b\Phi_\theta+\frac{\Pi^a_\theta}{r}\Phi_r^0-\frac{\Phi^a_\theta}{r}\Pi_r^0\right).
	\]
	Then
	\begin{align*}
		&-\frac1{\varepsilon^2}\int u_p^a\Pi_\theta G\\
		=&-\frac1{a\varepsilon^2}\int u_p^aG^2
		-\frac{b}{a\varepsilon^2}\int u_p^a\Phi_\theta G
		-\frac1{a\varepsilon^2}\int u_p^a\left(\frac{\Pi^a_\theta}{r}\Phi_r^0-\frac{\Phi^a_\theta}{r}\Pi_r^0\right)G .
	\end{align*}
	The first term is bounded by $C\eta\mathbb{L}$. For the second term, since
	$\Phi_\theta(\theta,1)=0$, weighted Hardy's inequality gives
	\begin{align*}
		\left|\frac1{\varepsilon^2}\int u_p^a\Phi_\theta G\right|
		&\leq {\frac{1}{\varepsilon^2}\int} (1-r)|u_p^a|
		\left|\frac{\Phi_\theta}{1-r}\right||G| \\
           &\leq {\frac{C\eta}{\varepsilon}\int}
		r\left|\frac{\Phi_\theta}{1-r}\right||G|\\
		&\leq \eta\mathbb{L}+C\eta\int r\Phi_{r\theta}^2
		\leq \eta\mathbb{L}+C\eta\mathbb{P}.
	\end{align*}
	The last term contains the normal approximate fields $\Pi^a_\theta/r$ and $\Phi^a_\theta/r$, hence
	\begin{align*}
		&\left|\frac1{\varepsilon^2}\int u_p^a\left(\frac{\Pi^a_\theta}{r}\Phi_r^0-\frac{\Phi^a_\theta}{r}\Pi_r^0\right)G\right|\\
		\leq& {\frac{1}{\varepsilon^2}\int}(1-r)|u_p^a|
		\left|\frac{\Pi^a_\theta}{r}\frac{\Phi_r^0}{1-r}
		-\frac{\Phi^a_\theta}{r}\frac{\Pi_r^0}{1-r}\right||G|\\
		\leq& {\frac{C\eta}{\varepsilon}\int} r
		\left|\frac{\Pi^a_\theta}{r}\frac{\Phi_r^0}{1-r}
		-\frac{\Phi^a_\theta}{r}\frac{\Pi_r^0}{1-r}\right||G|\\
		\leq& C\eta\int r(|\Phi_{rr}^0|+|\Pi_{rr}^0|)|G|\\
		\leq& \eta\mathbb{L}+C\eta\mathbb{E}.
	\end{align*}
	Thus,
	\begin{align}\label{e:i1-polar-estimate}
		|I_1|\leq C\eta(\mathbb{E}+\mathbb{P}+\mathbb{L}).
	\end{align}

	The term $I_2$ is treated similarly, but no substitution by $G$ is needed. Splitting
	$g^a-br=(g_e^a-br)+g_p^a$, we get
	\begin{align*}
		\left|\frac1{\varepsilon^2}\int (g_e^a-br)\Phi_\theta G\right|
		&\leq \frac1{\varepsilon^2}\int |g_e^a-br|\,|\Phi_\theta||G|\\
		&\leq \frac{C\eta}{\varepsilon}\int |\Phi_\theta||G|
		\leq \eta\mathbb{L}+C\eta\mathbb{P},
	\end{align*}
	and, using again $\Phi_\theta(\theta,1)=0$,
	\begin{align*}
		\left|\frac1{\varepsilon^2}\int g_p^a\Phi_\theta G\right|
		&\leq {\frac{1}{\varepsilon^2}\int} (1-r)|g_p^a|
		\left|\frac{\Phi_\theta}{1-r}\right||G|\\
		&\leq {\frac{C\eta}{\varepsilon}\int}
		r\left|\frac{\Phi_\theta}{1-r}\right||G|\\
		&\leq \eta\mathbb{L}+C\eta\int r\Phi_{r\theta}^2
		\leq \eta\mathbb{L}+C\eta\mathbb{P}.
	\end{align*}
	Thus
	\begin{align}\label{e:i2-polar-estimate}
		|I_2|\leq C\eta(\mathbb{P}+\mathbb{L}).
	\end{align}

	For $I_3$, we use the smallness of the normal components of the approximate field:
	\begin{align*}
		|I_3|
		&\leq \frac{C\eta}{\varepsilon}\int r\big(|\widetilde{\Phi}_r|+|\widetilde{\Pi}_r|\big)|G|\\
		&\leq \eta\mathbb{L}
		+C\eta\int r\big(\widetilde{\Phi}_r^{\,2}+\widetilde{\Pi}_r^{\,2}\big)
		\leq C\eta(\mathbb{P}+\mathbb{L}).
	\end{align*}
	Combining \eqref{e:i1-polar-estimate}, \eqref{e:i2-polar-estimate}, and the last bound gives
	\begin{align*}
		|I_1|+|I_2|+|I_3|\leq C\eta(\mathbb{E}+\mathbb{P}+\mathbb{L}).
	\end{align*}
	For the source term,
	\begin{align*}
		|I_4|\leq \delta\mathbb{L}
		+\frac{C_\delta}{\varepsilon^2}\int\frac{F_2^2}{r}.
	\end{align*}

	It remains to identify the diffusion contribution. By the definition of $G$,
	\begin{align*}
		-\int r\Delta\Pi\,G
		=&-a\int r\Delta\Pi\,\Pi_\theta
		+b\int r\Delta\Pi\,\Phi_\theta\\
		&+\int\Delta\Pi\,\Pi^a_\theta\Phi_r^0
		-\int\Delta\Pi\,\Phi^a_\theta\Pi_r^0.
	\end{align*}
	The first term on the right is zero by periodicity in $\theta$ and by the self-adjointness of $\Delta$ with respect to the polar measure $r\,\ud r\,\ud\theta$:
	\[
		\int r\Delta\Pi\,\Pi_\theta=0.
	\]
	For the last two terms, using
	\[
		\Pi_r(\theta,1)=\Phi_r(\theta,1)=0,\qquad
		\partial_\theta\Phi_r^0=\partial_\theta\Pi_r^0=0,
	\]
	we obtain
	\begin{align}\label{type-1-estimate1}
		&\int\Delta\Pi\,\Pi^a_\theta\Phi_r^0
		-\int\Delta\Pi\,\Phi^a_\theta\Pi_r^0\nonumber\\
		=&-\int r\Pi_r\left[\left(\frac{\Pi^a_\theta}{r}\right)_r\Phi_r^0
		+\frac{\Pi^a_\theta}{r}\Phi_{rr}^0
		-\left(\frac{\Phi^a_\theta}{r}\right)_r\Pi_r^0
		-\frac{\Phi^a_\theta}{r}\Pi_{rr}^0\right]\\
		&-\int \frac{\Pi_\theta}{r}\left[
		\left(\frac{\Pi^a_\theta}{r}\right)_\theta\Phi_r^0
		-\left(\frac{\Phi^a_\theta}{r}\right)_\theta\Pi_r^0\right].\nonumber
	\end{align}
	Notice that the zero mode in the first integral drops out:
	\[
		\int r\Pi_r^0\left[\left(\frac{\Pi^a_\theta}{r}\right)_r\Phi_r^0
		+\frac{\Pi^a_\theta}{r}\Phi_{rr}^0
		-\left(\frac{\Phi^a_\theta}{r}\right)_r\Pi_r^0
		-\frac{\Phi^a_\theta}{r}\Pi_{rr}^0\right]=0,
	\]
	because the bracket has zero Fourier mode equal to zero. Thus
	\begin{align}\label{type-1-estimate2}
		&\left|\int r\Pi_r\left[\left(\frac{\Pi^a_\theta}{r}\right)_r\Phi_r^0
		+\frac{\Pi^a_\theta}{r}\Phi_{rr}^0
		-\left(\frac{\Phi^a_\theta}{r}\right)_r\Pi_r^0
		-\frac{\Phi^a_\theta}{r}\Pi_{rr}^0\right]\right|\nonumber\\
		&=\left|\int r\widetilde{\Pi}_r\left[\left(\frac{\Pi^a_\theta}{r}\right)_r\Phi_r^0
		+\frac{\Pi^a_\theta}{r}\Phi_{rr}^0
		-\left(\frac{\Phi^a_\theta}{r}\right)_r\Pi_r^0
		-\frac{\Phi^a_\theta}{r}\Pi_{rr}^0\right]\right|\\
		&\leq C\varepsilon\eta\int r|\widetilde{\Pi}_r|
		\left(\left|\frac{\Phi_r^0}{1-r}\right|
		+\left|\frac{\Pi_r^0}{1-r}\right|
		+|\Phi_{rr}^0|+|\Pi_{rr}^0|\right)\nonumber\\
		&\leq C\varepsilon\eta\,\mathbb P^{1/2}
		\left(\int r\big((\Phi_{rr}^0)^2+(\Pi_{rr}^0)^2\big)\right)^{1/2}
		\leq C\eta(\mathbb E+\mathbb P),\nonumber
	\end{align}
	For the second integral in \eqref{type-1-estimate1}, we also have
	\begin{align*}
		&\left|\int \frac{\Pi_\theta}{r}\left[
		\left(\frac{\Pi^a_\theta}{r}\right)_\theta\Phi_r^0
		-\left(\frac{\Phi^a_\theta}{r}\right)_\theta\Pi_r^0\right]\right|\\
		&\leq C\varepsilon\eta\int \frac{|\Pi_\theta|}{r}(1-r)
		\left(\left|\frac{\Phi_r^0}{1-r}\right|
		+\left|\frac{\Pi_r^0}{1-r}\right|\right)\\
		&\leq C\varepsilon\eta
		\left(\int\frac{\Pi_\theta^2}{r}\right)^{1/2}
		\left(\int r\big((\Phi_{rr}^0)^2+(\Pi_{rr}^0)^2\big)\right)^{1/2}
		\leq C\eta(\mathbb E+\mathbb P).
	\end{align*}
	Thus,
	\begin{align*}
		\left|\int\Delta\Pi\,\Pi^a_\theta\Phi_r^0
		-\int\Delta\Pi\,\Phi^a_\theta\Pi_r^0\right|
		\leq C\eta(\mathbb E+\mathbb P).
	\end{align*}
	Combining the estimates above and taking $\delta>0$ fixed sufficiently small gives \eqref{e:combination-estimate}.
\end{proof}

\subsubsection{Energy estimate for velocity}

In this subsection, we establish the basic velocity energy estimate in polar variables.

\begin{lemma}\label{basic-energy-estimate}
	Let $(\Phi,\Pi)$ be a smooth solution of \eqref{error-equation}. Then
	\begin{align}\label{e:basic-energy-estimate}
		\varepsilon^2\int r|\nabla^2_{\!p}\Phi|^2
		+b\int r(\Delta\Pi)_\theta\Phi
		\leq C\eta(\mathbb{E}+\mathbb{P}+\mathbb{L})
		+\frac{C}{\varepsilon^2}\int\frac{F_1^2}{r}.
	\end{align}
\end{lemma}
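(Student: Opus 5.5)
We test the first equation of \eqref{error-equation} against $\Phi$ (weighted so that the viscous term produces $\int r(\Delta\Phi)^2$), i.e.\ multiply by $-\Phi$ and integrate over $\Omega$ with respect to $\ud\theta\,\ud r$. Since $\Phi=\Phi_r=0$ at $r=1$, two integrations by parts give $-\kappa\varepsilon^2\int r\Delta^2\Phi\,(-\Phi)$ equal to $\kappa\varepsilon^2\int r(\Delta\Phi)^2$, which by Lemma~\ref{lem:polar-hessian} equals $\kappa\varepsilon^2\int r|\nabla_p^2\Phi|^2$ with no boundary term. This is the coercive left-hand side.

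The transport terms are split by writing $\Phi^a=\Phi_e^{0}+(\Phi^a-\Phi_e^0)$ and $\Pi^a=\Pi_e^0+(\Pi^a-\Pi_e^0)$, with $\Phi_e^0=\tfrac a2(1-r^2)$ and $\Pi_e^0=-\tfrac b2 r^2$ (the Couette part, up to constants). For the Couette part, $\Phi^a_\theta=\Pi^a_\theta=0$, $\Delta\Phi_e^0,\Delta\Pi_e^0$ are constants, and $-\Phi^a_r=ar$, $\Pi^a_r=-br$. So the velocity terms reduce to $ar(\Delta\Phi)_\theta\Phi$, which integrates to zero: integrate by parts in $\theta$ and use $\int r\Delta\Phi\,\Phi_\theta=0$ by self-adjointness of $\Delta$ in the measure $r\,\ud r\,\ud\theta$. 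The magnetic Couette term is $-br(\Delta\Pi)_\theta$ tested against $-\Phi$, which yields exactly the cross term $b\int r(\Delta\Pi)_\theta\Phi$ appearing in \eqref{e:basic-energy-estimate}. This cross term is kept, since it will cancel against the $b\int r\Delta\Pi\,\Phi_\theta$ term in \eqref{e:combination-estimate}.

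The remaining terms carry coefficients from $\Phi^a-\Phi^0_e$ and $\Pi^a-\Pi^0_e$, which are $O(\eta)$ by the preceding lemma on approximate fields.
\begin{itemize}
\item[(i)] \emph{Euler-part perturbations.} These have $O(\varepsilon\eta)$ coefficients with bounded derivatives. After integrating by parts to move one derivative off $\Delta\Phi$ or $\Delta\Pi$, they are bounded by $C\eta\varepsilon\,\|\nabla^2\|\,\|\nabla\|\le C\eta(\mathbb E+\mathbb P)$.
\item[(ii)] \emph{Prandtl terms with third derivatives of the error.} Examples are $\Phi^a_{p,\theta}(\Delta\Phi)_r\Phi$ and $\Phi^a_{p,r}(\Delta\Phi)_\theta\Phi$. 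For these we integrate by parts so that only $\Delta\Phi$ (or $\Delta\Pi$) remains. The boundary factor $d=1-r$ is then absorbed through the Hardy inequalities: $\Phi$ vanishes to second order at $r=1$, so $|\Phi|\lesssim d^2$ in $L^2$ against $\Phi_{rr}$, and similarly $\Phi_\theta/d$ is controlled by $\Phi_{r\theta}$. The weighted bounds $\|d^q\partial^j u_p^a\|\le C\eta\varepsilon^{q-j}$ then convert each loss of $\varepsilon^{-1}$ into a gain from a factor of $d$. The outcome is a bound of the form $\varepsilon\eta\cdot\varepsilon^{-1}\|\varepsilon\nabla^2\Phi\|\,\|\Phi_{r\theta}\|\le C\eta(\mathbb E+\mathbb P)$.
\item[(iii)] \emph{Terms with derivatives of the Prandtl vorticity or current.} Examples are $\Phi_\theta(\Delta\Phi_p^a)_r\Phi$ and $\Pi_\theta(\Delta\Pi^a_p)_r\Phi$, whose coefficients are of size $\eta\varepsilon^{-3}$. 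We use the factor $d^3$ from $\Phi$ (two powers) together with $\Phi_\theta$ (one power). For $\Pi_\theta$, which has no boundary condition, all three powers must come from $\Phi$ alone, which is insufficient. We therefore substitute $a\Pi_\theta=G+b\Phi_\theta+\dots$: the $G$ part is paid by $\mathbb L$ (gaining $\varepsilon$), and the $\Phi_\theta$ part by Hardy.
\end{itemize}
The forcing term is handled by $|\int F_1\Phi|\le \delta\varepsilon^2\int r\Phi_{rr}^2+C\varepsilon^{-2}\int F_1^2/r$, using the Hardy inequality (note the factor $d^2\le 1$, with the weight $r$ handled near the center by Poincar\'e).

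The main obstacle is step (iii) for the magnetic terms, where $\Pi_\theta$ lacks a Dirichlet condition. The $G$-substitution and the placement of the $\mathbb L$ weight are essential there, and the zero modes $\Phi_r^0,\Pi_r^0$ generated by the substitution must be controlled by $\mathbb E$ using Hardy in $r$.
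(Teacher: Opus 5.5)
Your overall route matches the paper's proof: you test with $-\Phi$ and use $\Phi=\Phi_r=0$ to get $\kappa\varepsilon^2\int r(\Delta\Phi)^2$. The $a$-Couette term vanishes, you keep $b\int r(\Delta\Pi)_\theta\Phi$ for the later cancellation against \eqref{e:combination-estimate}, and you handle $\int\Pi_\theta(\Delta\Pi^a_p)_r\Phi$ by substituting $a\Pi_\theta=G+b\Phi_\theta+\cdots$. However, step (i) fails as written.

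The Euler coefficients $u_e^a-ar$, $v_e^a$, $g_e^a-br$, $h_e^a$ are $O(\varepsilon\eta)$ with no boundary-layer decay; for example, $v_e^a(\theta,1)\neq0$. After your integration by parts you meet $\int(\partial_\theta u_e^a)\Delta\Phi\,\Phi$ and $\int rv_e^a\Delta\Phi\,\Phi_r$, together with their magnetic analogues $\int(\partial_\theta g_e^a)\Delta\Pi\,\Phi$ and $\int rh_e^a\Delta\Pi\,\Phi_r$. The zero modes $\Phi^0$ and $\Phi^0_r$ are seen by neither $\mathbb P$ nor $\mathbb L$. Only Hardy and $\mathbb E$ control them: $\int r(\Phi_r^0)^2\le C\int r(\Phi^0_{rr})^2\le C\varepsilon^{-2}\mathbb E$. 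So your bound $C\varepsilon\eta\|\nabla^2\cdot\|\,\|\nabla\cdot\|$ becomes $C\eta\varepsilon^{-1}\mathbb E$, which cannot be absorbed.

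The missing idea is the structure the paper uses, in three steps.
\begin{itemize}
\item The Euler divergence identities $\partial_\theta u_e^a+(rv_e^a)_r=\partial_\theta g_e^a+(rh_e^a)_r=0$ cancel all coefficient-derivative terms. What remains is $\int\Delta\Phi\,\big((u_e^a-ar)\Phi_\theta+rv_e^a\Phi_r\big)$ and its magnetic counterpart.
\item The zero $\theta$-mean of $v_e^a$ and $h_e^a$ annihilates the products $\Delta\Phi^0\,\Phi^0_r$ and $\Delta\Pi^0\,\Phi^0_r$.
\item One more integration by parts in the remaining $\Delta\widetilde\Phi\,\Phi_r$ and $\Delta\widetilde\Pi\,\Phi_r$ pieces places a nonzero-mode factor ($\widetilde\Phi_r$, $\widetilde\Pi_r$, $\widetilde\Phi_\theta$ or $\widetilde\Pi_\theta$) against each second derivative. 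That factor is controlled by $\mathbb P$.
\end{itemize}
Your Prandtl terms in step (ii) do not need this. There the weight $(1-r)^2$ on $\Phi$, or $(1-r)$ on $\Phi_r$, already gains $\varepsilon^2$.

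Separately, correct the size claim in step (iii). $\Delta\Phi^a_p$ and $\Delta\Pi^a_p$ are $O(\eta\varepsilon^{-1})$, so their radial derivatives are $O(\eta\varepsilon^{-2})$, not $O(\eta\varepsilon^{-3})$. With your stated size, neither the $(1-r)^3$ Hardy count for $\int\Phi_\theta(\Delta\Phi^a_p)_r\Phi$ nor the $G$-part of the substitution would close; each leaves a stray factor $\varepsilon^{-1}$. With the correct bounds, $(1-r)^3|(\Delta\Phi^a_p)_r|\le C\varepsilon\eta$ and $(1-r)^2|(\Delta\Pi^a_p)_r|\le C\eta$. Both terms then close exactly as in the paper, giving $C\eta\,\mathbb P^{1/2}\mathbb E^{1/2}$ and $C\eta\,\mathbb L^{1/2}\mathbb E^{1/2}$ respectively.
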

\begin{proof}
{
	Multiplying the first equation in \eqref{error-equation} by $-\Phi$ and integrating over $\Omega$, we obtain
	\begin{align*}
		&\kappa\varepsilon^2\int r\Delta^2\Phi\,\Phi
		-\int \Phi^a_\theta(\Delta\Phi)_r\Phi
		-\int \Phi_\theta(\Delta\Phi^a)_r\Phi
		+\int \Phi^a_r(\Delta\Phi)_\theta\Phi
		+\int \Phi_r(\Delta\Phi^a)_\theta\Phi\\
		&\quad+\int \Pi^a_\theta(\Delta\Pi)_r\Phi
		+\int \Pi_\theta(\Delta\Pi^a)_r\Phi
		-\int \Pi^a_r(\Delta\Pi)_\theta\Phi
		-\int \Pi_r(\Delta\Pi^a)_\theta\Phi
		=-\int F_1\Phi .
	\end{align*}
	For the diffusion term, the boundary conditions $\Phi=\Phi_r=0$ at $r=1$ give
	\[
		\kappa\varepsilon^2\int r\Delta^2\Phi\,\Phi
		=\kappa\varepsilon^2\int r(\Delta\Phi)^2
		\geq c\varepsilon^2\int r|\nabla^2_{\!p}\Phi|^2.
	\]
	The principal Euler contribution is
	\[
		-\int ar(\Delta\Phi)_\theta\Phi+\int br(\Delta\Pi)_\theta\Phi
		=b\int r(\Delta\Pi)_\theta\Phi .
	\]
	Thus
	\[
		u_{p,c}^a:=u_p^a+{\color{black}\varepsilon^{12}}u_L,
		\qquad h_{p,c}^a:=h_p^a+h_c^a.
	\]
	Then
	\[
		\partial_\theta u_{p,c}^a+\partial_r(rv_p^a)=0,
		\qquad
		\partial_\theta g_p^a+\partial_r(rh_{p,c}^a)=0.
	\]
	\[
		\kappa\varepsilon^2\int r\Delta^2\Phi\,\Phi
		+b\int r(\Delta\Pi)_\theta\Phi
		\leq |I_1|+|I_2|+|I_3|+|I_4|+|I_5|,
	\]
	where
	\begin{align*}
		I_1=&-\int (u_e^a-ar)(\Delta\Phi)_\theta\Phi
		+\int (g_e^a-br)(\Delta\Pi)_\theta\Phi
		-\int rv_e^a(\Delta\Phi)_r\Phi
		+\int rh_e^a(\Delta\Pi)_r\Phi,\\
		I_2=&-\int u_{p,c}^a(\Delta\Phi)_\theta\Phi
		+\int g_p^a(\Delta\Pi)_\theta\Phi
		-\int rv_p^a(\Delta\Phi)_r\Phi
		+\int rh_{p,c}^a(\Delta\Pi)_r\Phi,\\
		I_3=&-\int \Phi_\theta(\Delta\Phi^a)_r\Phi
		+\int \Phi_r(\Delta\Phi^a)_\theta\Phi
		-\int \Pi_r(\Delta\Pi^a)_\theta\Phi,\\
		I_4=&\int \Pi_\theta(\Delta\Pi^a)_r\Phi,\qquad
		I_5=-\int F_1\Phi .
	\end{align*}
	By the Euler divergence identities and \eqref{estimate-on-euler-parts},
	\[
		\partial_\theta u_e^a+(rv_e^a)_r
		=\partial_\theta g_e^a+(rh_e^a)_r=0.
	\]
	\begin{align*}
		|I_1|
		&\leq \left|\int \Delta\Phi\big((u_e^a-ar)\Phi_\theta+rv_e^a\Phi_r\big)\right|
		+\left|\int \Delta\Pi\big((g_e^a-br)\Phi_\theta+rh_e^a\Phi_r\big)\right|\\
		&\leq I_{12}+I_{13},
	\end{align*}
	where
	\begin{align*}
		I_{12}=&\ \left|\int \Delta\Phi(u_e^a-ar)\Phi_\theta\right|
		+\left|\int \Delta\Pi(g_e^a-br)\Phi_\theta\right|,\\
		I_{13}=&\ \left|\int r v_e^a\Delta\Phi\,\Phi_r\right|
		+\left|\int r h_e^a\Delta\Pi\,\Phi_r\right|.
	\end{align*}
	For $I_{12}$,
	\begin{align*}
		I_{12}=&\left|\int \Delta\Phi(u_e^a-ar)\Phi_\theta\right|
		+\left|\int \Delta\Pi(g_e^a-br)\Phi_\theta\right|\\
		\leq& \int\big(|u_e^a-ar||\Delta \Phi|
		+|g_e^a-br||\Delta \Pi|\big)|\Phi_\theta|\\
		\leq& C\eta\varepsilon \int \big(|\Delta \Phi|+|\Delta \Pi|\big)|\Phi_\theta|\\
		\leq& C\eta\varepsilon\left(\int r\big(|\nabla^2_{\!p}\Phi|^2
		+|\nabla^2_{\!p}\Pi|^2\big)\right)^{1/2}
		\left(\int \frac{\Phi^2_{\theta\theta}}{r}\right)^{1/2}.
	\end{align*}
	For $I_{13}$, since $\int v_e^a\,\ud\theta=\int h_e^a\,\ud\theta=0$,
	\begin{align*}
		\int rv_e^a\Delta\Phi\,\Phi_r
		=&\int rv_e^a(\Delta\Phi^0)\widetilde{\Phi}_r
		-\int\widetilde{\Phi}_r\big((rv_e^a)_r\Phi_r+rv_e^a\Phi_{rr}\big)
		+\int v_e^a\widetilde{\Phi}_r\Phi_r\\
		&-\int\frac{\widetilde{\Phi}_\theta}{r^2}
		\big((rv_e^a)_\theta\Phi_r+rv_e^a\Phi_{r\theta}\big),\\
		\int rh_e^a\Delta\Pi\,\Phi_r
		=&\int rh_e^a(\Delta\Pi^0)\widetilde{\Phi}_r
		-\int\widetilde{\Pi}_r\big((rh_e^a)_r\Phi_r+rh_e^a\Phi_{rr}\big)
		+\int h_e^a\widetilde{\Pi}_r\Phi_r\\
		&-\int\frac{\widetilde{\Pi}_\theta}{r^2}
		\big((rh_e^a)_\theta\Phi_r+rh_e^a\Phi_{r\theta}\big).
	\end{align*}
	Hence
	\begin{align*}
		I_{13}
		&\leq C\varepsilon\eta
		\left(\int\left(r\Phi_{r\theta}^2+r\Pi_{r\theta}^2
		+\frac{\Phi_{\theta\theta}^2}{r}
		+\frac{\Pi_{\theta\theta}^2}{r}\right)\right)^{1/2}
		\left(\int r\big(|\nabla^2_{\!p}\Phi|^2
		+|\nabla^2_{\!p}\Pi|^2\big)\right)^{1/2}\\
		&\quad+C\varepsilon\eta
		\int\left(r\Phi_{r\theta}^2+r\Pi_{r\theta}^2
		+\frac{\Phi_{\theta\theta}^2}{r}+\frac{\Pi_{\theta\theta}^2}{r}\right).
	\end{align*}
	Thus,
	\begin{align*}
		|I_1|
		&\leq C\varepsilon\eta
		\left(\int\left(r\Phi_{r\theta}^2+r\Pi_{r\theta}^2
		+\frac{\Phi_{\theta\theta}^2}{r}
		+\frac{\Pi_{\theta\theta}^2}{r}\right)\right)^{1/2}
		\left(\int r\big(|\nabla^2_{\!p}\Phi|^2
		+|\nabla^2_{\!p}\Pi|^2\big)\right)^{1/2}\\
		&\quad+C\varepsilon\eta
		\int\left(r\Phi_{r\theta}^2+r\Pi_{r\theta}^2
		+\frac{\Phi_{\theta\theta}^2}{r}+\frac{\Pi_{\theta\theta}^2}{r}\right)
		\leq \delta\mathbb E+C_\delta\eta\mathbb P .
	\end{align*}
	For $I_2$, using
	\[
		(u_{p,c}^a)_\theta+(rv_p^a)_r
		=(g_p^a)_\theta+(rh_{p,c}^a)_r=0,
	\]
	The estimates for $u_L$ and \eqref{magnetic-normal-correction-estimate}
	show that $u_{p,c}^a$ and $h_{p,c}^a$ satisfy the same weighted boundary
	layer bounds used below for $u_p^a$ and $h_p^a$.  We have
	\begin{align*}
		I_2
		=&\int\big(u_{p,c}^a\Phi_\theta+rv_p^a\Phi_r\big)\Delta\Phi
		-\int\big(g_p^a\Phi_\theta+rh_{p,c}^a\Phi_r\big)\Delta\Pi .
	\end{align*}
	For the tangential Prandtl terms,
	\begin{align*}
		&\left|\int u_{p,c}^a\Phi_\theta\Delta\Phi\right|
		+\left|\int g_p^a\Phi_\theta\Delta\Pi\right|\\
		&\leq \int(1-r)|u_{p,c}^a|
		\left|\frac{\Phi_\theta}{1-r}\right||\Delta\Phi|
		+\int(1-r)|g_p^a|
		\left|\frac{\Phi_\theta}{1-r}\right||\Delta\Pi|\\
		&\leq C\eta \int r\Phi_{r\theta}^2
		+C\eta \varepsilon^2 \int r\big(|\nabla^2_{\!p}\Phi|^2
		+|\nabla^2_{\!p}\Pi|^2\big).
	\end{align*}
	For the normal Prandtl terms, since
	$\int v_p^a\,\ud\theta=\int h_{p,c}^a\,\ud\theta=0$,
	\begin{align*}
		\int rv_p^a\Phi_r\Delta\Phi
		=&\int rv_p^a(\Delta\Phi^0)\widetilde{\Phi}_r
		-\int\widetilde{\Phi}_r\big((rv_p^a)_r\Phi_r+rv_p^a\Phi_{rr}\big)
		+\int v_p^a\widetilde{\Phi}_r\Phi_r\\
		&-\int\frac{\widetilde{\Phi}_\theta}{r^2}
		\big((rv_p^a)_\theta\Phi_r+rv_p^a\Phi_{r\theta}\big),\\
		\int rh_{p,c}^a\Phi_r\Delta\Pi
		=&\int rh_{p,c}^a(\Delta\Pi^0)\widetilde{\Phi}_r
		-\int\widetilde{\Pi}_r\big((rh_{p,c}^a)_r\Phi_r+h_{p,c}^a r\Phi_{rr}\big)
		+\int h_{p,c}^a\widetilde{\Pi}_r\Phi_r\\
		&-\int\frac{\widetilde{\Pi}_\theta}{r^2}
		\big((rh_{p,c}^a)_\theta\Phi_r+rh_{p,c}^a\Phi_{r\theta}\big).
	\end{align*}
	Then
	\begin{align*}
		&\left|\int rv_p^a\Phi_r\Delta\Phi\right|
		+\left|\int rh_{p,c}^a\Phi_r\Delta\Pi\right|\\
		&\leq C\int r
		\left(|\widetilde{\Phi}_r|+|\widetilde{\Pi}_r|
		+\frac{|\widetilde{\Phi}_\theta|+|\widetilde{\Pi}_\theta|}{r}\right)\\
		&\qquad\times\left[
		(|rv_p^a|+|rh_{p,c}^a|)
		\big(|\Phi_{rr}|+|\Pi_{rr}|+|\Phi_{r\theta}|\big)\right.\\
		&\qquad\qquad\left.
		+(1-r)\big(|(rv_p^a)_r|+|(rh_{p,c}^a)_r|\big)
		\frac{|\Phi_r|+|\Pi_r|}{1-r}
		+(|v_p^a|+|h_{p,c}^a|)\frac{|\Phi_\theta|}{r}
		\right]\\
		&\leq C\varepsilon\eta\int r
		\left(|\widetilde{\Phi}_r|+|\widetilde{\Pi}_r|
		+\frac{|\widetilde{\Phi}_\theta|+|\widetilde{\Pi}_\theta|}{r}\right)
		\left(|\Phi_{rr}|+|\Pi_{rr}|+|\Phi_{r\theta}|
		+\frac{|\Phi_r|+|\Pi_r|}{1-r}+\frac{|\Phi_\theta|}{r}\right)\\
		&\leq C\eta\int\left(r\Phi_{r\theta}^2+r\Pi_{r\theta}^2
		+\frac{\Phi_{\theta\theta}^2}{r}+\frac{\Pi_{\theta\theta}^2}{r}\right)
		+C\eta\varepsilon^2\int r\big(|\nabla^2_{\!p}\Phi|^2
		+|\nabla^2_{\!p}\Pi|^2\big).
	\end{align*}
	Thus
	\begin{align*}
		|I_2|
		&\leq C\eta\int\left(r\Phi_{r\theta}^2+r\Pi_{r\theta}^2
		+\frac{\Phi_{\theta\theta}^2}{r}+\frac{\Pi_{\theta\theta}^2}{r}\right)\\
		&\quad+C\eta\varepsilon^2\int r\big(|\nabla^2_{\!p}\Phi|^2
		+|\nabla^2_{\!p}\Pi|^2\big)
		\leq \delta\mathbb E+C_\delta\eta\mathbb P .
	\end{align*}
	For $I_3$, notice that
	\[
		\Delta\Phi_e^a=(\Delta\Phi_e^a)^0
		=-2a-2\sum_{i=1}^{11}\varepsilon^i A_{i\infty}.
	\]
	Using
	\[
		(1-r)^3\big(|(\Delta\Phi_p^a)_r|+|(\Delta\Pi_p^a)_\theta|\big)
		+(1-r)^2|(\Delta\Phi_p^a)_\theta|
		+{\color{black}\varepsilon^{12}}\big(|(\Delta\Phi_L)_r|
		+|(\Delta\Phi_L)_\theta|\big)
		\leq C\varepsilon\eta,
	\]
	together with
	\[
		(1-r)^2|(\Delta\Pi_p^a)_r|\leq C\eta,\qquad
		(1-r)^3|(\Delta\Pi_p^a)_r|\leq C\varepsilon\eta,
	\]
	and Hardy's inequality, we have
	\begin{align*}
	I_3&=-\int \Phi_\theta(\Delta\Phi^a)_r\Phi
	+\int \Phi_r(\Delta\Phi^a)_\theta\Phi
	-\int \Pi_r(\Delta\Pi^a)_\theta\Phi,\\
		&=-\int \Phi_\theta\big((\Delta\Phi_p^a)_r+{\color{black}\varepsilon^{12}}(\Delta\Phi_L)_r\big)\Phi
		+\int \widetilde{\Phi}_r
		\big((\Delta\Phi_p^a)_\theta+{\color{black}\varepsilon^{12}}(\Delta\Phi_L)_\theta\big)\Phi\\
		&\quad+\int \Phi_r
		\big((\Delta\Phi_p^a)_\theta+{\color{black}\varepsilon^{12}}(\Delta\Phi_L)_\theta\big)\widetilde{\Phi}
		{-}\int \widetilde{\Phi}_r
		\big((\Delta\Phi_p^a)_\theta+{\color{black}\varepsilon^{12}}(\Delta\Phi_L)_\theta\big)\widetilde{\Phi}\\
		&\quad-\int \Pi_r
		(\Delta\Pi_p^a)_\theta\Phi\\
		&\leq C\varepsilon\eta\int
		\left(\left|\frac{\Phi_\theta}{1-r}\right|
		+\left|\frac{\Phi_r}{1-r}\right|
		+\left|\frac{\Pi_r}{1-r}\right|\right)
		\left(\left|\frac{\Phi}{(1-r)^2}\right|
		+\left|\frac{\widetilde{\Phi}}{1-r}\right|\right)\\
		&\leq C\eta(\mathbb E+\mathbb P).
	\end{align*}
	For $I_4$, using
$
		\Pi_\theta=\frac1a\left(G+b\Phi_\theta
		+\frac{\Pi^a_\theta}{r}\Phi_r^0-\frac{\Phi^a_\theta}{r}\Pi_r^0\right),
$
	and $(\Delta\Pi_e^a)_r=0$,
	we obtain
	\begin{align*}
		|I_4|
		&\leq C\left|\int G(\Delta\Pi_p^a)_r\Phi\right|\\
		&\quad+C\left|\int \Phi_\theta
		(\Delta\Pi_p^a)_r\Phi\right|\\
		&\quad+C\left|\int
		\left(\frac{\Pi^a_\theta}{r}\Phi_r^0-\frac{\Phi^a_\theta}{r}\Pi_r^0\right)
		(\Delta\Pi_p^a)_r\Phi\right|\\
		&\leq C\int
		(1-r)^2|(\Delta\Pi_p^a)_r|
		|G|\left|\frac{\Phi}{(1-r)^2}\right|\\
		&\quad+C\int
		(1-r)^3|(\Delta\Pi_p^a)_r|
		\left|\frac{\Phi_\theta}{1-r}\right|
		\left|\frac{\Phi}{(1-r)^2}\right|\\
		&\quad+C\int
		(1-r)^3|(\Delta\Pi_p^a)_r|
		\left(\left|\frac{\Phi_r^0}{1-r}\right|
		+\left|\frac{\Pi_r^0}{1-r}\right|\right)
		\left|\frac{\Phi}{(1-r)^2}\right|\\
		&\leq C\eta\int |G|\left|\frac{\Phi}{(1-r)^2}\right|
		+C(\varepsilon\eta+\varepsilon^{10}\eta)\int
		\left|\frac{\Phi_\theta}{1-r}\right|
		\left|\frac{\Phi}{(1-r)^2}\right|\\
		&\quad+C\varepsilon^2\eta\int
		\left(\left|\frac{\Phi_r^0}{1-r}\right|
		+\left|\frac{\Pi_r^0}{1-r}\right|\right)
		\left|\frac{\Phi}{(1-r)^2}\right|\\
		&\leq C\eta
		\left(\frac1{\varepsilon^2}\int rG^2\right)^{1/2}
		\left(\varepsilon^2\int r\left|\frac{\Phi}{(1-r)^2}\right|^2\right)^{1/2}\\
		&\quad+C(\varepsilon\eta+\varepsilon^{10}\eta)
		\left(\int\left(r\Phi_{r\theta}^2+\frac{\Phi_{\theta\theta}^2}{r}\right)\right)^{1/2}
		\left(\int r\left|\frac{\Phi}{(1-r)^2}\right|^2\right)^{1/2}\\
		&\quad+C\varepsilon^2\eta
		\left(\int r\big((\Phi_{rr}^0)^2+(\Pi_{rr}^0)^2\big)\right)^{1/2}
		\left(\int r\left|\frac{\Phi}{(1-r)^2}\right|^2\right)^{1/2}\\
		&\leq C\eta(\mathbb E+\mathbb P+\mathbb L).
	\end{align*}
	Finally,
	\[
		|I_5|\leq \delta\varepsilon^2\int r|\nabla^2_{\!p}\Phi|^2
		+\frac{C_\delta}{\varepsilon^2}\int\frac{F_1^2}{r}.
	\]
	Choosing $\delta$ small gives \eqref{e:basic-energy-estimate}.}
\end{proof}

\subsubsection{Energy estimate for the magnetic field}

In this subsection, we establish the basic magnetic energy estimate.
\begin{lemma}
	Let $(\Phi,\Pi)$ be a smooth solution of \eqref{error-equation}. Then
	for any $\delta>0$,
	\begin{align}\label{e:basic-magnetic-energy-estimate}
		\varepsilon^2\int r|\nabla^2_{\!p}\Pi|^2
		&\leq \delta\varepsilon^2\int r|\nabla^2_{\!p}\Phi|^2
		+C_\delta\mathbb L+C_\delta\eta(\mathbb E+\mathbb P)
		+\frac{C_\delta}{\varepsilon^2}\int\frac{F_2^2}{r}.
	\end{align}
\end{lemma}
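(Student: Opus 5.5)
We test the second (induction) equation of \eqref{error-equation} against $-\Delta\Pi$, in the form used in the combination estimate:
\[
rG+(u^a-ar)\Pi_\theta-(g^a-br)\Phi_\theta-\Pi^a_\theta\widetilde\Phi_r+\Phi^a_\theta\widetilde\Pi_r-\varepsilon^2r\Delta\Pi=F_2 .
\]
Multiplying by $\Delta\Pi$ and integrating over $\Omega$ gives $\varepsilon^2\int r(\Delta\Pi)^2$ on one side. By Lemma~\ref{lem:polar-hessian} (second identity, since only $\Pi_r=0$ at $r=1$), this equals $\varepsilon^2\int r|\nabla_p^2\Pi|^2+\varepsilon^2\int_0^{2\pi}\Pi_\theta(\theta,1)^2\,\ud\theta$. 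The boundary term is nonnegative, so it only helps and can be dropped. What remains is to bound $\int rG\,\Delta\Pi$ and the coefficient terms tested against $\Delta\Pi$, each by $\delta\varepsilon^2\int r|\nabla^2_p\Pi|^2$ plus the allowed right-hand side. After this the $\delta$-part is absorbed on the left.

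\emph{Principal term.} Cauchy--Schwarz gives
\[
\Big|\int rG\Delta\Pi\Big|\le \Big(\varepsilon^{-2}\int rG^2\Big)^{1/2}\Big(\varepsilon^2\int r(\Delta\Pi)^2\Big)^{1/2}\le \delta\varepsilon^2\int r(\Delta\Pi)^2+C_\delta\mathbb L .
\]
This is where $\mathbb L$ enters. The extra copy of $\varepsilon^2\int_0^{2\pi}\Pi_\theta(\theta,1)^2$ produced by $(\Delta\Pi)^2$ is absorbed by the same boundary term on the left. Similarly, the source term is bounded by
\[
\Big|\int F_2\Delta\Pi\Big|\le \delta\varepsilon^2\int r(\Delta\Pi)^2+C_\delta\varepsilon^{-2}\int F_2^2/r .
\]

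\emph{Coefficient terms.} These are treated as in $I_{12},I_{13}$ and $I_2$ of Lemma~\ref{basic-energy-estimate}. The Euler parts $(u_e^a-ar)\Pi_\theta$ and $(g_e^a-br)\Phi_\theta$ are $O(\varepsilon\eta)$, hence bounded by $C\varepsilon\eta\,(\int r(\Delta\Pi)^2)^{1/2}\mathbb P^{1/2}\le\delta\mathbb E+C\eta\mathbb P$. The normal terms $\Pi^a_\theta\widetilde\Phi_r$ and $\Phi^a_\theta\widetilde\Pi_r$ have coefficients of size $O(\varepsilon\eta)$ and likewise contribute $\le C\eta(\mathbb E+\mathbb P)$. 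The Prandtl term $g_p^a\Phi_\theta\Delta\Pi$ uses Hardy with $\Phi_\theta(\theta,1)=0$ and $(1-r)|g_p^a|\le C\varepsilon\eta$, giving $C\eta(\mathbb P+\mathbb E)$.

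\emph{Main obstacle.} The delicate term is $u_p^a\Pi_\theta\Delta\Pi$, because $\Pi_\theta$ has no boundary condition and Hardy cannot be applied to it. I would first substitute
\[
\Pi_\theta=a^{-1}\Big(G+b\Phi_\theta+\tfrac{\Pi^a_\theta}{r}\Phi^0_r-\tfrac{\Phi^a_\theta}{r}\Pi^0_r\Big),
\]
exactly as in $I_1$ of the combination estimate. The $G$ piece is bounded by $C\eta(\mathbb L+\varepsilon^2\int r(\Delta\Pi)^2)$, and the $\Phi_\theta$ and zero-mode pieces are handled with Hardy. If a problem remains with the split, the fallback is to write $u_p^a\Pi_\theta\Delta\Pi$ through $(\Pi_\theta^2)$-type integration by parts in $\theta$ and $r$. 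Since $\int\Pi_\theta\Pi_{\theta\theta}u_p^a/r$ reduces to $-\tfrac12\int(u_p^a)_\theta\Pi_\theta^2/r$, this is bounded by $C\eta\mathbb P$ after the same substitution. Collecting all terms and choosing $\delta$ small yields \eqref{e:basic-magnetic-energy-estimate}.
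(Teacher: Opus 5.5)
Your proof is correct. For the principal term it takes a different and shorter route than the paper.

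\textbf{How the paper handles the coupling.} The paper tests the uncombined induction equation against $-\Delta\Pi$. It removes $ar\Pi_\theta\Delta\Pi$ by the exact antisymmetry $\int r\Pi_\theta\Delta\Pi=0$, rewrites the coupling $br\Phi_\theta\Delta\Pi$ as $b\int r\Delta\Phi\,\Pi_\theta$, and only then substitutes $\Pi_\theta=a^{-1}(G+b\Phi_\theta+\dots)$. This produces $\int r\Delta\Phi\,G\le\delta\varepsilon^2\int r|\nabla_p^2\Phi|^2+C_\delta\mathbb L$, which is the source of the $\Phi$-Hessian term on the right of the statement. The zero-mode pieces $\Pi^a_\theta\Phi_r^0-\Phi^a_\theta\Pi_r^0$, paired with $\Delta\widetilde\Phi$ in $I_1$ and with $\Delta\widetilde\Pi$ in $I_4$, each get a separate integration by parts.

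\textbf{What your route does instead.} You test the combined form $rG+(u^a-ar)\Pi_\theta-\dots$ directly. A single Cauchy--Schwarz bound, $|\int rG\,\Delta\Pi|\le\delta\varepsilon^2\int r(\Delta\Pi)^2+C_\delta\mathbb L$, then absorbs $ar\Pi_\theta$, $-br\Phi_\theta$ and both zero-mode corrections at once. What remains are the nonzero-mode normal terms, which need no integration by parts because $\int r\widetilde\Phi_r^2\le\int r\Phi_{r\theta}^2$, and likewise for $\widetilde\Pi_r$.

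\textbf{Trade-off.} You give up the exact cancellation, but this costs nothing because $\mathbb L$ is allowed on the right. In return you get a slightly stronger inequality, without the $\delta\varepsilon^2\int r|\nabla_p^2\Phi|^2$ term. Your handling of $u_p^a\Pi_\theta\Delta\Pi$ through the $G$-substitution is the same as the paper's. So the fallback in your last paragraph is unnecessary; as written it would in any case cover only the $\Pi_{\theta\theta}/r^2$ part of $\Delta\Pi$.

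\textbf{One bookkeeping correction.} For the Euler and normal coefficient terms, write $\delta\varepsilon^2\int r(\Delta\Pi)^2$ rather than $\delta\mathbb E$, and absorb it on the left together with its boundary part. The reason is that $\mathbb E$ contains the Hessians of $\Phi_\theta$ and $\Pi_\theta$, which the statement allows only with a factor $C_\delta\eta$. Your actual bounds have the form $C\varepsilon\eta\,(\int r(\Delta\Pi)^2)^{1/2}\mathbb P^{1/2}$, which is absorbable in this way, so the argument goes through.
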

\begin{proof}
	Multiplying the second equation in \eqref{error-equation} by $-\Delta\Pi$ gives
	\begin{align*}
		&-\int u^a\Pi_\theta\Delta\Pi
		+\int g^a\Phi_\theta\Delta\Pi
		+\int\Pi^a_\theta\Phi_r\Delta\Pi
		-\int\Phi^a_\theta\Pi_r\Delta\Pi
		+\varepsilon^2\int r(\Delta\Pi)^2\\
		&\qquad=-\int F_2\Delta\Pi .
	\end{align*}
	By Lemma~\ref{lem:polar-hessian} and $\Pi_r=0$ on $r=1$,
	\begin{align*}
		\varepsilon^2\int r(\Delta\Pi)^2
		-\varepsilon^2\int r|\nabla^2_{\!p}\Pi|^2
		=\varepsilon^2\int_{r=1}\Pi_\theta^2\geq0.
	\end{align*}
	Moreover,
	\begin{align*}
		-\int ar\Pi_\theta\Delta\Pi=0 .
	\end{align*}
	Thus
	\begin{align*}
		\varepsilon^2\int r(\Delta\Pi)^2
		\leq I_1+I_2+I_3+I_4+I_5,
	\end{align*}
	where
	\begin{align*}
		I_1=&\left|\int br\Phi_\theta\Delta\Pi\right| =\left|b\int r\Delta\Phi\,\Pi_\theta\right|,\qquad
		I_2=\left|\int (u^a-ar)\Pi_\theta\Delta\Pi\right|,\\
		I_3=&\left|\int (g^a-br)\Phi_\theta\Delta\Pi\right|,\qquad
		I_4=\left|\int\Pi^a_\theta\Phi_r\Delta\Pi
		-\int\Phi^a_\theta\Pi_r\Delta\Pi\right|,\qquad
		I_5=\left|\int F_2\Delta\Pi\right|.
	\end{align*}
	For $I_1$, using
	\[
		\Pi_\theta=\frac1a\left(G+b\Phi_\theta
		+\frac{\Pi^a_\theta}{r}\Phi_r^0
		-\frac{\Phi^a_\theta}{r}\Pi_r^0\right),
	\]
	we have
	\begin{align*}
		I_1
		&\leq C\left|\int r\Delta\Phi\,G\right|
		+C\left|\int r\Delta\Phi\,\Phi_\theta\right|\\
		&\quad+C\left|\int
		\left(\Pi^a_\theta\Phi_r^0-\Phi^a_\theta\Pi_r^0\right)
		\Delta\Phi\right|.
	\end{align*}
	Since $\int r\Delta\Phi\,\Phi_\theta=0$ and
	\begin{align*}
		&\int
		\left(\Pi^a_\theta\Phi_r^0-\Phi^a_\theta\Pi_r^0\right)
		\Delta\Phi\\
		=&\int
		\left(\Pi^a_\theta\Phi_r^0-\Phi^a_\theta\Pi_r^0\right)
		\Delta\widetilde{\Phi}\\
		=&-\int r\widetilde{\Phi}_r
		\left[\left(\frac{\Pi^a_\theta}{r}\right)_r\Phi_r^0
		+\frac{\Pi^a_\theta}{r}\Phi_{rr}^0
		-\left(\frac{\Phi^a_\theta}{r}\right)_r\Pi_r^0
		-\frac{\Phi^a_\theta}{r}\Pi_{rr}^0\right]\\
		&-\int\frac{\Phi_\theta}{r}
		\left[\left(\frac{\Pi^a_\theta}{r}\right)_\theta\Phi_r^0
		-\left(\frac{\Phi^a_\theta}{r}\right)_\theta\Pi_r^0\right],
	\end{align*}
	we get
	\begin{align*}
		I_1
		&\leq C\int r|\Delta\Phi|\,|G|
		+C\varepsilon\eta\int r|\widetilde{\Phi}_r|
		\left(\left|\frac{\Phi_r^0}{1-r}\right|
		+\left|\frac{\Pi_r^0}{1-r}\right|
		+|\Phi_{rr}^0|+|\Pi_{rr}^0|\right)\\
		&\quad+C\varepsilon\eta\int\frac{|\Phi_\theta|}{r}(1-r)
		\left(\left|\frac{\Phi_r^0}{1-r}\right|
		+\left|\frac{\Pi_r^0}{1-r}\right|\right)\\
		&\leq \delta\varepsilon^2\int r|\nabla^2_{\!p}\Phi|^2
		+C_\delta\mathbb L+C_\delta\eta(\mathbb E+\mathbb P).
	\end{align*}
	For $I_2$,
	\begin{align*}
		I_2
		&\leq \left|\int (u_e^a-ar+{\color{black}\varepsilon^{12}}u_L)
		\Pi_\theta\Delta\Pi\right|
		+\left|\int u_p^a\Pi_\theta\Delta\Pi\right|\\
		&\leq \int |u_e^a-ar+{\color{black}\varepsilon^{12}}u_L|
		|\Pi_\theta||\Delta\Pi|
		+\left|\int u_p^a\Pi_\theta\Delta\Pi\right|\\
		&\leq C\varepsilon\eta\int
		\sqrt r|\Delta\Pi|\frac{|\Pi_\theta|}{\sqrt r}
		+\left|\int u_p^a\Pi_\theta\Delta\Pi\right|\\
		&\leq C\varepsilon\eta\int
		\sqrt r|\Delta\Pi|\frac{|\Pi_\theta|}{\sqrt r}
		+C\left|\int u_p^aG\Delta\Pi\right|
		+C\left|\int u_p^a\Phi_\theta\Delta\Pi\right|\\
		&\quad+C\left|\int u_p^a
		\left(\frac{\Pi^a_\theta}{r}\Phi_r^0
		-\frac{\Phi^a_\theta}{r}\Pi_r^0\right)\Delta\Pi\right|\\
		&\leq C\varepsilon\eta\int
		\sqrt r|\Delta\Pi|\frac{|\Pi_\theta|}{\sqrt r}
		+C\eta\int r|G||\Delta\Pi|
		+C\varepsilon\eta\int r
		\left|\frac{\Phi_\theta}{1-r}\right||\Delta\Pi|\\
		&\quad+C\varepsilon^2\eta\int r
		\left(\left|\frac{\Phi_r^0}{1-r}\right|
		+\left|\frac{\Pi_r^0}{1-r}\right|\right)|\Delta\Pi|\\
		&\leq \delta\varepsilon^2\int r(\Delta\Pi)^2
		+C_\delta\eta(\mathbb E+\mathbb P+\mathbb L).
	\end{align*}
	{For $I_3$,
	\begin{align*}
		I_3
		&\leq \left|\int (g_e^a-br)
		\Phi_\theta\Delta\Pi\right|
		+\left|\int g_p^a\Phi_\theta\Delta\Pi\right|\\
		&\leq \int |g_e^a-br|
		|\Phi_\theta||\Delta\Pi|
		+\int(1-r)|g_p^a|
		\left|\frac{\Phi_\theta}{1-r}\right||\Delta\Pi|\\
		&\leq C\varepsilon\eta\int
		\sqrt r|\Delta\Pi|\frac{|\Phi_\theta|}{\sqrt r}
		+C\varepsilon\eta\int r
		\left|\frac{\Phi_\theta}{1-r}\right||\Delta\Pi|\\
		&\leq \delta\varepsilon^2\int r(\Delta\Pi)^2
		+C_\delta\eta\mathbb P .
	\end{align*}}
	For $I_4$, since
	\[
		\int\left(\Pi^a_\theta\Phi_r^0
		-\Phi^a_\theta\Pi_r^0\right)\Delta\Pi^0=0,
	\]
	we get
	\begin{align*}
		I_4
		&\leq C\varepsilon\eta\int r
		\left(|\widetilde{\Phi}_r|+|\widetilde{\Pi}_r|\right)|\Delta\Pi|
		+\left|\int
		\left(\Pi^a_\theta\Phi_r^0
		-\Phi^a_\theta\Pi_r^0\right)\Delta\widetilde{\Pi}\right|.
	\end{align*}
	The last term is
	\begin{align*}
		&\int
		\left(\Pi^a_\theta\Phi_r^0
		-\Phi^a_\theta\Pi_r^0\right)\Delta\widetilde{\Pi}\\
		=&-\int r\widetilde{\Pi}_r
		\left[\left(\frac{\Pi^a_\theta}{r}\right)_r\Phi_r^0
		+\frac{\Pi^a_\theta}{r}\Phi_{rr}^0
		-\left(\frac{\Phi^a_\theta}{r}\right)_r\Pi_r^0
		-\frac{\Phi^a_\theta}{r}\Pi_{rr}^0\right]\\
		&-\int\frac{\Pi_\theta}{r}
		\left[\left(\frac{\Pi^a_\theta}{r}\right)_\theta\Phi_r^0
		-\left(\frac{\Phi^a_\theta}{r}\right)_\theta\Pi_r^0\right].
	\end{align*}
	Hence
	\begin{align*}
		I_4
		&\leq C\varepsilon\eta\int r
		\left(|\widetilde{\Phi}_r|+|\widetilde{\Pi}_r|\right)|\Delta\Pi|\\
		&\quad+C\varepsilon\eta\int r|\widetilde{\Pi}_r|
		\left(\left|\frac{\Phi_r^0}{1-r}\right|
		+\left|\frac{\Pi_r^0}{1-r}\right|
		+|\Phi_{rr}^0|+|\Pi_{rr}^0|\right)\\
		&\quad+C\varepsilon\eta\int\frac{|\Pi_\theta|}{r}(1-r)
		\left(\left|\frac{\Phi_r^0}{1-r}\right|
		+\left|\frac{\Pi_r^0}{1-r}\right|\right)\\
		&\leq \delta\varepsilon^2\int r(\Delta\Pi)^2
		+C_\delta\eta(\mathbb E+\mathbb P).
	\end{align*}
	Finally,
	\begin{align*}
		I_5
		\leq \delta\varepsilon^2\int r(\Delta\Pi)^2
		+\frac{C_\delta}{\varepsilon^2}\int\frac{F_2^2}{r}.
	\end{align*}
	Choosing the small constants in front of
	$\varepsilon^2\int r(\Delta\Pi)^2$ and using
	$\int r(\Delta\Pi)^2\geq\int r|\nabla^2_{\!p}\Pi|^2$
	gives \eqref{e:basic-magnetic-energy-estimate}.
\end{proof}

\subsubsection{Positivity estimate for velocity}

In this subsection, we establish the following positivity estimate.
\begin{lemma}
	Let $(\Phi,\Pi)$ be a smooth solution of \eqref{error-equation}. Then, for any $\delta>0$, there exists $\eta_0>0$ such that for any $\eta\in (0,\eta_0)$,
	\begin{align}\label{e:positivity-estimate}
		\int\left(r\Phi_{r\theta}^2+\frac{\Phi_{\theta\theta}^2}{r}\right)
		\leq \delta\mathbb{E}+C_\delta\mathbb{L}
		+C_\delta\eta(\mathbb E+\mathbb P+\mathbb L)
		+\frac{C_\delta}{\varepsilon^2}\int\frac{F_1^2}{r}.
	\end{align}
\end{lemma}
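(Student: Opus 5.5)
We obtain \eqref{e:positivity-estimate} by testing the vorticity equation (first line of \eqref{error-equation}) with a tangential multiplier, as in the classical positivity estimate of the hydrodynamic disk problem. We multiply it by $r\Phi_{\theta\theta}/(u_e+u_p^a)$ (or, to leading order, by $\Phi_{\theta\theta}/a$), where $u_e=ar$ and $a\ge\alpha/2$, and integrate over $\Omega$.

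The principal transport term $-\Phi^a_r(\Delta\Phi)_\theta=u^a(\Delta\Phi)_\theta$ then becomes $\int r(\Delta\Phi)_\theta\Phi_{\theta\theta}$ up to the multiplier weight. After integrating by parts in $\theta$ and then in $r$, using $\Phi_\theta=\Phi_{r\theta}=0$ at $r=1$, this equals $-\int r\Delta\Phi_\theta\,\Phi_{\theta\theta}/r$-type terms, i.e.\ exactly $\int(r\Phi_{r\theta\theta}\cdot\ldots)$. More carefully, with $q=\Phi_\theta$ the main term is $\int r\,\Delta q\,q_\theta\cdot(\text{weight})$. With the nonconstant weight $1/(u_e+u_p^a)$, commutators appear; for the Couette coefficient they produce the coercive quantity. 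Following the hydrodynamic argument, the term $-\Phi_r(\Delta\Phi^a)_\theta$ together with $\Phi_\theta(\Delta\Phi^a)_r$, where $\Delta\Phi_e^a=-2a+O(\varepsilon)$, combines with the Euler transport to yield $\int(r\Phi_{r\theta}^2+\Phi_{\theta\theta}^2/r)$ with a positive constant.

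We split into Euler, Prandtl and corrector parts exactly as in Lemma~\ref{basic-energy-estimate}. The Euler part gives the positive quantity $\mathbb P_\Phi$. Deviations of $u_e^a,v_e^a$ from Couette are $O(\varepsilon\eta)$ and are absorbed into $\delta\mathbb E+C\eta\mathbb P$. The Prandtl terms are handled by Hardy's inequality, using the weights $(1-r)^k$ and the vanishing of $\Phi_\theta,\Phi_{\theta\theta}$ at $r=1$. The diffusion term $\kappa\varepsilon^2\int r\Delta^2\Phi\,r\Phi_{\theta\theta}/u^a$ is bounded by $\delta\mathbb E$ plus commutator terms, which are controlled by $\mathbb E$ times $\varepsilon^2\|\partial_r^2(1/u^a)\|$; the boundary-layer derivatives of the weight cost $\varepsilon^{-2}$, but the factor $\varepsilon^2$ and Hardy's inequality give $C\eta\mathbb E$. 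The source contributes $\int F_1 r\Phi_{\theta\theta}/u^a\le\delta\mathbb P+C_\delta\varepsilon^{-2}\int F_1^2/r$; the $\varepsilon^{-2}$ is harmless here, and we only need the stated form.

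The magnetic terms are the key step. The leading Lorentz term $br(\Delta\Pi)_\theta\cdot\Phi_{\theta\theta}/a$ is not small. We substitute $\Pi_\theta=a^{-1}(G+b\Phi_\theta+\ldots)$. The $b\Phi_\theta$ part yields $-(b^2/a^2)$ times the same structure as the velocity transport term, so the net coercive coefficient is proportional to $a-b^2/a=(a^2-b^2)/a$, which is nonzero by non-Alfv\'enicity. The $G$ part gives $\int r\Delta G\,\Phi_{\theta\theta}$-type terms. After moving one derivative, these are bounded by $\mathbb L^{1/2}$ times $\varepsilon\|\nabla^2\Phi_\theta\|\lesssim\mathbb E^{1/2}$, hence by $\delta\mathbb E+C_\delta\mathbb L$. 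This matches the structure of \eqref{e:positivity-estimate}, and if the sign of $a^2-b^2$ is negative we simply flip the multiplier's sign.

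\textbf{Main obstacle.} The hardest term is $\int\Pi_\theta(\Delta\Pi_p^a)_r\,r\Phi_{\theta\theta}/(u_e+u_p^a)$, flagged in the introduction. Since $\Pi_\theta$ has no boundary condition, Hardy's inequality gives only one factor of $(1-r)$. We plan to handle it as outlined there: use the induction equation to write $(u_e+u_p^a)\Pi_\theta=\varepsilon^2r\Delta\Pi+(\text{terms with }\Phi_\theta,\widetilde\Phi_r,\widetilde\Pi_r)+F_2$. The $\varepsilon^2r\Delta\Pi$ term supplies the missing power of $\varepsilon$, and we integrate by parts in $\theta$ to move the derivative off $\Phi_{\theta\theta}$. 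The remaining terms are bounded by $C\eta(\mathbb E+\mathbb P+\mathbb L)$.
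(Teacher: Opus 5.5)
There is a genuine gap: your multiplier carries one tangential derivative too many, so the coercive term $\mathbb P_\Phi$ never appears.

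\textbf{Why $r\Phi_{\theta\theta}/(u_e+u_p^a)$ fails.} Write $q=\Phi_\theta$.
\begin{itemize}
\item The leading transport contribution is $a\int r\,\Delta q\,q_\theta$. This vanishes identically: after one radial integration by parts, whose wall term contains $q_\theta=\Phi_{\theta\theta}=0$, the integrand is an exact $\theta$-derivative.
\item The weight $r/(u_e+u_p^a)$ equals $1/a$ up to $O(\eta)$. Its commutators are therefore $O(\eta)$ and cannot produce $\mathbb P_\Phi$ with an $\eta$-independent constant.
\item The terms with $(\Delta\Phi^a)_r$ and $(\Delta\Phi^a)_\theta$ have no Euler part at all, because $\Delta\Phi_e^a$ is an exact constant.
\item After substituting $\Pi_\theta=a^{-1}(G+b\Phi_\theta+\cdots)$, the $b^2/a$ piece is again $\int r\,\Delta q\,q_\theta=0$. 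So the non-Alfv\'enic factor $(a^2-b^2)/a$ multiplies zero.
\end{itemize}
The only coercive quantity your identity produces is the viscous one, $\kappa a^{-1}\varepsilon^2\int r|\Delta\Phi_\theta|^2$. That belongs to $\mathbb E$, not to $\mathbb P_\Phi$. You are reproducing the velocity half of the refined estimate \eqref{e:positive-estimate2}, not the positivity estimate.

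Your $G$ term also does not close. One has $\int r(\Delta\Pi)_\theta\Phi_{\theta\theta}=\int r\Pi_\theta\,\Delta\Phi_{\theta\theta}$, which leads to $\int rG\,\Delta\Phi_{\theta\theta}$ or $-\int rG_\theta\,\Delta\Phi_\theta$. These need a third tangential derivative of $\Phi$ or a derivative of $G$, and neither $\mathbb E$ nor $\mathbb L$ controls them. In the refined estimate the paper avoids this only by also testing the induction equation with $r(\Delta\Pi)_{\theta\theta}/(u_e+u_p^a)$. The leading Lorentz cross terms then form a total $\theta$-derivative.

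\textbf{The correct multiplier.} Test the vorticity equation with $-\Phi_\theta$.
\begin{itemize}
\item The transport term gives $-a\int r(\Delta\Phi)_\theta\Phi_\theta=a\,\mathbb P_\Phi$.
\item The Lorentz term equals $b\int r\Pi_\theta(\Delta\Phi)_\theta$, by the symmetry of $\Delta$ and $\Phi_\theta=\Phi_{r\theta}=0$ at $r=1$.
\item Substituting $\Pi_\theta$ through $G$ yields $(a-b^2/a)\mathbb P_\Phi+\frac ba\int rG(\Delta\Phi)_\theta$. The remaining zero-mode terms are bounded by $\delta\mathbb E+C_\delta\eta\mathbb P$.
\item Here your bound $|\int rG(\Delta\Phi)_\theta|\le\delta\mathbb E+C_\delta\mathbb L$ is exactly right.
\item The viscous term now vanishes: $\varepsilon^2\int r\Delta^2\Phi\,\Phi_\theta=\varepsilon^2\int r\Delta\Phi\,(\Delta\Phi)_\theta=0$.
\end{itemize}

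The term you single out as the main obstacle becomes $\int\Pi_\theta(\Delta\Pi_p^a)_r\Phi_\theta$, which is harmless. The multiplier $\Phi_\theta$ itself vanishes at the wall, so after substituting $G$ you can use Hardy's inequality. One Hardy factor handles the $G$ piece, using $(1-r)|(\Delta\Pi_p^a)_r|\le C\eta\varepsilon^{-1}$. Two Hardy factors handle the $b\Phi_\theta$ piece. Both give $C\eta(\mathbb E+\mathbb P+\mathbb L)$. The induction-equation substitution exploiting $\varepsilon^2 r\Delta\Pi$ is needed only in the refined estimate.
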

\begin{proof}
	Multiplying the first equation in \eqref{error-equation} by $-\Phi_\theta$ gives
	\begin{align*}
		|I_1|\leq I_2+I_3+I_4+I_5+I_6,
	\end{align*}
	where
	\begin{align*}
		I_1=&-\int ar(\Delta\Phi)_\theta\Phi_\theta
		+\int br(\Delta\Pi)_\theta\Phi_\theta,\\
		I_2=&\ \kappa\varepsilon^2\left|\int r\Delta^2\Phi\,\Phi_\theta\right|,\\
		I_3=&\left|\int (u^a-ar)(\Delta\Phi)_\theta\Phi_\theta
		-\int (g^a-br)(\Delta\Pi)_\theta\Phi_\theta
		+\int rv^a(\Delta\Phi)_r\Phi_\theta
		-\int rh^a(\Delta\Pi)_r\Phi_\theta\right|,\\
		I_4=&\left|\int \Phi_\theta(\Delta\Phi^a)_r\Phi_\theta
		-\int \Phi_r(\Delta\Phi^a)_\theta\Phi_\theta
		+\int \Pi_r(\Delta\Pi^a)_\theta\Phi_\theta\right|,\\
		I_5=&\left|\int \Pi_\theta(\Delta\Pi^a)_r\Phi_\theta\right|,
		\qquad
		I_6=\left|\int F_1\Phi_\theta\right|.
	\end{align*}
	For $I_1$,
	\begin{align*}
		I_1
		=&-\int ar(\Delta\Phi)_\theta\Phi_\theta
		+\int br(\Delta\Pi)_\theta\Phi_\theta\\
		=&\ a\int\left(r\Phi_{r\theta}^2+\frac{\Phi_{\theta\theta}^2}{r}\right)
		+b\int r\Pi_\theta(\Delta\Phi)_\theta\\
		=&\ a\int\left(r\Phi_{r\theta}^2+\frac{\Phi_{\theta\theta}^2}{r}\right)
		+\frac{b}{a}\int r
		\left(G+b\Phi_\theta
		+\frac{\Pi^a_\theta}{r}\Phi_r^0
		-\frac{\Phi^a_\theta}{r}\Pi_r^0\right)(\Delta\Phi)_\theta\\
		=&\left(a-\frac{b^2}{a}\right)
		\int\left(r\Phi_{r\theta}^2+\frac{\Phi_{\theta\theta}^2}{r}\right)
		+\frac{b}{a}\int rG(\Delta\Phi)_\theta\\
		&+\frac{b}{a}\int
		\left(\Pi^a_\theta\Phi_r^0-\Phi^a_\theta\Pi_r^0\right)(\Delta\Phi)_\theta .
	\end{align*}
	Since $a=\alpha+O(\eta^2)$, $b=\beta$, and $|\alpha|\neq|\beta|$,
	\[
		\left|a-\frac{b^2}{a}\right|
		=\frac{|a^2-b^2|}{|a|}\geq c_0>0
	\]
	for $\eta$ sufficiently small. Multiplying the entire identity by $\operatorname{sgn}((a^2-b^2)/a)$ if necessary, the coefficient contributes with the favorable sign. Hence
	\begin{align*}
		c_0\int\left(r\Phi_{r\theta}^2+\frac{\Phi_{\theta\theta}^2}{r}\right)
		&\leq |I_1|+\left|\int rG(\Delta\Phi)_\theta\right|\\
		&\quad+\left|\frac{b}{a}\int
		\left(\Pi^a_\theta\Phi_r^0-\Phi^a_\theta\Pi_r^0\right)
		(\Delta\Phi)_\theta\right|.
	\end{align*}
	Moreover,
	\begin{align*}
		\left|\int rG(\Delta\Phi)_\theta\right|
		&\leq
		\left(\frac1{\varepsilon^2}\int rG^2\right)^{1/2}
		\left(\varepsilon^2\int r|(\Delta\Phi)_\theta|^2\right)^{1/2}
		\leq C_\delta\mathbb L+\delta\mathbb E ,
	\end{align*}
	For the last term in the lower bound for $I_1$, the zero mode again
	drops out after one $\theta$-integration, as in
	\eqref{type-1-estimate1}--\eqref{type-1-estimate2}. Hence
	\begin{align*}
		&\left|\frac{b}{a}\int
		\left(\Pi^a_\theta\Phi_r^0-\Phi^a_\theta\Pi_r^0\right)(\Delta\Phi)_\theta\right|\\
		=&\left|\frac{b}{a}\int
		\left(\Pi^a_{\theta\theta}\Phi_r^0-\Phi^a_{\theta\theta}\Pi_r^0\right)
		\Delta\widetilde{\Phi}\right|\\
		\leq& \delta\mathbb E+C_\delta\eta\mathbb P .
	\end{align*}
	Thus
	\begin{align*}
		c\int\left(r\Phi_{r\theta}^2+\frac{\Phi_{\theta\theta}^2}{r}\right)
		\leq |I_1|+C_\delta\mathbb L+\delta\mathbb E+C_\delta\eta\mathbb P .
	\end{align*}
	For $I_2$,
	\begin{align*}
		I_2
		=\kappa\varepsilon^2\left|\int r\Delta\Phi(\Delta\Phi)_\theta\right|=0 .
	\end{align*}
	{For $I_3$, we divide it into the Euler part and the Prandtl part:
	\begin{align*}
		I_3\leq I_{31}+I_{32},
	\end{align*}
	where
	\begin{align*}
		I_{31}=&\left|\int (u_e^a-ar+\varepsilon^{12}u_L)(\Delta\Phi)_\theta\Phi_\theta
		-\int (g_e^a-br)(\Delta\Pi)_\theta\Phi_\theta\right.\\
		&\left.\quad+\int rv_e^a(\Delta\Phi)_r\Phi_\theta
		-\int r(h_e^a+h_c^a)(\Delta\Pi)_r\Phi_\theta\right|,\\
		I_{32}=&\left|\int u_p^a(\Delta\Phi)_\theta\Phi_\theta
		-\int g_p^a(\Delta\Pi)_\theta\Phi_\theta
		+\int rv_p^a(\Delta\Phi)_r\Phi_\theta
		-\int rh_p^a(\Delta\Pi)_r\Phi_\theta\right|.
	\end{align*}
	For $I_{31}$,
	\begin{align*}
		I_{31}
		&\leq \int\big(|u_e^a-ar+\varepsilon^{12}u_L|
		|(\Delta\Phi)_\theta|
		+|g_e^a-br|
		|(\Delta\Pi)_\theta|\big)|\Phi_\theta|\\
		&\quad+\int r\big(|v_e^a||\Delta\Phi|
		+(|h_e^a|+|h_c^a|)|\Delta\Pi|\big)|\Phi_{r\theta}|\\
		&\leq C\varepsilon\eta\int
		\left(|(\Delta\Phi)_\theta|
		+|(\Delta\Pi)_\theta|\right)
		|\Phi_\theta|+C\varepsilon\eta\int r
		\left(|\Delta\Phi|
		+|\Delta\Pi|\right)
		|\Phi_{r\theta}|\\
		&\leq C\eta \mathbb E+C\eta\mathbb P .
	\end{align*}}
	For $I_{32}$,
	\begin{align*}
		I_{32}
		&\leq \int(1-r)
		\big(|u_p^a||(\Delta\Phi)_\theta|
		+|g_p^a||(\Delta\Pi)_\theta|\big)
		\left|\frac{\Phi_\theta}{1-r}\right|\\
		&\quad+\int(1-r)\big(|(rv_p^a)_r||\Delta\Phi|
		+|(rh_p^a)_r||\Delta\Pi|\big)
		\left|\frac{\Phi_\theta}{1-r}\right|\\
		&\quad+\int\big(|rv_p^a||\Delta\Phi|
		+|rh_p^a||\Delta\Pi|\big)|\Phi_{r\theta}|\\
		&\leq C\varepsilon\eta\int
		\left(|(\Delta\Phi)_\theta|+|(\Delta\Pi)_\theta|
		+|\Delta\Phi|+|\Delta\Pi|\right)
		\left|\frac{\Phi_\theta}{1-r}\right|\\
		&\quad+C\varepsilon\eta\int
		\left(|\Delta\Phi|+|\Delta\Pi|\right)|\Phi_{r\theta}|\\
		&\leq C\varepsilon\eta
		\left(\int r\big(|\nabla^2_{\!p}\Phi_\theta|^2
		+|\nabla^2_{\!p}\Pi_\theta|^2
		+|\nabla^2_{\!p}\Phi|^2
		+|\nabla^2_{\!p}\Pi|^2\big)\right)^{1/2}\times
		\left(\int r\Phi_{r\theta}^2\right)^{1/2}\\
		&\leq C\eta \mathbb E+C\eta\mathbb P .
	\end{align*}
	Therefore,
	\begin{align*}
		I_3\leq  C\eta \mathbb E+C\eta\mathbb P .
	\end{align*}
	For $I_4$, decomposing $\Delta\Phi^a$ and $\Delta\Pi^a$ into Euler,
	Prandtl and corrector parts as in the estimate of $I_3$ in the proof of
	Lemma \ref{basic-energy-estimate} gives
	\begin{align*}
		I_4 \leq C\eta \mathbb E+C\eta\mathbb P .
	\end{align*}
	{For $I_5$, since
	$
		(\Delta\Pi_e^a)_r=0,
	$
	we have
	$
		(\Delta\Pi^a)_r=(\Delta\Pi_p^a)_r,
	$
	and the definition of $G$ gives
	\begin{align*}
		I_5
		&\leq C\left|\int G(\Delta\Pi_p^a)_r\Phi_\theta\right|
		+C\left|\int \Phi_\theta(\Delta\Pi_p^a)_r\Phi_\theta\right|\\
		&\quad+C\left|\int
		\left(\frac{\Pi^a_\theta}{r}\Phi_r^0-\frac{\Phi^a_\theta}{r}\Pi_r^0\right)
		(\Delta\Pi_p^a)_r
		\Phi_\theta\right|\\
		&\leq C\int r(1-r)|(\Delta\Pi_p^a)_r|
		|G|\left|\frac{\Phi_\theta}{1-r}\right|
		+C\int r(1-r)^2|(\Delta\Pi_p^a)_r|
		\left|\frac{\Phi_\theta}{1-r}\right|^2\\
		&\quad+C\int r(1-r)^2|(\Delta\Pi_p^a)_r|
		\left(\left|\frac{\Phi_r^0}{1-r}\right|
		+\left|\frac{\Pi_r^0}{1-r}\right|\right)
		\left|\frac{\Phi_\theta}{1-r}\right|\\
		&\leq \frac{C\eta}{\varepsilon}\int r|G|
		\left|\frac{\Phi_\theta}{1-r}\right|
		+C\eta\int r
		\left|\frac{\Phi_\theta}{1-r}\right|^2\\
		&\quad+C\varepsilon\eta\int r
		\left(\left|\frac{\Phi_r^0}{1-r}\right|
		+\left|\frac{\Pi_r^0}{1-r}\right|\right)
		\left|\frac{\Phi_\theta}{1-r}\right|\\
		&\leq C\eta \mathbb E+C\eta\mathbb P+C\eta \mathbb L .
	\end{align*}}
	For $I_6$,
	\begin{align*}
		I_6
		&\leq
		\left(\frac1{\varepsilon^2}\int\frac{F_1^2}{r}\right)^{1/2}
		\left(\varepsilon^2\int r\Phi_\theta^2\right)^{1/2}\\
		&\leq \delta\int\left(r\Phi_{r\theta}^2
		+\frac{\Phi_{\theta\theta}^2}{r}\right)
		+\frac{C_\delta}{\varepsilon^2}\int\frac{F_1^2}{r}.
	\end{align*}
	Combining these estimates gives
	\eqref{e:positivity-estimate}.
\end{proof}

\subsubsection{Positivity estimate for the magnetic field}

\begin{lemma}
	Let $(\Phi,\Pi)$ be a smooth solution of \eqref{error-equation}. Then
	for any $\delta>0$,
	\begin{align}\label{e:positivity-estimate-mag}
		\mathbb P_\Pi
		\leq \delta\mathbb E+C_\delta(\mathbb P_\Phi+\mathbb L)
		+C_\delta\eta(\mathbb{E}+\mathbb{P}+\mathbb{L})
		+\frac{C_\delta}{\varepsilon^2}\int\frac{F_{2,\theta}^2}{r}.
	\end{align}

\end{lemma}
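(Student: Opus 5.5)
The idea is to express $\mathbb P_\Pi$ as a pairing against the magnetic Laplacian and then trade the tangential derivative $\Pi_{\theta\theta}$ for the characteristic combination $G$ and for $\Phi_{\theta\theta}$. First, periodicity and the Neumann condition $\Pi_r(\theta,1)=0$ give the identity
\[
\int r\Delta\Pi\,\Pi_{\theta\theta}=-\int r(\Delta\Pi)_\theta\Pi_\theta=\int\Big(r\Pi_{r\theta}^2+\frac{\Pi_{\theta\theta}^2}{r}\Big)=\mathbb P_\Pi .
\]
This is obtained by integrating by parts in $r$ and then in $\theta$. The only boundary term at $r=1$ is $r\Pi_{r\theta}\Pi_\theta$, and it vanishes because $\Pi_{r\theta}=\partial_\theta\Pi_r=0$ there. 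Smoothness at the polar center removes any contribution from $r=0$. Unlike Lemma~\ref{lem:polar-hessian}, no term $\int_{r=1}\Pi_\theta^2$ appears, because only one tangential integration is performed.

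Next, I differentiate the definition of $G$ in $\theta$. Since $\partial_\theta\Phi^0_r=\partial_\theta\Pi^0_r=0$, this gives
\[
a\Pi_{\theta\theta}=G_\theta+b\Phi_{\theta\theta}+\Big(\frac{\Pi^a_\theta}{r}\Big)_\theta\Phi_r^0-\Big(\frac{\Phi^a_\theta}{r}\Big)_\theta\Pi_r^0 .
\]
I substitute this into $\int r\Delta\Pi\,\Pi_{\theta\theta}$, which splits $a\mathbb P_\Pi$ into three pieces.

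\emph{The $G_\theta$ piece.} I integrate by parts in $\theta$ to get $-\int r(\Delta\Pi)_\theta G$. Since $\varepsilon^2\int r(\Delta\Pi_\theta)^2\le C\mathbb E$ by Lemma~\ref{lem:polar-hessian} applied to $\Pi_\theta$, this is bounded by $(C\mathbb E)^{1/2}\mathbb L^{1/2}\le\delta\mathbb E+C_\delta\mathbb L$. This is exactly the scale pairing that $\mathbb L$ was designed for.

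\emph{The $b\Phi_{\theta\theta}$ piece.} Integrating by parts in $r$, I get
\[
\int r\Delta\Pi\,\Phi_{\theta\theta}=\int\Big(r\Pi_{r\theta}\Phi_{r\theta}+\frac{\Pi_{\theta\theta}\Phi_{\theta\theta}}{r}\Big).
\]
The boundary term at $r=1$ is $r\Pi_r\Phi_{\theta\theta}$, which vanishes because $\Pi_r=0$ there (and also $\Phi=0$). Cauchy--Schwarz bounds this piece by $\mathbb P_\Pi^{1/2}\mathbb P_\Phi^{1/2}$, so it can be absorbed as $\frac{|a|}{4}\mathbb P_\Pi+C\mathbb P_\Phi$.

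\emph{The zero-mode coupling pieces.} Their coefficients are $\theta$-derivatives of the normal approximate fields $\Pi^a_\theta/r=h^a$ and $\Phi^a_\theta/r=v^a$. These are $O(\varepsilon\eta)$ and carry the weighted boundary-layer decay. I treat them exactly as in \eqref{type-1-estimate1}--\eqref{type-1-estimate2}. The product with the zero mode $\Delta\Pi^0$ integrates to zero, because the coefficient has zero $\theta$-mean. For the nonzero modes, after one radial integration by parts, Hardy's inequality on $\Phi^0_r/(1-r)$ and $\Pi^0_r/(1-r)$ (both vanish at $r=1$) gives the bound $C\eta(\mathbb E+\mathbb P)$.

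Since $|a|\ge\alpha/2$, dividing through by $a$ yields the claimed estimate. The $F_{2,\theta}$ term on the right-hand side is not needed in this route; alternatively it arises if $G_\theta$ is instead eliminated through the $\theta$-differentiated induction equation $rG_\theta+\dots-\varepsilon^2r\Delta\Pi_\theta=F_{2,\theta}$. That route would be useful if some term in the $G_\theta$ pairing requires more than $\mathbb L$.

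The main obstacle I expect is the zero-mode coupling pieces. There $\Pi_r^0$ has no Dirichlet information, so the Hardy step must rely only on $\Pi_r^0(1)=0$. I also need to check that the $O(\varepsilon\eta)$ size of the normal fields exactly compensates for $\int r(\Pi^0_{rr})^2$ being controlled only at the scale $\varepsilon^{-2}\mathbb E$.
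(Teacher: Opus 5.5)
Your proof is correct, and it takes a genuinely different route from the paper's. The paper multiplies the second error equation by $-(\Delta\Pi)_\theta$. The transport term $-ar\Pi_\theta$ supplies $a\mathbb P_\Pi$, and the coupling $br\Phi_\theta$ gives a cross term bounded by $C\mathbb P_\Phi+\frac a2\mathbb P_\Pi$. Every coefficient perturbation must then be estimated separately:
\begin{itemize}
\item $u^a-ar$, where $G$ is substituted for $\Pi_\theta$ in the Prandtl part;
\item $g^a-br$;
\item $\Pi^a_\theta\Phi_r-\Phi^a_\theta\Pi_r$.
\end{itemize}
The source term is what produces the $F_{2,\theta}$ contribution. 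You never use the equation. The identity $\mathbb P_\Pi=\int r\Delta\Pi\,\Pi_{\theta\theta}$, together with the $\theta$-derivative of the definition of $G$, turns the lemma into an interpolation inequality,
\[
\mathbb P_\Pi\le C(\mathbb E\mathbb L)^{1/2}+C\mathbb P_\Phi+C\eta(\mathbb E+\mathbb P).
\]
It holds for every smooth pair with $\Phi=\Phi_r=\Pi_r=0$ at $r=1$, and it uses only $a\neq0$. The only surviving perturbative piece is the zero-mode coupling, which is exactly the term the paper handles in its $I_2$ and in \eqref{type-1-estimate1}--\eqref{type-1-estimate2}. Your scaling check there is right: the coefficient size $\varepsilon\eta$ times $\varepsilon^{-1}\mathbb E^{1/2}$ times $\mathbb P^{1/2}$ gives $C\eta(\mathbb E+\mathbb P)$. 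In the angular part of $\Delta\widetilde\Pi$, bound $\int(\Phi_r^0)^2/r$ by $\int r|\nabla^2_{\!p}\Phi|^2$ through the component $(\Phi_r/r+\Phi_{\theta\theta}/r^2)^2$. Hardy from $r=1$ does not reach the center.

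What your route buys is a shorter argument with no source term, so the $F_{2,\theta}$ term in the statement is superfluous. It also shows that the lemma is a property of the norm $\mathbb E+\mathbb P+\mathbb L$ itself. What it costs is that $\mathbb L$ enters with a full constant $C_\delta$, whereas the paper's proof produces $\mathbb L$ only with a factor $\eta$. The statement allows this, and it is harmless in Proposition~\ref{prop:linear-stability}, since $\mathbb L$ is closed first from the combination and velocity energy estimates.

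One small correction concerns the bound $\varepsilon^2\int r(\Delta\Pi_\theta)^2\le C\mathbb E$. It does not follow from Lemma~\ref{lem:polar-hessian}, whose boundary term $\int_{r=1}\Pi_{\theta\theta}^2$ has the wrong sign for an upper bound. It follows instead from the pointwise inequality $(\Delta q)^2\le 2|\nabla^2_{\!p}q|^2$: the Laplacian is the trace of the Hessian, and $|\nabla^2_{\!p}q|$ is its Frobenius norm in the polar frame. So one may take $C=2$.
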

\begin{proof}
	Multiplying the second equation in \eqref{error-equation} by $-(\Delta\Pi)_\theta$ gives
	\begin{align*}
		-\int u^a\Pi_\theta(\Delta\Pi)_\theta
		+\int \Pi^a_\theta\Phi_r(\Delta\Pi)_\theta
		-\int \Phi^a_\theta\Pi_r(\Delta\Pi)_\theta
		+\int g^a\Phi_\theta(\Delta\Pi)_\theta\\
		+\varepsilon^2\int r\Delta\Pi(\Delta\Pi)_\theta
		=-\int F_2(\Delta\Pi)_\theta .
	\end{align*}
	Since
	\[
		\varepsilon^2\int r\Delta\Pi(\Delta\Pi)_\theta=0,\qquad
		-\int ar\Pi_\theta(\Delta\Pi)_\theta
		=a\int\left(r\Pi_{r\theta}^2+\frac{\Pi_{\theta\theta}^2}{r}\right),
	\]
	we have
	\begin{align*}
		a\int\left(r\Pi_{r\theta}^2+\frac{\Pi_{\theta\theta}^2}{r}\right)
		\leq I_1+I_2+I_3+I_4+I_5,
	\end{align*}
	where
	\begin{align*}
		I_1=&\left|\int (u^a-ar)\Pi_\theta(\Delta\Pi)_\theta\right|,\\
		I_2=&\left|\int \Pi^a_\theta\Phi_r(\Delta\Pi)_\theta
		-\int \Phi^a_\theta\Pi_r(\Delta\Pi)_\theta\right|,\\
		I_3=&\left|\int (g^a-br)\Phi_\theta(\Delta\Pi)_\theta\right|,\\
		I_4=&\left|b\int r\Phi_\theta(\Delta\Pi)_\theta\right|,\qquad
		I_5=\left|\int F_2(\Delta\Pi)_\theta\right|.
	\end{align*}
	For $I_1$,
	\begin{align*}
		I_1
		&\leq \left|\int (u_e^a-ar+{\color{black}\varepsilon^{12}}u_L)
		\Pi_\theta(\Delta\Pi)_\theta\right|
		+\left|\int u_p^a\Pi_\theta(\Delta\Pi)_\theta\right|\\
		&\leq \int |u_e^a-ar+{\color{black}\varepsilon^{12}}u_L|
		|\Pi_\theta||(\Delta\Pi)_\theta|
		+\left|\int u_p^a\Pi_\theta(\Delta\Pi)_\theta\right|\\
		&\leq C\varepsilon\eta\int
		\sqrt r|(\Delta\Pi)_\theta|
		\frac{|\Pi_\theta|}{\sqrt r}
		+\left|\int u_p^a\Pi_\theta(\Delta\Pi)_\theta\right|.
	\end{align*}
	For the Prandtl part, using
	\[
		\Pi_\theta=\frac1a\left(G+b\Phi_\theta
		+\frac{\Pi^a_\theta}{r}\Phi_r^0-\frac{\Phi^a_\theta}{r}\Pi_r^0\right),
	\]
	we have
	\begin{align*}
		\left|\int u_p^a\Pi_\theta(\Delta\Pi)_\theta\right|
		&\leq C\left|\int u_p^aG(\Delta\Pi)_\theta\right|
		+C\left|\int u_p^a\Phi_\theta(\Delta\Pi)_\theta\right|\\
		&\quad+C\left|\int u_p^a
		\left(\frac{\Pi^a_\theta}{r}\Phi_r^0
		-\frac{\Phi^a_\theta}{r}\Pi_r^0\right)(\Delta\Pi)_\theta\right|\\
		&\leq C\eta\int r|G|\,|(\Delta\Pi)_\theta|+C\int(1-r)|u_p^a|
		\left|\frac{\Phi_\theta}{1-r}\right|
		|(\Delta\Pi)_\theta|\\
		&\quad+C\int(1-r)|u_p^a|
		\left|\frac{\Pi^a_\theta}{r}\frac{\Phi_r^0}{1-r}
		-\frac{\Phi^a_\theta}{r}\frac{\Pi_r^0}{1-r}\right|
		|(\Delta\Pi)_\theta|\\
		&\leq C\eta\int r|G|\,|(\Delta\Pi)_\theta|
		+C\varepsilon\eta\int r
		\left|\frac{\Phi_\theta}{1-r}\right|
		|(\Delta\Pi)_\theta|\\
		&\quad+C\varepsilon^2\eta\int r
		\left(\left|\frac{\Phi_r^0}{1-r}\right|
		+\left|\frac{\Pi_r^0}{1-r}\right|\right)
		|(\Delta\Pi)_\theta|\\
		&\leq \delta\mathbb E+C_\delta\eta(\mathbb P+\mathbb L).
	\end{align*}
	Thus
	\begin{align*}
		I_1
		\leq \delta\mathbb E+C_\delta\eta(\mathbb P+\mathbb L).
	\end{align*}
	For $I_2$,
	\begin{align*}
		I_2
		=&\left|-\int\left(\Pi^a_{\theta\theta}\Phi_r
		+\Pi^a_\theta\Phi_{r\theta}\right)\Delta\Pi
		+\int\left(\Phi^a_{\theta\theta}\Pi_r
		+\Phi^a_\theta\Pi_{r\theta}\right)\Delta\Pi\right|\\
		&\leq C\varepsilon\eta\int r
		\left(|\Phi_{r\theta}|+|\Pi_{r\theta}|\right)|\Delta\Pi|+\left|\int
		\left(\Pi^a_{\theta\theta}\Phi_r
		-\Phi^a_{\theta\theta}\Pi_r\right)\Delta\Pi\right|.
	\end{align*}
	Since
	\[
		\int\left(\Pi^a_{\theta\theta}\Phi_r^0
		-\Phi^a_{\theta\theta}\Pi_r^0\right)\Delta\Pi^0=0,
	\]
	we have
	\begin{align*}
		&\left|\int
		\left(\Pi^a_{\theta\theta}\Phi_r
		-\Phi^a_{\theta\theta}\Pi_r\right)\Delta\Pi\right|\\
		&\leq C\varepsilon\eta\int r
		\left(|\widetilde{\Phi}_r|+|\widetilde{\Pi}_r|\right)|\Delta\Pi|+\left|\int
		\left(\Pi^a_{\theta\theta}\Phi_r^0
		-\Phi^a_{\theta\theta}\Pi_r^0\right)\Delta\widetilde{\Pi}\right|\\
		&\leq C\varepsilon\eta\int r
		\left(|\widetilde{\Phi}_r|+|\widetilde{\Pi}_r|\right)|\Delta\Pi|\\
		&\quad+C\varepsilon\eta\int r
		\left(|\Phi_{rr}^0|+|\Pi_{rr}^0|
		+\left|\frac{\Phi_r^0}{1-r}\right|
		+\left|\frac{\Pi_r^0}{1-r}\right|\right)
		\left(|\widetilde{\Pi}_r|+\frac{|\Pi_\theta|}{r}\right)\\
		&\leq \delta\mathbb E+C_\delta\eta\mathbb P .
	\end{align*}
	Thus
	\begin{align*}
		I_2\leq \delta\mathbb E+C_\delta\eta\mathbb P .
	\end{align*}
	{For $I_3$,
	\begin{align*}
		I_3
		&\leq \left|\int (g_e^a-br)
		\Phi_\theta(\Delta\Pi)_\theta\right|
		+\left|\int g_p^a\Phi_\theta(\Delta\Pi)_\theta\right|\\
		&\leq \int |g_e^a-br|
		|\Phi_\theta||(\Delta\Pi)_\theta|
		+\int(1-r)|g_p^a|
		\left|\frac{\Phi_\theta}{1-r}\right||(\Delta\Pi)_\theta|\\
		&\leq C\varepsilon\eta\int
		\sqrt r|(\Delta\Pi)_\theta|
		\frac{|\Phi_\theta|}{\sqrt r}
		+\int(1-r)|g_p^a|
		\left|\frac{\Phi_\theta}{1-r}\right||(\Delta\Pi)_\theta|\\
		&\leq C\eta\varepsilon\int
		r\left|\frac{\Phi_\theta}{1-r}\right||(\Delta\Pi)_\theta|\\
		&\quad+C\varepsilon\eta\int
		\sqrt r|(\Delta\Pi)_\theta|
		\frac{|\Phi_\theta|}{\sqrt r}
		\leq \delta\mathbb E+C_\delta\eta\mathbb P .
	\end{align*}}
	For $I_4$,
	\begin{align*}
		b\int r\Phi_\theta(\Delta\Pi)_\theta
		&=b\int r\Phi_\theta
		\left(\Pi_{rr\theta}+\frac{\Pi_{r\theta}}{r}
		+\frac{\Pi_{\theta\theta\theta}}{r^2}\right)\\
		&=-b\int r\Phi_{r\theta}\Pi_{r\theta}
		-b\int\frac{\Phi_{\theta\theta}\Pi_{\theta\theta}}{r}.
	\end{align*}
	Thus
	\begin{align*}
		I_4
		&\leq C\int\left(r|\Phi_{r\theta}\Pi_{r\theta}|
		+\frac{|\Phi_{\theta\theta}\Pi_{\theta\theta}|}{r}\right)\\
		&\leq C\mathbb P_\Phi+\frac{a}{2}\mathbb P_\Pi .
	\end{align*}
	For $I_5$,
	\begin{align*}
		I_5
		=\left|\int F_{2,\theta}\Delta\Pi\right|
		\leq \delta\mathbb{E}
		+\frac{C_\delta}{\varepsilon^2}\int\frac{F_{2,\theta}^2}{r}.
	\end{align*}
	Choosing $\eta$ sufficiently small gives \eqref{e:positivity-estimate-mag}.
\end{proof}

\subsubsection{Refined higher-order energy estimate}

In this subsection, we establish the following higher-order energy estimate of \eqref{error-equation}.

\begin{lemma}
	Let $(\Phi,\Pi)$ be a smooth solution of \eqref{error-equation}. Then
	\begin{align}\label{e:positive-estimate2}
		&\varepsilon^2\int r\big(|\nabla^2_{\!p}\Phi_\theta|^2+|\nabla^2_{\!p}\Pi_\theta|^2\big)\nonumber\\
		&\qquad\leq C\eta(\mathbb{E}+\mathbb{P}+\mathbb{L})
		+\frac{C}{\varepsilon^2}\int\frac{F_1^2+F_2^2+F_{2,\theta}^2}{r}.
	\end{align}
\end{lemma}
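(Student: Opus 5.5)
The plan is to differentiate both error equations in \eqref{error-equation} once in $\theta$ and then repeat the two basic energy estimates for the pair $(\Phi_\theta,\Pi_\theta)$. Since the domain is periodic in $\theta$, the pair $(\Phi_\theta,\Pi_\theta)$ keeps the boundary conditions $\Phi_\theta=\Phi_{r\theta}=0$ and $\Pi_{r\theta}=0$ at $r=1$. The new terms are commutators in which $\partial_\theta$ falls on the approximate coefficients $\Phi^a,\Pi^a$, and the differentiated sources $F_{1,\theta}$ and $F_{2,\theta}$.

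\emph{Velocity part.} I will not test the differentiated momentum equation against $-\Phi_\theta$. Instead I test the undifferentiated first equation against $\Phi_{\theta\theta}$; by periodicity this is the same as testing the $\theta$-derivative against $-\Phi_\theta$, but only $F_1$ appears. The diffusion term gives $\kappa\varepsilon^2\int r(\Delta\Phi_\theta)^2$, which controls $\varepsilon^2\int r|\nabla_p^2\Phi_\theta|^2$ by Lemma~\ref{lem:polar-hessian}. In the Euler principal part, $a\int r(\Delta\Phi_\theta)_\theta\Phi_\theta$ vanishes. What remains is the coupling term $b\int r(\Delta\Pi_\theta)_\theta\Phi_\theta$, which I keep and cancel against the magnetic estimate, exactly as $b\int r(\Delta\Pi)_\theta\Phi$ is treated in Lemma~\ref{basic-energy-estimate} together with the combination estimate.

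\emph{Magnetic part.} I test the second equation against $-(\Delta\Pi_\theta)_\theta$; after integrating by parts this is the $\theta$-differentiated version of \eqref{e:basic-magnetic-energy-estimate}. It gives $\varepsilon^2\int r(\Delta\Pi_\theta)^2$, which controls $\varepsilon^2\int r|\nabla_p^2\Pi_\theta|^2$ since the boundary term $\Pi_{\theta\theta}^2$ has a good sign. The term $a\int r\Pi_{\theta\theta}\Delta\Pi_\theta$ again vanishes by antisymmetry. The coupling term $b\int r\Phi_{\theta\theta}\Delta\Pi_\theta$ pairs with the velocity coupling term and cancels it after one integration by parts in $\theta$. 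The source contributes $\int F_{2,\theta}\Delta\Pi_\theta$, which is bounded by $\delta\mathbb E+C\varepsilon^{-2}\int F_{2,\theta}^2/r$.

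\emph{Lower-order terms.} These are handled as in the previous lemmas: split into Euler, Prandtl and corrector parts, use Hardy's inequality against $1-r$ weights, substitute $\Pi_\theta=a^{-1}(G+b\Phi_\theta+\dots)$ wherever $\Pi_\theta$ appears without a boundary condition, and use the zero-mode cancellations of \eqref{type-1-estimate1}--\eqref{type-1-estimate2}. Each should be bounded by $C\eta(\mathbb E+\mathbb P+\mathbb L)$.

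\emph{Main obstacle.} I expect the hardest terms to be the largest Prandtl commutators, $\int\Phi_\theta(\Delta\Phi_p^a)_r\,r\Phi_{\theta\theta}/(u_e+u_p^a)$ and $\int\Pi_\theta(\Delta\Pi^a_p)_r\,r\Phi_{\theta\theta}/(u_e+u_p^a)$. Here $(\Delta\Phi_p^a)_r\sim\eta\varepsilon^{-3}$ near the wall. The first term has the factor $(1-r)^2$, with both $\Phi_\theta$ and $\Phi_{\theta\theta}$ vanishing at $r=1$, so Hardy's inequality gives $C\eta(\mathbb E+\mathbb P)$. For the second term, $\Pi_\theta$ has no boundary value. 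I will replace $(u_e+u_p^a)\Pi_\theta$ using the magnetic error equation itself: the leading coefficient is kept on the left and all other coefficients are moved to the right. This expresses $\Pi_\theta$ through $\varepsilon^2r\Delta\Pi$, $\Phi$-terms with boundary vanishing, and $F_2$. The $\varepsilon^2$ factor compensates the lost power of $\varepsilon$. The remaining derivative on $\Phi_{\theta\theta}$ is moved by integrating by parts in $\theta$ rather than in $r$, so no extra radial derivative is needed. This produces the $\varepsilon^{-2}\int F_2^2/r$ source. Collecting all bounds and choosing $\delta$ and $\eta$ small then gives \eqref{e:positive-estimate2}.
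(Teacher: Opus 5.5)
\textbf{Gap: the multipliers need the weight $r/(u_e+u_p^a)$.} Your outline matches the paper's in most respects: you test the momentum equation with $\Phi_{\theta\theta}$ and the induction equation with $(\Delta\Pi)_{\theta\theta}$, cancel the $b$-coupling by integrating by parts in $\theta$, and treat $\int\Pi_\theta(\Delta\Pi_p^a)_r\cdots$ by solving the magnetic equation for $\Pi_\theta$. But the multipliers you actually write down carry no weight, and with them the estimate does not close.

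The transport coefficient is
\[
-\Phi^a_r=u^a=ar+u_p^a+(u_e^a-ar)+\varepsilon^{12}u_L ,
\]
and antisymmetry removes only the part $ar$. The Prandtl part leaves
\[
\int u_p^a(\Delta\Phi)_\theta\Phi_{\theta\theta}
\qquad\text{and}\qquad
\int u_p^a\Pi_\theta(\Delta\Pi)_{\theta\theta}.
\]
Here $u_p^a=O(\eta)$ is not small in $\varepsilon$, varies in $r$ on the scale $\varepsilon$, and depends on $\theta$.

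\emph{The velocity term.} Hardy's inequality with $(1-r)|u_p^a|\leq C\varepsilon\eta$ gives only
\[
C\eta\varepsilon\|\sqrt r\,\Delta\Phi_\theta\|\,\|\sqrt r\,\Phi_{r\theta\theta}\|\lesssim\eta\varepsilon^{-1}(\mathbb E+\mathbb P),
\]
because both norms are of size $\varepsilon^{-1}(\mathbb E+\mathbb P)^{1/2}$. Integrating by parts reduces the term to $\int\partial_ru_p^a\,\Phi_{r\theta}\Phi_{\theta\theta}$ with $\partial_ru_p^a\sim\eta\varepsilon^{-1}$. This is not a total $\theta$-derivative when $u_p^a$ depends on $\theta$. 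For boundary-layer functions of tangential frequency $\sim\varepsilon^{-1}$ it can really be of size $\eta\varepsilon^{-1}(\mathbb E+\mathbb P)$.

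\emph{The magnetic term.} This one is worse. $\Pi_\theta$ has no boundary condition, so Hardy is unavailable, and $\mathbb L$ controls $G$ but not $G_\theta$.

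So your claim that every remaining term is bounded by $C\eta(\mathbb E+\mathbb P+\mathbb L)$ fails here by a factor $\varepsilon^{-1}$.

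\textbf{The missing idea.} Put the weight $r/(u_e+u_p^a)$ into both multipliers, i.e.\ use
\[
\frac{r\Phi_{\theta\theta}}{u_e+u_p^a}
\qquad\text{and}\qquad
\frac{r(\Delta\Pi)_{\theta\theta}}{u_e+u_p^a}.
\]
Then the entire leading transport $(u_e+u_p^a)$ cancels exactly. What remains is $\int r(\Delta\Phi)_\theta\Phi_{\theta\theta}=\int r\Pi_\theta(\Delta\Pi)_{\theta\theta}=0$ plus an $O(\varepsilon\eta)$ Euler remainder. The weight also has two further effects:
\begin{itemize}
\item The variable coefficient moves onto the diffusion terms. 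There the prefactor $\varepsilon^2$ pays for the $\varepsilon^{-1}$ produced by $\partial_r\big(r/(u_e+u_p^a)\big)$, using $\Phi_\theta=\Phi_{r\theta}=0$ at $r=1$ and Hardy.
\item The two coupling terms combine into $\int(br+g_p^a)\frac{r}{u_e+u_p^a}\,\partial_\theta\big((\Delta\Pi)_\theta\Phi_\theta\big)$, which one $\theta$-integration by parts makes small.
\end{itemize}

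The hard terms in your last paragraph already carry the factor $r\Phi_{\theta\theta}/(u_e+u_p^a)$. Likewise, your substitution for $\Pi_\theta$ with leading coefficient $u_e+u_p^a$ arises only with this weighted multiplier. So your treatment of them is the paper's and is fine, but it is inconsistent with the unweighted multipliers you set up.

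One further slip: $(\Delta\Phi_p^a)_r\sim\eta\varepsilon^{-2}$, not $\eta\varepsilon^{-3}$, since $\Delta\Phi_p^a\approx-\partial_ru_p^a=O(\eta\varepsilon^{-1})$. Your $(1-r)^2$ Hardy argument gives $C\eta\mathbb P$ only with the correct size.
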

\begin{proof}
	We multiply the first and second equations in \eqref{error-equation} by
	\[
		\frac{r\Phi_{\theta\theta}}{u_e+u_p^a},\qquad
		\frac{r(\Delta\Pi)_{\theta\theta}}{u_e+u_p^a},
	\]
	respectively.  Here and below the quotient
	$r/(u_e+u_p^a)$ is understood by continuous extension at $r=0$; it equals
	$1/a$ near the origin because $u_p^a=0$ there.  This gives
	\begin{align*}
		&-\kappa\varepsilon^2\int r\Delta^2\Phi
		\frac{r\Phi_{\theta\theta}}{u_e+u_p^a}
		-\varepsilon^2\int r\Delta\Pi
		\frac{r(\Delta\Pi)_{\theta\theta}}{u_e+u_p^a}\\
		&\qquad= I_1+I_2,
	\end{align*}
	where
	\begin{align*}
		I_1
		=&-\int\Big[
		\Phi^a_\theta(\Delta\Phi)_r+\Phi_\theta(\Delta\Phi^a)_r
		-\Phi^a_r(\Delta\Phi)_\theta-\Phi_r(\Delta\Phi^a)_\theta\\
		&\qquad
		-\Pi^a_\theta(\Delta\Pi)_r-\Pi_\theta(\Delta\Pi^a)_r
		+\Pi^a_r(\Delta\Pi)_\theta+\Pi_r(\Delta\Pi^a)_\theta
		\Big]\frac{r\Phi_{\theta\theta}}{u_e+u_p^a}\\
		&-\int\Big[-\Phi^a_r\Pi_\theta-\Phi_r\Pi^a_\theta
		+\Phi^a_\theta\Pi_r+\Phi_\theta\Pi^a_r\Big]
		\frac{r(\Delta\Pi)_{\theta\theta}}{u_e+u_p^a},\\
		I_2
		=&\int F_1\frac{r\Phi_{\theta\theta}}{u_e+u_p^a}
		+\int F_2\frac{r(\Delta\Pi)_{\theta\theta}}{u_e+u_p^a}.
	\end{align*}
	First,
	\begin{align*}
		&-\kappa\varepsilon^2\int r\Delta^2\Phi
		\frac{r\Phi_{\theta\theta}}{u_e+u_p^a}
		-\varepsilon^2\int r\Delta\Pi
		\frac{r(\Delta\Pi)_{\theta\theta}}{u_e+u_p^a}\\
		=&-\frac{\kappa\varepsilon^2}{a}\int r\Delta^2\Phi\,\Phi_{\theta\theta}
		-\frac{\varepsilon^2}{a}\int r\Delta\Pi(\Delta\Pi)_{\theta\theta}\\
		&-\kappa\varepsilon^2\int r
		\left(\frac{r}{u_e+u_p^a}-\frac1a\right)
		\Delta^2\Phi\,\Phi_{\theta\theta}
		-\varepsilon^2\int r
		\left(\frac{r}{u_e+u_p^a}-\frac1a\right)
		\Delta\Pi(\Delta\Pi)_{\theta\theta}.
	\end{align*}
	For the constant-coefficient part,
	\begin{align*}
		-\int r\Delta^2\Phi\,\Phi_{\theta\theta}
		&=\int r\Delta^2\Phi_\theta\,\Phi_\theta
		=\int r(\Delta\Phi_\theta)^2
		=\int r|\nabla^2_{\!p}\Phi_\theta|^2,\\
		-\int r\Delta\Pi(\Delta\Pi)_{\theta\theta}
		&=\int r|(\Delta\Pi)_\theta|^2
		\geq\int r|\nabla^2_{\!p}\Pi_\theta|^2 .
	\end{align*}
	The last equality and inequality follow from
	Lemma~\ref{lem:polar-hessian}.
	For the coefficient error, since $u_e=ar$,
	\begin{align*}
		\frac{r}{u_e+u_p^a}-\frac1a
		&=-\frac{u_p^a}{a(u_e+u_p^a)},\\
		\left(\frac{r}{u_e+u_p^a}\right)_\theta
		&=-\frac{r(u_p^a)_\theta}{(u_e+u_p^a)^2},\\
		\left(\frac{r}{u_e+u_p^a}\right)_r
		&=\frac{u_p^a-r(u_p^a)_r}{(u_e+u_p^a)^2}.
	\end{align*}
	Thus
	\begin{align*}
		&\left|\frac{r}{u_e+u_p^a}-\frac1a\right|
		+\left|\left(\frac{r}{u_e+u_p^a}\right)_\theta\right|
		+(1-r)\left(
		\left|\left(\frac{r}{u_e+u_p^a}\right)_r\right|
		+\left|\left(\frac{r}{u_e+u_p^a}\right)_{r\theta}\right|\right)\\
		&\qquad+\varepsilon(1-r)\left(
		\left|\Delta\left(\frac{r}{u_e+u_p^a}\right)\right|
		+\left|\left(\Delta\left(\frac{r}{u_e+u_p^a}\right)\right)_\theta\right|\right)
		\leq C\eta ,
	\end{align*}
	we obtain
	\begin{align*}
		&-\int r
		\left(\frac{r}{u_e+u_p^a}-\frac1a\right)
		\Delta^2\Phi\,\Phi_{\theta\theta}\\
		=&\int r\left(\frac{r}{u_e+u_p^a}-\frac1a\right)
		(\Delta\Phi)_\theta^2
		+\int r\left(\frac{r}{u_e+u_p^a}\right)_\theta
		\Delta\Phi(\Delta\Phi)_\theta\\
		&-2\int r\Delta\Phi
		\left[
		\left(\frac{r}{u_e+u_p^a}\right)_r\Phi_{r\theta\theta}
		+\frac1{r^2}\left(\frac{r}{u_e+u_p^a}\right)_\theta
		\Phi_{\theta\theta\theta}\right]\\
		&-\int r\Delta\Phi\,
		\Delta\left(\frac{r}{u_e+u_p^a}\right)\Phi_{\theta\theta},
	\end{align*}
	and
	\begin{align*}
		&-2\int r\Delta\Phi
		\left(\frac{r}{u_e+u_p^a}\right)_r\Phi_{r\theta\theta}\\
		&\quad=2\int r\left(\frac{r}{u_e+u_p^a}\right)_r
		(\Delta\Phi)_\theta\Phi_{r\theta}
		+2\int r\left(\frac{r}{u_e+u_p^a}\right)_{r\theta}
		\Delta\Phi\,\Phi_{r\theta},\\
		&-\int r\Delta\Phi\,
		\Delta\left(\frac{r}{u_e+u_p^a}\right)\Phi_{\theta\theta}\\
		&\quad=\int r(\Delta\Phi)_\theta
		\Delta\left(\frac{r}{u_e+u_p^a}\right)\Phi_\theta
		+\int r\Delta\Phi
		\left(\Delta\left(\frac{r}{u_e+u_p^a}\right)\right)_\theta
		\Phi_\theta .
	\end{align*}
	For the first two terms, using the coefficient bounds,
	\begin{align*}
		&\varepsilon^2\left|
		\int r\left(\frac{r}{u_e+u_p^a}-\frac1a\right)
		(\Delta\Phi)_\theta^2
		+\int r\left(\frac{r}{u_e+u_p^a}\right)_\theta
		\Delta\Phi(\Delta\Phi)_\theta\right|\\
		&\leq C\eta\varepsilon^2
		\int r\big((\Delta\Phi)_\theta^2+(\Delta\Phi)^2\big)\\
		&\leq C\eta\varepsilon^2\int r|\nabla^2_{\!p}\Phi_\theta|^2
		+C\eta\varepsilon^2\int r|\nabla^2_{\!p}\Phi|^2 .
	\end{align*}
	For the terms containing $\Phi_{r\theta\theta}$ and
	$\Phi_{\theta\theta\theta}$, using $\Phi_{r\theta}=0$ on $r=1$ and Hardy's
	inequality,
	\begin{align*}
		&\varepsilon^2\left|
		\int r\Delta\Phi
		\left[
		\left(\frac{r}{u_e+u_p^a}\right)_r\Phi_{r\theta\theta}
		+\frac1{r^2}\left(\frac{r}{u_e+u_p^a}\right)_\theta
		\Phi_{\theta\theta\theta}\right]\right|\\
		&\leq C\eta\varepsilon^2\int r
		\big(|(\Delta\Phi)_\theta|+|\Delta\Phi|\big)
		\left|\frac{\Phi_{r\theta}}{1-r}\right|
		+C\eta\varepsilon^2\int r|\Delta\Phi|
		\frac{|\Phi_{\theta\theta\theta}|}{r^2}\\
		&\leq C\eta\varepsilon^2
		\int r\big((\Delta\Phi)_\theta^2+(\Delta\Phi)^2\big)
		+C\eta\varepsilon^2\int r\left|\frac{\Phi_{r\theta}}{1-r}\right|^2
		+C\eta\varepsilon^2\int \frac{\Phi_{\theta\theta\theta}^2}{r^3}\\
		&\leq C\eta\varepsilon^2\int r|\nabla^2_{\!p}\Phi_\theta|^2
		+C\eta\varepsilon^2\int r|\nabla^2_{\!p}\Phi|^2 .
	\end{align*}
	For the last term, using $\Phi_\theta=0$ on $r=1$ and Hardy's inequality,
	\begin{align*}
		&\varepsilon^2\left|
		\int r\Delta\Phi\,
		\Delta\left(\frac{r}{u_e+u_p^a}\right)\Phi_{\theta\theta}\right|\\
		&\leq C\varepsilon^2\int r
		\big(|(\Delta\Phi)_\theta|+|\Delta\Phi|\big)
		(1-r)\left|\Delta\left(\frac{r}{u_e+u_p^a}\right)\right|
		\left|\frac{\Phi_\theta}{1-r}\right|\\
		&\quad+C\varepsilon^2\int r|\Delta\Phi|
		(1-r)\left|\left(\Delta\left(\frac{r}{u_e+u_p^a}\right)\right)_\theta\right|
		\left|\frac{\Phi_\theta}{1-r}\right|\\
		&\leq C\eta\varepsilon
		\int r\big(|(\Delta\Phi)_\theta|+|\Delta\Phi|\big)
		\left|\frac{\Phi_\theta}{1-r}\right|\\
		&\leq C\eta\varepsilon^2\int r|\nabla^2_{\!p}\Phi_\theta|^2
		+C\eta\varepsilon^2\int r|\nabla^2_{\!p}\Phi|^2
		+C\eta\int r\Phi_{r\theta}^2 .
	\end{align*}
	Hence
	\begin{align*}
		&\varepsilon^2\left|
		\int r
		\left(\frac{r}{u_e+u_p^a}-\frac1a\right)
		\Delta^2\Phi\,\Phi_{\theta\theta}\right|\\
		&\leq C\eta\varepsilon^2\int r|\nabla^2_{\!p}\Phi_\theta|^2
		+C\eta\varepsilon^2\int r|\nabla^2_{\!p}\Phi|^2
		+C\eta\int\left(r\Phi_{r\theta}^2+\frac{\Phi_{\theta\theta}^2}{r}\right).
	\end{align*}
	Similarly,
	\begin{align*}
		&-\int r
		\left(\frac{r}{u_e+u_p^a}-\frac1a\right)
		\Delta\Pi(\Delta\Pi)_{\theta\theta}\\
		=&\int r\left(\frac{r}{u_e+u_p^a}-\frac1a\right)
		|(\Delta\Pi)_\theta|^2
		+\int r\left(\frac{r}{u_e+u_p^a}\right)_\theta
		\Delta\Pi(\Delta\Pi)_\theta,
	\end{align*}
	so that
	\begin{align*}
		&\varepsilon^2\left|
		\int r
		\left(\frac{r}{u_e+u_p^a}-\frac1a\right)
		\Delta\Pi(\Delta\Pi)_{\theta\theta}\right|\\
		&\leq C\eta\varepsilon^2\int r|\nabla^2_{\!p}\Pi_\theta|^2
		+C\eta\varepsilon^2\int r|\nabla^2_{\!p}\Pi|^2 .
	\end{align*}
	Therefore,
	\begin{align*}
		&-\kappa\varepsilon^2\int r\Delta^2\Phi
		\frac{r\Phi_{\theta\theta}}{u_e+u_p^a}
		-\varepsilon^2\int r\Delta\Pi
		\frac{r(\Delta\Pi)_{\theta\theta}}{u_e+u_p^a}\\
		&\qquad\geq c\varepsilon^2\int r
		\big(|\nabla^2_{\!p}\Phi_\theta|^2
		+|\nabla^2_{\!p}\Pi_\theta|^2\big)
		-C\eta(\mathbb E+\mathbb P+\mathbb L).
	\end{align*}
	{Since
	\begin{align*}
		-\Phi_r^a&=u_e+u_p^a+(u_e^a-ar)+\varepsilon^{12}u_L,\\
		\Pi_r^a&=-br-g_p^a-(g_e^a-br),
	\end{align*}
	the leading velocity parts satisfy
	\begin{align*}
		&-\int (u_e+u_p^a)(\Delta\Phi)_\theta
		\frac{r\Phi_{\theta\theta}}{u_e+u_p^a}
		=-\int r(\Delta\Phi)_\theta\Phi_{\theta\theta}=0,\\
		&-\int (u_e+u_p^a)\Pi_\theta
		\frac{r(\Delta\Pi)_{\theta\theta}}{u_e+u_p^a}
		=-\int r\Pi_\theta(\Delta\Pi)_{\theta\theta}=0,
	\end{align*}
	Hence
	\begin{align*}
		I_1
		=&\underbrace{
		\int (br+g_p^a)(\Delta\Pi)_\theta
		\frac{r\Phi_{\theta\theta}}{u_e+u_p^a}
		+\int (br+g_p^a)\Phi_\theta
		\frac{r(\Delta\Pi)_{\theta\theta}}{u_e+u_p^a}}_{I_{1,1}}\\
		&+\underbrace{
		\int (g_e^a-br)(\Delta\Pi)_\theta
		\frac{r\Phi_{\theta\theta}}{u_e+u_p^a}
		+\int (g_e^a-br)\Phi_\theta
		\frac{r(\Delta\Pi)_{\theta\theta}}{u_e+u_p^a}}_{I_{1,2}}\\
		&+\underbrace{\begin{aligned}[t]
		&-\int\Big[
		(u_e^a-ar+\varepsilon^{12}u_L)(\Delta\Phi)_\theta
		+rv_e^a(\Delta\Phi)_r
		\Big]\frac{r\Phi_{\theta\theta}}{u_e+u_p^a}
		\end{aligned}}_{I_{1,3}}\\
		&+\underbrace{-\int rv_p^a(\Delta\Phi)_r
		\frac{r\Phi_{\theta\theta}}{u_e+u_p^a}}_{I_{1,4}}\\
		&+\underbrace{
		\int r(h_e^a+h_c^a)(\Delta\Pi)_r
		\frac{r\Phi_{\theta\theta}}{u_e+u_p^a}
		+\int r(h_e^a+h_c^a)\Phi_r
		\frac{r(\Delta\Pi)_{\theta\theta}}{u_e+u_p^a}}_{I_{1,5}}\\
		&+\underbrace{
		\int rh_p^a(\Delta\Pi)_r
		\frac{r\Phi_{\theta\theta}}{u_e+u_p^a}
		+\int rh_p^a\Phi_r
		\frac{r(\Delta\Pi)_{\theta\theta}}{u_e+u_p^a}}_{I_{1,6}}\\
		&+\underbrace{-\int\Big[
		\Phi_\theta(\Delta\Phi^a)_r-\Phi_r(\Delta\Phi^a)_\theta
		+\Pi_r(\Delta\Pi^a)_\theta\Big]
		\frac{r\Phi_{\theta\theta}}{u_e+u_p^a}}_{I_{1,7}}\\
		&+\underbrace{\int \Pi_\theta(\Delta\Pi^a)_r
		\frac{r\Phi_{\theta\theta}}{u_e+u_p^a}}_{I_{1,8}}\\
		&+\underbrace{-\int (u_e^a-ar+\varepsilon^{12}u_L)\Pi_\theta
		\frac{r(\Delta\Pi)_{\theta\theta}}{u_e+u_p^a}}_{I_{1,9}}\\
		&+\underbrace{-\int\Phi^a_\theta\Pi_r
		\frac{r(\Delta\Pi)_{\theta\theta}}{u_e+u_p^a}}_{I_{1,10}} .
	\end{align*}
	Thus $|I_1|\leq |I_{1,1}|+\cdots+|I_{1,10}|$.
	For $I_{1,1}$, we first integrate by parts in $\theta$:
	\begin{align*}
		I_{1,1}
		&=\int (br+g_p^a)\frac{r}{u_e+u_p^a}
		\left((\Delta\Pi)_\theta\Phi_\theta\right)_\theta\\
		&=-\int
		\partial_\theta\left((br+g_p^a)
		\frac{r}{u_e+u_p^a}\right)
		\Phi_\theta(\Delta\Pi)_\theta .
	\end{align*}
	Using
	\[
		(1-r)\left|
		\partial_\theta\left((br+g_p^a)\frac{r}{u_e+u_p^a}\right)
		\right|
		\leq C\varepsilon\eta\, r ,
	\]
	and then Hardy's inequality, we have
	\begin{align*}
		|I_{1,1}|
		&\leq C\int
		(1-r)\left|
		\partial_\theta\left((br+g_p^a)\frac{r}{u_e+u_p^a}\right)
		\right|
		\left|\frac{\Phi_\theta}{1-r}\right|
		|(\Delta\Pi)_\theta|\\
		&\leq C\varepsilon\eta\int r
		\left|\frac{\Phi_\theta}{1-r}\right|
		|(\Delta\Pi)_\theta|\\
		&\leq C\eta(\mathbb E+\mathbb P+\mathbb L).
	\end{align*}
	For $I_{1,2}$, the two terms must be kept together before integration by
	parts in $\theta$:
	\begin{align*}
		I_{1,2}
		&=\int (g_e^a-br)
		\frac{r}{u_e+u_p^a}
		\left((\Delta\Pi)_\theta\Phi_\theta\right)_\theta\\
		&=-\int
		\partial_\theta\left((g_e^a-br)
		\frac{r}{u_e+u_p^a}\right)
		\Phi_\theta(\Delta\Pi)_\theta .
	\end{align*}
	Using
	\[
		(1-r)\left|
		\partial_\theta\left((g_e^a-br)
		\frac{r}{u_e+u_p^a}\right)
		\right|
		\leq C\varepsilon\eta\, r ,
	\]
	and Hardy's inequality, we get
	\begin{align*}
		|I_{1,2}|
		&\leq C\int
		(1-r)\left|
		\partial_\theta\left((g_e^a-br)
		\frac{r}{u_e+u_p^a}\right)
		\right|
		\left|\frac{\Phi_\theta}{1-r}\right|
		|(\Delta\Pi)_\theta|\\
		&\leq C\varepsilon\eta\int r
		\left|\frac{\Phi_\theta}{1-r}\right|
		|(\Delta\Pi)_\theta|\\
		&\leq C\eta(\mathbb E+\mathbb P+\mathbb L).
	\end{align*}}
	For $I_{1,3}$, the tangential velocity remainder is estimated directly,
	whereas the normal Euler velocity term is first integrated by parts in $r$
	and then the high-order factors are written as a full $\theta$-derivative.
	For example,
	\begin{align*}
		&\int rv_e^a(\Delta\Phi)_r
		\frac{r\Phi_{\theta\theta}}{u_e+u_p^a}\\
		&\quad=-\int rv_e^a\frac{r}{u_e+u_p^a}
		\Delta\Phi\,\Phi_{r\theta\theta}
		-\int\partial_r\left(rv_e^a\frac{r}{u_e+u_p^a}\right)
		\Delta\Phi\,\Phi_{\theta\theta},
	\end{align*}
	where
	\begin{align*}
		&-\int rv_e^a\frac{r}{u_e+u_p^a}
		\Delta\Phi\,\Phi_{r\theta\theta}\\
		&\quad=-\int rv_e^a\frac{r}{u_e+u_p^a}
		\left(\Delta\Phi\,\Phi_{r\theta}\right)_\theta
		+\int rv_e^a\frac{r}{u_e+u_p^a}
		(\Delta\Phi)_\theta\Phi_{r\theta}\\
		&\quad=\int\partial_\theta\left(rv_e^a
		\frac{r}{u_e+u_p^a}\right)\Delta\Phi\,\Phi_{r\theta}
		+\int rv_e^a\frac{r}{u_e+u_p^a}
		(\Delta\Phi)_\theta\Phi_{r\theta},\\
		&-\int\partial_r\left(rv_e^a\frac{r}{u_e+u_p^a}\right)
		\Delta\Phi\,\Phi_{\theta\theta}\\
		&\quad=-\int\partial_r\left(rv_e^a\frac{r}{u_e+u_p^a}\right)
		\left(\Delta\Phi\,\Phi_\theta\right)_\theta
		+\int\partial_r\left(rv_e^a\frac{r}{u_e+u_p^a}\right)
		(\Delta\Phi)_\theta\Phi_\theta\\
		&\quad=\int\partial_\theta\partial_r\left(rv_e^a
		\frac{r}{u_e+u_p^a}\right)\Delta\Phi\,\Phi_\theta
		+\int\partial_r\left(rv_e^a\frac{r}{u_e+u_p^a}\right)
		(\Delta\Phi)_\theta\Phi_\theta .
	\end{align*}
	Therefore
	\begin{align*}
		|I_{1,3}|
		&\leq C\varepsilon\eta\int r
		|(\Delta\Phi)_\theta|
		\frac{|\Phi_{\theta\theta}|}{r}\\
		&\quad+C\varepsilon\eta\int r
		\big(|(\Delta\Phi)_\theta|+|\Delta\Phi|\big)
		\left(|\Phi_{r\theta}|+\frac{|\Phi_\theta|}{r}\right)\\
		&\leq C\eta(\mathbb E+\mathbb P+\mathbb L).
	\end{align*}
	For $I_{1,4}$, we treat the normal Prandtl velocity term in the same way, but we
	first display the Prandtl weights:
	\begin{align*}
		&\int rv_p^a(\Delta\Phi)_r
		\frac{r\Phi_{\theta\theta}}{u_e+u_p^a}\\
		&\quad=-\int rv_p^a\frac{r}{u_e+u_p^a}
		\Delta\Phi\,\Phi_{r\theta\theta}
		-\int\partial_r\left(rv_p^a\frac{r}{u_e+u_p^a}\right)
		\Delta\Phi\,\Phi_{\theta\theta}.
	\end{align*}
	Next,
	\begin{align*}
		&-\int rv_p^a\frac{r}{u_e+u_p^a}
		\Delta\Phi\,\Phi_{r\theta\theta}\\
		&\quad=-\int rv_p^a\frac{r}{u_e+u_p^a}
		\left(\Delta\Phi\,\Phi_{r\theta}\right)_\theta
		+\int rv_p^a\frac{r}{u_e+u_p^a}
		(\Delta\Phi)_\theta\Phi_{r\theta}\\
		&\quad=\int\partial_\theta\left(rv_p^a
		\frac{r}{u_e+u_p^a}\right)\Delta\Phi\,\Phi_{r\theta}
		+\int rv_p^a\frac{r}{u_e+u_p^a}
		(\Delta\Phi)_\theta\Phi_{r\theta},\\
		&-\int\partial_r\left(rv_p^a\frac{r}{u_e+u_p^a}\right)
		\Delta\Phi\,\Phi_{\theta\theta}\\
		&\quad=-\int\partial_r\left(rv_p^a\frac{r}{u_e+u_p^a}\right)
		\left(\Delta\Phi\,\Phi_\theta\right)_\theta
		+\int\partial_r\left(rv_p^a\frac{r}{u_e+u_p^a}\right)
		(\Delta\Phi)_\theta\Phi_\theta\\
		&\quad=\int\partial_\theta\partial_r\left(rv_p^a
		\frac{r}{u_e+u_p^a}\right)\Delta\Phi\,\Phi_\theta
		+\int\partial_r\left(rv_p^a\frac{r}{u_e+u_p^a}\right)
		(\Delta\Phi)_\theta\Phi_\theta .
	\end{align*}
	Hence
	\begin{align*}
		|I_{1,4}|
		&\leq C\int r|rv_p^a|
		|(\Delta\Phi)_\theta|\,|\Phi_{r\theta}|
		+C\int r|(rv_p^a)_\theta|
		|\Delta\Phi|\,|\Phi_{r\theta}|\\
		&\quad+C\int r(1-r)
		\left|\partial_r\left(rv_p^a\frac{r}{u_e+u_p^a}\right)\right|
		|(\Delta\Phi)_\theta|
		\left|\frac{\Phi_\theta}{1-r}\right|\\
		&\quad+C\int r(1-r)
		\left|\partial_\theta\partial_r\left(rv_p^a
		\frac{r}{u_e+u_p^a}\right)\right|
		|\Delta\Phi|\left|\frac{\Phi_\theta}{1-r}\right|\\
		&\leq C\varepsilon\eta\int r
		\big(|(\Delta\Phi)_\theta|+|\Delta\Phi|\big)
		|\Phi_{r\theta}|\\
		&\quad+C\varepsilon\eta\int r
		\big(|(\Delta\Phi)_\theta|+|\Delta\Phi|\big)
		\left|\frac{\Phi_\theta}{1-r}\right|\\
		&\leq C\eta(\mathbb E+\mathbb P+\mathbb L).
	\end{align*}
	{For $I_{1,5}$, set $h_{e,c}^a=h_e^a+h_c^a$.  The Euler bounds together
	with \eqref{magnetic-normal-correction-estimate} give the same coefficient
	bounds for $h_{e,c}^a$ as for $h_e^a$.  A combined integration by parts gives
	\begin{align*}
		I_{1,5}
		&=\int rh_{e,c}^a\frac{r}{u_e+u_p^a}
		\left((\Delta\Pi)_r\Phi_{\theta\theta}
		+\Phi_r(\Delta\Pi)_{\theta\theta}\right)\\
		&=\int rh_{e,c}^a\frac{r}{u_e+u_p^a}
		(\Delta\Pi)_r\Phi_{\theta\theta}
		+\int\partial_{\theta\theta}\left(rh_{e,c}^a
		\frac{r}{u_e+u_p^a}\Phi_r\right)\Delta\Pi\\
		&=\int rh_{e,c}^a\frac{r}{u_e+u_p^a}
		(\Delta\Pi)_r\Phi_{\theta\theta}
		+\int\partial_{\theta\theta}\left(rh_{e,c}^a
		\frac{r}{u_e+u_p^a}\right)\Phi_r\Delta\Pi\\
		&\quad+2\int\partial_\theta\left(rh_{e,c}^a
		\frac{r}{u_e+u_p^a}\right)\Phi_{r\theta}\Delta\Pi
		+\int rh_{e,c}^a\frac{r}{u_e+u_p^a}
		\Phi_{r\theta\theta}\Delta\Pi\\
		&=\int rh_{e,c}^a\frac{r}{u_e+u_p^a}
		(\Phi_{\theta\theta}\Delta\Pi)_r
		+\int\partial_{\theta\theta}\left(rh_{e,c}^a
		\frac{r}{u_e+u_p^a}\right)\Phi_r\Delta\Pi\\
		&\quad+2\int\partial_\theta\left(rh_{e,c}^a
		\frac{r}{u_e+u_p^a}\right)\Phi_{r\theta}\Delta\Pi\\
		&=-\int\partial_r\left(rh_{e,c}^a\frac{r}{u_e+u_p^a}\right)
		\Phi_{\theta\theta}\Delta\Pi\\
		&\quad+\int\partial_{\theta\theta}\left(rh_{e,c}^a
		\frac{r}{u_e+u_p^a}\right)\Phi_r\Delta\Pi\\
		&\quad+2\int\partial_\theta\left(rh_{e,c}^a
		\frac{r}{u_e+u_p^a}\right)\Phi_{r\theta}\Delta\Pi .
	\end{align*}
	Using the Euler bounds and Hardy's inequality for the $\Phi_r$ factor, we have
	\begin{align*}
		|I_{1,5}|
		&\leq C\varepsilon\eta\int r|\Delta\Pi|
		\left(\frac{|\Phi_{\theta\theta}|}{r}+|\Phi_{r\theta}|\right)\\
		&\quad+C\int (1-r)\left|
		\partial_{\theta\theta}\left(rh_{e,c}^a\frac{r}{u_e+u_p^a}\right)\right|
		\left|\frac{\Phi_r}{1-r}\right||\Delta\Pi|\\
		&\leq C\varepsilon\eta\int r|\Delta\Pi|
		\left(\frac{|\Phi_{\theta\theta}|}{r}+|\Phi_{r\theta}|\right)
		+C\varepsilon\eta\int r
		\left|\frac{\Phi_r}{1-r}\right||\Delta\Pi|\\
		&\leq C\eta(\mathbb E+\mathbb P+\mathbb L).
	\end{align*}}
	For $I_{1,6}$, the Prandtl coefficient is handled instead as in
	$I_{1,4}$. First,
	\begin{align*}
		I_{1,6}
		&=\int rh_p^a\frac{r}{u_e+u_p^a}
		\left((\Delta\Pi)_r\Phi_{\theta\theta}
		+\Phi_r(\Delta\Pi)_{\theta\theta}\right).
	\end{align*}
	For the first term, we integrate by parts in $r$ and then move one
	$\theta$ derivative from the high-order factor:
	\begin{align*}
		&\int rh_p^a\frac{r}{u_e+u_p^a}
		(\Delta\Pi)_r\Phi_{\theta\theta}\\
		&=-\int rh_p^a\frac{r}{u_e+u_p^a}
		\Delta\Pi\,\Phi_{r\theta\theta}
		-\int\partial_r\left(rh_p^a
		\frac{r}{u_e+u_p^a}\right)\Delta\Pi\,\Phi_{\theta\theta}\\
		&=\int\partial_\theta\left(rh_p^a
		\frac{r}{u_e+u_p^a}\right)\Delta\Pi\,\Phi_{r\theta}
		+\int rh_p^a\frac{r}{u_e+u_p^a}
		(\Delta\Pi)_\theta\Phi_{r\theta}\\
		&\quad+\int\partial_\theta\partial_r\left(rh_p^a
		\frac{r}{u_e+u_p^a}\right)\Delta\Pi\,\Phi_\theta
		+\int\partial_r\left(rh_p^a
		\frac{r}{u_e+u_p^a}\right)(\Delta\Pi)_\theta\Phi_\theta .
	\end{align*}
	For the second term, we first integrate by parts in $\theta$:
	\begin{align*}
		&\int rh_p^a\frac{r}{u_e+u_p^a}
		\Phi_r(\Delta\Pi)_{\theta\theta}\\
		&=-\int rh_p^a\frac{r}{u_e+u_p^a}
		\Phi_{r\theta}(\Delta\Pi)_\theta
		-\int\partial_\theta\left(rh_p^a
		\frac{r}{u_e+u_p^a}\right)\Phi_r(\Delta\Pi)_\theta .
	\end{align*}
	Combining the two identities, the two terms with coefficient
	$rh_p^a r/(u_e+u_p^a)$ cancel, and
	\begin{align*}
		I_{1,6}
		&=\int\partial_\theta\left(rh_p^a
		\frac{r}{u_e+u_p^a}\right)\Delta\Pi\,\Phi_{r\theta}\\
		&\quad+\int\partial_\theta\partial_r\left(rh_p^a
		\frac{r}{u_e+u_p^a}\right)\Delta\Pi\,\Phi_\theta
		+\int\partial_r\left(rh_p^a
		\frac{r}{u_e+u_p^a}\right)(\Delta\Pi)_\theta\Phi_\theta\\
		&\quad-\int\partial_\theta\left(rh_p^a
		\frac{r}{u_e+u_p^a}\right)\Phi_r(\Delta\Pi)_\theta .
	\end{align*}
	Thus, displaying the Prandtl weights before applying Hardy's inequality,
	\begin{align*}
		|I_{1,6}|
		&\leq C\int r\left|
		\partial_\theta\left(rh_p^a\frac{r}{u_e+u_p^a}\right)\right|
		|\Delta\Pi|\,|\Phi_{r\theta}|\\
		&\quad+C\int r(1-r)\left|
		\partial_\theta\partial_r\left(rh_p^a
		\frac{r}{u_e+u_p^a}\right)\right|
		|\Delta\Pi|\left|\frac{\Phi_\theta}{1-r}\right|\\
		&\quad+C\int r(1-r)\left|
		\partial_r\left(rh_p^a\frac{r}{u_e+u_p^a}\right)\right|
		|(\Delta\Pi)_\theta|\left|\frac{\Phi_\theta}{1-r}\right|\\
		&\quad+C\int r(1-r)\left|
		\partial_\theta\left(rh_p^a\frac{r}{u_e+u_p^a}\right)\right|
		|(\Delta\Pi)_\theta|\left|\frac{\Phi_r}{1-r}\right|\\
		&\leq C\varepsilon\eta\int r
		\big(|\Delta\Pi|+|(\Delta\Pi)_\theta|\big)
		\left(|\Phi_{r\theta}|
		+\left|\frac{\Phi_\theta}{1-r}\right|\right)\\
		&\quad+C\varepsilon^2\eta\int r
		|(\Delta\Pi)_\theta|
		\left|\frac{\Phi_r}{1-r}\right|\\
		&\leq C\eta(\mathbb E+\mathbb P+\mathbb L).
	\end{align*}
	For $I_{1,7}$, we first move one $\theta$ derivative from
	$\Phi_{\theta\theta}$:
	\begin{align*}
		I_{1,7}
		=&\frac12\int
		\partial_\theta\left((\Delta\Phi^a)_r
		\frac{r}{u_e+u_p^a}\right)\Phi_\theta^2\\
		&-\int(\Delta\Phi^a)_\theta
		\frac{r}{u_e+u_p^a}\Phi_{r\theta}\Phi_\theta
		-\int\partial_\theta\left((\Delta\Phi^a)_\theta
		\frac{r}{u_e+u_p^a}\right)\Phi_r\Phi_\theta\\
		&+\int(\Delta\Pi^a)_\theta
		\frac{r}{u_e+u_p^a}\Pi_{r\theta}\Phi_\theta
		+\int\partial_\theta\left((\Delta\Pi^a)_\theta
		\frac{r}{u_e+u_p^a}\right)\Pi_r\Phi_\theta .
	\end{align*}
	Since the Euler parts of $\Delta\Phi^a$ and $\Delta\Pi^a$ are constants,
	only the Prandtl and $L$ parts remain in these coefficients. We use
	\begin{align*}
		&(1-r)^2\left|\partial_\theta\left((\Delta\Phi_p^a)_r
		\frac{r}{u_e+u_p^a}\right)\right|
		+(1-r)\big(|(\Delta\Phi_p^a)_\theta|
		+|(\Delta\Pi_p^a)_\theta|\big)\leq C\eta,\\
		&(1-r)^2\left(
		\left|\partial_\theta\left((\Delta\Phi_p^a)_\theta
		\frac{r}{u_e+u_p^a}\right)\right|
		+\left|\partial_\theta\left((\Delta\Pi_p^a)_\theta
		\frac{r}{u_e+u_p^a}\right)\right|\right)
		\leq C\varepsilon\eta .
	\end{align*}
	Thus
	\begin{align*}
		|I_{1,7}|
		&\leq C\int r(1-r)^2
		\left|\partial_\theta\left((\Delta\Phi_p^a)_r
		\frac{r}{u_e+u_p^a}\right)\right|
		\left|\frac{\Phi_\theta}{1-r}\right|^2\\
		&\quad+C\int r(1-r)
		\big(|(\Delta\Phi_p^a)_\theta|
		+|(\Delta\Pi_p^a)_\theta|\big)
		\big(|\Phi_{r\theta}|+|\Pi_{r\theta}|\big)
		\left|\frac{\Phi_\theta}{1-r}\right|\\
		&\quad+C\int r(1-r)^2
		\left(
		\left|\partial_\theta\left((\Delta\Phi_p^a)_\theta
		\frac{r}{u_e+u_p^a}\right)\right|
		+\left|\partial_\theta\left((\Delta\Pi_p^a)_\theta
		\frac{r}{u_e+u_p^a}\right)\right|\right)\\
		&\qquad\qquad\times
		\left(\left|\frac{\Phi_r}{1-r}\right|
		+\left|\frac{\Pi_r}{1-r}\right|\right)
		\left|\frac{\Phi_\theta}{1-r}\right|\\
		&\quad+C\varepsilon^{10}\eta\int r
		\left(\left|\frac{\Phi_\theta}{1-r}\right|
		+|\Phi_{r\theta}|+|\Pi_{r\theta}|
		+\left|\frac{\Phi_r}{1-r}\right|
		+\left|\frac{\Pi_r}{1-r}\right|\right)
		\left|\frac{\Phi_\theta}{1-r}\right|\\
		&\leq C\eta\int r
		\left|\frac{\Phi_\theta}{1-r}\right|^2\\
		&\quad+C\eta\int r
		\big(|\Phi_{r\theta}|+|\Pi_{r\theta}|\big)
		\left|\frac{\Phi_\theta}{1-r}\right|\\
		&\quad+C\varepsilon\eta\int r
		\left(\left|\frac{\Phi_r}{1-r}\right|
		+\left|\frac{\Pi_r}{1-r}\right|\right)
		\left|\frac{\Phi_\theta}{1-r}\right|\\
		&\leq C\eta(\mathbb E+\mathbb P+\mathbb L).
	\end{align*}
	For $I_{1,8}$, we do not use the definition of $G$ again.  The relevant
	term is
	\[
		\int \Pi_\theta(\Delta\Pi^a)_r
		\frac{r\Phi_{\theta\theta}}{u_e+u_p^a}.
	\]
	If this term is estimated by replacing $\Pi_\theta$ with $G$, then one
	either loses the $\varepsilon$ order or needs one more derivative of
	$\Phi$. Thus we use the second
	equation in \eqref{error-equation} and then integrate by parts in
	$\theta$ term by term:
	\[
		\Pi_\theta
		=\frac{F_2+\varepsilon^2 r\Delta\Pi+\Pi^a_\theta\Phi_r-\Phi^a_\theta\Pi_r
		+g^a\Phi_\theta-(u^a-u_e-u_p^a)\Pi_\theta}
		{u_e+u_p^a},
	\]
	Then
	\begin{align*}
		|I_{1,8}|
		&\leq C\left|\int F_2(\Delta\Pi^a)_r
		\frac{r\Phi_{\theta\theta}}{(u_e+u_p^a)^2}\right|
		+C\varepsilon^2\left|\int r\Delta\Pi(\Delta\Pi^a)_r
		\frac{r\Phi_{\theta\theta}}{(u_e+u_p^a)^2}\right|\\
		&\quad+C\left|\int
		(\Pi^a_\theta\Phi_r-\Phi^a_\theta\Pi_r+g^a\Phi_\theta)
		(\Delta\Pi^a)_r
		\frac{r\Phi_{\theta\theta}}{(u_e+u_p^a)^2}\right|\\
		&\quad+C\left|\int (u^a-u_e-u_p^a)\Pi_\theta
		(\Delta\Pi^a)_r
		\frac{r\Phi_{\theta\theta}}{(u_e+u_p^a)^2}\right|.
	\end{align*}
	For the source part, we integrate by parts in $\theta$:
	\begin{align*}
		&\left|\int F_2(\Delta\Pi^a)_r
		\frac{r\Phi_{\theta\theta}}{(u_e+u_p^a)^2}\right|\\
		&\leq C\left|\int F_{2,\theta}(\Delta\Pi^a)_r
		\frac{r\Phi_\theta}{(u_e+u_p^a)^2}\right|
		+C\left|\int F_2
		\partial_\theta\left((\Delta\Pi^a)_r
		\frac{r}{(u_e+u_p^a)^2}\right)\Phi_\theta\right|\\
		&\leq \frac{C\eta}{\varepsilon}\int
		\big(|F_{2,\theta}|+|F_2|\big)
		\left|\frac{\Phi_\theta}{1-r}\right|\\
		&\leq \frac{C}{\varepsilon^2}
		\int\frac{F_2^2+F_{2,\theta}^2}{r}
		+C\eta(\mathbb E+\mathbb P+\mathbb L).
	\end{align*}
	For the $\varepsilon^2 r\Delta\Pi$ part, the same integration by parts
	gives
	\begin{align*}
		&\varepsilon^2\left|\int r\Delta\Pi(\Delta\Pi^a)_r
		\frac{r\Phi_{\theta\theta}}{(u_e+u_p^a)^2}\right|\\
		&\leq C\varepsilon^2\left|\int r(\Delta\Pi)_\theta
		(\Delta\Pi^a)_r\frac{r\Phi_\theta}{(u_e+u_p^a)^2}\right|\\
		&\quad+C\varepsilon^2\left|\int r\Delta\Pi\,
		\partial_\theta\left((\Delta\Pi^a)_r
		\frac{r}{(u_e+u_p^a)^2}\right)\Phi_\theta\right|\\
		&\leq C\varepsilon\eta\int r
		\big(|(\Delta\Pi)_\theta|+|\Delta\Pi|\big)
		\left|\frac{\Phi_\theta}{1-r}\right|\\
		&\leq C\eta(\mathbb E+\mathbb P+\mathbb L).
	\end{align*}
	For the terms containing
	$\Pi^a_\theta\Phi_r-\Phi^a_\theta\Pi_r+g^a\Phi_\theta$, we again move the
	$\theta$ derivative from $\Phi_{\theta\theta}$:
	\begin{align*}
		&\left|\int
		(\Pi^a_\theta\Phi_r-\Phi^a_\theta\Pi_r+g^a\Phi_\theta)
		(\Delta\Pi^a)_r
		\frac{r\Phi_{\theta\theta}}{(u_e+u_p^a)^2}\right|\\
		&\leq C\eta\int r
		\left(|\Phi_{r\theta}|+|\Pi_{r\theta}|
		+\left|\frac{\Phi_\theta}{1-r}\right|\right)
		\left|\frac{\Phi_\theta}{1-r}\right|\\
		&\quad+C\varepsilon\eta\int r
		\left(\left|\frac{\Phi_r}{1-r}\right|
		+\left|\frac{\Pi_r}{1-r}\right|\right)
		\left|\frac{\Phi_\theta}{1-r}\right|\\
		&\leq C\eta(\mathbb E+\mathbb P+\mathbb L).
	\end{align*}
	For the remaining term, we use
	$u^a-u_e-u_p^a=u_e^a-ar+{\color{black}\varepsilon^{12}}u_L$ before integrating by parts:
	\begin{align*}
		&\left|\int (u^a-u_e-u_p^a)\Pi_\theta
		(\Delta\Pi^a)_r
		\frac{r\Phi_{\theta\theta}}{(u_e+u_p^a)^2}\right|\\
		&\leq C{\eta^2}\int r
		\left(\frac{|\Pi_{\theta\theta}|}{r}
		+\frac{|\Pi_\theta|}{r}\right)
		\left|\frac{\Phi_\theta}{1-r}\right|\\
		&\leq C\eta(\mathbb E+\mathbb P+\mathbb L).
	\end{align*}
	Thus,
	\begin{align*}
		|I_{1,8}|
		&\leq C\eta(\mathbb E+\mathbb P+\mathbb L)
		+\frac{C}{\varepsilon^2}
		\int\frac{F_2^2+F_{2,\theta}^2}{r}.
	\end{align*}
	For $I_{1,9}$, the factor $r/(u_e+u_p^a)$ is bounded and does not give an
	extra power of $r$. Hence we integrate by parts in $\theta$ first:
	\begin{align*}
		|I_{1,9}|
		&=\left|\int (u_e^a-ar+{\color{black}\varepsilon^{12}}u_L)\Pi_\theta
		\frac{r(\Delta\Pi)_{\theta\theta}}{u_e+u_p^a}\right|\\
		&\leq C\left|\int
		\partial_\theta\left((u_e^a-ar+{\color{black}\varepsilon^{12}}u_L)
		\frac{r}{u_e+u_p^a}\right)
		\Pi_\theta(\Delta\Pi)_\theta\right|\\
		&\quad+C\left|\int (u_e^a-ar+{\color{black}\varepsilon^{12}}u_L)
		\frac{r}{u_e+u_p^a}\Pi_{\theta\theta}
		(\Delta\Pi)_\theta\right|\\
		&\leq C\varepsilon\eta\int
		\left(\frac{|\Pi_\theta|}{\sqrt r}
		+\frac{|\Pi_{\theta\theta}|}{\sqrt r}\right)
		\sqrt r\,|(\Delta\Pi)_\theta|\\
		&\leq C\eta(\mathbb E+\mathbb P+\mathbb L).
	\end{align*}
	{For $I_{1,10}$, put
	$c_a=\Phi^a_\theta r/(u_e+u_p^a)$.  One integration by parts in $\theta$
	gives
	\[
	 I_{1,10}=\int((c_a)_\theta\Pi_r+c_a\Pi_{r\theta})(\Delta\Pi)_\theta.
	\]
	After writing $\Pi_r=\Pi_r^0+\widetilde\Pi_r$, the terms containing
	$\widetilde\Pi_r$ or $\Pi_{r\theta}$ satisfy
	\begin{align*}
	&\left|\int
	((c_a)_\theta\widetilde\Pi_r+c_a\Pi_{r\theta})(\Delta\Pi)_\theta\right|\\
	&\qquad\leq C\eta(\mathbb E+\mathbb P+\mathbb L).
	\end{align*}
	It remains to estimate
	\[
	 J_0=\int (c_a)_\theta\Pi_r^0(\Delta\Pi)_\theta.
	\]
	Set $D=(c_a)_\theta/r$.  Since
	\[
	 r(\Delta\Pi)_\theta=(r\Pi_{r\theta})_r
	 +\frac1r\Pi_{\theta\theta\theta},
	\]
	integration by parts first in $r$ and then in $\theta$ gives
	\begin{align*}
	J_0
	&=-\int(D_r\Pi_r^0+D\Pi_{rr}^0)r\Pi_{r\theta}
	-\int\frac{D_\theta}{r}\Pi_r^0\Pi_{\theta\theta}.
	\end{align*}
	The radial boundary term is zero at $r=1$ because
	$\Pi_{r\theta}=0$, and it is zero at $r=0$ by regularity.  The profile
	bounds give, for $0\leq j\leq2$,
	\[
	 \left|\partial_\theta^j\left(\frac {c_a}r\right)\right|
	 +(1-r)\left|\partial_r\partial_\theta^j
	 \left(\frac {c_a}r\right)\right|
	 \leq C\varepsilon\eta.
	\]
	The boundary Hardy inequality for $\Pi_r^0$, the zero-mode terms contained
	in $\mathbb E$, and tangential Poincar\'e for $\widetilde\Pi$ give
	\[
	 |J_0|\leq C\eta(\mathbb E+\mathbb P+\mathbb L).
	\]
	Thus,
	\begin{align*}
		|I_{1,10}|
		&\leq C\eta(\mathbb E+\mathbb P+\mathbb L).
	\end{align*}}
	Therefore,
	\begin{align*}
		|I_1|
		&\leq C\eta(\mathbb E+\mathbb P+\mathbb L)
		+\frac{C}{\varepsilon^2}
		\int\frac{F_2^2+F_{2,\theta}^2}{r}.
	\end{align*}
	For $I_2$,
	\begin{align*}
		|I_2|
		&\leq
		\left(\frac1{\varepsilon^2}\int\frac{F_1^2}{r}\right)^{1/2}
		\left(\varepsilon^2\int\frac{r^3\Phi_{\theta\theta}^2}
		{(u_e+u_p^a)^2}\right)^{1/2}
		+\left|\int F_2\frac{r(\Delta\Pi)_{\theta\theta}}{u_e+u_p^a}\right|.
	\end{align*}
	For the $F_2$ term in $I_2$,
	\begin{align*}
		\left|\int F_2\frac{r(\Delta\Pi)_{\theta\theta}}{u_e+u_p^a}\right|
		&=\left|-\int F_{2,\theta}\frac{r(\Delta\Pi)_\theta}{u_e+u_p^a}
		-\int F_2\left(\frac{r}{u_e+u_p^a}\right)_\theta
		(\Delta\Pi)_\theta\right|\\
		&\leq
		C\left(\frac1{\varepsilon^2}\int\frac{F_2^2+F_{2,\theta}^2}{r}\right)^{1/2}
		\left(\varepsilon^2\int r|\nabla^2_{\!p}\Pi_\theta|^2\right)^{1/2}.
	\end{align*}
	Thus
	\begin{align*}
		|I_2|
		&\leq \delta\varepsilon^2\int r
		\big(|\nabla^2_{\!p}\Phi_\theta|^2
		+|\nabla^2_{\!p}\Pi_\theta|^2\big)
		+\frac{C_\delta}{\varepsilon^2}\int\frac{F_1^2+F_2^2+F_{2,\theta}^2}{r}.
	\end{align*}
	Choosing $\delta$ small enough to absorb only the refined Hessian terms
	gives \eqref{e:positive-estimate2}.  The term
	$C\eta(\mathbb E+\mathbb P+\mathbb L)$ is retained here and is closed after
	this estimate is combined with the preceding energy and positivity
	estimates in Proposition~\ref{prop:linear-stability}.
\end{proof}
\subsubsection{Linear stability estimate}
\begin{proposition}\label{prop:linear-stability}
	Let $(\Phi,\Pi)$ be a smooth solution of \eqref{error-equation}. Then there exist $\varepsilon_0>0$ and $\eta_0>0$ such that, for any $\varepsilon\in (0,\varepsilon_0)$ and $\eta\in(0,\eta_0)$,
	\begin{align}\label{eq:linear-stability}
		\mathbb{E}+\mathbb{P}+\mathbb{L}
		\leq C\frac{1}{\varepsilon^2}\int\frac{F_1^2+F_2^2+F_{2,\theta}^2}{r}.
	\end{align}

\end{proposition}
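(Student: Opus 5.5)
The plan is to combine the six preceding estimates linearly, so that the cross terms $b\int r\Delta\Pi\,\Phi_\theta$ and $b\int r(\Delta\Pi)_\theta\Phi$ cancel. The terms $\delta\mathbb E$, $C\mathbb L$, $C\mathbb P_\Phi$ are then absorbed in a fixed order. All $C\eta(\mathbb E+\mathbb P+\mathbb L)$ terms are absorbed at the end by taking $\eta_0$ small.

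\emph{Step 1: cancel the cross terms.} Integrating by parts in $\theta$ gives
\[
\int r(\Delta\Pi)_\theta\Phi=-\int r\Delta\Pi\,\Phi_\theta .
\]
Adding \eqref{e:combination-estimate} and \eqref{e:basic-energy-estimate} therefore removes both $b$-terms exactly. The result is
\[
\mathbb L+\varepsilon^2\int r|\nabla^2_{\!p}\Phi|^2\le C\eta(\mathbb E+\mathbb P+\mathbb L)+\frac{C}{\varepsilon^2}\int\frac{F_1^2+F_2^2}{r}.
\]
No smallness is needed here, which matters because $\mathbb L$ carries the strongest $\varepsilon$-weight.

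\emph{Step 2: add the magnetic Hessian.} I would add \eqref{e:basic-magnetic-energy-estimate} with a small fixed $\delta$. Its $\delta\varepsilon^2\int r|\nabla^2_p\Phi|^2$ is absorbed by Step 1, and its $C_\delta\mathbb L$ is handled by multiplying the Step 1 inequality by a large constant $M_1$. This controls $\mathbb L$ together with the first-order part of $\mathbb E$.

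\emph{Step 3: add the refined estimate.} Next I add \eqref{e:positive-estimate2}, which directly gives the $\theta$-differentiated Hessians. These have only $C\eta(\cdots)$ on the right, so the full $\mathbb E$ is now controlled.

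\emph{Step 4: positivity.} Then I add \eqref{e:positivity-estimate} with weight $1$. It contributes $\mathbb P_\Phi$ at the cost of $\delta\mathbb E+C_\delta\mathbb L$; choose $\delta$ small relative to the $\mathbb E$-coefficient from Steps 1--3, then enlarge $M_1$. Finally I add $\epsilon_1$ times \eqref{e:positivity-estimate-mag}. Its $C_\delta(\mathbb P_\Phi+\mathbb L)$ is absorbed by taking $\epsilon_1$ small relative to the coefficients of $\mathbb P_\Phi$ and $\mathbb L$ already obtained. Its source term involves $F_{2,\theta}$, which accounts for the $F_{2,\theta}^2$ in \eqref{eq:linear-stability}.

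\emph{Conclusion.} The weighted sum gives
\[
c(\mathbb E+\mathbb P+\mathbb L)\le C\eta(\mathbb E+\mathbb P+\mathbb L)+\frac{C}{\varepsilon^2}\int\frac{F_1^2+F_2^2+F_{2,\theta}^2}{r},
\]
with $c$ independent of $\eta$ and $\varepsilon$. Choosing $\eta_0$ with $C\eta_0\le c/2$ yields \eqref{eq:linear-stability}. The quantities $\mathbb E,\mathbb P,\mathbb L$ are finite for smooth solutions, so the absorption is legitimate.

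\emph{Main obstacle.} The delicate point is ordering the constants consistently: $\delta$, then $M_1$, then $\epsilon_1$, and $\eta_0$ last, so that no absorption is circular. The key structural fact is the exact cancellation of the $b$-cross terms in Step 1. Without it, the $b\int r\Delta\Pi\,\Phi_\theta$ term is of order $\varepsilon^{-1}\mathbb E^{1/2}\mathbb P^{1/2}$ and cannot be absorbed. I would verify the sign conventions of those two cross terms carefully before relying on the cancellation.
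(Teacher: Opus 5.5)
Your proposal is correct and follows the paper's own proof: you add the combination and velocity energy estimates so that the $b$-cross terms cancel, fold in the magnetic energy, refined, and two positivity estimates with suitable weights, and absorb all $\eta$-terms last. Putting a small weight $\epsilon_1$ on the $\mathbb P_\Pi$ estimate, instead of the paper's large weight on the $\mathbb P_\Phi$ estimate, is equivalent; your explicit ordering $\delta\to M_1\to\epsilon_1\to\eta_0$ is slightly more careful than the paper's final $C(\delta+\eta)$ bookkeeping, which leaves implicit that the $C_\delta\mathbb L$ and $C_\delta\mathbb P_\Phi$ terms are absorbed by these weights.
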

\begin{proof}
	By \eqref{e:combination-estimate} and \eqref{e:basic-energy-estimate},
	\[
		b\int r\Delta\Pi\,\Phi_\theta
		+b\int r(\Delta\Pi)_\theta\Phi
		=b\int r\Delta\Pi\,\Phi_\theta
		-b\int r\Delta\Pi\,\Phi_\theta=0,
	\]
	so
	\begin{align*}
		&\varepsilon^2\int r|\nabla^2_{\!p}\Phi|^2+\mathbb L\\
		&\qquad\leq C\eta(\mathbb E+\mathbb P+\mathbb L)
		+\frac{C}{\varepsilon^2}\int\frac{F_1^2+F_2^2}{r}.
	\end{align*}
	Using \eqref{e:basic-magnetic-energy-estimate} and the last inequality,
	\begin{align*}
		&\varepsilon^2\int r
		\big(|\nabla^2_{\!p}\Phi|^2+|\nabla^2_{\!p}\Pi|^2\big)
		+\mathbb L\\
		&\qquad\leq C_\delta\eta(\mathbb E+\mathbb P+\mathbb L)
		+\delta\mathbb E
		+\frac{C_\delta}{\varepsilon^2}\int\frac{F_1^2+F_2^2}{r}.
	\end{align*}
	Using \eqref{e:positivity-estimate}, \eqref{e:positivity-estimate-mag}, and
	\eqref{e:positive-estimate2},
	\begin{align*}
		\mathbb P_\Phi
		&\leq \delta\mathbb E+C_\delta\mathbb L
		+C_\delta\eta(\mathbb E+\mathbb P+\mathbb L)
		+\frac{C_\delta}{\varepsilon^2}\int\frac{F_1^2}{r},\\
		\mathbb P_\Pi
		&\leq \delta\mathbb E+C_\delta(\mathbb P_\Phi+\mathbb L)
		+C\eta(\mathbb E+\mathbb P+\mathbb L)
		+\frac{C_\delta}{\varepsilon^2}\int\frac{F_{2,\theta}^2}{r},\\
		\varepsilon^2\int r\big(|\nabla^2_{\!p}\Phi_\theta|^2
		+|\nabla^2_{\!p}\Pi_\theta|^2\big)
		&\leq C\eta(\mathbb E+\mathbb P+\mathbb L)
		+\frac{C}{\varepsilon^2}\int\frac{F_1^2+F_2^2+F_{2,\theta}^2}{r}.
	\end{align*}
	Therefore, after multiplying the first line by a sufficiently large fixed
	constant and adding the last three inequalities,
	\begin{align*}
		\mathbb E+\mathbb P+\mathbb L
		\leq C(\delta+\eta)(\mathbb E+\mathbb P+\mathbb L)
		+\frac{C_\delta}{\varepsilon^2}
		\int\frac{F_1^2+F_2^2+F_{2,\theta}^2}{r}.
		\end{align*}
		Choosing $\delta$ and $\eta$ small gives \eqref{eq:linear-stability}.
		{\color{black}The calculation above was made for smooth functions.
		Approximation by smooth solenoidal Dirichlet fields and smooth mean-zero
		Neumann potentials extends the estimate to the strong solutions used in
		Section~4.}
\end{proof}
\section{Solvability of the error equations}

In this section, we solve the nonlinear error equations \eqref{error-equation}
in the polar variables $(r,\theta)$. Recall that
\[
	u=-\Phi_r,\qquad v=\frac{\Phi_\theta}{r},\qquad
	g=-\Pi_r,\qquad h=\frac{\Pi_\theta}{r}.
\]
The solution constructed below has velocity and magnetic fields in
$H^2(B_1)$, and the equations hold almost everywhere.

\subsection{Linear solvability}

We prove solvability in the original velocity components and magnetic
potential.  Put
\[
\mathbf U=\nabla^\perp\Phi,\qquad
\mathbf H=\nabla^\perp\Pi,\qquad
\mathbf U^a=\nabla^\perp\Phi^a,\qquad
\mathbf B^a=\nabla^\perp\Pi^a,
\]
and impose the normalizations $\int_{B_1}\Pi\,\ud S=0$ and
{$\int_{B_1}q\,\ud S=0$}.  In Cartesian variables the
linearized equations have the form
\begin{equation}\label{component-linear-problem}
\left\{
\begin{aligned}
&(\mathbf U^a\!\cdot\nabla)\mathbf U
+(\mathbf U\!\cdot\nabla)\mathbf U^a
-(\mathbf B^a\!\cdot\nabla)\mathbf H
-(\mathbf H\!\cdot\nabla)\mathbf B^a
-\kappa\varepsilon^2\Delta\mathbf U+\nabla q=\mathbf f,\\
&\mathbf U^a\!\cdot\nabla\Pi
+\mathbf U\!\cdot\nabla\Pi^a-\varepsilon^2\Delta\Pi=f_2,\\
&\nabla\cdot\mathbf U=0,\qquad
\mathbf U|_{\partial B_1}=0,\qquad
\partial_n\Pi|_{\partial B_1}=0,\qquad
\int_{B_1}\Pi\,\ud S=0,\qquad
{\int_{B_1}q\,\ud S=0}.
\end{aligned}
\right.
\end{equation}
Here \(\int_{B_1}f_2\,\ud S=0\), as is required by the Neumann equation.

The principal operator is the triangular system obtained by keeping only
the Laplacian in the magnetic-potential equation and keeping both the Stokes
operator and the highest-order magnetic term in the velocity equation:
\begin{equation}\label{triangular-principal-operator}
\left\{
\begin{aligned}
-\kappa\varepsilon^2\Delta\mathbf U
-(\mathbf B^a\!\cdot\nabla)\nabla^\perp\Pi+\nabla q&=\mathbf f,\\
-\varepsilon^2\Delta\Pi&=f_2,\\
\nabla\cdot\mathbf U=0,\qquad
\mathbf U|_{\partial B_1}=0,\qquad
\partial_n\Pi|_{\partial B_1}&=0,\qquad
\int_{B_1}\Pi\,\ud S=0,\qquad
{\int_{B_1}q\,\ud S=0}.
\end{aligned}
\right.
\end{equation}
{
The mean-zero Neumann problem first determines \(\Pi\) in
\eqref{triangular-principal-operator}, and the Dirichlet Stokes problem then
determines \((\mathbf U,q)\).  Hence the triangular operator
\(\mathcal A_{\rm tr}\) is invertible in the standard strong class
\[
\mathbf U\in H^2(B_1)^2\cap H_0^1(B_1)^2,\qquad
q\in H^1(B_1)\cap L_0^2(B_1),\qquad
\Pi\in H^2(B_1),\quad \partial_n\Pi=0,\quad \int_{B_1}\Pi\,\ud S=0.
\]
Write the full operator as \(\mathcal A=\mathcal A_{\rm tr}+\mathcal K\).
{\color{black}All terms in \(\mathcal K\) contain at most one derivative, and
their coefficients are smooth.  A bounded sequence in the strong class has a
subsequence converging in \(H^1(B_1)\); its image under \(\mathcal K\) then
converges in \(L^2(B_1)\).  The continuity of
\(\mathcal A_{\rm tr}^{-1}\) shows that
\(\mathcal A_{\rm tr}^{-1}\mathcal K\) is compact.}  Thus the full problem is equivalent to an
equation of the form \(I+\mathcal A_{\rm tr}^{-1}\mathcal K\).  Its kernel is
zero by Proposition~\ref{prop:linear-stability}, applied after taking the curl
of the velocity equation and by a standard density argument.  The classical
compact-perturbation alternative therefore gives the unique solvability of
\eqref{component-linear-problem}.
}

{\color{black}
\begin{lemma}[Compatibility of the nonlinear forcing]
\label{lem:neumann-compatibility}
For every divergence-free \(\overline{\mathbf U}\in H_0^1(B_1)^2\) and
\(\overline\Pi\in H^1(B_1)\),
\[
 \int_{B_1}\left(R_\Pi^a
 -\overline{\mathbf U}\cdot\nabla\overline\Pi\right)\,\ud S=0.
\]
\end{lemma}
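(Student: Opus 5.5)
The identity splits into two independent pieces: the transport term has zero mean by the divergence theorem, and the residual has zero mean because the approximate potential satisfies the same global flux identity as the exact one.

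\emph{Transport term.} Since \(\overline{\mathbf U}\) is divergence-free and vanishes on \(\partial B_1\), we have \(\overline{\mathbf U}\cdot\nabla\overline\Pi=\nabla\cdot(\overline\Pi\,\overline{\mathbf U})\). The divergence theorem then gives
\[
\int_{B_1}\overline{\mathbf U}\cdot\nabla\overline\Pi\,\ud S=\int_{\partial B_1}\overline\Pi\,\overline{\mathbf U}\cdot\mathbf n\,\ud s=0.
\]
For \(\overline{\mathbf U}\in H^1_0\) and \(\overline\Pi\in H^1\) the integrand lies in \(L^1\). I would prove the identity for smooth compactly supported solenoidal fields and smooth \(\overline\Pi\), then pass to the limit by density.

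\emph{Residual term.} Here I use the definition \(R_\Pi^a=R_g^a/r\). Dividing the formula for \(R_g^a\) by \(r\) and comparing with \(-r\) times \eqref{eq:magnetic-flux-scalar} gives the scalar form
\[
R_\Pi^a=-\big(\mathbf u^a\cdot\nabla\Pi^a-\varepsilon^2\Delta\Pi^a-2\beta\varepsilon^2\big),
\]
up to the overall sign convention, which does not affect the mean. To check this, note that \(-u^a h^a+v^a g^a\), with \(g^a=-\Pi^a_r\) and \(h^a=\Pi^a_\theta/r\), equals \(-(u^a\Pi^a_\theta/r+v^a\Pi^a_r)=-\mathbf u^a\cdot\nabla\Pi^a\). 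Likewise \(rg^a_r+g^a-h^a_\theta=-r\Delta\Pi^a\). The approximate fields satisfy \(\nabla\cdot\mathbf u^a=0\) (this is the role of the corrector \(u_L\)) and \(\mathbf u^a\cdot\mathbf n=v^a=0\) on \(r=1\). The same divergence argument therefore gives \(\int_{B_1}\mathbf u^a\cdot\nabla\Pi^a\,\ud S=0\). Moreover \(\int_{B_1}\Delta\Pi^a\,\ud S=\int_0^{2\pi}\Pi^a_r(\theta,1)\,\ud\theta=-\int_0^{2\pi}(\beta+\eta\widehat f)\,\ud\theta=-2\pi\beta\), since \(g^a(\theta,1)=\beta+\eta\widehat f\) and \(\widehat f\) has zero mean. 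Combining these,
\[
\int_{B_1}R_\Pi^a\,\ud S=\pm\big(0+2\pi\beta\varepsilon^2-2\beta\varepsilon^2\pi\big)=0,
\]
which is exactly the computation leading to \eqref{eq:magnetic-flux-constant}, now applied to the approximate field.

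\emph{Main obstacle.} The delicate step is confirming that the scalar residual really has this divergence form with exactly the constant \(2\beta\varepsilon^2\), and that no term is hidden in the normal correction \(h_c^a\) of \eqref{magnetic-normal-correction}. Because \(h^a=\Pi^a_\theta/r\) is defined from the potential itself, the cross terms are exactly \(\mathbf u^a\cdot\nabla\Pi^a\), and the pointwise regularity at \(r=0\) follows from Proposition~\ref{prop:residual-estimate}. I also need to check that \(\Pi^a\) is smooth and that no boundary term arises at the polar center. Since \(\Pi^a=-\int_0^r g^a\), it is a smooth Cartesian function, so these checks should be routine.
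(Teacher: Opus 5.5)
Your proof is correct and is essentially the paper's argument: both transport integrals vanish by integration by parts (using \(\nabla\cdot\overline{\mathbf U}=\nabla\cdot\mathbf u^a=0\) and vanishing normal components on \(\partial B_1\)), and the remaining terms cancel because \(\varepsilon^2\int_{B_1}\Delta\Pi^a\,\ud S=-2\pi\beta\varepsilon^2\) by the boundary mean of \(g^a\). Your sign hedge is unnecessary, since \(R_\Pi^a=2\beta\varepsilon^2-\mathbf u^a\cdot\nabla\Pi^a+\varepsilon^2\Delta\Pi^a\) holds exactly; this is the form the paper uses.
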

\begin{proof}
Both transport integrals vanish by integration by parts.  Moreover,
\[
 \varepsilon^2\int_{B_1}\Delta\Pi^a\,\ud S
 =-2\pi\beta\varepsilon^2
 =-2\beta\varepsilon^2|B_1|,
\]
where the first equality follows from the boundary mean of \(g^a\).
The result now follows from
\(R_\Pi^a=2\beta\varepsilon^2
-\mathbf U^a\cdot\nabla\Pi^a+\varepsilon^2\Delta\Pi^a\).
\end{proof}
}

For polar data, choose $\mathbf f$ and $f_2$ so that
${-r\,\operatorname{curl}\mathbf f=F_1}$ and $rf_2=F_2$.
Applying
Proposition~\ref{prop:linear-stability} gives
\begin{align*}
\mathbb E+\mathbb P+\mathbb L
\leq \frac{C}{\varepsilon^2}\int_\Omega
\frac{F_1^2+F_2^2+F_{2,\theta}^2}{r}.
\end{align*}

For a pair $(\Phi,\Pi)$, set
\[
\|(\Phi,\Pi)\|_{\mathcal X}
=(\mathbb E+\mathbb P+\mathbb L)^{1/2}.
\]
\begin{lemma}[Component regularity for the linear problem]
\label{lem:component-linear-regularity}
{\color{black}Let \(\mathbf f\in H^1(B_1)^2\) and \(f_2\in H^1(B_1)\) with
\(\int_{B_1}f_2\,\ud S=0\).}  Let $(\Phi,\Pi)$ be the solution of \eqref{component-linear-problem}, and
write $\mathbf U=\nabla^\perp\Phi$ and $\mathbf H=\nabla^\perp\Pi$.  Then
\begin{align}\label{component-linear-regularity}
&\|(\Phi,\Pi)\|_{\mathcal X}+\varepsilon^3
\big(\|\mathbf U\|_{H^2(B_1)}+\|\mathbf H\|_{H^2(B_1)}\big)\nonumber\\
&\qquad\leq \frac{C}{\varepsilon}
\big(\|\mathbf f\|_{H^1(B_1)}+\|f_2\|_{H^1(B_1)}\big).
\end{align}
\end{lemma}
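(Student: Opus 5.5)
The estimate has two parts, and I will prove them separately: first the bound on $\|(\Phi,\Pi)\|_{\mathcal X}$, then the $\varepsilon^3$-weighted $H^2$ bound obtained from elliptic regularity.

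\emph{The $\mathcal X$-norm.} With $F_1=-r\operatorname{curl}\mathbf f$ and $F_2=rf_2$, the solution of \eqref{component-linear-problem} is strong. Taking the curl of the velocity equation yields \eqref{error-equation} without the quadratic terms, so Proposition~\ref{prop:linear-stability} applies after the density argument already recorded there. It gives
\[
\|(\Phi,\Pi)\|_{\mathcal X}^2\le \frac{C}{\varepsilon^2}\int_\Omega\frac{F_1^2+F_2^2+F_{2,\theta}^2}{r}.
\]
The weight is harmless. Since $F_1^2/r=r|\operatorname{curl}\mathbf f|^2$, $F_2^2/r=r f_2^2$, and $F_{2,\theta}=r\partial_\theta f_2$, where $\partial_\theta=x\partial_y-y\partial_x$ satisfies $|\partial_\theta f_2|\le r|\nabla f_2|$, each integrand is bounded by $r\,|\cdot|^2$ of Cartesian first derivatives of $\mathbf f$ or of $f_2$ itself. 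Hence the right-hand side is at most $C\varepsilon^{-2}(\|\mathbf f\|_{H^1}^2+\|f_2\|_{H^1}^2)$, which is the first part of \eqref{component-linear-regularity}.

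\emph{The $H^2$ bound.} I treat \eqref{component-linear-problem} as a perturbation of the triangular operator \eqref{triangular-principal-operator}. For $\Pi$, the Neumann problem
\[
-\varepsilon^2\Delta\Pi=f_2-\mathbf U^a\cdot\nabla\Pi-\mathbf U\cdot\nabla\Pi^a
\]
has a right-hand side with compatible mean, so $H^3$ Neumann regularity gives
\[
\varepsilon^2\|\Pi\|_{H^3}\le C\big(\|f_2\|_{H^1}+\|\mathbf U^a\cdot\nabla\Pi\|_{H^1}+\|\mathbf U\cdot\nabla\Pi^a\|_{H^1}\big).
\]
The approximate fields have derivatives of size $\varepsilon^{-1}$ in the layer, but only at the order of one derivative. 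I plan to use $\|\mathbf U^a\|_{W^{1,\infty}}\le C\varepsilon^{-1}$ and $\|\nabla^2\Pi^a\|_{L^\infty}\le C\varepsilon^{-1}$, and to bound $\|\Pi\|_{H^2}$ and $\|\mathbf U\|_{H^1}$ by the $\mathcal X$-norm. The $\mathbb E$ part of that norm controls $\varepsilon\|\nabla^2\Phi\|$ and $\varepsilon\|\nabla^2\Pi\|$, and $\mathbb P$ together with a zero-mode Hardy/Poincar\'e argument controls the lower-order terms. This produces $\|\Pi\|_{H^2}\lesssim\varepsilon^{-1}\|(\Phi,\Pi)\|_{\mathcal X}$, so the right-hand side is at most $C\varepsilon^{-2}(\ldots)$ and $\varepsilon^3\|\mathbf H\|_{H^2}\le C\varepsilon^{-1}(\|\mathbf f\|_{H^1}+\|f_2\|_{H^1})$ after tracking powers. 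The Stokes step is analogous: the forcing $\mathbf f+(\mathbf B^a\cdot\nabla)\mathbf H+\cdots$ lies in $L^2$ with norm $\lesssim\varepsilon^{-1}\|\mathbf H\|_{H^1}+\ldots$, and Stokes $H^2$ regularity with constant $(\kappa\varepsilon^2)^{-1}$ yields the stated $\varepsilon^3$-weighted bound for $\|\mathbf U\|_{H^2}$.

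\emph{Main obstacle.} The difficulty is the power counting in the $H^2$ step. A naive bound on $(\mathbf B^a\cdot\nabla)\nabla^\perp\Pi$ requires $\|\Pi\|_{H^2}$, which the $\mathcal X$-norm controls only with a loss of $\varepsilon^{-1}$. That loss then compounds with the $\varepsilon^{-2}$ Stokes constant. I will therefore apply the Neumann estimate first to obtain $\|\Pi\|_{H^3}$, feed that into the Stokes estimate, and absorb any residual cross terms by interpolation between the $H^1$ and $H^2$ norms, so that the final weight $\varepsilon^3$ suffices. If the powers do not close, the fallback is to use the interior estimate away from the layer and the layer-weighted bounds from the profile estimates near $r=1$.
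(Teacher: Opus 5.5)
Your first step is correct and is the paper's first step. With $F_1=-r\operatorname{curl}\mathbf f$ and $F_2=rf_2$, Proposition~\ref{prop:linear-stability} together with $|\partial_\theta f_2|\le r|\nabla f_2|$ gives $\|(\Phi,\Pi)\|_{\mathcal X}\le C\varepsilon^{-1}(\|\mathbf f\|_{H^1}+\|f_2\|_{H^1})$. Your order in the second part (Neumann regularity for $\Pi$ first, then Dirichlet Stokes for $\mathbf U$) also matches the paper. The gap is the power counting in that second part, and that is the whole content of the lemma. The tools you actually state are $\|\mathbf U^a\|_{W^{1,\infty}}+\|\nabla^2\Pi^a\|_{L^\infty}\le C\varepsilon^{-1}$ and $\|\Pi\|_{H^2}+\|\mathbf U\|_{H^1}\lesssim\varepsilon^{-1}\|(\Phi,\Pi)\|_{\mathcal X}$. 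With these, the product bound gives only $\|\mathbf U^a\cdot\nabla\Pi+\mathbf U\cdot\nabla\Pi^a\|_{H^1}\lesssim\varepsilon^{-2}\|(\Phi,\Pi)\|_{\mathcal X}$. Hence $\varepsilon^2\|\Pi\|_{H^3}\lesssim\varepsilon^{-3}(\|\mathbf f\|_{H^1}+\|f_2\|_{H^1})$, and you get only $\varepsilon^3\|\mathbf H\|_{H^2}\lesssim\varepsilon^{-2}(\cdots)$. Your Stokes forcing bound $\varepsilon^{-1}\|\mathbf H\|_{H^1}$ loses the same power for $\mathbf U$. So ``at most $C\varepsilon^{-2}(\ldots)$ after tracking powers'' does not follow from what you wrote, and your hedge ``if the powers do not close'' is exactly the unresolved point. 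A plain Leibniz split does not rescue it. The reason is that $\|\nabla\Pi\|_{L^2}$ and $\|\mathbf U\|_{L^2}$ are themselves only $O(\varepsilon^{-1}\|(\Phi,\Pi)\|_{\mathcal X})$: their zero modes $\Pi_r^0$ and $u^0=-\Phi_r^0$ are seen only by $\mathbb E$ (via Hardy from $r=1$), not by $\mathbb P$ or $\mathbb L$.

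You have also put the obstacle in the wrong place. The term $(\mathbf B^a\cdot\nabla)\mathbf H$ is harmless. Since $\mathbf B^a$ is bounded, it is $\lesssim\|\nabla\mathbf H\|_{L^2}\lesssim\varepsilon^{-1}\|(\Phi,\Pi)\|_{\mathcal X}$, and $\varepsilon^{3}\cdot\varepsilon^{-2}\cdot\varepsilon^{-1}\|(\Phi,\Pi)\|_{\mathcal X}$ is exactly on target. Neither $\|\Pi\|_{H^3}$ nor interpolation is needed, and interpolation could not recover a power lost at the $L^2$ level anyway. The dangerous terms are those where an $O(\varepsilon^{-1})$ radial derivative of a layer profile meets an undifferentiated error factor. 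The missing idea is the mode and component bookkeeping behind the paper's phrase ``coefficient bounds and the boundary Hardy inequality'':
\begin{itemize}
\item The normal components $v=\Phi_\theta/r$ and $h=\Pi_\theta/r$, and the nonzero modes of $\nabla\Phi,\nabla\Pi$, are $O(\|(\Phi,\Pi)\|_{\mathcal X})$ in $L^2$, by $\mathbb P$ and tangential Poincar\'e.
\item In polar components, the large derivatives $\partial_r u^a,\partial_r g^a=O(\varepsilon^{-1})$ only ever multiply $v$ or $h$. This happens in $\nabla(u^a h)$, $\nabla(g^a v)$, $v\,\partial_r\mathbf U^a$ and $h\,\partial_r\mathbf B^a$.
\item The zero modes $\Pi_r^0,u^0,g^0$ only meet $v^a,h^a=O(\varepsilon)$, whose gradients are $O(1)$, as in $\nabla(v^a\Pi_r)$ and $\nabla(u h^a)$. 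Otherwise they meet $O(1)$ tangential derivatives of the profiles, as in $r^{-1}u\,\partial_\theta\mathbf U^a$ and $r^{-1}g\,\partial_\theta\mathbf B^a$.
\end{itemize}
This gives both the Neumann $H^1$ source and the Stokes $L^2$ source the bound $C\varepsilon^{-1}\|(\Phi,\Pi)\|_{\mathcal X}$ plus the data. After that, both regularity steps close directly with the constants $\varepsilon^{-2}$.
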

\begin{proof}
The stability estimate and the definition of $\mathbb E$ give
\[
\|(\Phi,\Pi)\|_{\mathcal X}\leq \frac C\varepsilon
\big(\|\mathbf f\|_{H^1}+\|f_2\|_{H^1}\big),
\qquad
\|\mathbf U\|_{H^1}+\|\mathbf H\|_{H^1}
\leq C\varepsilon^{-1}\|(\Phi,\Pi)\|_{\mathcal X}.
\]
Rewrite the scalar equation as
\[
-\varepsilon^2\Delta\Pi
=f_2-\mathbf U^a\cdot\nabla\Pi-\mathbf U\cdot\nabla\Pi^a.
\]
The normalized Neumann regularity estimate, first in \(H^2\) and then in
\(H^3\), gives
\begin{align*}
\varepsilon^2\|\Pi\|_{H^3}
&\leq C\Big(\|f_2\|_{H^1}
+\|\mathbf U^a\cdot\nabla\Pi
+\mathbf U\cdot\nabla\Pi^a\|_{H^1}\Big)\\
&\leq C\|f_2\|_{H^1}
+C\varepsilon^{-1}\|(\Phi,\Pi)\|_{\mathcal X}.
\end{align*}
Here the second line follows from the coefficient bounds and the boundary
Hardy inequality; no boundary differentiation of the Neumann condition is
used.  Since \(\mathbf H=\nabla^\perp\Pi\), this is the required
\(H^2\)-estimate for \(\mathbf H\).

Next apply the Dirichlet Stokes estimate to the first equation of
\eqref{component-linear-problem}.  The already obtained \(H^2\) control of
\(\mathbf H\), the \(H^1\) control from the stability norm, and the fixed
coefficient bounds yield
\[
\varepsilon^2\big(\|\mathbf U\|_{H^2}+\|q\|_{H^1}\big)
\leq C\big(\|\mathbf f\|_{L^2}+\|f_2\|_{H^1}\big)
+C\varepsilon^{-1}\|(\Phi,\Pi)\|_{\mathcal X}.
\]
Combining the two bounds proves
\eqref{component-linear-regularity}.
\end{proof}

\begin{lemma}[Quadratic forcing estimate]
\label{lem:quadratic-forcing}
For a pair $(\Phi,\Pi)$ define
\[
\| (\Phi,\Pi)\|_{\mathcal Y}
=\|(\Phi,\Pi)\|_{\mathcal X}+\varepsilon^3
\big(\|\nabla^\perp\Phi\|_{H^2}
+\|\nabla^\perp\Pi\|_{H^2}\big).
\]
Let
\[
\mathbf N_M(\mathbf U,\mathbf H)
=-(\mathbf U\cdot\nabla)\mathbf U
+(\mathbf H\cdot\nabla)\mathbf H,
\qquad
N_\Pi(\mathbf U,\Pi)=-\mathbf U\cdot\nabla\Pi.
\]
Then
\begin{equation*}
\|\mathbf N_M\|_{H^1}+\|N_\Pi\|_{H^1}
\leq C\varepsilon^{-6}\|(\Phi,\Pi)\|_{\mathcal Y}^2.
\end{equation*}
The corresponding difference satisfies the same estimate with one factor
replaced by the sum of the two input norms and the other by their difference.
\end{lemma}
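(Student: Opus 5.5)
The bound follows from Leibniz' rule, placing the factors in Sobolev spaces on $B_1$ and converting the stability norm $\mathcal X$ into ordinary Sobolev norms at an explicit cost in $\varepsilon$. The conversion facts we use are the following.

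First, $\mathbb E$ controls $\varepsilon^2\|\nabla^2\Phi\|_{L^2}^2$ and $\varepsilon^2\|\nabla^2\Pi\|_{L^2}^2$. Here the polar Hessian identities of Lemma~\ref{lem:polar-hessian} and smoothness at the center are used to pass to Cartesian second derivatives. Hence
\[
\|\mathbf U\|_{H^1}+\|\mathbf H\|_{H^1}\le C\varepsilon^{-1}\|(\Phi,\Pi)\|_{\mathcal X}.
\]
For $\mathbf U$ the lower-order terms come from the Dirichlet conditions and Poincar\'e. For $\mathbf H$ we use the normalization $\int\Pi=0$, the Neumann condition, and the zero-mode part of $\mathbb E$. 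Second, by definition of $\mathcal Y$,
\[
\|\mathbf U\|_{H^2}+\|\mathbf H\|_{H^2}\le \varepsilon^{-3}\|(\Phi,\Pi)\|_{\mathcal Y}.
\]
Third, $\|\Pi\|_{H^1}\le C\|\mathbf H\|_{L^2}$, since $\Pi$ has zero mean.

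We estimate $\|(\mathbf U\cdot\nabla)\mathbf U\|_{H^1}$ by bounding the two terms produced by one derivative. For $\|\nabla\mathbf U\,\nabla\mathbf U\|_{L^2}$ we use $\|\nabla \mathbf U\|_{L^4}^2\le C\|\mathbf U\|_{H^1}\|\mathbf U\|_{H^2}$ (two-dimensional Ladyzhenskaya/Gagliardo--Nirenberg). For $\|\mathbf U\,\nabla^2\mathbf U\|_{L^2}$ we use $\|\mathbf U\|_{L^\infty}\|\mathbf U\|_{H^2}\le C\|\mathbf U\|_{H^2}^2$. The magnetic term $(\mathbf H\cdot\nabla)\mathbf H$ and $N_\Pi=-\mathbf U\cdot\nabla\Pi$ are handled identically; for $N_\Pi$ one also uses $\nabla\Pi=-\mathbf H^\perp$, so $\|\nabla\Pi\|_{H^2}\le\|\mathbf H\|_{H^2}$. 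Each term is then bounded by $C\|\cdot\|_{H^2}^2\le C\varepsilon^{-6}\|(\Phi,\Pi)\|_{\mathcal Y}^2$.

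This crude count (both factors placed in $H^2$) already gives exactly the stated power $\varepsilon^{-6}$, so no sharper interpolation is needed. The only point requiring care is that the $L^\infty$ and $L^4$ embeddings are taken on the whole disk $B_1$ in Cartesian variables. We will therefore first check that the strong solutions are genuine $H^2(B_1)$ fields, as recorded at the start of Section~4, so that no polar singularity at $r=0$ enters.

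For the difference estimate we use bilinearity. For example,
\[
(\mathbf U_1\cdot\nabla)\mathbf U_1-(\mathbf U_2\cdot\nabla)\mathbf U_2=((\mathbf U_1-\mathbf U_2)\cdot\nabla)\mathbf U_1+(\mathbf U_2\cdot\nabla)(\mathbf U_1-\mathbf U_2),
\]
and similarly for the magnetic and potential terms. The same product bounds then give
\[
C\varepsilon^{-6}\big(\|(\Phi_1,\Pi_1)\|_{\mathcal Y}+\|(\Phi_2,\Pi_2)\|_{\mathcal Y}\big)\|(\Phi_1-\Phi_2,\Pi_1-\Pi_2)\|_{\mathcal Y}.
\]
The main (mild) obstacle is verifying that $\mathcal X$ and $\mathcal Y$ genuinely control the full $H^1$ and $H^2$ norms of the magnetic field, including its zero mode. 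This control only enters through the lower-order terms, and the $H^2$ part is built into $\mathcal Y$ by definition. So we would rely on the $\varepsilon^3$ term of $\mathcal Y$ together with the Neumann normalization to control every factor.
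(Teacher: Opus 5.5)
Your proposal is correct and follows the paper's proof. The two-dimensional embeddings $H^2(B_1)\hookrightarrow L^\infty\cap W^{1,4}$ give $\|(\mathbf U\cdot\nabla)\mathbf V\|_{H^1}\le C\|\mathbf U\|_{H^2}\|\mathbf V\|_{H^2}$; the same bound covers $\mathbf U\cdot\nabla\Pi$ because $\nabla\Pi$ is a rotation of $\mathbf H$; each $H^2$ norm is at most $\varepsilon^{-3}\|(\Phi,\Pi)\|_{\mathcal Y}$; and bilinearity handles the differences. Your preliminary conversion of $\mathcal X$ into $H^1$ norms, including the zero-mode discussion, is unnecessary, since the full Cartesian $H^2$ norms of $\mathbf U$ and $\mathbf H$ are already part of the definition of $\mathcal Y$.
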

\begin{proof}
In two dimensions, $H^2(B_1)$ embeds into $L^\infty(B_1)$ and
$W^{1,4}(B_1)$.  Therefore
\[
\|(\mathbf U\cdot\nabla)\mathbf V\|_{H^1}
\leq C\|\mathbf U\|_{H^2}\|\mathbf V\|_{H^2}.
\]
The same product estimate applies to
$\mathbf U\cdot\nabla\Pi$ because $\nabla\Pi$ is a rotation of
$\mathbf H$.  Since each physical $H^2$ norm is bounded by
$\varepsilon^{-3}\|(\Phi,\Pi)\|_{\mathcal Y}$, the result follows.
\end{proof}

\subsection{Nonlinear solvability}

We apply the contraction mapping theorem to \eqref{error-equation}.
The residual estimate required below is precisely Proposition~\ref{prop:residual-estimate}, in particular \eqref{remainder-estimate-in-polar-variables}.

\begin{proposition}\label{existence-and-error-estimate-of-error-equation}
	There exist $\varepsilon_0>0,\eta_0>0$ such that for any
	$\varepsilon\in (0,\varepsilon_0)$ and $\eta\in (0,\eta_0)$, the error
	equations \eqref{error-equation} have a solution \((\Phi,\Pi)\), unique in
		a ball \(\|(\Phi,\Pi)\|_{\mathcal Y}\leq M\varepsilon^9\) for a fixed
	\(M\), and
	satisfying
	\[
		\|(\Phi,\Pi)\|_{\mathcal X}\leq C\varepsilon^9.
	\]
	Thus the corresponding physical error
	\[
		(u,v,g,h)=\left(-\Phi_r,\frac{\Phi_\theta}{r},
		-\Pi_r,\frac{\Pi_\theta}{r}\right)
	\]
	satisfies $\|(u,v,g,h)\|_\infty\leq C\varepsilon$.
\end{proposition}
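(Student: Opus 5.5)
We will obtain the solution as a fixed point of the map \(\mathcal S\) that sends an input pair \((\overline\Phi,\overline\Pi)\) to the solution \((\Phi,\Pi)\) of the linear problem \eqref{component-linear-problem}. The forcing is
\[
\mathbf f=\mathbf R_M^a\text{-part}+\mathbf N_M(\overline{\mathbf U},\overline{\mathbf H}),
\qquad
f_2=-R_\Pi^a+N_\Pi(\overline{\mathbf U},\overline\Pi),
\]
with the signs arranged so that a fixed point solves \eqref{error-equation} after taking the curl and multiplying by \(r\). Lemma~\ref{lem:neumann-compatibility} shows that \(\int_{B_1}f_2\,\ud S=0\), so the Neumann problem is solvable for every input. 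The unique linear solvability from the compact-perturbation argument then makes \(\mathcal S\) well defined on the strong class.

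For the self-map property, Lemma~\ref{lem:component-linear-regularity} gives
\[
\|\mathcal S(\overline\Phi,\overline\Pi)\|_{\mathcal Y}\le C\varepsilon^{-1}\big(\|\mathbf f\|_{H^1}+\|f_2\|_{H^1}\big).
\]
We bound the residual contribution by \eqref{cartesian-residual-H1}, which gives \(C\varepsilon^{10}\) and hence \(C\varepsilon^9\) after the \(\varepsilon^{-1}\) loss. We bound the quadratic part by Lemma~\ref{lem:quadratic-forcing}, which gives \(C\varepsilon^{-6}\|(\overline\Phi,\overline\Pi)\|_{\mathcal Y}^2\le C\varepsilon^{-6}M^2\varepsilon^{18}\). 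After the \(\varepsilon^{-1}\) factor this is \(CM^2\varepsilon^{11}\).

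Now choose \(M=2C_0\), where \(C_0\varepsilon^9\) is the residual bound. Taking \(\varepsilon_0\) small so that \(CM^2\varepsilon^{2}\le C_0\), the ball \(\{\|\cdot\|_{\mathcal Y}\le M\varepsilon^9\}\) is mapped into itself. For the contraction, the difference version of Lemma~\ref{lem:quadratic-forcing} gives a Lipschitz constant of order
\[
\varepsilon^{-1}\cdot\varepsilon^{-6}\cdot M\varepsilon^9=CM\varepsilon^2<\tfrac12 .
\]
The ball is closed in the \(\mathcal Y\)-topology, via weak limits in \(H^2\) and lower semicontinuity of \(\mathbb E,\mathbb P,\mathbb L\). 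Banach's theorem therefore yields existence and uniqueness in the ball together with \(\|(\Phi,\Pi)\|_{\mathcal X}\le C\varepsilon^9\).

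The step needing most care is the translation between the polar norms and the Cartesian forcing. The linear estimate must be applied with \(F_1=-r\operatorname{curl}\mathbf f\) and \(F_2=rf_2\). We must check that
\[
\varepsilon^{-2}\int\frac{F_1^2+F_2^2+F_{2,\theta}^2}{r}\lesssim\varepsilon^{-2}\big(\|\mathbf f\|_{H^1}^2+\|f_2\|_{H^1}^2\big),
\]
which holds because \(F_{2,\theta}=r\,\partial_\theta f_2\) and \(|\partial_\theta f_2|\le r|\nabla f_2|\). This is exactly what Lemma~\ref{lem:component-linear-regularity} encodes.

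For the final \(L^\infty\) bound we use \(\varepsilon^3\|\mathbf U\|_{H^2}\le M\varepsilon^9\), so \(\|\mathbf U\|_{H^2}\le C\varepsilon^6\), and similarly for \(\mathbf H\). Sobolev embedding then gives \(\|(u,v,g,h)\|_\infty\le C\varepsilon^6\le C\varepsilon\). Near \(r=0\) the polar components are bounded by the Cartesian fields, so no singularity arises there.
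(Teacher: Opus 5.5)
Your proposal is correct and is essentially the paper's proof: the same fixed-point map through \eqref{component-linear-problem}, the self-map bound $C\varepsilon^9+CM^2\varepsilon^{11}$ and contraction constant $CM\varepsilon^2$ from Lemma~\ref{lem:component-linear-regularity}, \eqref{cartesian-residual-H1} and Lemma~\ref{lem:quadratic-forcing}, and the same Sobolev step giving the $C\varepsilon^6$ bound on the physical error. One small sign point: with the paper's convention $R_\Pi^a=R_g^a/r$, the correct forcing is $f_2=R_\Pi^a-\overline{\mathbf U}\cdot\nabla\overline\Pi$ rather than $-R_\Pi^a+\cdots$, although the compatibility condition holds either way because $\int_{B_1}R_\Pi^a\,\ud S$ and the transport integral each vanish separately.
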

\begin{proof}
Let $\bar{\mathbf U}=\nabla^\perp\bar\Phi$ and
$\bar{\mathbf H}=\nabla^\perp\bar\Pi$.  For an input in the ball
\[
\|(\bar\Phi,\bar\Pi)\|_{\mathcal Y}\leq M\varepsilon^9,
\]
solve the linear component problem \eqref{component-linear-problem} with
\[
\mathbf f=-\mathbf R_M^a
-(\bar{\mathbf U}\cdot\nabla)\bar{\mathbf U}
+(\bar{\mathbf H}\cdot\nabla)\bar{\mathbf H},
\qquad
f_2=R_\Pi^a-\bar{\mathbf U}\cdot\nabla\bar\Pi,
\]
where $R_\Pi^a=R_g^a/r$ has the sign fixed in the definition above.  The
compatibility condition for \(f_2\) is exactly
Lemma~\ref{lem:neumann-compatibility}.  By \eqref{cartesian-residual-H1},
Lemma~\ref{lem:component-linear-regularity}, and
Lemma~\ref{lem:quadratic-forcing}, the output satisfies
\begin{align*}
\|(\Phi,\Pi)\|_{\mathcal Y}
&\leq \frac C\varepsilon
\left(\varepsilon^{10}
+\varepsilon^{-6}\|(\bar\Phi,\bar\Pi)\|_{\mathcal Y}^2\right)\\
&\leq C\varepsilon^9+CM^2\varepsilon^{11}.
\end{align*}
Choose $M\geq2C$ and then choose $\varepsilon_0$ so that
$CM^2\varepsilon_0^2\leq M/2$.  The map then preserves the ball.

For two inputs in the ball, the difference estimate in
Lemma~\ref{lem:quadratic-forcing} and the linear estimate give
\[
\|(\Phi_1-\Phi_2,\Pi_1-\Pi_2)\|_{\mathcal Y}
\leq CM\varepsilon^2
\|(\bar\Phi_1-\bar\Phi_2,
\bar\Pi_1-\bar\Pi_2)\|_{\mathcal Y}.
\]
After decreasing $\varepsilon_0$ once more, this is a strict contraction.
Its fixed point solves the original component equations and hence
\eqref{error-equation}.  It satisfies
\[
\|(\Phi,\Pi)\|_{\mathcal Y}\leq C\varepsilon^9,
\qquad
\|(\Phi,\Pi)\|_{\mathcal X}\leq C\varepsilon^9.
\]
Finally, the two-dimensional Sobolev embedding and the definition of
$\mathcal Y$ give
\[
\left\|\left(\Phi_r,\frac{\Phi_\theta}{r},
\Pi_r,\frac{\Pi_\theta}{r}\right)\right\|_\infty
\leq C\big(\|\mathbf U\|_{H^2}+\|\mathbf H\|_{H^2}\big)
\leq C\varepsilon^{-3}\|(\Phi,\Pi)\|_{\mathcal Y}
\leq C\varepsilon^6\leq C\varepsilon.
\]
\end{proof}

{
\begin{lemma}
\label{lem:interior-curl-convergence}
{\color{black}Let
\(\mathbf U=u\mathbf e_\theta+v\mathbf e_r\) and
\(\mathbf H=g\mathbf e_\theta+h\mathbf e_r\) be the velocity and magnetic
corrections obtained above.}  Fix
\[
0<r_0<\rho_1<\rho_2<1.
\]
Suppose
\[
\|\mathbf U\|_{L^\infty(B_1)}+\|\mathbf H\|_{L^\infty(B_1)}
\leq C\varepsilon,\qquad
\|\mathbf U\|_{H^1(B_1)}+\|\mathbf H\|_{H^1(B_1)}
\leq C\varepsilon^8,
\]
and
\[
\|\mathbf R_M^a\|_{H^2(B_{\rho_2})}
+\|R_\Pi^a\|_{H^2(B_{\rho_2})}
\leq C_{\rho_2}\varepsilon^{10}.
\]
Then
\begin{align}
\|\mathbf U\|_{H^2(B_{\rho_1})}
+\|\mathbf H\|_{H^2(B_{\rho_1})}
&\leq C_{\rho_1,\rho_2}\varepsilon^6,\label{eq:interior-H2}\\
\|\mathbf U\|_{H^3(B_{r_0})}
+\|\mathbf H\|_{H^3(B_{r_0})}
&\leq C_{r_0,\rho_1,\rho_2}\varepsilon^4.\label{eq:interior-H3}
\end{align}
In particular,
\[
\|\operatorname{curl}\mathbf U\|_{L^\infty(B_{r_0})}
+\|\operatorname{curl}\mathbf H\|_{L^\infty(B_{r_0})}
\leq C_{r_0,\rho_1,\rho_2}\varepsilon^4.
\]
\end{lemma}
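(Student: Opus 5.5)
The plan is interior elliptic bootstrapping on nested disks. The estimates are applied to the Cartesian error equations \eqref{component-linear-problem}, with forcing $\mathbf f=-\mathbf R_M^a-(\mathbf U\cdot\nabla)\mathbf U+(\mathbf H\cdot\nabla)\mathbf H$ and $f_2=R_\Pi^a-\mathbf U\cdot\nabla\Pi$. The available information is:
\begin{itemize}
\item the stability bound $\|(\Phi,\Pi)\|_{\mathcal X}\le C\varepsilon^9$, which gives the $H^1$ bound $C\varepsilon^8$ through $\mathbb E$;
\item the $L^\infty$ bound $C\varepsilon$;
\item the interior residual bound \eqref{cartesian-residual-interior-H2}.
\end{itemize}
On $B_{\rho_2}$ the approximate fields $\mathbf U^a,\mathbf B^a,\nabla\Pi^a$ contain no Prandtl layer. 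Their derivatives are therefore bounded by constants independent of $\varepsilon$. The only loss in the argument comes from the factor $\varepsilon^{-2}$ incurred when the diffusion operators $\kappa\varepsilon^2\Delta$ and $\varepsilon^2\Delta$ are inverted.

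\textbf{Step 1: $H^2$ on $B_{\rho_1}$.} Fix cutoffs $\zeta$ with $\zeta=1$ on $B_{\rho_1}$ and support in $B_{\rho_2}$, and $\zeta'$ with support in a slightly larger disk. First take the scalar equation $-\varepsilon^2\Delta\Pi=f_2-\mathbf U^a\cdot\nabla\Pi-\mathbf U\cdot\nabla\Pi^a$. Multiply by $\zeta$ and apply interior Laplace regularity, with the commutator terms controlled by the $H^1$ bound. The linear terms are $O(\varepsilon^8)$ in $H^1$. The quadratic term satisfies $\|\mathbf U\cdot\nabla\Pi\|_{L^2}\le\|\mathbf U\|_\infty\|\nabla\Pi\|_{L^2}\le C\varepsilon^9$. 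This gives $\|\Pi\|_{H^2}\lesssim\varepsilon^{-2}\varepsilon^8$, i.e.\ $\|\mathbf H\|_{H^1}$ control on the interior disk.

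To reach $\|\mathbf H\|_{H^2}$, i.e.\ $\Pi\in H^3$, we need $H^1$ control of the right-hand side. The quadratic term $\mathbf U\cdot\nabla\Pi$ in $H^1$ requires $\nabla\mathbf U$ in $L^4$ and $\nabla^2\Pi$ in $L^2$. I will use the global $H^2$ bound $\|\mathbf U\|_{H^2}\le C\varepsilon^{-3}\cdot\varepsilon^9=C\varepsilon^6$ from Proposition~\ref{existence-and-error-estimate-of-error-equation}, together with $H^2\subset W^{1,4}$. With it the quadratic term is $O(\varepsilon^{14})$ and the linear terms are $O(\varepsilon^{8})$ in $H^1$ (a derivative falling on $\nabla\Pi$ costs nothing extra beyond the previous step). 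Hence $\|\Pi\|_{H^3(B_{\rho_1'})}\le C\varepsilon^{6}$.

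For the velocity, take the Stokes system
\[
-\kappa\varepsilon^2\Delta\mathbf U+\nabla q=\mathbf f+(\mathbf B^a\cdot\nabla)\mathbf H+\text{lower order}.
\]
Apply interior Stokes regularity; the pressure is handled by normalizing $q$ on the annulus. The term $(\mathbf B^a\cdot\nabla)\mathbf H$ is bounded in $L^2$ by $\varepsilon^8$. This gives $\|\mathbf U\|_{H^2}\lesssim\varepsilon^{-2}\varepsilon^8=\varepsilon^6$, which is \eqref{eq:interior-H2}.

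\textbf{Step 2: $H^3$ on $B_{r_0}$.} Repeat the argument one level higher on $B_{r_0}\subset B_{\rho_1}$. The $H^1$ norms of the forcing are now estimated using the $\varepsilon^6$ bounds just obtained, the residual's $H^2$ bound $\varepsilon^{10}$, and Moser-type products in two dimensions for the quadratic terms, which are $O(\varepsilon^{12})$. Each inversion costs $\varepsilon^{-2}$, so $\|\mathbf U\|_{H^3}+\|\mathbf H\|_{H^3}\lesssim\varepsilon^{-2}\cdot\varepsilon^6=\varepsilon^4$. Finally, $H^2\subset L^\infty$ applied to $\operatorname{curl}\mathbf U$ and $\operatorname{curl}\mathbf H$ gives the $L^\infty$ curl bound.

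\textbf{Main obstacle.} The delicate point is the coupling term $(\mathbf B^a\cdot\nabla)\mathbf H$ in the momentum equation, which is one derivative higher in $\Pi$. This forces the order of the argument: the magnetic potential must always be advanced one derivative ahead before the Stokes step, on a slightly larger disk. I will use a chain of intermediate radii between $r_0$ and $\rho_2$ so that each stage has room for its cutoff. Nonetheless, the exponent bookkeeping for this coupling term must be checked at each level to keep the powers at $\varepsilon^6$ and $\varepsilon^4$.
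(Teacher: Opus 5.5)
Your proposal is correct and is essentially the paper's proof: interior Stokes and Poisson estimates carried from $B_{\rho_2}$ to $B_{\rho_1}$ and then to $B_{r_0}$, with each inversion of the $\varepsilon^2\Delta$ operators costing $\varepsilon^{-2}$. At the first level the forcing is bounded by $C\varepsilon^8$ in $L^2$, giving $\varepsilon^6$, and at the second level by $C\varepsilon^6$ in $H^1$ via the two-dimensional $H^2$ algebra, giving $\varepsilon^4$. The differences are minor. The paper applies $\nabla^\perp$ to the potential equation and estimates $\mathbf U$ and $\mathbf H$ simultaneously; this shows your ``main obstacle'' is not one, since $(\mathbf B^a\cdot\nabla)\mathbf H$ is only first order in $\mathbf H$. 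Also, at the first level the quadratic terms need only $L^\infty\times H^1$ bounds (giving $\varepsilon^9$), not the global $H^2$ bound from $\mathcal Y$.
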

\begin{proof}
On \(B_{\rho_2}\), the exact Cartesian equations are
\begin{align*}
-\kappa\varepsilon^2\Delta\mathbf U+\nabla q
&=\mathcal F_U,\qquad \nabla\cdot\mathbf U=0,\\
-\varepsilon^2\Delta\Pi
&=\mathcal F_\Pi,\qquad \mathbf H=\nabla^\perp\Pi.
\end{align*}
where
\begin{align*}
\mathcal F_U={}&-\mathbf R_M^a
-(\mathbf U^a\cdot\nabla)\mathbf U
-(\mathbf U\cdot\nabla)\mathbf U^a
+(\mathbf B^a\cdot\nabla)\mathbf H
+(\mathbf H\cdot\nabla)\mathbf B^a\\
&-(\mathbf U\cdot\nabla)\mathbf U
+(\mathbf H\cdot\nabla)\mathbf H,\\
\mathcal F_\Pi={}&R_\Pi^a
-\mathbf U^a\cdot\nabla\Pi
-\mathbf U\cdot\nabla\Pi^a
-\mathbf U\cdot\nabla\Pi.
\end{align*}
Equivalently, after applying \(\nabla^\perp\) to the scalar equation,
\begin{align*}
-\varepsilon^2\Delta\mathbf H=\mathcal F_H:={}&
\nabla^\perp R_\Pi^a
-(\mathbf U^a\cdot\nabla)\mathbf H
-(\mathbf U\cdot\nabla)\mathbf B^a
+(\mathbf B^a\cdot\nabla)\mathbf U\\
&+(\mathbf H\cdot\nabla)\mathbf U^a
-(\mathbf U\cdot\nabla)\mathbf H
+(\mathbf H\cdot\nabla)\mathbf U.
\end{align*}

{\color{black}The standard interior Stokes and Poisson estimates give}
\begin{align*}
&\varepsilon^2\big(
\|\mathbf U\|_{H^2(B_{\rho_1})}
+\|\mathbf H\|_{H^2(B_{\rho_1})}\big)\\
&\qquad\leq C_{\rho_1,\rho_2}\Big(
\|\mathcal F_U\|_{L^2(B_{\rho_2})}
+\|\mathcal F_H\|_{L^2(B_{\rho_2})}
+\varepsilon^2\|(\mathbf U,\mathbf H)\|_{H^1(B_{\rho_2})}\Big).
\end{align*}
The approximate coefficients and their fixed interior derivatives are
uniformly bounded.  Thus the assumed \(H^1\) and \(L^\infty\)
bounds imply
\begin{align*}
&\|\mathcal F_U\|_{L^2(B_{\rho_2})}
+\|\mathcal F_H\|_{L^2(B_{\rho_2})}\\
&\quad\leq C_{\rho_2}\Big(
\|(\mathbf U,\mathbf H)\|_{H^1(B_{\rho_2})}
+\|(\mathbf U,\mathbf H)\|_{L^\infty(B_{\rho_2})}
 \|(\mathbf U,\mathbf H)\|_{H^1(B_{\rho_2})}
+\varepsilon^{10}\Big)\\
&\quad\leq C_{\rho_2}\big(\varepsilon^8+\varepsilon^9
+\varepsilon^{10}\big)
\leq C_{\rho_2}\varepsilon^8.
\end{align*}
This proves \eqref{eq:interior-H2}.  On the fixed disk \(B_{\rho_1}\), the
approximate fields and the derivatives of their coefficients are uniformly
bounded.  Since \(H^2\) is an algebra in two dimensions, the preceding
estimate and the interior residual bound give
\[
\|\mathcal F_U\|_{H^1(B_{\rho_1})}
+\|\mathcal F_H\|_{H^1(B_{\rho_1})}
\leq C_{\rho_1,\rho_2}\varepsilon^6.
\]
Using a second cutoff supported in \(B_{\rho_1}\), equal to one on
\(B_{r_0}\), the interior \(H^3\) estimates for the Stokes and Poisson
equations yield
\begin{align*}
&\varepsilon^2\big(
\|\mathbf U\|_{H^3(B_{r_0})}
+\|\mathbf H\|_{H^3(B_{r_0})}\big)\\
&\qquad\leq C_{r_0,\rho_1}\Big(
\|\mathcal F_U\|_{H^1(B_{\rho_1})}
+\|\mathcal F_H\|_{H^1(B_{\rho_1})}
+\varepsilon^2\|(\mathbf U,\mathbf H)\|_{H^2(B_{\rho_1})}\Big)
\leq C_{r_0,\rho_1,\rho_2}\varepsilon^6.
\end{align*}
This proves \eqref{eq:interior-H3}.  Finally,
\(H^2(B_{r_0})\hookrightarrow L^\infty(B_{r_0})\), applied to the first
derivatives of \(\mathbf U\) and \(\mathbf H\), gives the curl estimate.
\end{proof}
}

\section{Proof of Theorem \ref{main-theorem}}
Finally, we give the proof of Theorem \ref{main-theorem}.
\begin{proof}
	Let $(\Phi,\Pi)$ be the solution obtained in Proposition
	\ref{existence-and-error-estimate-of-error-equation}, and set
	\[
		(u,v,g,h)=\left(-\Phi_r,\frac{\Phi_\theta}{r},
		-\Pi_r,\frac{\Pi_\theta}{r}\right).
	\]
	Then
	\[
		(u^\varepsilon,v^\varepsilon,g^\varepsilon,h^\varepsilon)=(u^a,v^a,g^a,h^a)+(u,v,g,h).
	\]
	Moreover,
	\[
		u^\varepsilon_\theta+rv^\varepsilon_r+v^\varepsilon
		=g^\varepsilon_\theta+rh^\varepsilon_r+h^\varepsilon=0.
	\]
	For this solution,
	\begin{align*}
		F_1=&\ {\partial_rR_u^a-\frac1r\partial_\theta R_v^a}
		-\Phi_\theta(\Delta\Phi)_r+\Phi_r(\Delta\Phi)_\theta
		+\Pi_\theta(\Delta\Pi)_r-\Pi_r(\Delta\Pi)_\theta,\\
		F_2=&\ R_g^a+\Phi_r\Pi_\theta-\Phi_\theta\Pi_r,
	\end{align*}
	so \eqref{error-equation} cancels the residual of the approximate
	solution.  Also,
	\[
		\Phi(\theta,1)=\Phi_r(\theta,1)=0,\qquad
		\Pi_r(\theta,1)=0
	\]
	are exactly the required boundary conditions.  The fixed point was
	constructed through \eqref{component-linear-problem}, so it satisfies the
	full momentum equation with the associated normalized Stokes pressure.  The
	second error equation restores the scalar magnetic-flux equation with the constant
	\(2\beta\varepsilon^2\); applying \(\nabla^\perp\) and using
	\eqref{eq:magnetic-flux-constant} gives the full vector induction equation.
	Proposition~\ref{existence-and-error-estimate-of-error-equation} and
	\eqref{approximate-solution} give the stated convergence of the velocity and
	magnetic fields.  It remains to prove convergence of their curls.  Fix
\[
0<r_0<\rho_1<\rho_2<1.
\]
On $B_{\rho_2}$ the stretched coordinate satisfies
$Y\leq-(1-\rho_2)/\varepsilon$. {Hence the decaying Prandtl parts and their
first derivatives are $O(\varepsilon^N)$ there for arbitrary $N$.
The possible constant limit of $\varepsilon^{11}g_p^{(11)}$ is
$O(\varepsilon^{11})$, while the top normal traces and their correctors are
$O(\varepsilon^{12})$; both are smaller than the orders used below.} The
smooth Euler profiles therefore satisfy
\[
\|\operatorname{curl}(u_e^a,v_e^a)-2a\|_{L^\infty(B_{\rho_2})}
+\|\operatorname{curl}(g_e^a,h_e^a)-2b\|_{L^\infty(B_{\rho_2})}
\leq C_{\rho_2}\varepsilon\eta.
\]

{\color{black}Let
\(\mathbf U=u\mathbf e_\theta+v\mathbf e_r\) and
\(\mathbf H=g\mathbf e_\theta+h\mathbf e_r\) be the physical error fields.}
From the definition of \(\mathcal X\), the boundary conditions, the
Fourier-mode Poincar\'e inequalities, and
Proposition~\ref{existence-and-error-estimate-of-error-equation},
\begin{equation}\label{eq:first-interior-error-bounds}
\|\mathbf U\|_{L^\infty}+\|\mathbf H\|_{L^\infty}\leq C\varepsilon,
\qquad
\|\mathbf U\|_{H^1(B_1)}+\|\mathbf H\|_{H^1(B_1)}\leq C\varepsilon^8.
\end{equation}
For the zero Fourier modes, the \(L^2\) part of the second bound follows by
integrating radially from \(r=1\), where the tangential errors vanish; for the
nonzero modes it follows from Poincar\'e's inequality in \(\theta\).  The
derivative part follows from the Hessian terms in \(\mathbb E\) and regularity
at \(r=0\).  {Hence Lemma~\ref{lem:interior-curl-convergence}, with
\eqref{cartesian-residual-interior-H2} and
\eqref{eq:first-interior-error-bounds}, gives
\[
\|\operatorname{curl}\mathbf U\|_{L^\infty(B_{r_0})}
+\|\operatorname{curl}\mathbf H\|_{L^\infty(B_{r_0})}
\leq C_{r_0,\rho_1,\rho_2}\varepsilon^4.
\]}
Combining the Euler, Prandtl, and error contributions yields
\[
\|\omega^\varepsilon-2a\|_{L^\infty(B_{r_0})}
+\|j^\varepsilon-2b\|_{L^\infty(B_{r_0})}\longrightarrow0,
\]
which completes the proof of Theorem~\ref{main-theorem}.
\end{proof}

\appendix

\section{The velocity divergence corrector}\label{app:correctors}
Recall that $\mathcal I$ denotes the zero-mean periodic primitive
in $\theta$.
For the velocity expansion, direct calculation gives
\[
(u_e^a+u_p^a)_\theta+r(v_e^a+v_p^a)_r+(v_e^a+v_p^a)=K_u,
\]
where
\[
K_u=\varepsilon^{12}\chi(r)(Y\partial_Yv_p^{(12)}+v_p^{(12)})
+r\chi'(r)\sum_{i=1}^{12}\varepsilon^iv_p^{(i)}.
\]
The profile divergence equations imply that $K_u$ has zero $\theta$-mean.
Moreover, $K_u(\theta,1)=0$ because $v_p^{(12)}(\theta,0)=0$, and
$K_u=O(\eta\varepsilon^{12})$.  Define
\[
u_L=-\varepsilon^{-12}\mathcal I K_u.
\]
Then $u_L(\theta,1)=0$,
\[
\varepsilon^{12}\partial_\theta u_L+K_u=0,
\]
and
\[
\|\partial_\theta^j\partial_r^ku_L\|_2
\leq C(j,k)\eta\varepsilon^{-k}.
\]
This is the velocity corrector used in \eqref{approximate-solution}.

For comparison, the cutoff magnetic profiles have the defect
\[
K_g=\varepsilon^{12}\chi(r)(Y\partial_Yh_p^{(12)}+h_p^{(12)})
+r\chi'(r)\sum_{i=1}^{12}\varepsilon^ih_p^{(i)}.
\]
Unlike the velocity, the normal magnetic component has no prescribed
boundary value.  We therefore do not introduce a tangential corrector.
Instead, defining the magnetic potential from $g^a$ gives exactly the normal
correction \eqref{magnetic-normal-correction}.  Writing
$h_p^{(12)}=H_{12,\infty}+\widetilde h_p^{(12)}$ in the displayed formula
gives
\[
K_g=\varepsilon^{12}\partial_r(r\chi H_{12,\infty})+\widetilde K_g,
\]
where every term in $\widetilde K_g$ contains a rapidly decaying Prandtl
profile.  Its radial primitive therefore gains one power of $\varepsilon$,
which proves \eqref{magnetic-normal-correction-estimate}.

\section{A conditional MHD Prandtl--Batchelor selection principle}
\label{app:mhd-pb-selection}
This appendix gives a conditional selection argument using the magnetic
potential and the velocity stream function.  It does not require convergence
of second derivatives of the velocity or magnetic fields and assumes no
interior expansion in powers of \(\varepsilon\).

\begin{lemma}[Connected levels and smooth composition]
\label{lem:single-eddy-levels}
Let \(\Phi\in C^\infty(\overline{B_1})\) be constant on
\(\partial B_1\), have no boundary critical point, and have exactly one
critical point in \(B_1\), a nondegenerate extremum.  Then every regular
level of \(\Phi\) is connected.  If
\(\Pi\in C^\infty(\overline{B_1})\) is constant on those levels, then
\(\Pi=\mathcal{F}(\Phi)\) for a function \(\mathcal{F}\) smooth on the closed
range of \(\Phi\).
\end{lemma}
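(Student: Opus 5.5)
Normalize so that the interior critical point $x_0$ is a nondegenerate minimum with $\Phi(x_0)=m$ and $\Phi|_{\partial B_1}=M>m$; the maximum case follows by replacing $\Phi$ with $-\Phi$. The plan is to show that $\Phi$ is a Morse function with a single critical point on a disk, so that its sublevel sets are nested disks. The connectedness of the levels then follows from Morse theory or a gradient-flow argument. The smooth factorization is obtained from the Morse lemma at $x_0$ together with a local product structure away from $x_0$.

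\emph{Step 1 (range and levels).} Since $\Phi$ is constant on $\partial B_1$ and has no critical point there, its normal derivative has a fixed sign on the boundary. It must point outward-increasing: otherwise $\Phi$ would attain a value above $M$ in the interior, producing an interior maximum, which is a second critical point. Hence $m<\Phi<M$ in $B_1\setminus\{x_0\}$, and every $c\in(m,M)$ is a regular value. For such $c$, consider the normalized gradient flow $\dot x=\nabla\Phi/|\nabla\Phi|^2$ on $B_1\setminus\{x_0\}$; it increases $\Phi$ at unit rate. Flowing from a small Morse-lemma circle $\{\Phi=m+\delta\}$ around $x_0$ gives a diffeomorphism onto $\{\Phi=c\}$. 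The flow exists up to level $c$ because trajectories cannot reach $\partial B_1$ (where $\Phi=M$) nor $x_0$. It is onto because backward flow from any point of $\{\Phi=c\}$ decreases $\Phi$ and must enter the Morse neighborhood, since no other critical point or boundary is available. Therefore $\{\Phi=c\}$ is a single circle and in particular connected. The levels $\{m\}=\{x_0\}$ and $\{M\}=\partial B_1$ are connected as well.

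\emph{Step 2 (definition and interior smoothness of $\mathcal F$).} Set $\mathcal F(c)=\Pi|_{\{\Phi=c\}}$. This is well defined by connectedness and the hypothesis on regular levels. On $(m,M]$, take the flow coordinates $(c,s)$, with $s$ an angular parameter along the level circles; they are smooth up to $\partial B_1$ because $\nabla\Phi\neq0$ there. In these coordinates $\Pi(c,s)$ is independent of $s$, so $\mathcal F$ is smooth on $(m,M]$. The value at $c=M$ is consistent by continuity, since $\Pi$ is constant on $\partial B_1$.

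\emph{Step 3 (smoothness at the extremum).} This is the step I expect to be the main obstacle. By the Morse lemma, there are local smooth coordinates $y$ in which $\Phi=m+|y|^2$. In these coordinates $\Pi$ is a smooth function constant on circles, i.e.\ a smooth radial function $\widetilde\Pi(y)=G(|y|)$ with $G$ even. By Whitney's theorem on smooth even functions, $G(t)=H(t^2)$ with $H$ smooth near $0$. Consequently $\mathcal F(c)=H(c-m)$, which is smooth up to $c=m$. The $C^\infty$ regularity, as opposed to merely $C^k$, comes precisely from Whitney's result, so I would cite it rather than expand Taylor series by hand.

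Combining Steps 2 and 3, $\Pi=\mathcal F(\Phi)$ with $\mathcal F$ smooth on the closed range $[m,M]$.
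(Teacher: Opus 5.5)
Your proof is correct. For connectedness it takes a different route from the paper; for smoothness it follows essentially the same route.

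\textbf{The paper's route.} The paper argues purely topologically in the plane. Each component of a regular level is a Jordan curve bounding a disk. If two components are not nested, the two enclosed disks each contain an interior extremum, hence a critical point, of $\Phi$. If they are nested, the annulus between them, on whose two boundary curves $\Phi$ takes the same value, contains one as well. Either way uniqueness of the critical point is contradicted. Smoothness of $\mathcal{F}$ is then obtained locally in three pieces: the implicit-function theorem on regular level bands, the Morse lemma together with the fact that a smooth radial function is smooth in $x_1^2+x_2^2$ (your Whitney step), and the collar at $\partial B_1$.

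\textbf{Your route.} You first fix the ordering $m<\Phi<M$ and the sign of $\partial_n\Phi$. You then transport a small Morse circle by the normalized gradient flow. This gives more than connectedness: a global diffeomorphism of $\overline{B_1}\setminus\{x_0\}$ with $(m,M]\times S^1$. The same coordinates then yield smoothness of $\mathcal{F}$ on $(m,M]$ in one stroke, and the argument does not use the Jordan curve theorem. The paper's argument is shorter and needs no dynamics.

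\textbf{Points to make explicit.}
\begin{itemize}
\item In the surjectivity step, choose $\delta$ so small that the whole sublevel set $\{\Phi\le m+\delta\}$ lies inside the Morse chart $U$. This is possible because $\Phi>m$ on the compact set $\overline{B_1}\setminus U$. Then $\{\Phi=m+\delta\}$ is exactly the Morse circle, not merely one component of that level.
\item The backward trajectory from $\{\Phi=c\}$ reaches level $m+\delta$ because the normalized field is smooth and bounded on the compact set $\{m+\delta\le\Phi\le c\}$, which avoids both $x_0$ and $\partial B_1$.
\item $M>m$ is a consequence of your sign argument for $\partial_n\Phi$, not part of the normalization.
\end{itemize}
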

\begin{proof}
Each regular component is a simple closed curve.  If two components are
disjoint, the disk enclosed by each component contains an interior extremum
of \(\Phi\), hence a critical point.  If they are nested, the closed annulus
between them has the same value on both boundary components; unless
\(\Phi\) is constant there, its maximum or minimum is attained in the
interior and is critical.  Both alternatives contradict uniqueness.

The composition \(\Pi=\mathcal{F}(\Phi)\) is smooth on regular level bands by
the implicit-function theorem.  Near the unique critical point \(x_*\), the
Morse lemma gives local coordinates centered at \(x_*\) in which
\(\Phi=\Phi(x_*)\pm(x_1^2+x_2^2)\).  A smooth function constant on the
corresponding circles is a smooth function of \(x_1^2+x_2^2\), so
\(\mathcal{F}\) extends smoothly through the critical value.  The boundary
value follows in the same way from the collar given by
\(\nabla\Phi\ne0\).
\end{proof}

\begin{proposition}[Conditional single-eddy rigidity]
Let \((\mathbf u^\varepsilon,\mathbf B^\varepsilon)\) be smooth
viscous--resistive MHD solutions in \(B_1\), with
\(\mathbf u^\varepsilon\cdot\mathbf n=0\) on the boundary and with the
magnetic circulation prescribed in \eqref{magnetic-boundary-condition}.
Assume a uniform \(L^\infty\) bound and
\[
(\mathbf u^\varepsilon,\mathbf B^\varepsilon)
\longrightarrow(\mathbf u_e,\mathbf B_e)
\quad\text{in }C^1_{\mathrm{loc}}(B_1).
\]
Suppose the smooth ideal limit is a single eddy: its normalized stream
function is constant on the boundary, has exactly one nondegenerate interior
critical point, and no other critical point in \(\overline{B_1}\).
{\color{black}Suppose also that the proportionality function \(m\), determined by
\(\mathbf B_e=m(\Phi_e)\mathbf u_e\), satisfies
\(\inf_{\overline{B_1}}|1-m(\Phi_e)^2|>0\).}  Then
\[
\mathbf u_e=ar\,\mathbf e_\theta,\qquad
\mathbf B_e=\beta r\,\mathbf e_\theta
\]
for a constant \(a\).
\end{proposition}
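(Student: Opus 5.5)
Our plan follows the outline given in the introduction: first reduce the ideal limit to a single scalar semilinear equation, then use exact viscous identities and local convergence to show that both profile functions are constant, and finally use radial symmetry to identify the core.

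\emph{Step 1: structure of the ideal limit.} The $C^1_{\mathrm{loc}}$ convergence passes the weak form of the equations to the limit. This gives the ideal MHD system for $(\mathbf u_e,\mathbf B_e)$, and in potential form $\mathbf u_e\cdot\nabla\Pi_e=C_e$. Integrating \eqref{eq:magnetic-flux-scalar} with $C_0=2\beta\varepsilon^2\to0$ identifies $C_e=0$. Hence $\Pi_e$ is constant on streamlines, and Lemma~\ref{lem:single-eddy-levels} gives $\Pi_e=\mathcal F(\Phi_e)$, so $\mathbf B_e=m(\Phi_e)\mathbf u_e$ with $m=\mathcal F'$. With $\mathbf B_e$ parallel to $\mathbf u_e$, the momentum equation reduces, via the change of stream function $d\widehat\Phi=\sqrt{|1-m^2|}\,d\Phi_e$ (which is legitimate by the non-Alfv\'enic assumption), to a semilinear elliptic equation $\Delta\widehat\Phi=\pm G(\widehat\Phi)$ in $B_1$, with $\widehat\Phi$ constant on $\partial B_1$. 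Since the single eddy has exactly one extremum, we invoke Gidas--Ni--Nirenberg (moving planes) to conclude that $\widehat\Phi$, and hence $\Phi_e$, is radial about the center. Then $\mathbf u_e=U(r)\mathbf e_\theta$ and $\mathbf B_e=m\,U\mathbf e_\theta$.

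\emph{Step 2: Batchelor-type identities.} We test the finite-$\varepsilon$ equation \eqref{eq:magnetic-flux-scalar} with $\psi(\Phi^\varepsilon)$ on the subdomain enclosed by a level curve $\Gamma_c$ of $\Phi^\varepsilon$ close to a level of $\Phi_e$. The transport term integrates to zero because $\mathbf u^\varepsilon\cdot\nabla\Phi^\varepsilon=0$ identically. Dividing by $\varepsilon^2$ then gives $\oint_{\Gamma_c}\partial_n\Pi^\varepsilon=-2\beta|D_c|$, exactly. Passing to the limit in $C^1$ on the compact curve yields the exact relation $\mathcal F'(c)\oint|\nabla\Phi_e|=-2\beta|D_c|$. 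Combined with radiality, this gives $m(\Phi_e)U(r)\cdot 2\pi r=2\beta\pi r^2$, that is, $g_e=\beta r$, a constant current $2\beta$.

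Similarly, we integrate the vorticity equation over $D_c$, where $\mathbf u^\varepsilon\cdot\mathbf n=0$ on $\Gamma_c$. This gives $\kappa\varepsilon^2\oint\partial_n\omega^\varepsilon=\oint(\mathbf B^\varepsilon\cdot\mathbf n)j^\varepsilon$. The right-hand side is $O(\varepsilon^2)$ after using the structure from the induction identity, namely $\mathbf B\cdot\mathbf n\propto\varepsilon^2$ on streamlines. At this point we have only $C^1$ convergence, so $\partial_n\omega^\varepsilon$ cannot be passed to the limit directly. To avoid this, we test with $\psi(\Phi^\varepsilon)$ and integrate by parts so that only first derivatives remain. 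This yields $\int_{D_c}\kappa\,\omega_e'(\Phi_e)|\nabla\Phi_e|^2-\dots=0$ on every level band, forcing the vorticity to be constant and $U=ar$.

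\emph{Step 3: conclusion.} The limit core is then $\mathbf u_e=ar\,\mathbf e_\theta$, $\mathbf B_e=\beta r\,\mathbf e_\theta$.

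The main obstacle is the vorticity identity in Step 2. The Lorentz term must be shown to contribute only at order $\varepsilon^2$, and this has to be done using first-derivative information alone. To handle it, we would rewrite the Lorentz term using $\Pi^\varepsilon$ and the scalar flux identity, so that it becomes $\varepsilon^2$ times boundary integrals of first derivatives.
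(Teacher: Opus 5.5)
Your plan follows the paper's proof. The ideal limit gives $\mathbf B_e=m(\Phi_e)\mathbf u_e$; the non-Alfv\'enic change of stream function plus Gidas--Ni--Nirenberg gives radial symmetry; the scalar flux equation integrated along streamlines of $\Phi^\varepsilon$ gives $j_e=2\beta$; and the vorticity equation tested with $\chi(\Phi^\varepsilon)$, integrated by parts twice and divided by $\varepsilon^2$, gives constant vorticity. (For the flux identity the paper tests with smooth $\chi(\Phi^\varepsilon)$ rather than integrating over the domains enclosed by level curves, which spares you from analysing the level curves of $\Phi^\varepsilon$.)

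The one point to sharpen is your ``main obstacle.'' The identity $\mathbf B^\varepsilon\cdot\nabla\Phi^\varepsilon=\varepsilon^2(j^\varepsilon-2\beta)$ makes the Lorentz term exactly $\varepsilon^2\int j^\varepsilon(j^\varepsilon-2\beta)\chi'(\Phi^\varepsilon)$, which is the same order as the viscous term, not smaller. So after dividing by $\varepsilon^2$ it disappears in the limit only because of the constant-current identity $j_e=2\beta$ that you proved first. That term is what your ``$-\dots$'' must be, and it is why the two identities have to be used in this order.
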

\begin{proof}

Let
\[
\omega^\varepsilon=\operatorname{curl}\mathbf u^\varepsilon,
\qquad
j^\varepsilon=\operatorname{curl}\mathbf B^\varepsilon,
\qquad
\mathbf u^\varepsilon=\nabla^\perp\Phi^\varepsilon,
\qquad
\mathbf B^\varepsilon=\nabla^\perp\Pi^\varepsilon.
\]
Normalize the velocity stream functions by
\(\Phi^\varepsilon=\Phi_e=0\) on \(\partial B_1\).  By hypothesis,
\[
\sup_{\varepsilon>0}
\big(\|\mathbf u^\varepsilon\|_{L^\infty(B_1)}
+\|\mathbf B^\varepsilon\|_{L^\infty(B_1)}\big)<\infty.
\]
The stream functions are uniformly Lipschitz:
\(\|\nabla\Phi^\varepsilon\|_\infty\leq M\).  Given \(\delta>0\), their
common boundary normalization gives
\[
\sup_{\{1-\delta<|x|<1\}}|\Phi^\varepsilon(x)|
\leq M\delta,
\qquad
\sup_{\{1-\delta<|x|<1\}}|\Phi_e(x)|\leq M\delta.
\]
On \(B_{1-\delta}\), local convergence of the velocities and one fixed
normalization point give uniform convergence of the stream functions.
First choosing \(\delta\) small and then \(\varepsilon\) small proves
\[
\Phi^\varepsilon\longrightarrow\Phi_e
\quad\hbox{uniformly on }\overline{B_1}.
\]
Choose the orientation so that \(\Phi_e>0\) in \(B_1\); if the opposite
orientation occurs, replace \(\Phi_e\) by \(-\Phi_e\).
Lemma~\ref{lem:single-eddy-levels} shows that every regular level is
connected.
Write \(\mathbf B_e=\nabla^\perp\Pi_e\).

{After fixing additive constants, local \(C^1\) convergence of the
fields gives local \(C^2\) convergence of their potentials.  In the weak
formulation the diffusion terms vanish as \(\varepsilon\to0\), so the smooth
limit satisfies ideal MHD without convergence of second derivatives of the
fields.}

The limiting magnetic-potential equation gives
\[
\mathbf u_e\cdot\nabla\Pi_e=0.
\]
The connectedness of the streamlines therefore implies
\[
\Pi_e=\mathcal F(\Phi_e),
\qquad
\mathbf B_e=m(\Phi_e)\mathbf u_e,
\qquad m=\mathcal F'.
\]
The smooth-composition conclusion of
Lemma~\ref{lem:single-eddy-levels} shows that \(\mathcal{F}\), \(m\), and the
compositions below extend smoothly through the center.  The uniform
non-Alfv\'enic hypothesis is
\begin{equation}\label{eq:appendix-uniform-nonalfven}
m(\tau)\ne\pm1
\quad\hbox{for every }\tau\hbox{ in the range of }\Phi_e.
\end{equation}
By continuity and compactness, this is equivalent to a positive lower bound
for \(|1-m(\Phi_e)^2|\).
Taking the curl of the ideal momentum equation gives
\[
\mathbf u_e\cdot\nabla\omega_e
-\mathbf B_e\cdot\nabla j_e=0.
\]
Since \(m=m(\Phi_e)\), one has
\(\mathbf u_e\cdot\nabla m=0\).  Hence
\[
\mathbf u_e\cdot\nabla(\omega_e-mj_e)=0,
\qquad
\omega_e-mj_e=K(\Phi_e)
\]
for a function \(K\) on the range of \(\Phi_e\).  The same level-set argument
shows that \(K\) is smooth, including at the center.
Moreover,
\[
j_e=-\Delta\mathcal F(\Phi_e)
=m\omega_e-m'|\nabla\Phi_e|^2,
\]
and so
\begin{equation}\label{eq:appendix-transformed-balance}
(1-m^2)\omega_e+mm'|\nabla\Phi_e|^2=K(\Phi_e).
\end{equation}

The sign
\(\sigma=\operatorname{sgn}(1-m^2)\) is constant by
\eqref{eq:appendix-uniform-nonalfven}.  Define the strictly increasing
function
\[
H(\tau)=\int_{0}^{\tau}\sqrt{|1-m(s)^2|}\,\ud s,
\qquad
\Xi=H(\Phi_e).
\]
Set \(\lambda(\tau)=H'(\tau)=\sqrt{|1-m(\tau)^2|}\).  Then
\[
H''(\tau)=-\frac{\sigma m(\tau)m'(\tau)}{\lambda(\tau)}.
\]
Using \(\omega_e=-\Delta\Phi_e\) and
\eqref{eq:appendix-transformed-balance}, we obtain
\begin{align*}
-\Delta\Xi
&=\lambda(\Phi_e)\omega_e-H''(\Phi_e)|\nabla\Phi_e|^2\\
&=\frac{\sigma}{\lambda(\Phi_e)}
\left((1-m^2)\omega_e+mm'|\nabla\Phi_e|^2\right)
=\frac{\sigma K(\Phi_e)}{\lambda(\Phi_e)}.
\end{align*}
Since \(H\) is invertible, the right-hand side is a smooth function of
\(\Xi\) alone.  Thus \(\Xi\in C^2(\overline{B_1})\) satisfies a semilinear
Dirichlet equation
\[
-\Delta\Xi=\mathcal N(\Xi).
\]
The function \(\mathcal N\) is smooth, hence locally Lipschitz, on the compact range
of \(\Xi\).  Since \(H(0)=0\), \(H'>0\), and \(\Phi_e>0\) in the disk,
\(\Xi\) is positive in the disk and vanishes on the boundary.  The radial
symmetry theorem of Gidas, Ni, and Nirenberg \cite{GNN} shows that \(\Xi\) is
radial and strictly monotone away from the center.  Since \(H\) is strictly
increasing, \(\Phi_e\), \(\Pi_e\), \(\omega_e\), and \(j_e\) are radial as
well.

It remains to determine the two radial curls.  Let \(I\) be the open interval
between the boundary value and the center value of \(\Phi_e\), and take
\(\chi\in C_c^\infty(I)\).  Uniform convergence of the normalized stream
functions implies that \(\chi(\Phi^\varepsilon)\) is supported, for all
sufficiently small \(\varepsilon\), in a fixed compact annulus separated
from both the wall and the center.  Thus every integration by parts below
has no boundary contribution from the support of the test.  Moreover, the
assumed local \(C^1\) convergence of the fields gives local uniform
convergence
\[
\omega^\varepsilon\to\omega_e,\qquad
j^\varepsilon\to j_e
\]
on that annulus.  Since \(-\Delta\Pi^\varepsilon=j^\varepsilon\),
the exact magnetic-potential equation is
\[
\mathbf u^\varepsilon\cdot\nabla\Pi^\varepsilon
+\varepsilon^2j^\varepsilon=2\beta\varepsilon^2.
\]
Multiplying it by \(\chi(\Phi^\varepsilon)\) and integrating gives
\begin{equation}\label{eq:appendix-current-weak}
\int_{B_1}(j^\varepsilon-2\beta)
\chi(\Phi^\varepsilon)\,\ud x=0.
\end{equation}
Indeed, the transport term is the integral of
\(\operatorname{div}(\Pi^\varepsilon
\chi(\Phi^\varepsilon)\mathbf u^\varepsilon)\), because
\(\mathbf u^\varepsilon\cdot\nabla\Phi^\varepsilon=0\), and its boundary
integral vanishes.  Passing to the limit in
\eqref{eq:appendix-current-weak} gives
\[
\int_{B_1}(j_e-2\beta)\chi(\Phi_e)\,\ud x=0.
\]
The strict radial monotonicity of \(\Phi_e\), followed by the change of
variable \(\tau=\Phi_e(r)\), now gives
\begin{equation}\label{eq:appendix-constant-current}
j_e(r)=2\beta,
\qquad 0<r<1.
\end{equation}

Next, multiply
\[
\mathbf u^\varepsilon\cdot\nabla\omega^\varepsilon
-\mathbf B^\varepsilon\cdot\nabla j^\varepsilon
-\kappa\varepsilon^2\Delta\omega^\varepsilon=0
\]
by \(\chi(\Phi^\varepsilon)\).  The magnetic-potential equation and the
identity
\[
\mathbf B^\varepsilon\cdot\nabla\Phi^\varepsilon
=\varepsilon^2(j^\varepsilon-2\beta)
\]
give, after integration by parts and division by \(\varepsilon^2\),
\begin{equation}\label{eq:appendix-vorticity-weak}
\begin{aligned}
0={}&\int_{B_1}
\big[j^\varepsilon(j^\varepsilon-2\beta)
+\kappa(\omega^\varepsilon)^2\big]
\chi'(\Phi^\varepsilon)\,\ud x\\
&-\kappa\int_{B_1}\omega^\varepsilon
|\nabla\Phi^\varepsilon|^2
\chi''(\Phi^\varepsilon)\,\ud x.
\end{aligned}
\end{equation}
Here the viscous term was integrated by parts twice; thus no convergence of
\(\Delta\omega^\varepsilon\) is used.  The local \(C^1\) convergence of the
fields gives local uniform convergence of \(\omega^\varepsilon\) and
\(j^\varepsilon\), so all terms in \eqref{eq:appendix-vorticity-weak} pass to
the limit.  By \eqref{eq:appendix-constant-current}, the result is
\[
\int_{B_1}\nabla\omega_e\cdot
\nabla\chi(\Phi_e)\,\ud x=0.
\]
In radial variables this becomes
\[
2\pi\int_0^1 r\omega_e'(r)\Phi_e'(r)
\chi'(\Phi_e(r))\,\ud r=0.
\]
Changing variables from \(r\) to \(\tau=\Phi_e(r)\) shows that
\(r\omega_e'(r)\) is constant on \((0,1)\).  Smoothness at the center makes
this constant zero.  Hence \(\omega_e\) is constant on \(0<r<1\).

The present sign convention and smoothness at the center now give
\[
\mathbf u_e=\frac{\omega_e}{2}r\,\mathbf e_\theta,
\qquad
\mathbf B_e=\frac{j_e}{2}r\,\mathbf e_\theta
=\beta r\,\mathbf e_\theta.
\]
By continuity, the current and vorticity identities extend to \(r=0\) and
\(r=1\).  Thus \(a=\omega_e/2\) and \(b=\beta\).  The single-eddy, uniform
non-Alfv\'enic, uniform boundedness, and local \(C^1\) convergence assumptions
used here are not assumed in Theorem~\ref{main-theorem}; the theorem instead
constructs a family of solutions with the selected core.
\end{proof}

\section*{Acknowledgments}

Gao's research is supported in part by the National Natural Science
Foundation of China under Grant 12494541.
Lin's research is supported in part by the National Natural Science
Foundation of China under Grant 12494544.
C. Gao thanks Professor Zhouping Xin for helpful discussions on the boundary
conditions for the magnetic field.

\end{document}